\title{Chern-Simons theory and string topology}
\author{Kai Cieliebak}
\address{Universit\"at Augsburg, Universit\"atsstrasse 14, 86159 Augsburg, Germany}
\email{kai.cieliebak@math.uni-augsburg.de}
\author{Evgeny Volkov}
\address{Universitat Polit\`ecnica de Catalunya,
Av.~Dr.~Mara\~n\'on 44--50, 08028 Barcelona, Spain}
\email{evgeny.volkov@upc.edu} 
\theoremstyle{plain}
\newtheorem{theorem}{Theorem}[section]
\newtheorem{thm}[theorem]{Theorem}
\newtheorem{corollary}[theorem]{Corollary}
\newtheorem{cor}[theorem]{Corollary}
\newtheorem{proposition}[theorem]{Proposition}
\newtheorem{prop}[theorem]{Proposition}
\newtheorem{lemma}[theorem]{Lemma}
\newtheorem{lem}[theorem]{Lemma}
\theoremstyle{remark}
\newtheorem{remark}[theorem]{Remark}
\newtheorem{rem}[theorem]{Remark}
\newtheorem{definition}{Definition}
\newcommand{\Id}{{{\mathchoice {\rm 1\mskip-4mu l} {\rm 1\mskip-4mu l}
{\rm 1\mskip-4.5mu l} {\rm 1\mskip-5mu l}}}}
\newcommand{\id}{{\rm id}}
\newcommand{\ddt}{\frac{d}{dt}}
\newcommand{\ol}{\overline}
\newcommand{\p}{\partial}
\newcommand{\om}{\omega}
\newcommand{\Om}{\Omega}
\newcommand{\eps}{\varepsilon}
\newcommand{\into}{\hookrightarrow}
\newcommand{\la}{\langle}
\newcommand{\ra}{\rangle}
\newcommand{\wt}{\widetilde}
\newcommand{\wh}{\widehat}
\newcommand{\onto}{\twoheadrightarrow}
\newcommand{\N}{{\mathbb{N}}}
\newcommand{\Z}{{\mathbb{Z}}}
\newcommand{\R}{{\mathbb{R}}}
\newcommand{\C}{{\mathbb{C}}}
\newcommand{\F}{\mathbb{F}}
\newcommand{\bbb}{{\bf b}}
\newcommand{\m}{{\bf m}}
\newcommand{\n}{{\bf n}}
\newcommand{\G}{{\bf G}}
\newcommand{\kkk}{{\bf k}}
\newcommand{\bs}{{\bf s}}
\newcommand{\Aut}{{\rm Aut}}
\newcommand{\im}{{\rm im}\,}        
\newcommand{\Ad}{{\rm Ad}}
\newcommand{\conn}{{\rm conn}}
\newcommand{\inn}{{\rm int}}
\newcommand{\can}{{\rm can}}
\newcommand{\reg}{{\rm reg}}
\newcommand{\IBL}{{\rm IBL}}
\newcommand{\dIBL}{{\rm dIBL}}
\newcommand{\ext}{{\rm ext}}
\newcommand{\cross}{{\rm cross}}
\newcommand{\Flag}{{\rm Flag}}
\newcommand{\Ver}{{\rm Vert}}
\newcommand{\Edge}{{\rm Edge}}
\newcommand{\Leaf}{{\rm Leaf}}
\newcommand{\Hom}{{\rm Hom}}
\newcommand{\main}{{\rm main}}
\newcommand{\hidden}{{\rm hidden}}
\newcommand{\Tr}{{\rm Tr}}
\newcommand{\DD}{\mathcal{D}}
\newcommand{\CC}{\mathcal{C}}
\newcommand{\FF}{\mathcal{F}}
\newcommand{\HH}{\mathcal{H}}
\newcommand{\XX}{\mathcal{X}}
\renewcommand{\AA}{\mathcal{A}}
\newcommand{\YY}{\mathcal{Y}}
\newcommand{\PP}{\mathcal{P}}
\newcommand{\RR}{\mathcal{R}}
\newcommand{\GG}{\mathcal{G}}
\newcommand{\WW}{\mathcal{W}}
\newcommand{\cyc}{{\rm cyc}}
\newcommand{\g}{{\mathfrak g}}         
\newcommand{\fp}{{\mathfrak p}}
\newcommand{\fq}{{\mathfrak q}}
\newcommand{\ff}{{\mathfrak f}}
\newcommand{\fm}{{\mathfrak m}}
\newcommand{\HHH}{\mathbb{H}}
\newcommand{\W}{\mathcal{W}}
\newcommand{\D}{\mathcal{D}}
\newcommand{\K}{\mathbb{K}}
\newcommand{\Bl}{{\rm Bl}_+}
\begin{document}

\maketitle

\begin{abstract}
We construct chain-level $S^1$-equivariant string topology for each
simply connected closed manifold. This amounts to constructing a
Maurer-Cartan element for the canonical involutive Lie bialgebra (IBL)
structure on the dual cyclic bar complex of its de Rham cohomology
that is unique up to $\IBL_\infty$ gauge equivalence. The construction 
involves integrals over configuration spaces associated to trivalent
ribbon graphs, which can be seen as a version of perturbative
Chern-Simons theory in arbitrary dimension.  
\end{abstract}
%
\tableofcontents

\section{Introduction}\label{sec:intro}

Over the past 20 years there has been mounting evidence for the
equivalence of the following theories associated to a closed manifold
$M$: 
\begin{enumerate}[(A)]
\item string topology on the loop space of $M$;
\item holomorphic curve theory on the cotangent bundle of $M$;
\item Chern-Simons theory on $M$.
\end{enumerate}
All these theories come in various flavours: String topology can
refer to the based loop space, the free loop space, and/or the
$S^1$-equivariant theory on the free loop space. Holomorphic curves
can be compact with punctures, asymptotic markers, and/or Lagrangian
boundary conditions. Chern-Simons theory can be perturbative or
non-perturbative, and it involves a gauge group as well as harmonic
insertions and/or Wilson loops. Moreover, all the theories can be
considered at chain level or at the level of homology. 

Evidence for $(A)\Longleftrightarrow(B)$ arises for example from Viterbo's
isomorphism between symplectic homology (or wrapped Floer homology) of
the cotangent bundle and singular homology of the free (or based) loop
space~\cite{Viterbo-cotangent,AS,AS-corrigendum,AS2,SW,Kragh,Abouzaid-cotangent}; 
the conjectural equivalence between symplectic field theory on the
cotangent bundle and $S^1$-equivariant string topology on the free
loop space~\cite{Fukaya-Lag,Cieliebak-Latschev};
and the expression of knot contact homology and other holomorphic
curve invariants of knots in terms of open string topology~\cite{CELN,Ekholm-Ng-Shende}.

Evidence for $(B)\Longleftrightarrow(C)$ appears in the physics
literature on string field theory, see e.g.~\cite{Witten95}. However,
to our knowledge, these physical arguments have not yet been made
mathematically rigorous. 

It is the goal of this paper to directly establish an instance of the
equivalence $(A)\Longleftrightarrow(C)$. Our motivation comes from
symplectic field theory. The algebraic structure encoded by moduli
spaces of punctured holomorphic curves of arbitrary genus in the
cotangent bundle $T^*M$ was first described in~\cite{EGH} in the
formalism of Weyl algebras. We will use its reformulation
in~\cite{Cieliebak-Fukaya-Latschev} in terms of {\em
  $\IBL_\infty$-algebras} (involutive Lie bialgebras up 
to infinite homotopies, see~\S\ref{sec:IBLinfty}). According
to~\cite{Cieliebak-Latschev,Cieliebak-Fukaya-Latschev}, the same structure should appear on
chain-level $S^1$-equivariant string topology. 

One approach to chain-level string topology builds on the de Rham
chains introduced by Fukaya. This approach has been successfully
implemented by Irie~\cite{Irie} in the non-equivariant case, where it
leads to the structure of an algebra over the framed little disk operad
on a suitable chain model for the free loop space. 

We focus in this paper on the $S^1$-equivariant case and use a
different approach proposed in~\cite{Cieliebak-Fukaya-Latschev}.
For a closed oriented $n$-manifold $M$, let $\Om^*(M)$ denote the de
Rham complex of $M$ and $H_{dR}^*(M)$ its  
cohomology. According to~\cite{Cieliebak-Fukaya-Latschev}
(see also~\S\ref{sec:IBLinfty}), the degree shifted dual cyclic bar complex
$B^{\text{\rm cyc}*}H^*_{dR}(M)[2-n]$ carries a canonical $\IBL$ structure. 
Our first result
is

\begin{thm}\label{thm:existence-intro}
For any closed oriented $n$-manifold $M$, there exists a Maurer-Cartan
element on $B^{\text{\rm cyc}*}H^*_{dR}(M)[2-n]$
whose twisted homology equals the Connes cyclic cohomology of $\Om^*(M)$.  
\end{thm}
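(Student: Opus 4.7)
The plan is to construct $\mu$ as a formal Feynman sum over connected trivalent ribbon graphs, with vertices integrated over configuration spaces of $M$ and edges decorated by a Green's-form propagator --- the perturbative Chern-Simons package extended to arbitrary dimension $n$ and arbitrary genus. Concretely, fix a Riemannian metric on $M$, let $\pi\colon\Om^*(M)\onto\HH^*(M)$ be orthogonal projection onto harmonic forms with inclusion $\iota$, and let $P\colon\Om^*(M)\to\Om^{*-1}(M)$ be the Hodge chain homotopy $dP+Pd=\id-\iota\pi$. Following Axelrod-Singer, $P$ is realized as integration along the second factor against a smooth propagator $\mathbf{G}\in\Om^{n-1}({\rm Bl}_\Delta(M\times M))$ on the differential-geometric blow-up of the diagonal, antisymmetric under the swap and with prescribed behaviour on the blown-up diagonal (Thom class of the normal bundle).

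To each connected trivalent ribbon graph $\Gamma$ of genus $g$ with $v$ vertices and cyclically ordered external legs, we associate a multilinear form $I_\Gamma$ on cyclic tensor powers of $H^*_{dR}(M)$ obtained by (i) labelling external legs by harmonic representatives, (ii) pulling back $\mathbf{G}$ along each edge map $M^{V(\Gamma)}\to M\times M$, and (iii) wedging and integrating over $M^{V(\Gamma)}$. Weighting by automorphism factors and a genus parameter $\hbar$ yields a formal series
\[
\mu=\sum_{\Gamma}\frac{\hbar^{g(\Gamma)}}{|\Aut(\Gamma)|}\,I_\Gamma\in B^{\text{\rm cyc}*}H^*_{dR}(M)[2-n][[\hbar]].
\]
The $\IBL_\infty$ Maurer-Cartan equation would then be verified by Stokes' theorem on the Fulton-MacPherson compactifications $\ol{C_v(M)}$: codimension-one boundary strata correspond to collisions of a connected subset of vertices, and each such stratum either collapses a sub-ribbon-graph --- producing a principal face matching the IBL bracket/cobracket applied to the remaining pieces via the Poincar\'e pairing --- or contracts the two endpoints of a single edge, producing a diagonal contribution encoding the internal de Rham differential. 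Hidden faces (collisions of three or more vertices in the interior) vanish by the classical Kontsevich/Bott-Taubes degree count.

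For the identification of the twisted homology, the plan is to extend the Feynman sum to allow external legs labelled by arbitrary forms in $\Om^*(M)$, promoting $\mu$ to an $\IBL_\infty$ morphism
\[
F\colon \bigl(B^{\text{\rm cyc}*}\Om^*(M)[2-n],\,\text{Hochschild-cyclic}\bigr) \longrightarrow \bigl(B^{\text{\rm cyc}*}H^*_{dR}(M)[2-n],\,\mu\bigr).
\]
Its linear term is the bar-extension of the chain quasi-isomorphism $\pi$, so $F$ is an $\IBL_\infty$-quasi-isomorphism by the Whitehead-type theorem of \S\ref{sec:IBLinfty} and induces an isomorphism on twisted homologies. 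On the source side the twisted complex is, by construction, the Connes cyclic cohomology complex of the DGA $\Om^*(M)$.

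The main difficulty will be the Stokes verification of the Maurer-Cartan equation. We need (i) a propagator $\mathbf{G}$ that is simultaneously smooth on the blow-up, antisymmetric, and with the correct diagonal restriction; (ii) vanishing of hidden boundary faces in arbitrary dimension and genus, generalizing the Kontsevich/Bott-Taubes degree count to ribbon graphs with boundary; and (iii) matching of the principal boundary contributions with the combinatorial $\IBL_\infty$ operations, where the degree shift $[2-n]$ and ribbon cyclic orderings must be tracked with care. Convergence of the infinite graph sum is controlled by the genus filtration in $\hbar$, and independence from the choice of metric and propagator --- yielding the uniqueness up to $\IBL_\infty$ gauge equivalence advertised in the abstract --- would reduce to a further cobordism argument on the space of admissible propagators.
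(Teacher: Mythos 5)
Your overall strategy --- a Feynman sum over trivalent ribbon graphs with configuration-space integrals, a smooth propagator on the blow-up of the diagonal, and Stokes' theorem --- is exactly the route the paper takes, but two of your key steps have genuine gaps. First, the vanishing of hidden faces does \emph{not} follow from ``the classical Kontsevich/Bott--Taubes degree count'' once $n$ is arbitrary: degree counting only rules out some collisions, and in general dimension the remaining hidden faces are killed by a symmetry argument in the spirit of Kontsevich's Lemma~2.2, namely an involution of the face that swaps the two edges adjacent to a $2$-valent vertex of the collapsed subgraph and is orientation $(-1)^n$-serving while acting on the integrand with the opposite sign. The existence of such a $2$-valent vertex uses trivalency \emph{and} condition~\eqref{eq:one-boundary-vertex} (every graph has a leaf), so the argument is not purely a dimension count; moreover the compactification $\CC_\Gamma$ is not a manifold with corners, so even the applicability of Stokes' theorem requires a separate argument (the paper reduces it to the Fulton--MacPherson space via a morphism of compactifications). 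Your $1/|\Aut(\Gamma)|$ weights and the $\hbar$-filtration are also unnecessary in the paper's setup: labelled ribbon graphs have no automorphisms, and for fixed $(\ell,g)$ and fixed leaf numbers the graph sum is finite, so convergence is handled by the completion $\wh E_\ell C$ rather than by a formal parameter.

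Second, your identification of the twisted homology via an $\IBL_\infty$-morphism $F$ out of $B^{\text{\rm cyc}*}\Om^*(M)[2-n]$ cannot work as stated: the intersection pairing on $\Om^*(M)$ is nondegenerate but not perfect, so $\Om^*(M)$ is not a cyclic DGA and the canonical $\dIBL$ structure on its dual cyclic bar complex (whose operations $\fp_{2,1,0}$, $\fp_{1,2,0}$ require inserting a finite sum $\sum_a e_a\otimes e^a$, i.e.\ a Poincar\'e dual of the diagonal inside $\Om\otimes\Om$) does not exist --- this is precisely why the paper cannot push forward a canonical Maurer--Cartan element and has to define $\fm$ directly by integrals. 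The paper instead identifies the twisted differential at the $(1,1,0)$ level: the component $\fm_{1,0}$ is the $A_\infty$-structure on $\HH$ obtained by homotopy transfer from the DGA $\Om^*(M)$, the twisted differential $\fp^{\fm}_{1,1,0}=\fp_{2,1,0}(\fm_{1,0},\cdot)$ is the dual Hochschild differential of that $A_\infty$-structure, and invariance of Connes cyclic (co)homology under $A_\infty$ quasi-isomorphisms then gives $H_*(B^{\rm cyc*}\HH,\fp^\fm_{1,1,0})\cong HC_\lambda^*(\Om^*(M))$. If you want to keep your morphism-based picture, you must replace it by this chain-level $A_\infty$/cyclic-complex argument rather than an $\IBL_\infty$-morphism from a structure that is not defined.
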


The construction of this Maurer-Cartan element depends on the choice
of a complement $\HH$ of $\im d$
in $\ker d$ and a special Hodge propagator (see~\S\ref{ss:blowupprop}).
Our second result concerns the independence of these choices.

\begin{thm}\label{thm:uniqueness-intro}
The Maurer-Cartan element in Theorem~\ref{thm:existence-intro} is
independent of choices up to $\IBL_\infty$ gauge equivalence. In
particular, the associated twisted $\IBL_\infty$ structure is
independent of choices up to $\IBL_\infty$ homotopy equivalence.
\end{thm}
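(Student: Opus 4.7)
The plan is to reduce the statement to a family version of Theorem~\ref{thm:existence-intro} and then invoke the standard principle in $\IBL_\infty$ homotopy theory that a Maurer-Cartan element defined over the de Rham complex of the interval $I=[0,1]$ induces an $\IBL_\infty$ gauge equivalence between its endpoints.

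First I would check that the space of admissible pairs $(\HH,P)$, with $\HH\subset\ker d$ a complement of $\im d$ and $P$ a special Hodge propagator compatible with $\HH$ (see~\S\ref{ss:blowupprop}), is path-connected. The space of complements is affine and hence contractible; for propagators it suffices that any two admissible $P_0,P_1$ can be joined by a smooth family $P_t$, $t\in I$, satisfying the blowup and Hodge conditions uniformly in $t$. Given two choices $(\HH_0,P_0)$ and $(\HH_1,P_1)$, I would select such a smooth path $(\HH_t,P_t)_{t\in I}$.

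Next I would build a family Maurer-Cartan element
\[
\wt\fm\in B^{\text{\rm cyc}*}H^*_{dR}(M)[2-n]\otimes\Om^*(I)
\]
whose restrictions to $t=0$ and $t=1$ agree with the static Maurer-Cartan elements produced by Theorem~\ref{thm:existence-intro} for the respective choices. The natural construction is to repeat the configuration-space-integral recipe underlying Theorem~\ref{thm:existence-intro} with $(P_t,\HH_t)$ in place of $(P,\HH)$ and push forward along the projection to $I$, so that the resulting forms pick up $dt$-components from fiber integration in the $t$-direction. Once $\wt\fm$ is known to satisfy the family Maurer-Cartan equation, a gauge equivalence $\fm_0\rightsquigarrow\fm_1$ is obtained by the standard ODE whose time-dependent generator is the $dt$-component of $\wt\fm$, integrated from $0$ to $1$. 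The second assertion on twisted $\IBL_\infty$ structures then follows from the general fact, proved in the framework of~\cite{Cieliebak-Fukaya-Latschev}, that gauge equivalent Maurer-Cartan elements induce $\IBL_\infty$ homotopy equivalent twisted structures.

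The main obstacle will be verifying the family Maurer-Cartan equation for $\wt\fm$. In the static setting this equation is proved by Stokes' theorem on the compactified configuration spaces of points on $M$ associated to trivalent ribbon graphs, with codimension-one boundary strata from collisions of points encoding the bracket and cobracket compositions that assemble into the $\IBL$ relations. In the family version one must account for two new sources of boundary contributions: the endpoints $\p I$, which should recover the static Maurer-Cartan elements, and the fiberwise-projection boundary, which should yield $d_I\wt\fm$. Controlling these requires a parametric extension of the blowup analysis underlying the special Hodge propagator of~\S\ref{ss:blowupprop}, in particular showing that the Hodge conditions on $P_t$ are preserved globally in $t$ and that no unwanted strata appear in the family Fulton-MacPherson-type compactification. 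Once this parametric analysis is in place, the family Maurer-Cartan equation follows by the same Stokes argument as in the static case, and the rest of the proof is formal.
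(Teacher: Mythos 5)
Your overall strategy --- join the two choices by a smooth path, produce a path of Maurer--Cartan elements together with a $dt$-component, and read off the gauge equivalence from the resulting ODE --- is essentially the criterion the paper itself extracts from the Weyl-algebra formalism (Lemma~\ref{lem:gaugeIBLinfty}, equation~\eqref{eq:gaugealg}), so the shape of the argument is right. The gap is that the two steps you treat as routine are where the actual content lies, and one of them fails as stated. You cannot simply ``select a smooth path $(\HH_t,P_t)$'' joining the two given special propagators: linear interpolation of propagators is not in general a propagator for any harmonic subspace (the condition $d\wt G_t=e_{at}\times e^a_t$ is not affine in $t$), and the family one can actually construct --- via the Cauchy problem~\eqref{eq:diffeq}, with primitives $d_t^{-1}$ taken in a complement $C_t\perp\HH_t$ --- does not terminate at the prescribed $P_1$. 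Bridging from the endpoint of that family to the given special propagators is exactly where the paper needs the notion of vanishing bilinear form (special propagators have it, it is preserved along the Cauchy family by Lemma~\ref{lem:vanishbilin}, and two propagators for the same $\HH$ with vanishing bilinear form differ by an exact form, Lemma~\ref{lem:vanishexact}), together with a second, separate gauge-equivalence argument for propagators differing by $d\mu$ (\S\ref{ss:case2}). Your outline supplies neither the connectivity statement in the admissible class nor this comparison mechanism, so the path you start from need not exist with the required endpoints.

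Second, the family Maurer--Cartan equation is not ``the same Stokes argument as in the static case''. For the gauge criterion one must exhibit the $dt$-component as concrete graph integrals --- insertions of $d_t^{-1}\ddt\alpha_t$ at leaves in the Cauchy-problem case, insertions of $\mu$ at an edge in the exact-difference case --- and verifying~\eqref{eq:gaugealg} uses the precise form of $\ddt\wt G_t$ as the chosen primitive of $\ddt(e_{at}\times e^a_t)$ plus the orthogonality $\HH_t\perp C_t$ (which is arranged through an auxiliary special propagator). For a generic path of propagators, $\ddt\wt G_t$ contains a closed non-exact part whose insertions do not assemble into $\wh\fp_{2,1,0}$ and $\wh\fp_{1,2,0}$ applied to any legitimate $\bbb_t$, which is precisely what the vanishing-bilinear-form hypothesis rules out. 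Moreover, in the exact-difference computation the hidden-face contributions do not vanish face by face as in Theorem~\ref{thm:key-vanish}; they only cancel after summing over the edge positions of the $\mu$-insertion (\S\ref{ss:gaugevan}), a subtlety your ``no unwanted strata appear'' remark does not address.
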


\begin{remark}
A result very similar to Theorem~\ref{thm:existence-intro} has been
obtained previously 
and independently by Naef and Willwacher in~\cite{Naef-Willwacher}. 
Their approach builds for simply connected $M$ on a finite
dimensional Poincar\'e duality model for $\Om^*(M)$ provided by
Lambrechts and Stanley~\cite{Lambrechts-Stanley}, and for non-simply
connected $M$ on a dgca model for the configuration space of points on
$M$ constructed by Campos and Willwacher~\cite{Campos-Willwacher}. 
By contrast, our approach is more analytic, working directly with
configuration space integrals in the smooth setting and thereby
establishing connections to perturbative Chern-Simons theory as
discussed below. 
The article~\cite{Naef-Willwacher} does not contain an
invariance statement as in Theorem~\ref{thm:uniqueness-intro}.
It is shown in~\cite{Cieliebak-Hajek-Volkov} that, under some
additional hypotheses, the $\IBL_\infty$ structures 
in~\cite{Naef-Willwacher} and in Theorem~\ref{thm:existence-intro} are
homotopy equivalent. 
\end{remark}

\begin{rem}
Uniqueness of the Maurer-Cartan element up to $\IBL_\infty$ gauge
equivalence is the strongest possible kind of invariance. We do not
know, however, whether this is actually stronger than uniqueness of
the twisted $\IBL_\infty$ structure up to $\IBL_\infty$ homotopy equivalence.
\end{rem}

{\bf Relation to string topology. }
Let $M$ be a connected oriented closed manifold,
let $\Lambda=C^\infty(S^1,M)$ denote the free loop space,
$\Lambda_0\subset\Lambda$ the space of constant loops, and 
$q_0\in M\cong \Lambda_0$ a basepoint.
It is proved in~\cite{Cieliebak-Volkov-cyc} that Chen's iterated
integrals~\cite{Chen73,Chen77} induce a map 
\begin{equation}\label{eq:chen}
  \bar J_{\lambda}:H^{S^1}_*(\Lambda,q_0)
  \longrightarrow\overline{HC}^*_\lambda(\Om^*(M))
\end{equation}
between the reduced $S^1$-equivariant homology of $\Lambda$ and the
reduced Connes cyclic homology of $\Om^*(M)$,  
which is an isomorphism for $M$ simply connected. 

On the other hand, Chas and Sullivan have defined in~\cite{Chas-Sullivan04} 
a string bracket and cobracket giving an $\IBL$ structure on
$H_*^{S^1}(\Lambda,\Lambda_0)$.
It is proved in~\cite{Cieliebak-Volkov-stringtop} that this
canonically extends to an $\IBL$ structure on
$H^{S^1}_*(\Lambda,q_0)$, which is intertwined under the 
Chen map~\eqref{eq:chen} with the $\IBL$ structure on
$\overline{HC}^*_\lambda(\Om^*(M))$ induced from the twisted
$\IBL_\infty$ structure in Theorem~\ref{thm:existence-intro}. 
See also~\cite{Naef-Willwacher}.
Hence, in the simply connected case, the twisted $\IBL_\infty$ structure
in Theorem~\ref{thm:existence-intro} is indeed ``chain level string topology''
in the sense that its induced structure on homology agrees with the
one from string topology. 
Note that Theorems~\ref{thm:existence-intro} and~\ref{thm:uniqueness-intro} 
provide a canonical twisted $\IBL_\infty$-structure also in the
non-simply connected case; it would be interesting to explore its
relation to string topology and its implications for manifold topology.


\begin{rem}
The question on which version of loop space homology string topology
operations can be defined is surprisingly subtle. In the
non-equivariant case, $H_*(\Lambda,\chi(M)\cdot q_0)$ turns out to be the
unique space on which both the loop product and coproduct can be
defined~\cite{Cieliebak-Hingston-Oancea-PD}. 
By contrast, in the $S^1$-equivariant case, the string bracket and
cobracket can be defined on any space 
between $H_*^{S^1}(\Lambda,\chi(M)\cdot q_0)$ and
$H_*^{S^1}(\Lambda,\Lambda_0)$; see~\cite{Cieliebak-Volkov-stringtop}.
\end{rem}

{\bf Relation to Chern-Simons theory. }
The proofs of the above theorems involve constructions similar to
those in perturbative Chern-Simons theory, which we will now describe. 

To prove Theorem~\ref{thm:existence-intro}, we pick a 
complement $\HH$ of $\im d$ in $\ker d$ and a special Hodge
propagator (see~\S\ref{ss:blowupprop}).  
The integral kernel of the Hodge propagator is a smooth $(n-1)$-form
$G$ on $(M\times M)\setminus\Delta_2$ which blows up along the diagonal
$\Delta_2\subset M\times M$.
We call $\HH\subset\Om^*(M)$ the space of {\em harmonic forms}. 
Consider now a trivalent ribbon graph $\Gamma$, with associated ribbon
surface $\Sigma_\Gamma$ of genus $g\geq 0$ with $\ell\geq 1$ boundary components,
such that $\Gamma$ has $s_b\geq 1$ leaves on the $b$-th boundary loop
for $b=1,\dots,\ell$. Given an ordered collection $\alpha=\{\alpha^b_j\}$ of
harmonic forms $\alpha^b_j\in\HH$ for $b=1,\dots,\ell$ and
$j=1,\dots,s_b$ we define a real number
\begin{equation}\label{eq:MC-intro}
  \fm_\Gamma(\alpha) := \pm 
  \int_{M^k\setminus\Delta}G^e\wedge\prod_{b,j}\alpha^b_j.
\end{equation}
Here $k$ and $e$ are the numbers of vertices and edges of
$\Gamma$, respectively, $M^k\setminus\Delta$ is
the configuration space of $k$ distinct points on $M$, and we
assign factors of $G$ to the edges and $\alpha^b_j$ to the leaves. 
See Figure~\ref{fig:insertforms}.
\begin{figure}
\begin{center}
\includegraphics[width=\textwidth]{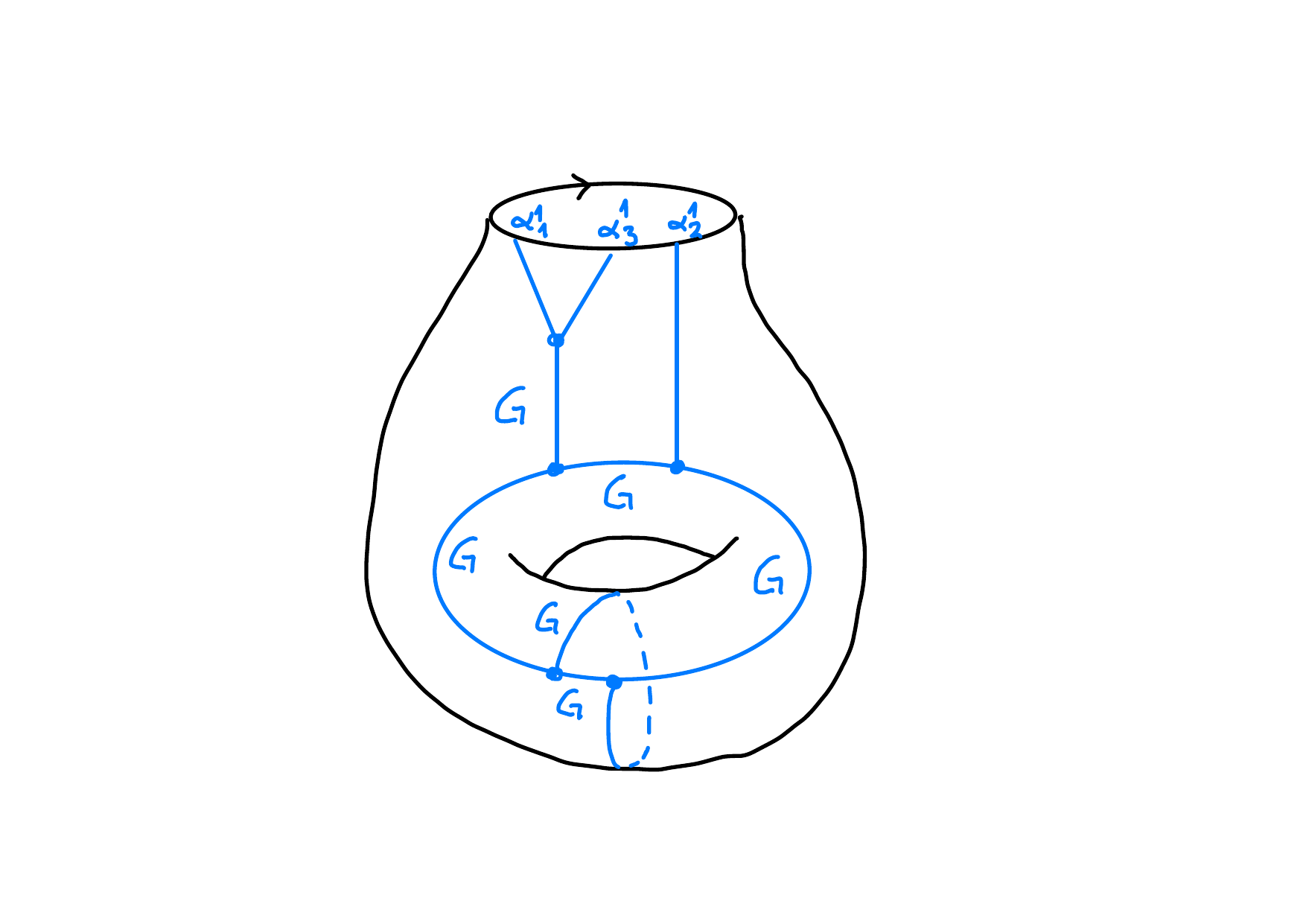}
\vspace{-1.5cm}
\caption{Definition of the configuration space integral $\fm_\Gamma(\alpha)$}
\label{fig:insertforms} 
\end{center}
\end{figure}
Then the sums over isomorphism classes of trivalent ribbon graphs
$$
  \fm_{g,\ell} := \frac{1}{\ell!} \sum_{\Gamma\in\RR_{\ell,g}^3}\fm_{\Gamma}\in 
   B^{\cyc *}\HH[3-n]^{\otimes\ell}
$$
define the Maurer-Cartan element $\m=\{\fm_{g,\ell}\}$. 
To make this argument rigorous, we need to resolve the following
technical difficulties.

{\em Existence of the integrals~\eqref{eq:MC-intro}. } 
For this, we choose $G$ so that it extends smoothly to the oriented
real blow-up of $M\times M$ along the diagonal, and we
replace $M^k\setminus\Delta$ by a suitable compactified configuration
space $\CC_\Gamma$ on which the form $G^e\wedge\prod_{b,j}\alpha^b_j$
becomes continuous and thus integrable.
Our compactification is closely related to the ones by Bott and
Taubes~\cite{Bott-Taubes} and by Fulton and
MacPherson~\cite{Fulton-MacPherson}. 

{\em Signs in~\eqref{eq:MC-intro}. }
Finding the correct signs in~\eqref{eq:MC-intro} turned out to be the
hardest part of this project. We will define the signs and prove
their basic invariance properties in~\S\ref{sec:opforms},
and carry out the proof in reasonable detail in~\S\ref{sec:proofMC}, 
while for some lengthy sign computations (only relevant for even $n$)
we refer to~\cite{Volkov-thesis}.

{\em Proof of the Maurer-Cartan equation. }
Although the compactified configuration space $\CC_\Gamma$ is not a
manifold with corners, Stokes' theorem still holds on it
(see~\S\ref{sec:blow-up} and~\cite{Cieliebak-Volkov-stringtop}).   
By a duality argument, this yields the Maurer-Cartan equation, except
for possible error terms coming from additional boundary components
(``hidden faces'') of $\CC_\Gamma$. We prove
in~\cite{Cieliebak-Volkov-stringtop} (and recall here
in~\S\ref{ss:CGamma}) that the integrals over hidden faces vanish, by a symmetry
argument similar to the one in~\cite[Lemma 2.2]{Kontsevich-feynman}. 

The proof of Theorem~\ref{thm:uniqueness-intro} uses some ideas from
symplectic field theory. In~\S\ref{ss:gaugegen}, we derive from the
differential Weyl algebra formalism in~\cite{EGH} a criterion for
gauge equivalence of two Maurer-Cartan elements $\fm_0,\fm_1$. This
criterion (Lemma~\ref{lem:gaugeIBLinfty}) involves a smooth path
$(\fm_t)_{t\in[0,1]}$ of Maurer-Cartan elements connecting $\fm_0,\fm_1$
and a smooth path of ``primitives'' $(\bbb_t)_{t\in[0,1]}$. 
In~\S\ref{ss:gaugegen}, we use suitable integrals over configuration
spaces from~\S\ref{sec:opforms} to define the paths $\fm_t,\bbb_t$ and
prove that they define a gauge equivalence.  

\smallskip

The integrals over configuration spaces in~\eqref{eq:MC-intro} are of
the same type as those appearing in the perturbative expansion of the
Chern-Simons path integral associated to a closed oriented
$3$-manifold~\cite{Witten-jones,Bar-Natan95,Sawon,Axelrod-Singer-II,Cattaneo-Mnev}.
In view of this, O.\,Gwilliam has suggested that our theory should
correspond to the large $N$ limit of $U(N)$ Chern-Simons theory as in
the article by Ginot, Gwilliam, Hamilton
and Zeinalian~\cite{Ginot-Gwilliam-Hamilton-Zeinalian}.
We discuss the relation of our configuration space integrals to physical
Chern-Simons theory in more detail in~\S\ref{sec:physics}.

\bigskip

  









\centerline{\bf Acknowledgements}

This project greatly benefited from discussions with many people
including
Alberto Cattaneo,
Tobias Ekholm,
Kenji Fukaya,
Owen Gwilliam,
Pavel H\'ajek,
Branislav Jur\v{c}o,
Janko Latschev,
Pavel Mn\"{e}v,
Ivo Sachs,
Vivek Shende,
and Bruno Vallette.

\section{Ribbon graphs}\label{sec:graphs}

In this section we introduce the notions of ribbon graphs and their
(extended) labellings as they are needed in this paper. Moreover, 
we discuss the disjoint union, cutting, and gluing of ribbon graphs,
as well as the duality operation on trivalent ribbon graphs. 

\subsection{Basic definitions}\label{ss:ribbonbasicdef}

In this paper, by a {\em ribbon graph} (or simply a {\em graph}) we
mean a finite 
graph $\Gamma$ 
with a cyclic ordering of the adjacent edges at each vertex and with
some degree $1$ vertices removed.\footnote{
In contrast to~\cite{Cieliebak-Fukaya-Latschev} we do not require $\Gamma$ to be connected.}
We assume that at most one vertex is removed from each edge, and we
will refer to the edges with a vertex removed not as edges but as {\em
  leaves}, so that an {\em edge} still ends in two vertices.
We denote by $d_v$ the {\em degree} of a vertex, i.e.~the number of
edges or leaves adjacent to $v$.
We denote the sets of vertices, edges and leaves of $\Gamma$ by $\Ver(\Gamma)$,
$\Edge(\Gamma)$ and $\Leaf(\Gamma)$ and their cardinalities by
$$
   k=|\Ver(\Gamma)|,\qquad e=|\Edge(\Gamma)|,\qquad s=|\Leaf(\Gamma)|.
$$
The ribbon graph $\Gamma$ can be thickened in a unique way 
(up to orientation preserving homeomorphism)
to a compact oriented surface $\Sigma_\Gamma$ with boundary such that
each leaf ends on the boundary $\p\Sigma_\Gamma$.
We require that $\Gamma$ has at least one vertex and
satisfies the following condition:
\begin{equation}\label{eq:one-boundary-vertex}
\text{Each boundary component of $\Sigma_\Gamma$ has at least one leaf
  ending on it.}
\end{equation}
Note that the boundary components of $\Sigma_\Gamma$ induce additional
structure on the set of leaves: it gets subdivided into
subsets according to the boundary components, and each subset obtains
a cyclic order according to the boundary orientation. 
We denote by $s_b$ the number of leaves ending on boundary component
$b$, so that
$$
  s = s_1+\dots+s_\ell.
$$
The {\em signature} of $\Gamma$ is $(k,\ell,g)$ where
$k=|\Ver(\Gamma)|$, $\ell$ is the number of boundary
components $\Sigma_\Gamma$, and $g$ is its genus defined in terms of
the Euler characteristic by
\begin{equation}\label{eq:chi}
   k-e = \chi(\Gamma) = \chi(\Sigma_\Gamma) = 2-2g-\ell. 
\end{equation}
A {\em flag} in $\Gamma$ is a pair $(v,t)$ consisting of a 
vertex $v$ and an adjacent 
edge or leaf $t$. 
A flag $(v,t)$ is called {\em interior} if $t$ is an edge, and {\em
  exterior} if $t$ is a leaf. 
We denote the set of flags of $\Gamma$ by
$$
  \Flag(\Gamma) = \Flag_\inn(\Gamma)\amalg \Flag_\ext(\Gamma).
$$
For our purposes it will be
convenient to describe a ribbon graph in terms of its flags: a leaf
corresponds to a single flag $x$, an edge corresponds to an
unordered pair $\{x,y\}$ of flags, and a vertex corresponds to a cyclically ordered
tuple $[x_1,\dots,x_d]$ of flags. An oriented edge corresponds to an
ordered pair $(x,y)$, and a vertex with an ordering of the
adjacent edges corresponds to an ordered tuple $(x_1,\dots,x_d)$. 

An {\em isomorphism} $\Gamma\to\Gamma'$ between ribbon graphs is a
bijection $\Flag(\Gamma)\to \Flag(\Gamma')$ mapping leaves to leaves,
vertices to vertices, edges to edges, and preserving the cyclic orderings at the
vertices. It induces a homeomorphism $\Sigma_{\Gamma}\to\Sigma_{\Gamma'}$.
Note that a ribbon graph may have nontrivial {\em automorphisms}
(i.e.~self-isomorphisms). These will disappear when we consider
labelled graphs in the next subsection. 

{\bf Trivalent ribbon graphs. }
Of particular importance in this paper will be ribbon graphs that are
{\em trivalent}, i.e., each vertex has degree $3$. Note that in this
case condition~\eqref{eq:one-boundary-vertex} rules out self-loops,
i.e.~edges with both ends on the same vertex. Trivalency of $\Gamma$ 
gives $2e+s=3k$, and eliminating $e$ from this equation
and~\eqref{eq:chi} expresses $k$ in terms of $\ell,g,s$ as
\begin{equation}\label{eq:k}
   k = 2(2g+\ell-2)+s.
\end{equation}
We will call $(\ell,g)$ the {\em type} of the trivalent ribbon graph $\Gamma$.

{\bf Marked ribbon graphs. }
We say that a connected ribbon graph $\Gamma$ is {\em marked} if an
edge is chosen, and {\em o-marked} if the marked edge is in addition
oriented. A (o-)marked graph will be denoted by $\wh\Gamma=(\Gamma,l)$ with $l$
the (oriented) marked edge.

\subsection{Labellings, edge and vertex orders}\label{ss:basiccomb}


\begin{definition}\label{def:labelling}
Consider the following additional data on a ribbon graph $\Gamma$ of
signature $(k,\ell,g)$:
\begin{enumerate}
\item a numbering of the boundary components of $\Sigma_\Gamma$ by
   $1,\dots,\ell$; 
\item a numbering of the leaves ending on the $b$-th boundary
  component by $1,\dots,s_b$ compatible with the cyclic order given by the orientation;
\item a numbering of the vertices by $1,\dots,k$; 
\item a numbering of the flags at each vertex compatible with the
  cyclic order given by the orientation; 
\item a numbering of the edges by $1,\dots,e$;
\item an orientation of each edge.
\end{enumerate}
We call the data (i) and (ii) a {\em labelling}, the additional data
(iii) -- (vi) an {\em extension of the labelling}, and all data
together an {\em extended labelling}. 
Note that a labelling is uniquely determined by an ordered set of
$\ell$ leaves, the first ones on their boundary components. 
\end{definition}

\begin{remark}\label{rem:labelling}
This notion of a labelling differs from the one
in~\cite{Cieliebak-Fukaya-Latschev} where a ``labelling'' comprises items
(i)--(iv). The reason for this is that the operations associated to
ribbon graphs in~\cite{Cieliebak-Fukaya-Latschev} require a choice of
items (i)--(iv), whereas the operations in this paper require only a choice of items (i)--(ii). 
Given $k,\ell\geq 1$ and $g\geq 0$,  
we denote by $RG_{k,\ell,g}$ the set of isomorphism classes of connected
ribbon graphs of signature $(k,\ell,g)$ together with choices of items
(i)--(iv) in Definition~\ref{def:labelling}.
\end{remark}

An {\em isomorphism} $\Gamma\to\Gamma'$ between labelled ribbon graphs
is an isomorphism of ribbon graphs which preserves the labellings. In
other words, the induced homeomorphism $\Sigma_{\Gamma}\to\Sigma_{\Gamma'}$ 
matches the numberings of the boundary components and leaves. 

\begin{lemma}\label{lem:no-auto}
If an automorphism of a connected ribbon graph fixes at least one flag,
then it must be the identity.
\end{lemma}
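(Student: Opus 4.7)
The plan is to argue that a fixed flag forces every flag at the incident vertex to be fixed, and then propagate this to all of $\Gamma$ using connectedness via the edges.

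First I would observe the local statement: if $\phi$ fixes a flag $(v,t)$, then $\phi(v)=v$, and $\phi$ restricts to a bijection of the set of flags at $v$ that preserves the cyclic order. Any cyclic-order-preserving bijection of a finite cyclically ordered set of size $d_v$ is a rotation, and a rotation with a fixed point must be the identity. Consequently $\phi$ fixes every flag at $v$, both interior and exterior.

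Next I would propagate along edges. Suppose $\phi$ fixes all flags at a vertex $v$, and let $(v,e)$ be an interior flag at $v$. The edge $e$ corresponds to the unordered pair $\{(v,e),(v',e)\}$ of its two flags; since $\phi$ permutes edges and fixes one of these flags, it must also fix the other. Hence $\phi(v')=v'$ and $\phi$ fixes a flag at $v'$, so by the local step $\phi$ fixes every flag at $v'$. Let $F\subset\Ver(\Gamma)$ be the set of vertices all of whose flags are fixed by $\phi$. The hypothesis together with the local step gives $v_0\in F$ where $(v_0,t_0)$ is the given fixed flag, and the edge-propagation step shows that $F$ is closed under passing along any edge. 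Since $\Gamma$ is connected, $F=\Ver(\Gamma)$.

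Finally, every flag of $\Gamma$ is a flag at some vertex (this is true for both interior flags, which come in pairs at the two endpoints of an edge, and exterior flags, which are attached to the unique vertex of the corresponding leaf). Therefore $\phi$ fixes every element of $\Flag(\Gamma)$, and since a ribbon graph is determined by its set of flags together with the incidence and cyclic-order data that $\phi$ preserves by assumption, this means $\phi=\id$. No step appears to present a real obstacle; the only subtle point is the local cyclic-order observation, which I would spell out explicitly, and the use of connectedness which makes essential use of the standing hypothesis.
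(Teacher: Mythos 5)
Your proof is correct and follows essentially the same route as the paper: the local observation that a fixed flag forces all flags at its vertex to be fixed (since a cyclic-order-preserving bijection with a fixed point is trivial), the edge-propagation step, and connectedness. The only difference is that you spell out the rotation argument and the connectedness bookkeeping slightly more explicitly than the paper does.
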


\begin{proof}
Let $\phi$ be an automorphism of a connected ribbon 
graph $\Gamma$ that fixes a flag. To show that $\phi$ is the identity, 
we make the following two observations about the action of 
$\phi$ on $\Gamma$.
\begin{itemize}
 \item [(V)] Let $[x_1,\dots,x_d]$
 be a vertex of $\Gamma$ and assume that 
 $\phi(x_{j_0})=x_{j_0}$ for some 
 $j_0\in\{1,\dots,d\}$. Then 
 $\phi(x_j)=x_j$ for all 
 $j\in\{1,\dots,d\}$.
\end{itemize}
To see this, note that as an isomorphism of a {\em ribbon} graph
$\phi$ must preserve the ribbon structure of $\Gamma$, 
and in particular the cyclic order in $[x_1,\dots,x_d]$. Therefore, fixing 
one of these flags implies fixing all of them.
\begin{itemize}
 \item [(E)] Let $\{x,y\}$ be an edge
 and assume that $\phi(x)=x$. Then $\phi(y)=y$.
\end{itemize}
To see this note that $\{\phi(x),\phi(y)\}$ is also an edge 
of $\Gamma$. Since one flag of an edge 
uniquely determines the other one 
and $\phi(x)=x$, we get $\phi(y)=y$.

Observations (V) and (E) together with connectedness of $\Gamma$ imply the 
desired conclusion.
\end{proof}

\begin{lemma}\label{lem:no-auto1}
A labelled ribbon graph
has no nontrivial automorphisms. 
\end{lemma}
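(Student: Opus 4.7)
The plan is to reduce the claim to the previous lemma (Lemma~\ref{lem:no-auto}) applied componentwise. Let $\phi$ be an automorphism of a labelled ribbon graph $\Gamma$, i.e.~an automorphism of the underlying ribbon graph which respects the numberings~(i) and~(ii) in Definition~\ref{def:labelling}.

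The first step is to argue that $\phi$ preserves each connected component of $\Gamma$. The connected components of $\Gamma$ are in bijection with those of the thickened surface $\Sigma_\Gamma$, and each boundary circle of $\Sigma_\Gamma$ lies in exactly one connected component. Since $\phi$ preserves the numbering of the boundary components of $\Sigma_\Gamma$, it in particular preserves each individual boundary circle, and therefore it sends each connected component of $\Gamma$ to itself.

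The second step is to produce a fixed flag in each connected component. By assumption~\eqref{eq:one-boundary-vertex}, every boundary component carries at least one leaf; in particular, on each component there is at least one numbered boundary, and on it at least one numbered leaf (the leaf with label $1$). Because $\phi$ preserves both the numbering of boundaries and the numbering of leaves along each boundary, $\phi$ fixes the unique flag of that distinguished leaf. Applying Lemma~\ref{lem:no-auto} to each connected component, we conclude that $\phi$ restricted to every connected component is the identity, and hence $\phi=\id$.

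The argument is essentially bookkeeping; there is no serious obstacle, the only mildly subtle point being the componentwise reduction. One should just take care to remember that in this paper ribbon graphs are not assumed to be connected, so one cannot apply Lemma~\ref{lem:no-auto} directly to $\Gamma$, but only after the first step above has shown that $\phi$ preserves each component. The hypothesis~\eqref{eq:one-boundary-vertex} is precisely what guarantees the existence of the distinguished fixed flag needed on every component.
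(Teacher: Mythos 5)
Your argument is correct and matches the paper's own proof: the labelling forces every boundary component, hence every leaf and every exterior flag, to be fixed, and Lemma~\ref{lem:no-auto} is then applied to each connected component (which has a fixed exterior flag thanks to condition~\eqref{eq:one-boundary-vertex}). Your componentwise reduction is just a slightly more explicit rendering of the same two-line argument.
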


\begin{proof}
Since an automorphism of a labelled ribbon graph respects the
labelling, it must map each boundary component of the graph to itself
and fix all exterior flags of the graph. Lemma~\ref{lem:no-auto1}
thus follows by applying Lemma~\ref{lem:no-auto} to all connected 
components of the labelled ribbon graph in question. 
\end{proof}

For a ribbon graph $\Gamma$, an extended labelling gives us two bijective maps  
$$ 
   O_e:\{1,\dots,2e,\dots,|\Flag(\Gamma)|\}\longrightarrow
   \Flag(\Gamma),\quad O_v:\{1,\dots,
   |\Flag(\Gamma)|\}\longrightarrow \Flag(\Gamma). 
$$
The first one is called the {\em edge order} on $\Flag(\Gamma)$; it is
determined by items (i), (ii), (v) and (vi) of the extended labelling,
mapping the numbers $1,\dots,2e$ to the flags corresponding to edges
according to (v) and (vi), and the remaining numbers to the leaves
according to (i) and (ii). 
The second one is called the {\em vertex order}; it is determined by
items (iii) and (iv), numbering the flags in the order (iii) of vertices
and using the ordering (iv) at each vertex.   
By composition we obtain the {\em reordering permutation}
\begin{equation}\label{eq:reod}
   \bar R_\Gamma:=O_v^{-1}\circ O_e:\{1,\dots,|\Flag(\Gamma)|\}\longrightarrow 
   \{1,\dots,|\Flag(\Gamma)|\}.
\end{equation}
The map $\bar R_\Gamma$ behaves as follows under changes of the
extended labelling of $\Gamma$. A change in (i), (ii), (v) and (vi) leads to  
precomposition of the edge order $O_e$ with some permutation 
$\eta\in S_{|\Flag(\Gamma)|}$, whereas a change in (iii) and (iv) leads to precomposition 
of the vertex order $O_v$ with a some permutation $\sigma^{-1}$, so
altogether $\bar R_\Gamma$ is replaced by $\sigma\circ \bar R_\Gamma\circ \eta$. 
We say that $\eta$ and $\sigma$ act on a graph $\Gamma$ with extended labelling 
and give us a graph $\sigma\Gamma\eta$.

\begin{definition}\label{def:R}
The set of isomorphism classes of labelled connected trivalent ribbon graphs
of genus $g\ge 0$ with $\ell\ge 1$ boundary components 
will be denoted by $\RR_{\ell,g}$.
We denote by $\RR^m_{\ell,g}$
(resp.~$\RR^{om}_{\ell,g}$) the set of isomorphism classes of labelled 
trivalent ribbon graphs with a marked (resp.~o-marked)
edge. Recall that marked graphs are by definition connected.
\end{definition}

In general we do not impose any relation between the labelling
and the o-marked edge. However, the presence of an o-marked edge
gives rise to a class of {\em special labellings} that we describe next.


{\bf Special labellings of marked ribbon graphs. }
Consider a labelled ribbon graph $\Gamma$ with an o-marked edge $l$.
The labelling is called {\em special} if it has the following
properties (see the right hand side of Figure~\ref{fig:cut-glue}):
\begin{itemize}
\item The boundary component that runs parallel to $l$ (in the
  direction of its orientation) on its left has number $1$ in the
  numbering of boundary components. The first leaf on this boundary
  component encountered when moving in the direction of the boundary
  orientation from the portion to the left of $l$ has number $1$ in
  the numbering of its leaves.
\item If the boundary component that runs parallel to $l$ on its right
  differs from the one on its left, then it has number $2$ in the
  numbering of boundary components. In this case, the first leaf on this boundary
  component encountered when moving in the direction of the boundary
  orientation from the portion to the right of $l$ has number $1$ in
  the numbering of its leaves. 
\item  If cutting $\Gamma$ along $l$ yields a disconnected
  graph $\Gamma_1\amalg\Gamma_2$, where $\Gamma_1$ contains the initial
  point of $l$ and $\Gamma_2$ its endpoint, then the boundary
  components of $\Gamma$ other than the one parallel to $l$ are
  numbered such that the ones on $\Gamma_1$ come before the ones on
  $\Gamma_2$. 
\end{itemize}
The set of isomorphism classes of marked trivalent graphs of type
$(\ell,g)$ with a special labelling will be denoted by 
$\RR_{\ell,g}^{oms}$, where the ``s'' in the superscript stands
for ``special labelling''.

{\bf The sign exponent $\eta_3$. }
Let $\Gamma$ be an extended labelled ribbon graph. We recall the basic
properties of the sign exponent $\eta_3(\Gamma)$ defined
in~\cite[Appendix A]{Cieliebak-Fukaya-Latschev} for connected
$\Gamma$. 
The sign exponent $\eta_3(\Gamma)$ does not depend on items (ii) and
(iv) in Definition~\ref{def:labelling}, and it changes by $1$ under
each of the following operations: 
\begin{itemize}
\item swapping the order of two adjacent boundary components in (i);
\item swapping the order of two adjacent vertices in (iii);
\item swapping the order of two adjacent edges in (v);
\item flipping the orientation of an edge in (vi).
\end{itemize}
For a disconnected graph $\Gamma=\Gamma_1\amalg\Gamma_2$
with connected $\Gamma_1$ and $\Gamma_2$ we define 
$$
  \eta_3(\Gamma):=\eta_3(\Gamma_1)+\eta_3(\Gamma_2)
$$
for the canonical disjoint union extended labelling of $\Gamma$, and we
extend the definition to all other extended labellings using the four
items above. 


\subsection{Operations on ribbon graphs}\label{ss:op-graphs}
Recall that by a ``graph'' we will always mean a ribbon graph as in~\S\ref{ss:ribbonbasicdef}.
We will describe three operations on such graphs: disjoint union, cutting, and gluing.

{\bf Disjoint union.}
Let $\Gamma_1$ and $\Gamma_2$ be two connected (extended) labelled graphs. Then the 
disjoint union $\Gamma_1\amalg\Gamma_2$ inherits a natural (extended)
labelling, putting $\Gamma_1$ before $\Gamma_2$ in all numberings. We
will assume this (extended) labelling on $\Gamma_1\amalg\Gamma_2$
unless otherwise specified. 

{\bf Cutting and gluing.}
Given a marked graph $(\Gamma,l)$ we define a new graph $\Gamma\setminus
l$ by cutting open the marked edge $l$. Formally, this just means that
we remove the corresponding unordered pair $l=\{u,v\}$ from the set of
edges, so the interior flags $u,v$ become exterior ones. Note
that the resulting graph $\Gamma\setminus l$ is not marked, but it has
a distinguished unordered pair of exterior flags $u,v$. 

Conversely, given a graph $\Gamma$ with a distinguished
unordered pair of exterior flags $u,v$ we define a new graph
$\Gamma\cup\{u,v\}$ by gluing $u$ and $v$ to a new edge.
Formally, this just means that we add the unordered pair $l:=\{u,v\}$ to
the set of edges, so the exterior flags $u,v$ become interior
ones. Note that the resulting graph $\Gamma\cup\{u,v\}$ has no more
distinguished unordered pair of exterior flags, by it is marked by the
interior edge $l=\{u,v\}$.
The resulting cutting and gluing operations
$$
   \{\text{marked graphs}\} \longleftrightarrow \{\text{graphs with
     a given unordered pair of exterior flags}\} 
$$
are clearly inverse to each other, and they induce operations
$$
   \{\text{o-marked graphs}\} \longleftrightarrow \{\text{graphs with
     a given ordered pair of exterior flags}\} 
$$
Note that cutting a graph $\Gamma$ of type $(\ell,g)$ increases its Euler
characteristic 
$$
\chi_{\ell,g}=2-2g-\ell
$$
by $1$ and therefore results
in a graph of type $(\ell-1,g)$ or $(\ell+1,g-1)$. 
In the first case two of the boundary components of 
$\wh\Gamma$ give rise to a boundary component of the cut graph.
The other boundary components of $\wh\Gamma$ are in natural
bijective correspondence with those of the cut graph.
The space of isomorphism classes of o-marked labelled trivalent graphs 
with this property will be denoted by $\RR_{\ell,g,12}^{om}$.
In the second case one boundary component of 
$\wh\Gamma$ 
gives rise to two boundary components of the cut graph and the others
are in natural bijective correspondence. 
This case splits into two subcases according to whether the cut graph
is connected or not. The corresponding spaces of isomorphism 
classes of o-marked labelled graphs are denoted by
$\RR_{\ell,g,1c}^{om}$ and $\RR_{\ell,g,1dc}^{om}$ respectively,
where $c$ stands for ``the cut graph is connected'' and 
dc stands for ``the cut graph is disconnected''.

The three kinds of o-marked labelled graphs are shown on the right
hand sides in Figure~\ref{fig:cut-glue}. So we have a decomposition
(an analogous one holds for $\RR_{\ell,g}^{m}$)
\begin{equation}\label{eq:R-disjoint-union}
   \RR_{\ell,g}^{om} = \RR_{\ell,g,1c}^{om}\amalg 
   \RR_{\ell,g,1dc}^{om}\amalg \RR_{\ell,g,12}^{om}.   
\end{equation}
\begin{figure}
\begin{center}
\includegraphics[width=\textwidth]{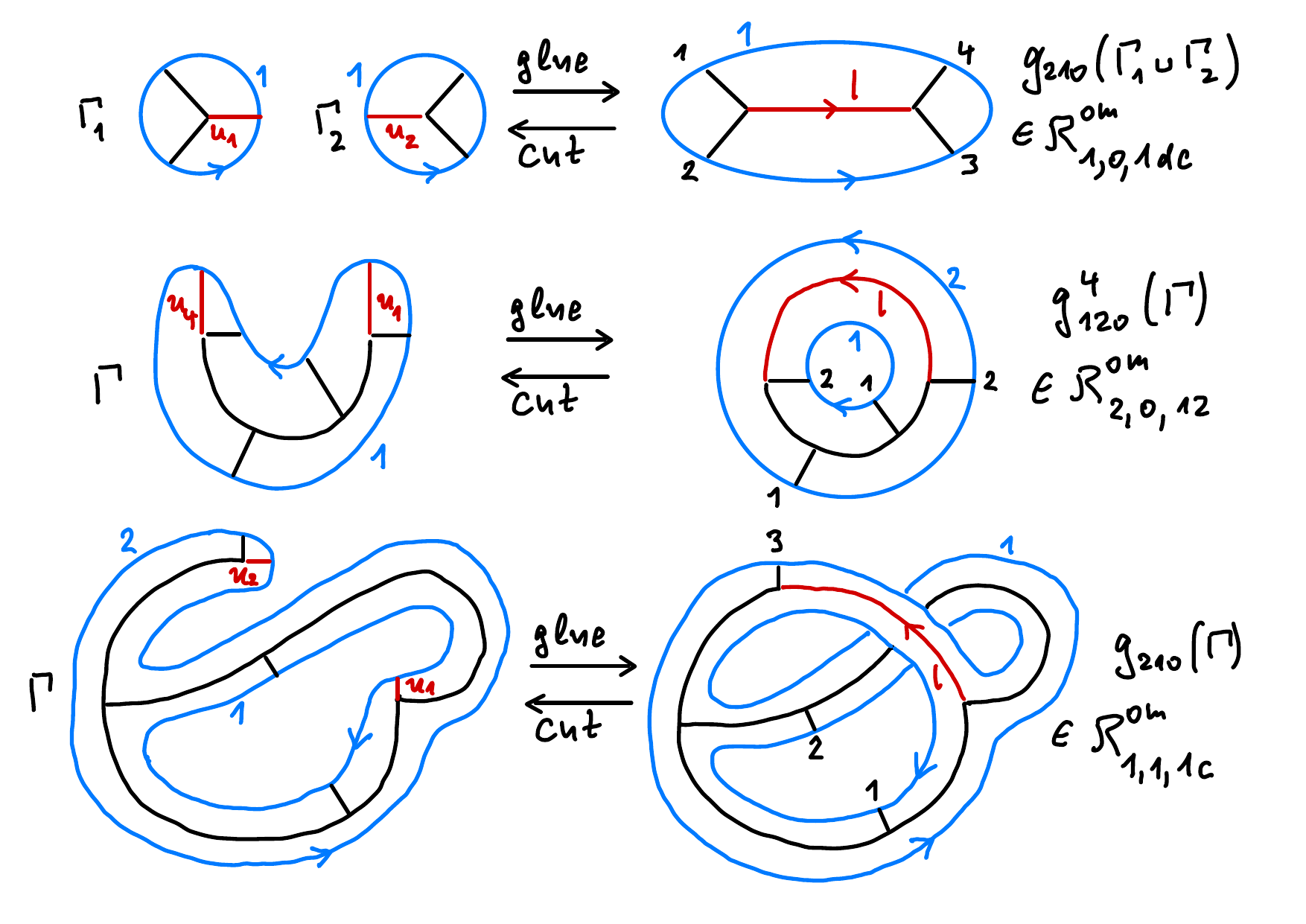}
\caption{Cutting and gluing}
\label{fig:cut-glue} 
\end{center}
\end{figure}

Let $\RR_{\ell_1,\ell_2,g_1,g_2,1dc}^{om}$ denote the subset of $\RR_{\ell,g,1dc}^{om}$
defined by the condition that the $i$-th component of the cut graph
has $\ell_i$ boundary components and genus $g_i$ (with $\ell_1+\ell_2=\ell+1$ and $g_1+g_2=g$).

Observe that 
\begin{equation}\label{eq:ell12ell}
 \coprod_{\substack{\ell_1+\ell_2=\ell+1 \\ g_1+g_2=g}}\RR_{\ell_1,\ell_2,g_1,g_2,1dc}^{om}=\RR_{\ell,g,1dc}^{om}.
\end{equation}
Next we discuss how (extended) labellings get transferred under these operations.
Consider a graph $\Gamma$ with a given ordered pair of
exterior flags $u,v$. Suppose we are given a labelling for
$\Gamma$ such that $u$ comes before $v$ in the ordering of the
exterior flags, and denote the difference of their positions by $|v-u|$.
Assume that $u,v$ lie either on the same boundary component (in which
case we require that $u,v$ are not adjacent in the cyclic order on
that component), or on adjacent ones in the ordering of boundary components.  
The glued graph $\Gamma\cup(u,v)$ inherits a labelling by requalifying
the flags $u$ and $v$ from ``exterior'' to ``interior'', and keeping
the ordering of the remaining exterior flags.  

Assume now that an extension of the labelling of $\Gamma$ is given. 
Then the labelling of the glued graph $\Gamma\cup(u,v)$ inherits an extension by
putting the new oriented edge $l=(u,v)$ in first position for the edge
order.
Note that the vertex order of flags remains the same under the gluing operation.
Since we must move $u$ past $|v-u|-1$ exterior flags to put it next to
$v$ in the edge order, the sign exponents of the reordering maps of
the two graphs are related by 
\begin{equation}\label{gluing-sign}
   \bar R_{\Gamma\cup(u,v)} \equiv \bar R_\Gamma + |v-u|-1.
\end{equation}

We will use the following three special cases of the gluing operation.
See Figure~\ref{fig:cut-glue}, where the numbers denote the positions
of the boundary components and the leaves on a boundary component
corresponding to the labelling.
In the following discussion the graphs $\Gamma$, $\Gamma_i$ are {\em
trivalent and connected} and equipped with an (extended) labelling. 

First, let $\Gamma$ be of type $(\ell,g)$ with $\ell\geq 2$.
Let $s_1,s_2$ be the numbers of exterior flags on the first
two boundary components and assume that $s_1+s_2\geq 3$. Let $u_1,u_2$
be the first exterior flags on the first two boundary components
and glue them to the o-marked labelled graph 
\begin{equation}\label{eq:g120}
  g_{210}(\Gamma):=\Gamma\cup(u_1,u_2).
\end{equation}
The set of isomorphism classes of labelled trivalent graphs 
that arise this way will be denoted by $\RR_{\ell-1,g+1,1c}^{oms}$,
where the ``s'' in the superscript again stands for ``special
labelling'' in the sense of \S\ref{ss:basiccomb}. 
In view of~\eqref{gluing-sign} the sign exponents of the reordering maps are related by
$$
   \bar R_{g_{210}(\Gamma)}\equiv\bar R_\Gamma+s_1-1.
$$
Next, let $\Gamma_1$ and $\Gamma_2$ be of types $(\ell_1,g_1)$ and
$(\ell_2,g_2)$, respectively.
Pick the disjoint union labelling on $\Gamma_1\amalg \Gamma_2$ and apply 
to it the shuffle permutation moving the first boundary component
of $\Gamma_2$ to the second position (next to the first boundary
component of $\Gamma_1$). As a result, the first two boundary
components of $\Gamma_1\amalg \Gamma_2$ now
correspond to the first boundary components of $\Gamma_1$ and $\Gamma_2$.
Assume that the numbers $s_1,s_2$ of exterior flags on these boundary
components satisfy $s_1+s_2\geq 3$ and define
$g_{210}(\Gamma_1\amalg\Gamma_2)$ as above. 
The set of isomorphism classes of labelled trivalent graphs 
that arise this way will be denoted by 
$\RR_{\ell,g,1dc}^{oms}$ with $\ell=\ell_1+\ell_2-1$ and $g=g_1+g_2$.
Define 
$$
\RR_{\ell_1,\ell_2,g_1,g_2,1dc}^{oms}:=
\RR_{\ell,g,1dc}^{oms}\cap \RR_{\ell_1,\ell_2,g_1,g_2,1dc}^{om}.
$$
The sign exponents of the reordering maps are again related by
$$
   \bar R_{g_{210}(\Gamma_1\amalg\Gamma_2)}\equiv \bar R_{\Gamma_1\amalg\Gamma_2}+s_1-1.
$$
Finally, let $\Gamma$ be of type $(\ell,g)$ 
with $s_1\geq 4$ exterior flags on the first boundary component. Let
$u_1,u_j$ be the first and $j$-th flag on the first boundary component
of $\Gamma$, for some $j\in \{3,\dots,s_1-1\}$, and glue them to the
o-marked labelled graph
$$
  g_{120}^j(\Gamma):=\Gamma\cup(u_1,u_2).
$$
Note that the first boundary component of $\Gamma$ gives rise to the
first two of $g_{120}^j(\Gamma)$. 
The set of isomorphism classes of trivalent labelled graphs 
that arise this way will be denoted by 
$\RR_{\ell+1,g,12}^{oms}$. 
In view of~\eqref{gluing-sign} the
sign exponents of the reordering maps are related by
$$
   \bar R_{g_{120}^j(\Gamma)}\equiv\bar R_\Gamma+j-1.
$$
\begin{definition}\label{def:Rlgs}
Let $(s_1,\dots,s_\ell)$
be an ordered set of $\ell$ positive integers. Then the (necessarily finite) subset 
$$
\RR_{\ell,g}(s_1,\dots,s_\ell)\subset 
\RR_{\ell,g}
$$
consists of isomorphism classes of connected trivalent ribbon graphs with
$s_b$ leaves on the $b$-th boundary component. A similar notation 
applies to other sets of graphs.
\end{definition}

With this notation, the gluing operation yields bijections (whose
inverse is always given by the cutting operation)
\begin{equation}\label{eq:glue1}
   g_{210}: \RR_{\ell+1,g-1}(s_1,\dots,s_{\ell+1})\stackrel{\cong}\longrightarrow
   \RR_{\ell,g,1c}^{oms}(s_1+s_2-2,\dots,s_{\ell+1}), 
\end{equation}
\begin{equation}\label{eq:glue2}
\begin{aligned}
   g_{210}: &
   \RR_{\ell_1,g_1}(s_1,\dots,s_{\ell_1})\times 
   \RR_{\ell_2,g_2}(t_1,\dots,t_{\ell_2}) \cr
   &\stackrel{\cong}\longrightarrow
   \RR_{\ell_1,\ell_2,g_1,g_2,1dc}^{oms} 
   (s_1+t_1-2,s_2,\dots,s_{\ell_1},t_2,\dots,t_{\ell_2}), 
\end{aligned}
\end{equation}
\begin{equation}\label{eq:glue3}
   g_{120}^j: \RR_{\ell-1,g}(s_1,\dots,s_{\ell-1})\stackrel{\cong}\longrightarrow
   \RR_{\ell,g,12}^{oms}(j-2,s_1-j,s_2,\dots,s_{\ell-1}) 
\end{equation}
for $3\leq j\leq s_1-1$.

For $\ell\in\N$ let $S_\ell$ be the group of permutations of
$\{1,\dots,\ell\}$. For a decomposition $\ell_1+\dots+\ell_r=\ell$ let 
$Sh_{\ell_1,\dots,\ell_r}\subset S_\ell$ be the set of $\sigma\in S_\ell$ 
satisfying $\sigma(i)<\sigma(j)$ whenever
$\ell_1+\cdots+\ell_{s-1}<i<j\leq \ell_1+\cdots+\ell_s$ for some
$s\in\{1,\dots,r\}$ (these permutations are called {\em shuffles}). 
Note that the number of elements of $Sh_{\ell_1,\dots,\ell_r}$ equals
$$
   |Sh_{\ell_1,\dots,\ell_r}| = \frac{\ell!}{\ell_1!\cdots\ell_r!}.
$$
For $s\in \N$ we write the cyclic group as $\Z_s:=\Z/s\Z$. With these
notations we have bijections
\begin{equation}\label{eq:slab1}
   Sh_{1,\ell-1}\times \Z_{s_1}\times 
   \RR_{\ell,g,1c}^{oms}(s_1,\dots,s_\ell)
   \stackrel{\cong}\longrightarrow 
   \RR_{\ell,g,1c}^{om}(s_1,\dots,s_\ell),
\end{equation}
\begin{equation}\label{eq:slab2}
   Sh_{1,\ell_1-1,\ell_2-1}\times \Z_{s_1}\times \RR_{\ell_1,\ell_2,g_1,g_2,1dc}^{oms}(s_1,\dots,s_\ell)
   \stackrel{\cong}\longrightarrow 
   \RR_{\ell_1,\ell_2,g_1,g_2,1dc}^{om}(s_1,\dots,s_\ell),
\end{equation}
\begin{equation}\label{eq:slab3}
   Sh_{1,1,\ell-2}\times \Z_{s_1}\times\Z_{s_2}\times 
   \RR_{\ell,g,12}^{oms}(s_1,\dots,s_\ell)
   \stackrel{\cong}\longrightarrow 
   \RR_{\ell,g,12}^{om}(s_1,\dots,s_\ell).
\end{equation}
Here the shuffles move the first boundary component to an arbitrary
position in the first two cases, and the first two boundary components
to arbitrary positions in the third case. In addition, in the second
case they shuffle the two sets of boundary components of cardinalities 
$\ell_i-1$, $i=1,2$, corresponding to the boundary components on the $i$-th
connected component of the cut graph.
The cyclic groups
rotate the numberings of the exterior vertices on the first boundary
component in the first two cases, and on the first two boundary
components in the third case (before applying the shuffles). 
Let us prove equation~\eqref{eq:slab2} (the others can be proved analogously). 
Consider
$\Gamma\in 
\RR_{\ell_1,\ell_2,g_1,g_2,1dc}^{om}$
with an arbitrary
labelling. Then there exists a unique element in the group
$Sh_{1,\ell_1-1,\ell_2-1}\times \Z_{s_1}$ mapping the given labelling of 
$\Gamma$ to a special labelling in the sense of~\S\ref{ss:basiccomb}.

\subsection{Duality}\label{ss:graph-duality}

Here we discuss a duality operation on trivalent graphs that will play
a crucial role in the proof of the Maurer-Cartan equation in~\S\ref{sec:proofMC}.

Let $(\Gamma,l)$ be an o-marked labelled trivalent graph with
$l=(u,v)$ the oriented marked edge. Let $(z,w,u)$ and $(v,x,y)$ be the
two vertices connected by $l$ as shown in Figure~\ref{fig:duality}. We
define the o-marked labelled graph $(I(\Gamma),I(l))$ using the same
set of flags, but assembling them into vertices and edges slightly
differently. Namely, we let $I(l):=(u,v)$ be the oriented marked edge
of $I(\Gamma)$ and $(y,z,u)$ and $(v,w,x)$ be its adjacent vertices,
see Figure~\ref{fig:duality}. The other vertices and edges stay the same. 
\begin{figure}
\begin{center}
\includegraphics[width=\textwidth]{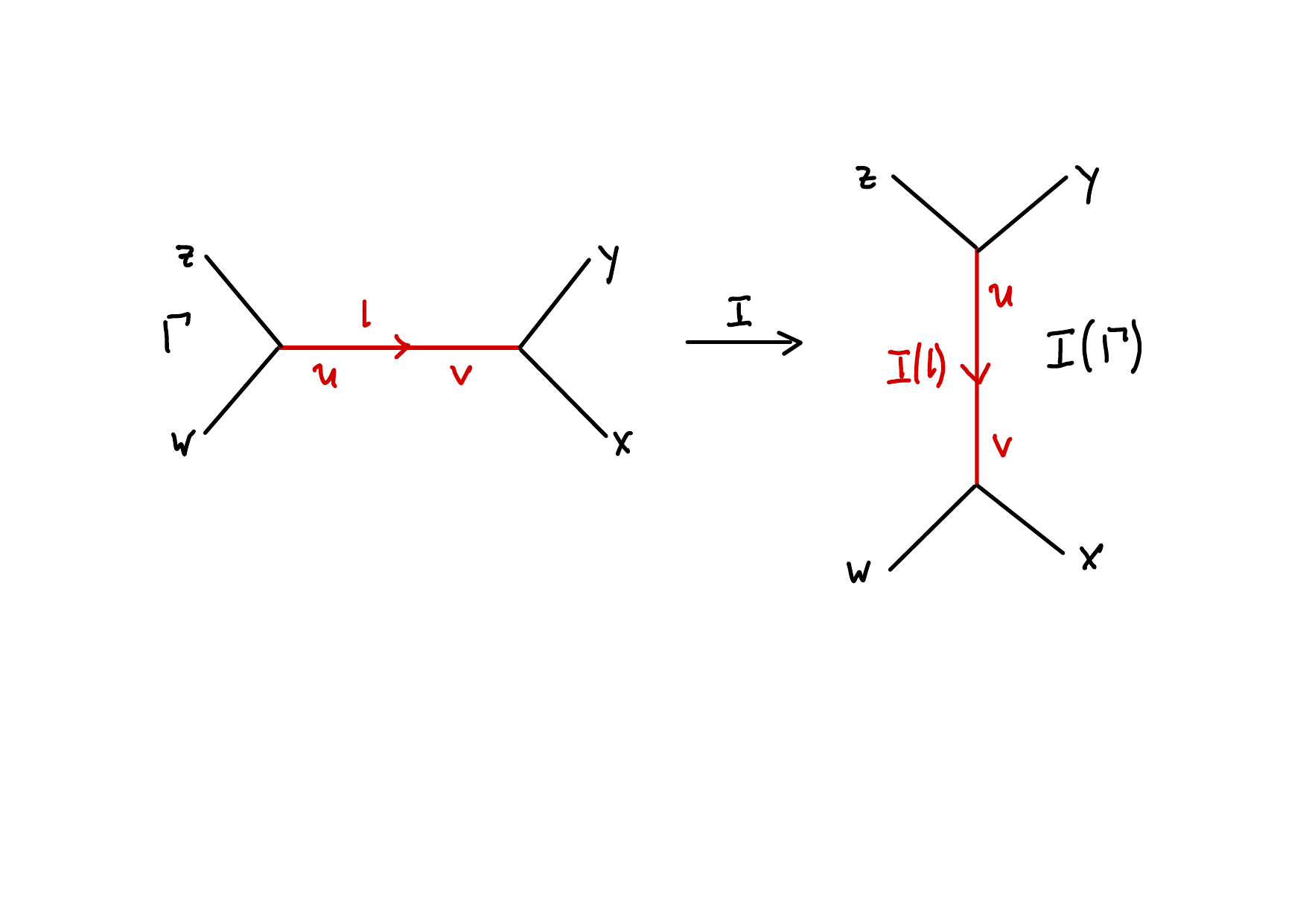}
\vspace{-3.5cm}
\caption{The duality operation $I$}
\label{fig:duality} 
\end{center}
\end{figure}
Geometrically, the operation $I$ is cutting out a model subtree and
pasting back the dual model subtree. In particular, the type of the graph 
remains the same. Since the two graphs have the same boundary
components and leaves, we can give $I(\Gamma)$ the same labelling as $\Gamma$ has.  

Next, we analyse the difference between the reordering maps 
for $\Gamma$ and for $I(\Gamma)$. 
Suppose that $\Gamma$ is equipped with an extension of the labelling such that the triples
of flags $(z,w,u)$ and $(v,x,y)$ are ordered as written in the vertex
order $O_v$, and the edge order $O_e$ contains the oriented edge $(u,v)$.
This induces an extension of the labelling for $I(\Gamma)$ as follows. 
As the sets of edges of $\Gamma$ and $I(\Gamma)$ coincide, we
choose the edge orders of $\Gamma$ and $I(\Gamma)$ to be exactly the same, $O_e^I=O_e$.
To produce a bijective correspondence between vertices we stipulate
that $(z,w,u)$ corresponds to  
$(y,z,u)$ and $(v,x,y)$ to $(v,w,x)$. We order the triples of flags
$(y,z,u)$ and $(v,w,x)$ as written, thus obtaining a vertex order $O_v^I$
for $I(\Gamma)$. Note that the composition $\sigma_4:=O_v^I\circ O_v^{-1}$ is
the cyclic permutation 
$$
   (z,w,x,y)\mapsto (y,z,w,x)
$$
of $\{z,w,x,y\}$, leaving the other flags fixed. The extended
labellings give us reordering maps $\bar R_\Gamma$, $\bar
R_{I(\Gamma)}$ and we compute
\begin{equation}\label{eq:dual-r}
   \bar R_{I(\Gamma)}^{-1}\circ \bar R_\Gamma=(O_e^I)^{-1}\circ O_v^I\circ
   O_v^{-1}\circ O_e = O_e^{-1}\circ O_v^I\circ O_v^{-1}\circ O_e=
   O_e^{-1}\circ\sigma_4\circ O_e. 
\end{equation}
The explicit description of 
how the vertices and edges of $\Gamma$ are related to those of
$I(\Gamma)$ implies (see~\cite{Volkov-thesis}) that
\begin{equation}\label{eq:eta3-duality}
  \eta_3(\Gamma)=\eta_3(I(\Gamma)).
\end{equation}
Let $(\Gamma,-l)$ denote the o-marked graph $(\Gamma,l)$ with the
reversed orientation of the marked edge. It is straightforward to
check that $I(\Gamma,-l)=(I(\Gamma),-I(l))$, so ``minus commutes with $I$''.
Therefore, the operation $I$ descends to an operation on marked
labelled graphs that we denote 
\begin{equation}\label{eq:dualtrival}
    \ol I:\RR_{\ell,g}^{m}\to \RR_{\ell,g}^{m}.
\end{equation}
Note that the operation $I$ on o-marked labelled graphs is an
operation of order $4$, whereas the operation $\ol I$ on marked labelled
graphs is an involution.


\begin{lemma}\label{lem:I-free}
The involution $\ol I$ has no
fixed points.
\end{lemma}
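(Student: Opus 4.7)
Assume for contradiction that $\bar I(\Gamma,l)=(\Gamma,l)$ as isomorphism classes of marked labelled trivalent ribbon graphs. Then for some choice of orientation of $l$ there is an isomorphism $\phi\colon I(\Gamma)\to\Gamma$ of labelled o-marked ribbon graphs. Since the labelling prescribes a numbering of every leaf on every boundary component and $\phi$ preserves this labelling, $\phi$ restricts to the identity on the set of exterior flags. Moreover $\phi$ preserves the marked edge as a set, so either $\phi(u)=u$ and $\phi(v)=v$, or $\phi$ swaps $u$ and $v$.

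In either case, comparing the cyclic orderings of the two modified vertices forces $\phi$ to act as a nontrivial $4$-cycle on the four outer flags $\{z,w,x,y\}$. For example, in the case $\phi(u)=u$, $\phi(v)=v$, the requirements $\phi([y,z,u])=[z,w,u]$ and $\phi([v,w,x])=[v,x,y]$ together with preservation of cyclic orders yield $\phi\colon y\mapsto z\mapsto w\mapsto x\mapsto y$; the swapped case is analogous. Since $\phi$ is the identity on exterior flags, none of $z,w,x,y$ can itself be a leaf. Consequently each of $z,w,x,y$ is paired in an edge with a flag $z',w',x',y'$ lying outside the two modified vertices, and the $4$-cycle on $\{z,w,x,y\}$ extends by edge-preservation to a nontrivial $4$-cycle on $\{z',w',x',y'\}$.

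Now let $\Gamma^{\ast}$ denote the ribbon graph obtained from $\Gamma$ by deleting the two modified vertices together with the marked edge, regarding the dangling flags $z',w',x',y'$ as additional leaves. The combinatorics of $\Gamma^{\ast}$ coincide in $\Gamma$ and $I(\Gamma)$, so $\phi$ restricts to an automorphism of $\Gamma^{\ast}$ that is the identity on every original leaf. By the one-boundary-vertex condition~\eqref{eq:one-boundary-vertex} and the fact that $z,w,x,y$ are not leaves, at least one original leaf $L$ of $\Gamma$ lies in $\Gamma^{\ast}$. Let $C\subset \Gamma^{\ast}$ be its connected component; since $\phi(L)=L$, necessarily $\phi(C)=C$, and Lemma~\ref{lem:no-auto} applied to $\phi|_C$ yields $\phi|_C=\id$. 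On the other hand, connectedness of $\Gamma$ forces $C$ to attach to $v_1$ or $v_2$, so $C$ contains at least one of $z',w',x',y'$, contradicting the nontrivial $4$-cycle on this set.

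The main obstacle is the cyclic-order computation in the second paragraph, which must be carried out explicitly in both cases for the action of $\phi$ on $\{u,v\}$. Once the nontrivial $4$-cycle on the outer flags is established, the remainder is a clean propagation argument relying only on Lemma~\ref{lem:no-auto} and the one-boundary-vertex condition.
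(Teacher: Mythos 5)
Your overall strategy (fix all exterior flags via the labelling, extract a $4$-cycle on $\{z,w,x,y\}$ from the two modified vertices, delete those vertices and propagate the identity from a fixed leaf using Lemma~\ref{lem:no-auto}) is sound, and the cyclic-order computation in both cases (fixing or swapping $u,v$) is correct. However, there is one step that does not follow as stated and in fact fails for a family of graphs the paper explicitly allows: the claim ``Consequently each of $z,w,x,y$ is paired in an edge with a flag $z',w',x',y'$ lying outside the two modified vertices.'' Not being a leaf only tells you that each of $z,w,x,y$ has an edge partner; that partner may be another flag of the same two vertices, namely when $\Gamma$ contains an edge parallel to the marked edge $l$ (e.g.\ $\{z,x\}$). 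Condition~\eqref{eq:one-boundary-vertex} rules out self-loops for trivalent graphs, but not multiple edges with the same endpoints -- the paper even comments on this configuration when discussing $\p_1^l\CC_\Gamma$. In that situation there are fewer than four outside flags, the ``extended $4$-cycle on $\{z',w',x',y'\}$'' does not exist, and your final contradiction evaporates.

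The gap is repairable, but it needs an argument rather than the word ``consequently'': since $\phi$ preserves the six-flag set of the two end vertices and maps edges to edges, applying $\phi$ to a hypothetical parallel edge (say $\{z,x\}$, with $\phi:y\mapsto z\mapsto w\mapsto x\mapsto y$) produces either a self-loop (excluded) or forces $\{w,y\}$ to be an edge as well, making $\Gamma$ the leafless theta graph, which violates~\eqref{eq:one-boundary-vertex}; the other parallel-edge positions and the swapped case are handled the same way. Alternatively, note that the paper's own proof sidesteps the whole case analysis: collapse the marked edge to obtain a labelled graph $\ol\Gamma$ with one $4$-valent vertex; the isomorphism $\Gamma\to\ol I(\Gamma)$ then induces an automorphism of $\ol\Gamma$ rotating the four flags at that vertex by $\pm 90$ degrees, which is nontrivial and contradicts Lemma~\ref{lem:no-auto1} directly, with parallel edges simply becoming harmless self-loops of $\ol\Gamma$. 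Either fix your step along the first line, or adopt the collapse argument.
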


\begin{proof}
Suppose the marked labelled graph $\wh\Gamma$ is a fixed point of $\ol I$.
This means that there exists an isomorphism of marked labelled graphs
$\phi:\Gamma\to \ol I(\Gamma)$. Let $\ol\Gamma$ be the labelled graph with
one $4$-valent vertex $v$ obtained from 
$\wh\Gamma$ by collapsing the marked edge. It
follows that $\phi$ induces an automorphism $\ol\phi:\ol\Gamma\to\ol\Gamma$  
which fixes $v$ and rotates the flags at $v$ by $\pm 90$ degrees. In
particular, $\ol\phi$ is not the identity. But this contradicts Lemma~\ref{lem:no-auto1}
applied to $\ol\Gamma$. 
%
\end{proof}

\section{Algebraic preliminaries}\label{sec:alg}

In this section we collect some basic notions and facts about cochain
complexes and differential graded algebras with
pairings. See~\cite{Cieliebak-Fukaya-Latschev,Cieliebak-Hajek-Volkov}
for more background.

\subsection{Graded vector spaces}\label{ss:gradedvect}

Let $A=\bigoplus_{i\in \Z}A^i$ be a $\Z$-graded  
$\R$-vector space. For $m\in\Z$ let $A[m]$ be the degree shift of $A$ by $m$, i.e. 
$A[m]^i:=A^{i+m}$. Most often we will need the degree shift by $1$, that is $A[1]$.
For $x\in A$ of homogeneous degree, i.e. $x\in A^k$ for some $k\in\Z$,
the degree of $x$ as an element of $A$ will be denoted by $\deg x$ and
the degree of $x$ as an element of $A[1]$ will be denoted by $|x|$, so that
$$
  |x|=\deg x-1.
$$
We define the {\em graded dual} of $A$ by
$$
  A^*:=\bigoplus_{i\in \Z}\Hom(A^i,\R),
$$
and grade it by giving $\phi\in\Hom(A^i,\R)$ degree $i$.\footnote{ 
Another frequently used convention gives $\phi\in\Hom(A^i,\R)$ degree $-i$.} 

{\bf Three actions on $A^{\otimes k}$. } 
To define an operation on a tensor product, we will usually define it
on decomposable elements of homogeneous degree and extend it by linearity.
When we say ``decomposable'' we mean ``decomposable of homogeneous
degree'' whenever homogeneity of degree is needed to keep track of signs. 
Let us denote the {\em naive action} of an element $\eta$ of the symmetric
group $S_k$ on $A^{\otimes k}$ permuting factors without signs by
\begin{equation}\label{eq:actnaiv}
   \eta(x_1\otimes\cdots \otimes x_k) =
   x_{\eta(1)}\otimes\cdots\otimes x_{\eta(k)}.
\end{equation}
Now there are two natural sign conventions to incorporate the grading.
We define 
\begin{equation}\label{eq:actanalg}
   \eta_{an}(x):=(-1)^{\eta+\eta_{an}(x)}\eta(x),\qquad
   \eta_{alg}(x):=(-1)^{\eta_{alg}(x)}\eta(x),
\end{equation}
which we will refer to as the {\em analytic and algebraic action}. 
Here $(-1)^\eta$ is the sign of the permutation $\eta$, 
$\eta_{an}(x)$ is the sign exponent for permuting the elements
$x_j$ with their degrees in $A$, and 
$\eta_{alg}(x)$ is the
sign exponent for permuting the elements with their shifted degrees
$|x_j|=\deg x_j-1$.
We introduce the operation
$$
  P:A^{\otimes k}\to A[1]^{\otimes k}, 
$$
that acts on decomposables 
$x=x_1\otimes\dots\otimes x_k$ by
\begin{equation}\label{eq:defP}
  P(x) := (-1)^{P(x)}x
\end{equation}
with the sign exponent
$$
  P(x) := \sum_{j=1}^k(k-j)\deg x_j.
$$
Here the sign exponent can be described as follows: writing
$\theta_1x_1\theta_2x_2\cdots\theta_kx_k$ 
with formal variables $\theta_i$ of degree $1$, it is the sign
exponent for pulling all the formal variables $\theta_i$ to the left
of all the elements $x_j$.  
The two actions are related by the commuting diagram
\begin{equation}\label{eq:alg-ana-action}
\begin{aligned}
\xymatrix{
  A^{\otimes k} \ar[d]_{P} \ar[r]^{\eta_{an}} &A^{\otimes k} \ar[d]^{P} \\
  A[1]^{\otimes k} \ar[r]^{\eta_{alg}} &A[1]^{\otimes k}.
}
\end{aligned}
\end{equation}
This terminology allows us to introduce the following maps on tensor powers of $A$. Let $t\in S_k$
denote the cyclic permutation $(1,2,\dots,k)\mapsto (k,1,\dots,k-1)$
and $t_{an}$, $t_{alg}$ its associated analytic and
algebraic actions. Explicitly,
$$
  t_{alg}(x_1\otimes\cdots\otimes x_k) :=
  (-1)^{|x_k|(|x_1|+\cdots+|x_{k-1}|)} x_k\otimes x_1\otimes\cdots\otimes x_{k-1}\,.
$$
We set
$$
  N_{an}:=1+t_{an}+\dots+t_{an}^{k-1},
  \qquad N_{alg}:=1+t_{alg}+\dots+t_{alg}^{k-1}.
$$

{\bf The dual cyclic bar complex. }
We define the {\em bar complex} of $A$ by 
$$
  BA := \bigoplus_{k=1}^{\infty}A[1]^{\otimes k},
$$
Its graded dual is given by
$$
  B^*A = \prod_{k=1}^{\infty}(A[1]^{\otimes k})^*.
$$
Note that the direct sum becomes a direct product.
Similarly, we define the {\em cyclic bar complex} 
$$
  B^{\text{\rm cyc}}A := \bigoplus_{k=1}^{\infty}A[1]^{\otimes k}/\im(1-t_{alg})
$$
and its graded dual, the {\em dual cyclic bar complex} 
$$
  B^{\text{\rm cyc}*}A = \prod_{k=1}^{\infty}\Bigl(A[1]^{\otimes k}/\im(1-t_{alg})\Bigr)^*.
$$
Note that $t_{alg}$ generates the algebraic action of $\Z_k$ on 
$A[1]^{\otimes k}$, and dually on $(A[1]^{\otimes k})^*$, 
and we can identify 
$(A[1]^{\otimes k}/\im(1-t_{alg}))^*$ with the subcomplex of
functionals on $A[1]^{\otimes k}$ fixed under the action of $\Z_k$.

\begin{remark}[Filtrations]\label{rem:filtrations}
Consider a direct sum $B=\bigoplus_{k=1}^\infty B_k$ of $\Z$-graded
vector spaces $B_k$ (the index $k$ is not the grading!). Its has an
increasing filtration by the subspaces $\FF_\lambda =
\bigoplus_{k\leq\lambda}B_k$, which induces a decreasing filtration on
its graded dual $B^*=\prod_{k=1}^\infty B_k^*$ by the subspaces 
$$
\FF^\lambda = \{\phi\in B^*\mid \phi|_{B_k}=0\text{ for all }k\leq\lambda\}.
$$
Then $B^*$ is complete with respect to this filtration in the sense
of~\cite{Cieliebak-Fukaya-Latschev}; in fact, it is the completion of 
$\bigoplus_{k=1}^\infty B_k^*$.
(This is analogous to the well-knkown fact that the topological dual of
a normed vector space is complete.)
Applied to $B_k:=A[1]^{\otimes k}/\im(1-t_{alg})$, this shows that the
dual cyclic bar complex $B^{\text{\rm cyc}*}A$ is complete with
respect to its canonical filtration. 
\end{remark}

\begin{remark}
The symmetric group $S_k$ acts on the $k$-fold tensor product 
$A^{\otimes k}$ by~\eqref{eq:actnaiv} and~\eqref{eq:actanalg}. Since this action 
is based on precomposition, it is a right action (even though we
write it on the left). We extend these actions to the dual 
$(A^{\otimes k})^*$ by taking conjugate maps.
Since conjugation is a contravariant functor,
the resulting action on $(A^{\otimes k})^*$ is a left action.
\end{remark}

\subsection{Cochain complexes with pairing}\label{ss:coch}

A {\em pairing of degree $n\in \Z$} on a cochain complex $(A,d)$ is a
bilinear form $(\cdot,\cdot)\colon A\times A\to \R$, which for all
homogeneous $x, y\in A$ satisfies the degree condition 
\begin{equation*}
	(x,y)\neq 0\quad\Longrightarrow\quad\deg x+\deg y=n,
\end{equation*}
graded symmetry
\begin{align}\label{Eq:Symmetry}
   ( x, y ) = (-1)^{\deg x \deg y} 
   ( y, x ),
\end{align}
and compatibility with the differential
\begin{equation}\label{eq:DiffCyclic}
   ( d x, y ) = (-1)^{1+\deg x} 
   ( x, d y ). 
\end{equation}
We write $x\perp y$ if $( x, y ) = 0$ and say that $x,y$ are \emph{orthogonal}.
The subcomplex of elements of $A$ orthogonal to a given subcomplex
$B\subset A$ will be denoted by  
\[
	B^{\perp} := \{ x\in A\mid x\perp B\}.
\]
We call a pairing $(\cdot,\cdot)\colon A\times
A\to\R$ \emph{nondegenerate} if the induced map
\begin{equation*}
	A\longrightarrow Hom(A,\R),\qquad
	x\longmapsto ( x,\cdot )
\end{equation*}
is injective, and {\em perfect} if it is an isomorphism. Observe that a nonnegatively graded cochain complex with a perfect pairing is finite dimensional. 
Following the terminology in~\cite{Cieliebak-Fukaya-Latschev},
a cochain complex with a perfect pairing is called {\em cyclic}. If a perfect pairing
restricts to a subcomplex as a perfect pairing, then the subcomplex is called 
{\em cyclic}.

{\bf Projections and propagators. }
Next we recall some notions and facts from~\cite{Cieliebak-Hajek-Volkov}.
Let $(A, d,(\cdot,\cdot))$ be a cochain complex with pairing.
A {\em projection} on $A$ is a degree $0$ chain map $\pi\colon A\to A$
that satisfies $\pi\circ \pi = \pi$.  
A \emph{homotopy operator} with respect to the $\pi$ is a degree
  $-1$ map $P\colon A\to A$ satisfying
\begin{equation}\label{eq:P2}
    d\circ P+P\circ d=\pi-\Id. 
\end{equation}
Its existence implies that $\pi$ is a {\em quasi-isomorphism}, i.e.,
the induced map on cohomology $H(\pi)\colon H( A)\to H( A)$ is an
isomorphism.  
Note that the projection $\pi$ is determined by $P$ via equation~\eqref{eq:P2}.
We say that a homotopy operator $P\colon  A\to A$ is a
\emph{propagator} if it satisfies in addition the symmetry property 
\begin{equation}\label{eq:symmpropprelim}
	 (Px,y) = (-1)^{\deg x}(x,Py). 
\end{equation}
The associated projection 
$\pi\colon A\to A$ is then also \emph{symmetric},
\begin{equation*}
	 (\pi x,y) = ( x,\pi y).
\end{equation*}
A propagator $P$ with respect to $\pi$ is called {\em semi-special} if
\begin{equation}\label{eq:P4}
  P\circ \pi = \pi\circ P = 0,
\end{equation}
and {\em special} if in addition
\begin{equation}\label{eq:P5}
  P\circ P = 0.
\end{equation}
A special propagator with respect to a given projection is not unique in general.

\begin{lemma}[{\cite[Remark 2]{Cattaneo-Mnev}}]\label{lem:propagator}
Any propagator $P$ can be modified to a semi-special propagator $P_2$
and then to a special propagator $P_3$ with
respect to the same projection $\pi$ by setting 
\[
   P_2 := (\pi-\Id)\circ P\circ (\pi-\Id),\qquad P_3 := -P_2\circ  d\circ P_2.
\]
\end{lemma}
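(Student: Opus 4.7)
The plan is to verify the three required properties of $P_3$—that it is a propagator with respect to $\pi$ satisfying both \eqref{eq:P5} and \eqref{eq:PP2}—in two stages: first for the intermediate operator $P_2 := (\pi-\Id)\circ P\circ(\pi-\Id)$, then for the twist $P_3 = -P_2\circ d\circ P_2$.

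For the first stage, the identity $(\pi-\Id)^2 = \Id-\pi = -(\pi-\Id)$ combined with $d(\pi-\Id) = (\pi-\Id)d$ (because $\pi$ is a chain map) yields, starting from $dP+Pd = \pi-\Id$,
$$dP_2 + P_2 d = (\pi-\Id)(dP+Pd)(\pi-\Id) = (\pi-\Id)^3 = \pi-\Id,$$
so $P_2$ is a homotopy operator with respect to $\pi$. The normalisation $\pi P_2 = P_2\pi = 0$ is immediate from $\pi(\pi-\Id) = 0$, and symmetry of $P_2$ in the sense of \eqref{eq:symmpropprelim} follows from symmetry of $P$ together with self-adjointness of $\pi-\Id$ (a consequence of $\pi_P$ being symmetric): one checks $(P_2x,y)=(-1)^{\deg x}(x,P_2y)$ by moving the two factors $\pi-\Id$ across the pairing without any sign and then using the symmetry of $P$ in the middle.

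The second stage hinges on the identity $[d, P_2^2] = 0$. To see this, I would use the consequence $(\pi-\Id)P_2 = P_2(\pi-\Id) = -P_2$ of step one to compute
$$(dP_2)\,P_2 = \bigl((\pi-\Id)-P_2d\bigr)P_2 = -P_2 - P_2\,dP_2 = P_2\bigl((\pi-\Id)-dP_2\bigr) = P_2\,(P_2d),$$
which means $dP_2^2 = P_2^2 d$. With this commutation in hand all properties of $P_3$ fall out. The homotopy identity $dP_3 + P_3 d = \pi - \Id$ is a short substitution using $\pi P_2 = P_2\pi = 0$. For \eqref{eq:P5}, the commutation together with $d^2=0$ gives
$$P_3^{\,2} = P_2\,d\,P_2\,P_2\,d\,P_2 = P_2\,(dP_2^2)\,dP_2 = P_2\,(P_2^2 d)\,dP_2 = P_2^{\,3}\, d^2\, P_2 = 0.$$
For \eqref{eq:PP2}, multiplying $dP_2 + P_2 d = \pi-\Id$ by $d$ on the right gives $dP_2\,d = d(\pi-\Id)$, whence $dP_2\,dP_2 = d(\pi-\Id)P_2 = -dP_2$ (using $\pi P_2 = 0$); this implies $P_3\,dP_2 = -P_3$, and therefore
$$P_3\,d\,P_3 = -P_3\,d\,P_2\,dP_2 = -(P_3\,dP_2)(dP_2) = P_3\,dP_2 = -P_3.$$

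Symmetry of $P_3$ is obtained by inserting the definition into the pairing, using symmetry of $P_2$ twice and the compatibility \eqref{eq:DiffCyclic} once: the two sign factors from moving $d$ past the first $P_2$ combine to $(-1)^{2(1+\deg P_2x)} = 1$, leaving precisely the required $(-1)^{\deg x}$. The only non-routine point in the whole argument is recognising the commutation $[d, P_2^2] = 0$; this is what forces $P_3^{\,2} = 0$ even though $P_2^{\,2}$ is generally nonzero. Everything else is substitution and sign bookkeeping.
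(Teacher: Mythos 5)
Your verification is correct: all the identities you use check out ($(\pi-\Id)^2=-(\pi-\Id)$, $d(\pi-\Id)=(\pi-\Id)d$, $\pi P_2=P_2\pi=0$, the commutation $dP_2^2=P_2^2d$, and the sign count for the symmetry of $P_3$), and together they give that $P_3$ is a homotopy operator for the same $\pi$, symmetric, with $P_3^2=0$ and $P_3dP_3=-P_3$. The paper itself offers no proof of this lemma---it is quoted from Cattaneo--Mn\"ev---so your direct computation fills that in, and it is essentially the standard argument behind the cited remark rather than a genuinely different route.
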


Given a subcomplex $B\subset  A$, we say that a projection $\pi\colon
A\to A$ is onto $B$ if $\im\pi=B$, and we identify
$\pi$ with the induced surjection $\pi\colon A\onto B$ in this case. 
A homotopy operator~$P\colon A\to A$ with respect to a projection
$\pi\colon A\onto B$ exists if and only if~$\pi$ is a
quasi-isomorphism.  
In the case with pairing we have 

\begin{lemma}[{\cite[Lemma~2.2]{Cieliebak-Hajek-Volkov}}]\label{lem:retraction}
Let $( A, d,(\cdot,\cdot))$ be a cyclic cochain complex and
$B\subset  A$ a subcomplex. 
A propagator $P$ with respect to a projection $\pi\colon A\onto B$
exists if and only if~$\pi$ is symmetric and a quasi-isomorphism. 
\end{lemma}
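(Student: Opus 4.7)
The forward direction is essentially unpacking of definitions. If $P$ is a propagator with respect to $\pi$, then $\pi_P = \Id + dP + Pd$ is chain-homotopic to $\Id$, hence induces the identity on cohomology and is a quasi-isomorphism. For symmetry of $\pi_P$, combine the symmetry property $(Px,y)=(-1)^{\deg x}(x,Py)$ with the compatibility $(dx,y)=(-1)^{1+\deg x}(x,dy)$: a short sign calculation shows $(dPx,y)=(x,Pdy)$ and $(Pdx,y)=(x,dPy)$, whence $(\pi_P x,y)=(x,\pi_P y)$. (This is also essentially noted in the paragraph defining a propagator.)

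The substantive content lies in the reverse direction, whose plan is to start from any homotopy operator and symmetrize it. Since $\pi\colon A\onto B$ is a quasi-isomorphism, the earlier statement (just before the lemma) provides a homotopy operator $P_0\colon A\to A$ with $dP_0+P_0d=\pi-\Id$; however, $P_0$ need not be symmetric. Using perfectness of the pairing, define the graded adjoint $P_0^\dagger\colon A\to A$ by the rule
\[
   (P_0^\dagger x,y) := (-1)^{\deg x}(x,P_0 y),
\]
which pins down $P_0^\dagger$ uniquely as a degree $-1$ endomorphism.

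The key verification is that $P_0^\dagger$ is again a homotopy operator with respect to $\pi$, i.e.\ $dP_0^\dagger+P_0^\dagger d=\pi-\Id$. This reduces to a sign bookkeeping: using $(dx,y)=(-1)^{1+\deg x}(x,dy)$ and the definition of $P_0^\dagger$, one computes
\[
   (dP_0^\dagger x+P_0^\dagger dx,\,y) \;=\; (x,\,dP_0 y+P_0 d y) \;=\; (x,(\pi-\Id)y) \;=\; ((\pi-\Id)x,y),
\]
where the last equality uses the hypothesis that $\pi$ (and, trivially, $\Id$) is symmetric; nondegeneracy of the pairing then yields the claimed identity. Once this is established, set
\[
   P := \tfrac{1}{2}(P_0+P_0^\dagger).
\]
By construction $P$ is symmetric, and it is a homotopy operator with respect to $\pi$ by linearity of the homotopy relation. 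Hence $P$ is a propagator with respect to $\pi$, completing the argument.

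The main obstacle is really just the sign-correct verification that the adjointed operator $P_0^\dagger$ still implements the same homotopy; this is where the hypothesis of symmetry on $\pi$ is used in an essential way, and it would fail if $\pi$ were not symmetric (since then the right-hand side of the displayed equation would involve the adjoint of $\pi$ instead of $\pi$ itself).
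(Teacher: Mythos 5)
Your argument is correct. Note first that the paper itself gives no proof of this lemma -- it is quoted from \cite{Cieliebak-Hajek-Volkov} -- so there is no internal proof to compare against; what you propose is the standard symmetrization argument, and the sign bookkeeping checks out: using \eqref{eq:DiffCyclic} and \eqref{eq:symmpropprelim} one indeed gets $(dPx,y)=(x,Pdy)$ and $(Pdx,y)=(x,dPy)$ in the forward direction, and in the reverse direction the adjoint $P_0^\dagger$ (well defined because the pairing of a cyclic complex is perfect, and of degree $-1$ by the degree-$n$ condition on the pairing) satisfies $(dP_0^\dagger x+P_0^\dagger dx,y)=(x,dP_0y+P_0dy)=((\pi-\Id)x,y)$, where symmetry of $\pi$ enters exactly as you say, so nondegeneracy gives $dP_0^\dagger+P_0^\dagger d=\pi-\Id$. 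One point worth making explicit, since you only assert it "by construction": the symmetry of $P:=\tfrac12(P_0+P_0^\dagger)$ requires checking that $(P_0x,y)=(-1)^{\deg x}(x,P_0^\dagger y)$ as well, which follows from the graded symmetry \eqref{Eq:Symmetry} applied twice together with the defining relation of $P_0^\dagger$; this is a one-line computation but it is the place where \eqref{Eq:Symmetry} is genuinely used. Finally, your proof rests on the statement preceding the lemma (existence of a homotopy operator with respect to any quasi-isomorphic projection), which the paper also only quotes; invoking it is legitimate here, but a fully self-contained treatment would have to supply that standard fact too.
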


If $(A,d,(\cdot,\cdot))$ is a cyclic cochain complex, then any
quasi-isomorphic cyclic subcomplex $B\subset A$ admits a unique
symmetric projection $\pi_B\colon A \onto B$ by sending the orthogonal
complement $B^\perp$ to $0$.
Lemmas~\ref{lem:propagator} and~\ref{lem:retraction} now imply

\begin{cor}\label{cor:retraction}
Let $( A, d,(\cdot,\cdot))$ be a cyclic cochain complex and
$B\subset  A$ a quasi-isomorphic cyclic subcomplex. 
Then there exists a special propagator $P\colon A\to A$ whose
associated projection $\pi$ satisfies $\im\pi=B$.
\qed
\end{cor}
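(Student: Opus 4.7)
The plan is to chain together the preceding lemmas, with the only non-mechanical step being the construction of the canonical symmetric projection $\pi_B$.

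First I would verify that there is a natural candidate projection $\pi_B\colon A\onto B$ coming from the pairing. Since $B\subset A$ is a \emph{cyclic} subcomplex, the restriction of $(\cdot,\cdot)$ to $B$ is perfect, and standard linear algebra (graded, but with no new complication) then yields the orthogonal decomposition $A = B\oplus B^\perp$ as graded vector spaces; compatibility of the pairing with $d$ (equation~\eqref{eq:DiffCyclic}) and the fact that $B$ is a subcomplex guarantee that $B^\perp$ is also a subcomplex, so this is a decomposition of cochain complexes. Define $\pi_B$ to be the projection onto $B$ along $B^\perp$. It is symmetric by construction: for $x=x_B+x_\perp$ and $y=y_B+y_\perp$, both $(\pi_B x,y)$ and $(x,\pi_B y)$ reduce to $(x_B,y_B)$. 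Moreover $\pi_B$ is a quasi-isomorphism, because the inclusion $B\hookrightarrow A$ is a quasi-isomorphism by hypothesis and $\pi_B\circ i_B=\id_B$.

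Next I would invoke Lemma~\ref{lem:retraction} directly: a symmetric projection that is a quasi-isomorphism admits a propagator, so there exists a propagator $P\colon A\to A$ with $\pi_P=\pi_B$. At this stage $P$ satisfies the homotopy identity~\eqref{eq:P2} and the symmetry~\eqref{eq:symmpropprelim}, but not necessarily the specialness conditions~\eqref{eq:P5} and~\eqref{eq:PP2}.

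Finally I would apply Lemma~\ref{lem:propagator} to this $P$, setting
\[
   P_2 := (\pi_B-\Id)\circ P\circ (\pi_B-\Id),\qquad P_3 := -P_2\circ d\circ P_2.
\]
The cited lemma yields that $P_3$ is a special propagator with respect to the same projection, so $\pi_{P_3}=\pi_B$ and therefore $\im\pi_{P_3}=B$, as required. I do not anticipate a real obstacle: the only point requiring genuine care is that $B^\perp$ is a subcomplex (needing~\eqref{eq:DiffCyclic}) and that the restricted pairing on $B$ actually produces a linear-algebraic complement (needing cyclicity of $B$); once $\pi_B$ is in hand, the corollary is a two-line consequence of Lemmas~\ref{lem:propagator} and~\ref{lem:retraction}.
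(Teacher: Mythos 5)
Your proposal is correct and follows essentially the paper's own route: construct the symmetric projection $\pi_B$ onto $B$ along $B^\perp$ (using that the pairing restricts perfectly to the cyclic subcomplex $B$), obtain a propagator for $\pi_B$ via Lemma~\ref{lem:retraction}, and upgrade it to a special one via Lemma~\ref{lem:propagator}. The only care points you flag ($B^\perp$ being a subcomplex and $\pi_B$ being a symmetric quasi-isomorphism) are exactly the ones the paper's discussion covers.
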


\subsection{Harmonic subspaces}\label{ss:hodgesetup}

Let $(A,d,(\cdot,\cdot))$ be a cochain complex with pairing. In view
of~\eqref{eq:DiffCyclic}, the pairing descends to its homology $H(A)$.  
In this subsection we consider the case that $H(A)$ is finite dimensional
and the induced pairing on $H(A)$ is nondegenerate, so $H(A)$ becomes
a cyclic cochain complex with trivial differential.
Our main example for this will be the de Rham complex 
$(\Om^*(M), d,(\cdot,\cdot))$ of a closed oriented manifold with the 
intersection pairing, see~\S\ref{sec:prop}.

By a {\em harmonic subspace} $\HH$ we mean any complement of $\im d$ in
$\ker d$, so that
$$
   \ker d=\HH\oplus\im d.
$$

\begin{lemma}\label{lem:Horthog}
Let $(A,d,(\cdot,\cdot))$ be a cochain complex with pairing such that
$H(A)$ is finite dimensional and the induced pairing on $H(A)$ is nondegenerate.
Let $\HH\subset\ker d$ be a harmonic subspace. Then we get a direct sum decomposition
\begin{equation}\label{eq:Horthog}
  A=\HH\oplus\HH^\perp\quad\text{with}\quad \HH^\perp\cap\ker d=\im d.
\end{equation}
The projection $\pi_\HH:A\to A$ onto $\HH$ along $\HH^\perp$ is
symmetric, and it is a quasi-isomorphism as a map $A\to\HH$. 
\end{lemma}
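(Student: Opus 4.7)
The plan is to reduce everything to one key observation: the composition $\HH\hookrightarrow\ker d\twoheadrightarrow H(A)$ is an isomorphism of graded vector spaces (since $\HH$ is a complement of $\im d$ in $\ker d$), and it identifies the pairing on $\HH$ inherited from $A$ with the nondegenerate pairing on $H(A)$. In particular $\HH$ is finite dimensional and $\HH\cap\HH^\perp=\{0\}$. From this the decomposition $A=\HH\oplus\HH^\perp$ is immediate: given $x\in A$, the linear functional $h\mapsto(x,h)$ on the finite-dimensional space $\HH$ is represented by a unique element $h_x\in\HH$, so $x=h_x+(x-h_x)$ with $x-h_x\in\HH^\perp$, and uniqueness of the decomposition is forced by $\HH\cap\HH^\perp=\{0\}$.

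To identify $\HH^\perp\cap\ker d=\im d$, I would first use the Leibniz-type compatibility $(dy,h)=\pm(y,dh)=0$ for $h\in\HH\subseteq\ker d$ to get the inclusion $\im d\subseteq\HH^\perp\cap\ker d$. Conversely, any $x\in\ker d$ admits a decomposition $x=h+dy$ with $h\in\HH$ coming from the harmonic splitting $\ker d=\HH\oplus\im d$; if in addition $x\in\HH^\perp$, then $h=x-dy$ lies in $\HH\cap\HH^\perp=\{0\}$, so $x=dy\in\im d$.

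Finally I would verify the two properties of $\pi_\HH$. Symmetry is a direct computation: writing $x=h_x+r_x$ and $y=h_y+r_y$, graded symmetry gives $(h_x,r_y)=\pm(r_y,h_x)=0$ and likewise $(r_x,h_y)=0$, so both $(\pi_\HH x,y)$ and $(x,\pi_\HH y)$ collapse to $(h_x,h_y)$. For the quasi-isomorphism property, I give $\HH$ the zero differential; then $\pi_\HH$ is a chain map since $\pi_\HH\circ d=0$ (because $\im d\subseteq\HH^\perp$) and $d\circ\pi_\HH=0$ (because $\HH\subseteq\ker d$), and the induced map on cohomology sends $[x]\mapsto\pi_\HH(x)$ for $x\in\ker d$, which by the splitting $\ker d=\HH\oplus\im d$ is visibly a bijection $H(A)\cong\HH$. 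No step presents a serious obstacle; the only point meriting care is the initial identification of the pairing on $\HH$ with the one on $H(A)$, after which everything follows from elementary linear algebra together with graded symmetry and the compatibility $(dx,y)=\pm(x,dy)$.
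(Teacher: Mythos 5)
Your proof is correct and follows essentially the same route as the paper: you first transfer nondegeneracy from $H(A)$ to $\HH$ via the isomorphism $\HH\cong H(A)$, then use the resulting perfect pairing on $\HH$ to produce the projection (basis-free, where the paper writes $\pi_\HH(x)=\sum_a(e_a,x)e^a$ with dual bases), and the remaining points follow by the same elementary arguments. The only cosmetic difference is that for $\HH^\perp\cap\ker d\subset\im d$ you invoke the splitting $\ker d=\HH\oplus\im d$ together with $\HH\cap\HH^\perp=\{0\}$, while the paper passes through the induced pairing on homology; both are fine.
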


\begin{proof}
Nondegeneracy of the pairing on $H(A)$ and the fact that every
cohomology class has a unique harmonic representative implies that 
the restriction of the pairing to $\HH$ is nondegenerate.
Pick a basis $\{e_a\}$ of $\HH$ and define its dual basis $\{e^b\}$ of
$\HH$ by $(e_a,e^b)=\delta_a^b$. Then
$$
\pi_\HH:A\to A,\qquad \pi_\HH(x) :=
\sum_a(e_a,x)e^a
$$
is a projection with image $\HH$ and kernel $\HH^\perp$, which
shows the first equation in~\eqref{eq:Horthog}. 
The inclusion $\HH^\perp\cap\ker d\supset\im d$ is obvious. 
For the converse inclusion, consider $x\in\HH^\perp\cap\ker d$.
In view of~\eqref{eq:DiffCyclic} this implies $x\perp \HH\oplus\im
d=\ker d$, and therefore $0=(x,y)=([x],[y])$ for all $y\in\ker d$,
where $[x],[y]$ denote the cohomology classes. By nondegeneracy of the 
pairing on homology this implies $[x]=0$, hence $x\in\im d$. 
This proves the second equation in~\eqref{eq:Horthog}, and the
asserted properties of $\pi_\HH$ follow directly from this. 
\end{proof}

Assume now that $P$ is a homotopy operator with respect to $\pi_\HH$. 
Then equation~\eqref{eq:P2} implies
$$
  A=\ker d+\im P.
$$
Assume in addition that $P$ is special and consider any $y\in \im
P\cap\ker d$. Then there exists $x\in A$ with $Px=y$, and therefore  
$d(Px)=0$. Equation~\eqref{eq:P2} now implies $P(dx)=\pi x-x$. We
apply $P$ to both sides and use properties~\eqref{eq:P5}
and~\eqref{eq:P4} of a special homotopy operator to conclude
$y=Px=0$. Therefore, the above sum is direct and we get 
\begin{equation}\label{eq:towardshodge}
  A=\ker d\oplus\im P.
\end{equation}
Finally, we relate this decomposition to the decomposition~\eqref{eq:Horthog}.
Equation~\ref{eq:P4} implies that $\pi_\HH|_{\im P}=0$. Since $\pi_\HH$
is a projection along $\HH^\perp$, we get the inclusion $\im P\subset \HH^\perp$. 
In other words, 
\begin{equation}\label{eq:keyortho}
  \HH\perp\im P.
\end{equation}
This last relation will play a crucial role in~\S\ref{sec:gauge}.
Note that in the preceding discussion we have not used the symmetry
  property~\eqref{eq:symmpropprelim}.

\subsection{Differential graded algebras with pairings}\label{ss:dga1} 

A {\em differential graded algebra (DGA)} $(A,d,\wedge)$ is a 
nonnegatively graded cochain complex $(A,d)$ equipped with an
associative product $\wedge\colon A\times A \to A$ of degree  
$0$ satisfying the Leibniz rule
\[
	d(x\wedge y) = d x \wedge y + 
	(-1)^{\deg x} x \wedge d y.
\]
A {\em pairing} on a DGA $(A,d,\wedge)$ is a pairing 
$(\cdot,\cdot)\colon A \times A\to\R$ on the cochain complex $(A,d)$ 
satisfying in addition
\begin{equation}\label{Eq:ProdCyclic}
	( x \wedge y,z) = (x, y\wedge z).
\end{equation}
Using \eqref{Eq:Symmetry}, condition \eqref{Eq:ProdCyclic} is
equivalent to the \emph{cyclicity condition}
\begin{equation}\label{Eq:ProdCyclicOld}
	(x\wedge y, z)  = (-1)^{\deg z(\deg x+\deg y)} (z\wedge x, y).
\end{equation}
The last condition is in turn equivalent to the {\em triple intersection product}
\begin{equation}\label{eq:deftriple}
   \m^\can(x_0,x_1,x_2):= 
   (-1)^{\deg x_1+n}(x_0\wedge x_1,x_2)
\end{equation}
defining an element in $B_3^{\text{\rm cyc}*}A$. 
A DGA with a perfect pairing is called {\em cyclic}. 

Our main example of a DGA with pairing is the de Rham algebra
$(\Om^*(M),d,\wedge)$ of a closed oriented manifold $M$ with the wedge
product $\wedge$ and the {\em intersection pairing}
\begin{equation}\label{eq:defineintpair}
  (\alpha,\beta):=\int_M\alpha\wedge\beta.
\end{equation}
Note that the intersection pairing 
$(\cdot,\cdot)$ is nondegenerate but not perfect.

\section{$\IBL_\infty$-algebras}\label{sec:IBLinfty}



In this section we recall from~\cite{Cieliebak-Fukaya-Latschev} the
basic notions and properties of $\IBL_\infty$-algebras. 
See also~\cite{Hoffbeck-Leray-Vallette} 
for a presentation in the language of properads. 
As before, we restrict to the ground field $\R$. 

\subsection{Basic definitions and properties}\label{ss:IBL-basic}

We begin with the basic definitions. 

{\bf $\IBL_\infty$-algebras. }
Let $C= \bigoplus_{i\in\Z} C^i$ be a $\Z$-graded $\R$-vector space.
Recall that the degree shifted version $C[1]$ of $C$ is defined by
by $C[1]^i:=C^{i+1}$, so the degrees $\deg c$ in $C$ and
$|c|$ in $C[1]$ are related by
$$
   |c| = \deg c -1. 
$$
We introduce the $k$-fold symmetric product 
$$
    E_k C := \left(C[1] \,\otimes_\R \cdots
    \,\otimes_\R C[1]\right)/S_k 
$$
as the quotient of the $k$-fold tensor product under the standard
action of the symmetric group $S_k$ permuting the factors with signs,
and the {\em reduced symmetric algebra}
$$
   EC := \bigoplus_{k\geq 1}E_kC. 
$$
We will write the equivalence class of $c_1\otimes\dots\otimes c_k$ in
$E_kC$ as $c_1\cdots c_k$. 

Let $d\in\Z$. An {\em $\IBL_\infty$-structure of degree $d$} on $C$ is
a collection of linear maps 
$$
    \frak p_{k,\ell,g} : E_kC \to E_{\ell}C,\qquad k\ge 1,\ \ell \ge 1,\
    g\geq 0
$$ 
of degree 
$$
    |\frak p_{k,\ell,g}| = -2d(k+g-1)-1,
$$
that satisfy the quadratic relation 
\begin{equation*}
    \wh\fp\circ \wh\fp=0.
\end{equation*}
To make sense of this relation, we extend the $\fp_{k,\ell,g}$ to operations 

\begin{equation}\label{eq:phat}
  \wh\fp_{k,\ell,g}:EC\longrightarrow EC
\end{equation}
and then form generating series $\wh\fp$.
See~\cite{Cieliebak-Fukaya-Latschev} for the precise meaning of this
relation, which will not be relevant in this paper.
Geometrically, one should think of $\fp_{k,\ell,g}$ as an operation
associated to a compact connected oriented surface of genus $g$ with
$k$ incoming and $\ell$ outgoing boundary components. The quadratic
relation then amounts to the vanishing of the sum over all partial
compositions of such operations corresponding to splittings of the
surface into two surfaces.  

The tuple $(C,\{\frak p_{k,\ell,g}\}_{k,\ell \geq 1,g\geq 0})$ is
called an {\em $\IBL_\infty$-algebra of degree $d$}. 
An $\IBL_\infty$-algebra is called a {\rm dIBL}-algebra if its only
nontrivial operations are $\fp_{1,1,0}$, $\fp_{2,1,0}$ and $\fp_{1,2,0}$,
and an {\em $\IBL$-algebra} if its only nontrivial operations are
$\fp_{1,1,0}$ and $\fp_{2,1,0}$. After adjusting the operations by
suitable signs, an IBL-algebra is the  
same as an involutive Lie bialgebra, which explains its name.

The quadratic relations for an $\IBL_\infty$-algebra imply that
$\fp_{1,1,0}:C\to C$ squares to zero and the operations $\fp_{2,1,0}$
and $\fp_{1,2,0}$ descend to homology $H(C,\fp_{1,1,0})$, where they
induce the structure of an IBL-algebra.

{\bf $\IBL_\infty$-morphisms. }
Let $(C^+,\{\fp^+_{k,\ell,g}\})$ and $(C^-,\{\fp^-_{k,\ell,g}\})$ be 
two $\IBL_\infty$-algebras of the same degree $d$. 
An {\em $\IBL_\infty$-morphism} from $C^+$ to $C^-$ is a collection of
linear maps  
$$
    \ff_{k,\ell,g} : E_k C^+ \to E_{\ell} C^-,\qquad k,\ell \ge 1,\
    g\geq 0
$$ 
of degree 
$$
    |\ff_{k,\ell,g}| = -2d(k+g-1)
$$ 
whose generating series $\ff$ satisfies the relation
\begin{equation}\label{eq:mor2}
    e^\ff\wh\fp^+ - \wh\fp^- e^\ff=0.
\end{equation}
Again, the precise meaning of this relation will not be relevant in
this paper. 

An {\em $\IBL_\infty$-quasi-isomorphism} is an $\IBL_\infty$-morphism
which induces an isomorphism on homology.
In~\cite{Cieliebak-Fukaya-Latschev} the notion of
a {\em homotopy of $\IBL_\infty$-morphisms} is defined and the
following facts are proved:

({\em Homotopy inverse}) Every $\IBL_\infty$-quasi-isomorphism is an
  $\IBL_\infty$-homotopy equivalence.

({\em Homotopy transfer}) Every $\IBL_\infty$-algebra $(C,\{\fp_{k,\ell,g}\}$
induces an $\IBL_\infty$-structure $\{\fq_{k,\ell,g}\}$ on its
homology $H=H(C,\fp_{1,1,0})$ together with an $\IBL_\infty$-homotopy
equivalence $(C,\{\fp_{k,\ell,g}\})\to (H,\{\fq_{k,\ell,g}\})$.

\subsection{Maurer-Cartan elements}

Next we discuss Maurer-Cartan elements and the resulting
twisted $\IBL_\infty$-structures.
Let $(C,\fp=\{\fp_{k,\ell,g}\})$ be an $\IBL_\infty$-algebra of degree $d$. 
A {\em Maurer-Cartan element} $\fm$ on $(C,\fp)$ is a collection of elements 
\begin{equation}\label{eq:MCfooting}
    \fm_{\ell,g}\in \wh E_\ell C,\qquad \ell\geq 1,g\geq 0
\end{equation}
of degrees 
\begin{equation}\label{eq:MCdegrees}
   |\fm_{\ell,g}| = -2d(g-1)
\end{equation} 
whose generating series (again denoted $\m$ by abuse of language)
satisfies the {\em Maurer-Cartan equation} 
\begin{equation}\label{eq:MC}
    \wh\fp(e^{\fm}) = 0. 
\end{equation}
The last equation is equivalent to saying that
$\fm=\{\fm_{\ell,g}\}_{\ell\geq 1,g\geq 0}$ defines a morphism from the
trivial $\IBL_\infty$-algebra $\R$ to $C$, since
equation~\eqref{eq:mor2} reduces to equation~\eqref{eq:MC} for $C^+=\R$
and $C^-=C$. 
The space $\wh E_\ell C$ in~\eqref{eq:MCfooting} is a
suitable completion of $E_\ell C$ (see~\cite{Cieliebak-Fukaya-Latschev}),
which we will later spell out in the case we need it (see
equation~\eqref{eq:whEk}).  

\begin{prop}[{\cite[\S9]{Cieliebak-Fukaya-Latschev}}]\label{prop:MC}
(a) A Maurer-Cartan element $\fm$ on an $\IBL_\infty$-algebra
$(C,\fp=\{\fp_{k,\ell,g}\})$ gives rise to a {\em twisted
$\IBL_\infty$-structure} $\fp^\fm=\{\fp^\fm_{k,\ell,g}\}$ on $C$. 

(b) Given an $\IBL_\infty$-morphism $\ff=\{\ff_{k,\ell,g}\}:(C,\{\fp_{k,\ell,g}\})\to
(D,\{\fq_{k,\ell,g}\})$ and a Maurer-Cartan element $\fm$ on $C$, we
obtain a {\em pushforward Maurer-Cartan element} $\ff_*\fm$ on
$(D,\{\fq_{k,\ell,g}\})$ together with an $\IBL_\infty$-morphism 
$$
  \ff^\fm = \{\ff^\fm_{k,\ell,g}\}: (C,\{\fp^\fm_{k,\ell,g}\}) \to
  (D,\{\fq^{\ff_*\fm}_{k,\ell,g}\}).
$$
(c) If $\ff$ in (b) is an $\IBL_\infty$-homotopy equivalence, then so is $\ff^\fm$.  
\end{prop}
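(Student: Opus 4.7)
The plan is to carry out the standard ``twist by an exponential of a Maurer-Cartan element'' construction in the generating-series language of~\S\ref{ss:IBL-basic}. The mechanism is that the Maurer-Cartan equation $\wh\fp(e^\fm)=0$ allows $e^\fm$ to be substituted into $\wh\fp$ without producing input-free obstructions, thereby making sense of a conjugated operator of the form ``$e^{-\fm}\wh\fp\,e^\fm$'' on the reduced symmetric algebra.

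For part~(a), I would define the twisted generating operator on $EC$ by
$$
\wh{\fp^\fm}(X) := \wh\fp(e^\fm \cdot X) - \wh\fp(e^\fm)\cdot X
$$
and extract its components $\fp^\fm_{k,\ell,g}: E_kC \to E_\ell C$. Concretely, each $\fp^\fm_{k,\ell,g}(x_1\cdots x_k)$ is a weighted sum over $j\geq 0$ of $\fp_{k+j,\ell',g'}$ evaluated on $j$ components drawn from the various $\fm_{\ell_i,g_i}$ together with $x_1,\dots,x_k$, with the bookkeeping on $(\ell',g')$ dictated by the topological gluing rules of the $\IBL_\infty$-signature. The subtracted term is formally vacuous by~\eqref{eq:MC}, but writing things this way makes the verification $\wh{\fp^\fm}\circ\wh{\fp^\fm}=0$ transparent: expanding and reindexing reduces it to the quadratic relation for $\wh\fp$ together with $\wh\fp(e^\fm)=0$. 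The required degree condition $|\fp^\fm_{k,\ell,g}|=-2d(k+g-1)-1$ is preserved since each inserted $\fm_{\ell',g'}$ contributes degree $-2d(g'-1)$, matching the genus shift forced by the gluing.

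For part~(b), I would define the pushforward Maurer-Cartan element on $D$ by
$$
e^{\ff_*\fm} := e^\ff(e^\fm) \in \wh E D
$$
(projected to the target reduced symmetric algebra), and verify the Maurer-Cartan equation directly: the morphism relation~\eqref{eq:mor2} applied to $e^\fm$ gives
$$
\wh\fq(e^{\ff_*\fm}) = \wh\fq(e^\ff(e^\fm)) = e^\ff(\wh\fp(e^\fm)) = 0.
$$
The twisted morphism $\ff^\fm$ would then be defined by analogous exponential conjugation, stripping $e^\fm$ off the input and $e^{\ff_*\fm}$ off the output of $e^\ff$, and extracting components $\ff^\fm_{k,\ell,g}$. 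Its morphism relation against the twisted structures $\wh{\fp^\fm}$ and $\wh{\fq^{\ff_*\fm}}$ is again a formal consequence of~\eqref{eq:mor2} combined with the two Maurer-Cartan equations on source and target.

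For part~(c), I would transport the homotopy data along the twist. An $\IBL_\infty$-homotopy inverse of $\ff$ and the two homotopies between its compositions with $\ff$ and the identity can all be twisted by the same exponential substitution, producing the corresponding data for $\ff^\fm$ in the twisted structures. The main technical obstacle I anticipate is not the algebraic identities, which are formal consequences of their untwisted counterparts together with~\eqref{eq:MC}, but the analytic verification that the infinite sums defining the twisted operations converge in the completions $\wh E_\ell C$. One must check that insertions of arbitrarily many components $\fm_{\ell',g'}$ remain controlled by the canonical filtration on $EC$, which combines input number and genus weights; this is precisely what the choice of completion in~\eqref{eq:whEk} is designed to accommodate. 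The full bookkeeping is carried out in~\cite{Cieliebak-Fukaya-Latschev}, and I would simply cite it rather than reprove it here.
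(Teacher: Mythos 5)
The paper offers no proof of this proposition—it is quoted verbatim from~\cite[\S9]{Cieliebak-Fukaya-Latschev}—and your sketch is exactly the exponential-twisting construction carried out there (twisted operations by inserting components of $\fm$ into the $\fp_{k+j,\ell',g'}$, pushforward via $e^{\ff_*\fm}=e^\ff(e^\fm)$ and the morphism relation, homotopy data transported by the same substitution), so you are on the same route as the cited source. One small imprecision: your displayed definition $\wh{\fp^\fm}(X)=\wh\fp(e^\fm X)-\wh\fp(e^\fm)X$ still carries a leftover factor of $e^\fm$; the operator you want is the genuine conjugation $e^{-\fm}\wh\fp(e^\fm X)$, as your opening remark and your componentwise verbal description correctly indicate.
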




On a $\dIBL$-algebra, the Maurer-Cartan equation~\eqref{eq:MC}
simplifies to 
\begin{equation}\label{eq:MCdIBL}
  \wh\fp_{1,1,0}\fm+\wh\fp_{2,1,0}\fm+\frac{1}{2}\wh\fp_{2,1,0}(\fm\otimes\fm)|_{\conn}+\wh\fp_{1,2,0}\fm=0.
\end{equation}
Here $\wh\fp_{k,\ell,g}$ denotes a suitable extension of the operation
$\fp_{k,\ell,g}$, and the subscript ``conn'' denotes the connected
part of the corresponding operation. 
We will later spell this out in the case we need it, see~\S\ref{sec:proofMC}. 

If in addition the only nontrivial term of the Maurer-Cartan element
$\fm$ is $\fm_{1,0}$, then equation~\eqref{eq:MCdIBL} specializes
further to the two equations
\begin{equation}\label{eq:discMC}
  \fp_{1,1,0}\fm_{1,0}+\frac{1}{2}\fp_{2,1,0}(\fm_{1,0}\otimes\fm_{1,0})=0
\quad \text{and} \quad
  \fp_{1,2,0}\fm_{1,0}=0.
\end{equation}
In this case to the corresponding twisted $\dIBL$ structure is given by
\begin{equation}\label{eq:dIBLtwist}
  (\fp_{1,1,0}^\fm=\fp_{1,1,0}+\fp_{2,1,0}(\fm_{1,0},\cdot),\,\fp_{2,1,0},\,\fp_{1,2,0}).
\end{equation}

\subsection{Relation to differential Weyl algebras}\label{ss:Weyl}

In this section we will explain the relation between the $\IBL_\infty$
formalism and the formalism of differential Weyl algebras from
symplectic field theory (SFT)~\cite{EGH}. All statements are 
proved in~\cite{Cieliebak-Fukaya-Latschev}.

{\bf Objects.} 
Fix a ground field $\R$ and an index set $\PP$ (which
corresponds to the set of periodic orbits in SFT). 
Consider the Weyl algebra $\W$ of power series in variables 
$\{p_\gamma\}_ {\gamma\in \PP}$ and $\hbar$ with coefficients polynomial
over $\R$ in variables $\{q_\gamma\}_{\gamma\in \PP}$. 
Each variable comes with an integer grading, and we assume that
$$
|p_\gamma| + |q_\gamma| = |\hbar| = 2d
$$
for some integer d and all $\gamma\in \PP$. 
The algebra $\W$ comes equipped with an associative product $\star$ in which all variables commute
according to their grading except for $p_\gamma$ and $q_\gamma$
corresponding to the same index $\gamma$, for which we have
$$
p_\gamma \star q_\gamma - (-1)^{|p_\gamma||q_\gamma|}q_\gamma \star
p_\gamma = \kappa_\gamma \hbar
$$
for some integers $\kappa_\gamma \geq 1$ (which correspond to the
multiplicities of periodic orbits in SFT).
A homogeneous element $\HHH\in\frac{1}{\hbar}\W$ of degree $-1$
satisfying 
\begin{equation}\label{eq:weyl1}
\HHH\star\HHH = 0
\end{equation}
is called a {\em Hamiltonian}, and the pair $(\W,\HHH)$ is called a {\em
   differential Weyl algebra of degree $d$}. Indeed, the commutator
with $\HHH$ is then a derivation of $(\W,\star)$ of square 0.
We will impose two further restrictions on our Hamiltonians $\HHH$,
namely
\begin{equation}\label{eq:H-exact}
\HHH|_{p=0}=0 \quad \text{\rm and} \quad \HHH|_{q=0}=0.
\end{equation}
Then $\HHH$ can be expanded as
\begin{equation}\label{formula:H}
   \HHH = \sum_{k,\ell \geq 1, g \geq 0} H_{k,\ell,g} \hbar^{g-1},
\end{equation}
where $H_{k,\ell,g}$ is the part of the coefficient of $\hbar^{g-1}$
which has degree $k$ in the $p$'s and degree $\ell$ in the $q$'s.

Consider now the free $\R$-module $C$ generated by the elements
$q_\gamma$ for $\gamma\in\PP$, and graded by the
degrees $\deg(q_\gamma):=|q_\gamma|+1$. Then $EC=\bigoplus_{k\geq 1}E_kC$ is the non-unital commutative
algebra of polynomials in the variables $\{q_\gamma\}_{\gamma \in
  \PP}$ without constant terms. 
We can represent $\W$ as differential operators acting on the left on
$EC\{\hbar\}$ by the replacements
$$
   p_\gamma\longrightarrow\hbar\kappa_\gamma
\overrightarrow{\frac \p {\p q_{\gamma}}}.
$$
Then the Hamiltonian $\HHH$ determines operations
\begin{equation}\label{eq:H-p}
   \fp_{k,\ell,g} := \frac 1 {\hbar^k}
   \overrightarrow{H_{k,\ell,g}}: E_kC \to E_\ell C.
\end{equation}
The fact that the coefficients of $\HHH$ are polynomial in the
$q_\gamma$'s translates into
\begin{equation}\label{eq:p-finite}
\begin{aligned}
   &\text{Given $k\geq 1$, $g\geq 0$ and $a\in E_kC$, the term
   $\fp_{k,\ell,g}(a)$} \cr 
   &\text{is nonzero for only finitely many $\ell\geq 1$.}
\end{aligned}
\end{equation}
Conversely, $\HHH$ can be recovered from the operations $\fp_{k,\ell,g}$
by 
\begin{equation}\label{eq:p-H}
   H_{k,\ell,g} = \sum_{\gamma_1,\dots,\gamma_k\in\PP}\frac{1}
   {\kappa_{\gamma_1}\cdots \kappa_{\gamma_k}}
    \fp_{k,\ell,g}(q_{\gamma_1}\cdots q_{\gamma_k})p_{\gamma_1}\cdots
    p_{\gamma_k}. 
\end{equation}

\begin{prop}[\cite{Cieliebak-Fukaya-Latschev}]\label{prop:IBLWeyl}
Equations~\eqref{eq:H-p} and~\eqref{eq:p-H} define a one-to-one
correspondence between differential Weyl algebras
satisfying~\eqref{eq:H-exact} and $\IBL_\infty$-algebras
satisfying~\eqref{eq:p-finite} (both of degree $d$).    
\end{prop}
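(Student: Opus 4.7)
The plan is to verify the proposition in two stages: first, check that \eqref{eq:H-p} and \eqref{eq:p-H} define mutually inverse bijections between the two classes of underlying data; second, show that the Hamiltonian condition $\HHH\star\HHH=0$ translates into the $\IBL_\infty$ quadratic relation $\wh\fp\circ\wh\fp=0$.

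For the first stage, I would establish invertibility by direct formal computation. Substituting \eqref{eq:p-H} into \eqref{eq:H-p} and applying the resulting differential operator to a test monomial $q_{\gamma_1}\cdots q_{\gamma_k}$, the prefactor $1/(\hbar^k\kappa_{\gamma_1}\cdots\kappa_{\gamma_k})$ cancels against the factors $\hbar\kappa_\gamma$ produced by each replacement $p_\gamma\mapsto \hbar\kappa_\gamma\,\partial/\partial q_\gamma$, and the sum over orderings of the indices recovers $\fp_{k,\ell,g}$ (which is $S_k$-equivariant by construction). The normalisation conditions \eqref{eq:H-exact} correspond exactly to the restriction $k,\ell\ge 1$ in the indices of $\fp_{k,\ell,g}$, the degree formula $|\fp_{k,\ell,g}|=-2d(k+g-1)-1$ follows from $|p_\gamma|+|q_\gamma|=|\hbar|=2d$ together with $|\HHH|=-1$, and polynomiality of $\HHH$ in the $q$-variables is literally the finiteness condition \eqref{eq:p-finite}.

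For the second stage, the key observation is that the left action of $\W$ on $EC\{\hbar\}$ by differential operators is faithful, so $\HHH\star\HHH=0$ is equivalent to $\overrightarrow{\HHH}\circ\overrightarrow{\HHH}=0$ as an operator on $EC\{\hbar\}$. Expanding this composition via the Leibniz rule, each derivative $\partial/\partial q_\gamma$ arising from one copy of $\HHH$ either passes the other copy without hitting a $q$-factor there (producing the ``disjoint'' summand in $\wh\fp\circ\wh\fp$) or contracts with one such factor (producing the ``glued'' summand). Sorting both sides by total $p$-degree, total $q$-degree, and power of $\hbar$ then identifies each triple-graded component of $\overrightarrow{\HHH}\circ\overrightarrow{\HHH}=0$ with the component of $\wh\fp\circ\wh\fp=0$ indexed by $(k,\ell,g)$.

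The hard part will be sign bookkeeping. Each graded Weyl commutator $p_\gamma\star q_\gamma-(-1)^{|p_\gamma||q_\gamma|}q_\gamma\star p_\gamma=\kappa_\gamma\hbar$ produces Koszul signs when $p$'s are pushed past $q$'s of different indices, and these must match the signs built into the $S_k$-action on $E_kC$ implicit in the extensions $\wh\fp_{k,\ell,g}$. The interplay between shifted and unshifted degree conventions ($|c|=\deg c-1$), combined with the fact that $\HHH$ itself is odd, makes the matching delicate. A clean strategy is to derive every sign from a single Koszul rule by working throughout in a super-commutative polynomial algebra with formal odd/even variables of the appropriate degrees, and then to proceed by induction on the $p$-degree (equivalently, the order of the differential operator $\overrightarrow{H_{k,\ell,g}}$) to close the argument.
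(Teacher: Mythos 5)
The paper does not contain a proof of this proposition: as stated at the beginning of \S\ref{ss:Weyl}, all statements there are quoted from~\cite{Cieliebak-Fukaya-Latschev}, so there is no in-paper argument to compare against. Your outline is the standard one and essentially reproduces the cited proof: equations~\eqref{eq:H-p} and~\eqref{eq:p-H} are inverse coordinate dictionaries (with~\eqref{eq:H-exact} corresponding to $k,\ell\geq 1$, polynomiality in the $q$'s to~\eqref{eq:p-finite}, and the degree formula coming from $|p_\gamma|+|q_\gamma|=|\hbar|=2d$ and $|\HHH|=-1$), and since $p_\gamma\mapsto\hbar\kappa_\gamma\,\p/\p q_\gamma$ is a homomorphism for the $\star$-product, $\HHH\star\HHH=0$ gives $\wh\fp\circ\wh\fp=0$, with the converse by reading off coefficients from the action.

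One point to tighten is your appeal to ``faithfulness of the left action of $\W$ on $EC\{\hbar\}$'': $EC$ has no unit, so a general element of $\W$ does not even act there (e.g.\ $p_\gamma$ applied to $q_\gamma$ produces a constant). What saves the argument is~\eqref{eq:H-exact}: every monomial of $\HHH$ contains at least one $q$ and one $p$, and since the $\star$-contractions only pair $p$'s of the left factor with $q$'s of the right factor, the same holds for every monomial of $\HHH\star\HHH$; on that subspace the action preserves $EC\{\hbar\}$ and its coefficients are recovered by applying to test monomials $q_{\gamma_1}\cdots q_{\gamma_k}$, so injectivity holds where you need it. With that remark your plan is sound, and the sign bookkeeping you flag is exactly what the shifted-degree conventions of \S\ref{ss:gradedvect} (and the conventions of~\cite{Cieliebak-Fukaya-Latschev}) are designed to absorb.
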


{\bf Morphisms.}
Suppose that $(\W^+,\HHH^+)$ and $(\W^-,\HHH^-)$ are
differential Weyl algebras of the same degree $d$ with indexing sets
$\PP^+$ and $\PP^-$. 
Let $\D$ denote the graded commutative associative
algebra of power series in the $p^+$ and $\hbar$ with coefficients
polynomial in the $q^-$. By definition, a {\em morphism} between the
differential Weyl algebras is an element $\F \in  \frac 1 {\hbar} \D$ 
satisfying 
\begin{equation}\label{eq:weyl2}
   e^{-\F} (\overrightarrow{\HHH^-}e^\F - e^\F \overleftarrow{\HHH^+}) = 0.
\end{equation}
Here $\HHH^+$ acts on $e^\F$ from the right by replacing each
$q^+_{\gamma}$ by $\hbar\kappa_\gamma \overleftarrow{\frac \p {\p
p^+_{\gamma}}}$, and the expression is to be viewed as an equality of
elements of $\frac 1 {\hbar} \D$. Again, we impose the additional
condition that 
\begin{equation}\label{eq:F-exact}
   \F|_{p^+=0}=0 \quad \text{\rm and} \quad \F|_{q^-=0}=0.
\end{equation}

As with $\HHH$ above, we expand $\F$ as
\begin{equation}\label{formula:F}
    \F = \sum_{k,\ell,g} F_{k,\ell,g} \hbar^{g-1},
\end{equation}
and define operators $\overrightarrow{F_{k,\ell,g}}:E_kC^+ \to
E_\ell C^-$ by substituting $p_\gamma^+$ by $\hbar\kappa_\gamma
\overrightarrow{\frac \p {\p q_{\gamma}^+}}$. In this way, we get maps 
\begin{equation}\label{eq:F-f}
   \ff_{k,\ell,g}:=\frac 1
   {\hbar^k}\overrightarrow{F_{k,\ell,g}}:E_kC^+ \to E_\ell C^-. 
\end{equation}
satisfying the condition
\begin{equation}\label{eq:f-finite}
\begin{aligned}
   &\text{Given $k\geq 1$, $g\geq 0$ and $a\in E_kC^+$, the term
   $\ff_{k,\ell,g}(a)$} \cr 
   &\text{is nonzero for only finitely many $\ell\geq 1$.}
\end{aligned}
\end{equation}
Again, $\F$ can be recovered from the operations $\ff_{k,\ell,g}$
by 
\begin{equation}\label{eq:f-F}
   F_{k,\ell,g} = \sum_{\gamma_1,\dots,\gamma_k\in\PP^+}\frac{1}
   {\kappa_{\gamma_1}\cdots \kappa_{\gamma_k}}
    \ff_{k,\ell,g}(q_{\gamma_1}^+\cdots q_{\gamma_k}^+)p_{\gamma_1}^+\cdots
    p_{\gamma_k}^+. 
\end{equation}

\begin{prop}[\cite{Cieliebak-Fukaya-Latschev}]
Equations~\eqref{eq:F-f} and~\eqref{eq:f-F} define a one-to-one
correspondence between morphisms of differential Weyl algebras
satisfying~\eqref{eq:F-exact} and morphisms of $\IBL_\infty$-algebras
satisfying~\eqref{eq:f-finite}.    
\end{prop}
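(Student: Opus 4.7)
The proof plan is to proceed in close analogy with the objects case, namely the preceding Proposition~\ref{prop:IBLWeyl}. First I would verify that equations~\eqref{eq:F-f} and~\eqref{eq:f-F} are formally inverse to each other at the level of data, before any morphism equation is imposed. Both equations are reorganizations of the same combinatorial information: a power series $\F$ in the $p^+$-variables and $\hbar$ with coefficients polynomial in the $q^-$-variables encodes exactly the same data as a collection of multilinear maps $E_k C^+ \to E_\ell C^-$, once the correspondence between a monomial $p^+_{\gamma_1}\cdots p^+_{\gamma_k}$ and an element dual to $q^+_{\gamma_1}\cdots q^+_{\gamma_k}\in E_k C^+$ is fixed. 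The degree condition $|\ff_{k,\ell,g}|=-2d(k+g-1)$ matches the condition that $\F\in\tfrac{1}{\hbar}\D$ be of degree $-1$; the exactness condition~\eqref{eq:F-exact} corresponds to the requirement $k,\ell\geq 1$; and the polynomiality of $\F$ in the $q^-$-variables corresponds to the finiteness condition~\eqref{eq:f-finite}.

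Next I would show that, under this bijection, the Weyl morphism equation~\eqref{eq:weyl2} translates term by term into the $\IBL_\infty$ morphism equation~\eqref{eq:mor2}. The strategy is to interpret both sides of~\eqref{eq:weyl2} as differential operators acting on a suitable polynomial-type space using the Weyl commutation relations $p_\gamma\star q_\gamma-(-1)^{|p_\gamma||q_\gamma|}q_\gamma\star p_\gamma=\kappa_\gamma\hbar$. Under this dictionary, the exponential $e^\F$ encodes all the multi-strand contractions by products of the $\ff_{k,\ell,g}$, while the right action of $\HHH^+$ on $e^\F$ and the left action of $\HHH^-$ on $e^\F$ correspond to the compositions $e^{\wh\ff}\circ \wh\fp^+$ and $\wh\fp^-\circ e^{\wh\ff}$ respectively, via the same substitution rules that underlie Proposition~\ref{prop:IBLWeyl}. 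Equation~\eqref{eq:weyl2}, stripped of the bookkeeping conjugation by $e^{-\F}$, thus becomes equation~\eqref{eq:mor2} coefficient by coefficient in the $p^+$- and $q^-$-monomials and powers of $\hbar$.

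The main obstacle is the sign and combinatorial bookkeeping rather than any conceptual difficulty. When both sides of~\eqref{eq:weyl2} are expanded, one obtains large sums indexed by the ways a differential operator coming from $\HHH^\pm$ can act on a product of copies of $\F$; these must be matched with the partial compositions of the $\fp^\pm_{k,\ell,g}$ against tensor products of the $\ff_{k',\ell',g'}$ appearing in~\eqref{eq:mor2}. This matching is essentially the same combinatorial exercise as in the proof of Proposition~\ref{prop:IBLWeyl}, but complicated by the fact that two Weyl algebras appear simultaneously and that $e^\F$ enters in addition to $\HHH^\pm$; the exactness conditions~\eqref{eq:F-exact} are used here to guarantee that no boundary terms involving the missing $k=0$ or $\ell=0$ strata intrude. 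The hypotheses~\eqref{eq:f-finite} ensure that the composition sums and generating series are well-defined in the appropriate completions, so no convergence issues arise, and bijectivity at the level of data established in the first step, combined with the coefficient-by-coefficient matching, yields the claimed one-to-one correspondence between morphisms.
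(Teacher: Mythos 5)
The paper does not prove this proposition at all: it is quoted verbatim from~\cite{Cieliebak-Fukaya-Latschev} (the paragraph opening \S\ref{ss:Weyl} states that all results there are proved in that reference), so there is no in-paper argument to compare against. Your outline follows the same dictionary strategy used there and in the objects case (Proposition~\ref{prop:IBLWeyl}): equations~\eqref{eq:F-f} and~\eqref{eq:f-F} are mutually inverse repackagings of the same data, with~\eqref{eq:F-exact} corresponding to $k,\ell\geq 1$ and polynomiality in the $q^-$ corresponding to~\eqref{eq:f-finite}, and the operator representation $p_\gamma^+\mapsto\hbar\kappa_\gamma\,\p/\p q^+_\gamma$ turns~\eqref{eq:weyl2} coefficient by coefficient into~\eqref{eq:mor2}; this is the correct and intended route, with the sign and combinatorial bookkeeping (which you defer) being the actual content. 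One small correction: a morphism $\F$ is an \emph{even} element, of total degree $0$, not $-1$ as you wrote; the degree $-1$ belongs to the Hamiltonians $\HHH^\pm$, and this parity difference is exactly what the shift from $|\fp_{k,\ell,g}|=-2d(k+g-1)-1$ to $|\ff_{k,\ell,g}|=-2d(k+g-1)$ reflects. Also note that the prefactor $e^{-\F}$ in~\eqref{eq:weyl2} is not purely cosmetic: it is what makes the left-hand side a well-defined element of $\frac1\hbar\D$, even though it does not affect the vanishing condition itself.
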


{\bf Homotopies.}
Let $C^\infty([0,1])$ be the algebra of smooth functions $[0,1]\to\R$
of a variable $t\in[0,1]$. 

\begin{definition}\label{def:homotWeyl}
Two Weyl algebra morphisms $\F_0,\F_1:(\W^+,\HHH^+) \to (\W^-,\HHH^-)$ are
called homotopic if there exist $\F(q^-,t,p^+),\K(q^-,t,p^+)\in \frac
1 {\hbar} \D\otimes C^\infty([0,1])$ such that
$$
\F(q^-,0,p^+) = \F_0(q^-,p^+), \qquad 
\F(q^-,1,p^+) = \F_1(q^-,p^+)
$$
and 
\begin{equation}\label{eq:BVhomot}
0 = \frac {\p}{\p t}e^{\F(t)}  - \overrightarrow{[\HHH^-,\K(t)]}
e^{\F(t)} - e^{\F(t)}\overleftarrow{[\K(t),\HHH^+]}.
\end{equation}
\end{definition}

In~\cite[Section~7]{Cieliebak-Fukaya-Latschev} this definition and the
following proposition and remark are stated with $\F,\K$ in $\frac 1 {\hbar}
\D[t]$ instead of $\frac 1 {\hbar} \D\otimes C^\infty([0,1])$, but everything 
goes through verbatim if we replace polynomials in $t$ by smooth functions 
in $t$.

\begin{prop}[\cite{Cieliebak-Fukaya-Latschev}]\label{prop:IBLWeylehomot}
Consider two differential Weyl algebras 
$(\W^+,\HHH^+)$ and $(\W^-,\HHH^-)$, 
and denote by $(C^+,\{\fp_{k,\ell,g}^{+}\})$ and
$(C^-,\{\fp_{k,\ell,g}^{-}\})$ 
the corresponding $\IBL_\infty$-algebras, respectively.
Let $\F_0,\F_1:(\W^+,\HHH^+) \to (\W^-,\HHH^-)$ be 
Weyl algebra morphisms and denote by
$\frak f^{(0)} = \{\frak f^{(0)}_{k,\ell,g}\}$ and $\frak f^{(1)} =
\{\frak f^{(1)}_{k,\ell,g}\}$ the corresponding morphisms
$(C^+,\{\fp_{k,\ell,g}^{+}\}) \to (C^-,\{\fp_{k,\ell,g}^{-}\})$ of
$\IBL_\infty$-algebras, respectively. 
Then $\F_0$ is homotopic to $\F_1$ if and only if 
$\frak f^{(0)}$ is homotopic to $\frak f^{(1)}$. 
\end{prop}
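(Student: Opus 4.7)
The plan is to extend the Weyl-to-$\IBL_\infty$ dictionary established for objects (Proposition~\ref{prop:IBLWeyl}) and for morphisms (via~\eqref{eq:F-f} and~\eqref{eq:f-F}) to the level of homotopies. The key observation making this possible is that the replacements $p^+_\gamma \mapsto \hbar\kappa_\gamma\overrightarrow{\p/\p q^+_\gamma}$ used to build $\ff_{k,\ell,g}$ from $F_{k,\ell,g}$ are purely algebraic in the variables $p^\pm_\gamma, q^\pm_\gamma, \hbar$; they commute with $\p/\p t$ and with multiplication by smooth functions of $t$. Thus working with coefficients in $C^\infty([0,1])$ on the Weyl side is tantamount to working with smooth one-parameter families on the $\IBL_\infty$ side.

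First I would associate to any $\K(q^-,t,p^+) \in \frac{1}{\hbar}\D\otimes C^\infty([0,1])$, expanded as $\K=\sum_{k,\ell,g} K_{k,\ell,g}\hbar^{g-1}$, a smooth one-parameter family of operators
$$
   \mathfrak{h}_{k,\ell,g}(t) := \frac{1}{\hbar^k}\overrightarrow{K_{k,\ell,g}(t)}: E_kC^+ \to E_\ell C^-
$$
by the same prescription as for $\ff$ in~\eqref{eq:F-f}. The degree of $\K$ as a homogeneous element of $\frac{1}{\hbar}\D$ forces $\mathfrak{h}_{k,\ell,g}(t)$ to sit in degree one below a morphism component, which is exactly the degree prescribed for the building blocks of an $\IBL_\infty$-homotopy in~\cite{Cieliebak-Fukaya-Latschev}. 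Similarly, $\F(q^-,t,p^+)$ produces a smooth family $\ff(t)=\{\ff_{k,\ell,g}(t)\}$ of morphism components, with $\ff(0)=\frak f^{(0)}$ and $\ff(1)=\frak f^{(1)}$ inherited from the boundary conditions on $\F$.

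Next I would translate equation~\eqref{eq:BVhomot} term by term. The derivative $\p_t e^{\F(t)}$ corresponds directly to $\p_t e^{\ff(t)}$ in the appropriate completion of $EC^-$. The expressions $\overrightarrow{[\HHH^-,\K(t)]}\,e^{\F(t)}$ and $e^{\F(t)}\overleftarrow{[\K(t),\HHH^+]}$ are governed by exactly the same Wick-contraction combinatorics used in the proof that~\eqref{eq:weyl2} is equivalent to~\eqref{eq:mor2}: each $q^-$-derivative in $\HHH^-$ contracts either with a $q^-$ in $\K$ or with a $q^-$ in $e^{\F(t)}$, and symmetrically each $p^+$-derivative in $\HHH^+$ contracts with a $p^+$ on the right. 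These contraction patterns organize, via~\eqref{eq:H-p} and the family $\mathfrak{h}(t)$, into precisely the two natural compositions between the structure maps $\fp^\pm_{k,\ell,g}$ and the family $\mathfrak{h}(t)$, each inserted once into the exponential $e^{\ff(t)}$. Once signs and the combinatorial factors $\kappa_\gamma$ are matched as in the proof of the morphism correspondence, equation~\eqref{eq:BVhomot} reads exactly as the defining identity of an $\IBL_\infty$-homotopy between $\frak f^{(0)}$ and $\frak f^{(1)}$. The converse direction is obtained by reading the same formulas backwards: given an $\IBL_\infty$-homotopy $(\ff(t),\mathfrak{h}(t))$, reconstruct $\F(t)$ and $\K(t)$ by the analogues of~\eqref{eq:f-F} applied coefficient-wise in $t$, and check via the dictionary that the $\IBL_\infty$-homotopy equation implies~\eqref{eq:BVhomot}.

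The main obstacle is the combinatorial bookkeeping in the translation step: one must track carefully how the single factor of $\K$ inside each commutator $[\HHH^\pm,\K(t)]$ interacts with the arbitrarily many insertions produced by expanding $e^{\F(t)}$, and identify each resulting contraction pattern with one of the composition operations assembled in the $\IBL_\infty$-homotopy equation. This is the same bookkeeping already performed for the morphism correspondence, now enriched by the derivative $\p_t$ and the distinguished role of the single $\K$-insertion; the degree calculation and the polynomiality conditions inherited from~\eqref{eq:F-exact} and~\eqref{eq:f-finite} for the paths $\F(t),\K(t)$ and $\ff(t),\mathfrak{h}(t)$ are then routine.
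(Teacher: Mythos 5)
Your proposal is sound, but note that the paper does not actually prove this proposition: it is quoted from \cite{Cieliebak-Fukaya-Latschev}, and the only argument supplied here is the remark preceding the statement, namely that in \cite[Section~7]{Cieliebak-Fukaya-Latschev} the definition and the proposition are formulated with $\F,\K\in\frac{1}{\hbar}\D[t]$ and that everything goes through verbatim once polynomials in $t$ are replaced by smooth functions of $t$. What you sketch --- extending the dictionary \eqref{eq:H-p}/\eqref{eq:p-H} and \eqref{eq:F-f}/\eqref{eq:f-F} to the homotopy datum $\K$, translating \eqref{eq:BVhomot} term by term via the same Wick-contraction bookkeeping that identifies \eqref{eq:weyl2} with \eqref{eq:mor2}, and reading the formulas backwards for the converse --- is essentially a reconstruction of the proof in \cite{Cieliebak-Fukaya-Latschev}, and your opening observation that the substitutions $p^+_\gamma\mapsto\hbar\kappa_\gamma\overrightarrow{\p/\p q^+_\gamma}$ commute with $\p_t$ and with multiplication by functions of $t$ is exactly the point that justifies the paper's ``verbatim'' claim for smooth coefficients. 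The one caveat is that your argument tacitly identifies the defining equation of an $\IBL_\infty$-homotopy with the arrow-translation of \eqref{eq:BVhomot}; since the paper never recalls the $\IBL_\infty$-side definition, the ``if and only if'' is only established once you check that homotopy of $\IBL_\infty$-morphisms in \cite{Cieliebak-Fukaya-Latschev} is indeed given by a family $(\ff(t),\mathfrak{h}(t))$ satisfying the translated equation (it is, and this is how the gauge-equivalence criterion in \S\ref{ss:gaugegen} is derived), rather than by an a priori different path-object construction --- otherwise an additional comparison step would be missing from your argument.
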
 

\begin{remark}[\cite{Cieliebak-Fukaya-Latschev}]\label{rem:homotWeyl}
In the situation of Definition~\ref{def:homotWeyl},
equation~\eqref{eq:BVhomot} together with the fact that $\F(0)$ is a
morphism implies that $\F(t)$ is a morphism for all $t\in[0,1]$. 
\end{remark}

\subsection{Gauge equivalence}\label{ss:gaugegen} 

Let $(C,\fp=\{\fp_{k,\ell,g}\})$ be an $\IBL_\infty$-algebra. 
Two Maurer-Cartan (MC) elements $\fm_0$ and $\fm_1$ on $(C,\fp)$ are
called {\em gauge equivalent} if they are homotopic as morphisms
$\R\to C$, where $\R$ denotes the trivial $\IBL_\infty$-algebra.
The purpose of this subsection is to derive from the differential Weyl
algebra formalism of~\S\ref{ss:Weyl} a workable criterion for gauge
equivalence of two MC elements connected by a smooth path of MC elements. 

Let $\{q_\gamma\}_{\gamma\in \PP}$ be a homogeneous basis of $C$. 
Let $(\WW,\mathbb{H})$ be the differential Weyl algebra corresponding
to $(C,\fp)$ via Proposition~\ref{prop:IBLWeyl}, where we define the
$\star$-product in $\WW$ using multiplicities $k_\gamma=1$
for all $\gamma\in \PP$. 
We denote the trivial Weyl algebra with 
$\PP=\emptyset$ and $\mathbb{H}=0$ by $(\R,0)$. 
We specify Definition~\ref{def:homotWeyl} to 
$$
  (\WW^+,\mathbb{H}^+):=
  (\R,0),\quad (\WW^-,\mathbb{H}^-):=(\WW,\mathbb{H})
$$
and drop all superscripts ``$-$'' from the
notation. Equation~\eqref{eq:BVhomot} then simplifies to 
\begin{equation}\label{eq:MCWeyl}
\ddt e^{\F(t)}=\overrightarrow{[\mathbb{H},\mathbb{K}(t)]}e^{\F(t)}
\end{equation}
In view of Remark~\ref{rem:homotWeyl}, we may assume that each $\F(t)$
is a morphism from $(R,0)$ to $(\WW,\mathbb{H})$. 
Then we can rewrite the right hand side of \eqref{eq:MCWeyl} as
$$
\overrightarrow{[\mathbb{H},\mathbb{K}(t)]}e^{\F(t)}=
\overrightarrow{\mathbb{H}}(\mathbb{K}(t)e^{\F(t)})-\mathbb{K}(t)\overrightarrow{\mathbb{H}}
e^{F(t)}=\overrightarrow{\mathbb{H}}(\mathbb{K}(t)e^{\F(t)}).
$$
Using this and $\overrightarrow{\mathbb{H}}=\wh\fp$, taking the
``upper right arrow'' of \eqref{eq:MCWeyl} yields 
$$
\ddt e^{\overrightarrow{\F(t)}}=\wh \fp\circ (\overrightarrow{\mathbb{K}(t)}\odot 
e^{\overrightarrow{\F(t)}}).
$$
Since $\F(t)$ is a morphism from the trivial Weyl algebra, the corresponding 
$$
\fm_t:=\overrightarrow{F(t)}
$$
is a MC element for on $\IBL_\infty$ algebra $(C,\fp)$. Setting 
$$
\bbb_t:=\overrightarrow{\mathbb{K}(t)}
$$
gives us the equation
\begin{equation}\label{eq:gaugegen}
\ddt e^{\fm_t}=\wh\fp(\bbb_te^{\fm_t}).
\end{equation}
We can further simplify this equation using the fact 
that each $\fm_t$ is a MC element. Let 
$[\wh\fp(\bbb_te^{\fm_t})]_{conn}$ denote the connected 
part of $\wh\fp(\bbb_te^{\fm_t})$. Then the right
hand side of~\eqref{eq:gaugegen} transforms as follows:
$$
\wh\fp(\bbb_te^{\fm_t})
=[\wh\fp(\bbb_te^{\fm_t})]_{conn}e^{\fm_t}
+\bbb_t\wh\fp(e^{\fm_t})
=[\wh\fp(\bbb_te^{\fm_t})]_{conn}e^{\fm_t},
$$
where the last equality uses the MC property of $\fm_t$.
The left hand side of~\eqref{eq:gaugegen}
transforms as follows:
$$
\ddt e^{\fm_t}=\left(\ddt\fm_t\right)e^{\fm_t}.
$$
Therefore, equation~\eqref{eq:gaugegen} is equivalent to 
\begin{equation}\label{eq:gaugesimpl}
\ddt \fm_t=[\wh\fp(\bbb_te^{\fm_t})]_{conn}.
\end{equation}
In the IBL case, this specializes to the following statement.

\begin{lemma}\label{lem:gaugeIBLinfty}
Assume that $\fp=(\fp_{2,1,0},\,\fp_{1,2,0})$ is an $\IBL$ structure on $C$. Let
$\{\fm_t\}_{t\in[0,1]}$ be a smooth family of Maurer-Cartan elements for $\fp$. Then  
all the $\fm_t$ are gauge equivalent to each other if
there exists a smooth family $\{\bbb_t\}_{t\in[0,1]}$, with
$\bbb_t=\{\bbb_{(\ell,g)t}\}_{\ell\ge 1,g\ge 0}$ and
$\bbb_{(\ell,g)t}\in E_\ell C$, such that the following equation holds:
\begin{equation}\label{eq:gaugeIBL} 
\ddt \fm_t=\wh \fp_{2,1,0}\bbb_t+\wh \fp_{2,1,0}(\fm_t,\bbb_t)|_{\conn}+\wh \fp_{120}\bbb_t.
\end{equation}
\end{lemma}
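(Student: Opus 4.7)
My plan is to reduce the lemma to equation~\eqref{eq:gaugesimpl}, which was derived just above by translating the Weyl-algebra homotopy condition~\eqref{eq:BVhomot} through the correspondence of~\S\ref{ss:Weyl}. Concretely, I would let $(\W,\HHH)$ be the differential Weyl algebra corresponding to $(C,\fp)$ under Proposition~\ref{prop:IBLWeyl} (with all multiplicities $\kappa_\gamma=1$), define the smooth path $\F(t)\in\frac{1}{\hbar}\D\otimes C^\infty([0,1])$ corresponding to $\fm_t$ via formula~\eqref{formula:F} (each $\F(t)$ being a morphism $(\R,0)\to(\W,\HHH)$ since $\fm_t$ is Maurer-Cartan), and analogously define $\K(t)$ from $\bbb_t$. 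After verifying that~\eqref{eq:BVhomot} holds for this pair, I would invoke Proposition~\ref{prop:IBLWeylehomot} to conclude that $\F(0)$ and $\F(1)$ are homotopic, and thus $\fm_0$ and $\fm_1$ are gauge equivalent; applying this argument to sub-intervals of $[0,1]$ then yields gauge equivalence between any two $\fm_t$.

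By the derivation culminating in~\eqref{eq:gaugesimpl}, verifying~\eqref{eq:BVhomot} for this choice of $\F(t),\K(t)$ amounts to checking
$$\ddt\fm_t=[\wh\fp(\bbb_t e^{\fm_t})]_{\conn}.$$
The core step is to show that, when $\fp$ has only the two nonzero operations $\fp_{2,1,0}$ and $\fp_{1,2,0}$, this right-hand side equals that of~\eqref{eq:gaugeIBL}. I would expand
$$\wh\fp(\bbb_t e^{\fm_t})=\wh\fp_{2,1,0}(\bbb_t e^{\fm_t})+\wh\fp_{1,2,0}(\bbb_t e^{\fm_t})$$
and enumerate which factors of $\bbb_t e^{\fm_t}$ each operation can hit. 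A contribution is connected, i.e.~survives after dividing out the $e^{\fm_t}$ prefactor as in the derivation of~\eqref{eq:gaugesimpl}, precisely when the operation touches at least one factor of $\bbb_t$. This gives $\wh\fp_{1,2,0}\bbb_t$ from $\fp_{1,2,0}$ acting on a factor of $\bbb_t$, plus two contributions from $\fp_{2,1,0}$: one in which both inputs lie in $\bbb_t$ (giving $\wh\fp_{2,1,0}\bbb_t$), and one in which one input lies in $\bbb_t$ while the other is drawn from a copy of $\fm_t$ in the expansion of $e^{\fm_t}$ (giving $\wh\fp_{2,1,0}(\fm_t,\bbb_t)|_{\conn}$). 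All remaining contributions (where $\fp_{2,1,0}$ acts on two factors of $\fm_t$) are absorbed into the $e^{\fm_t}$ prefactor.

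The main obstacle will be the combinatorial bookkeeping in the mixed term where $\fp_{2,1,0}$ pairs a factor of $\bbb_t$ with a factor pulled out of $e^{\fm_t}$: one has to verify that the combinatorial weights for selecting which copy of $\fm_t$ to use, together with the Koszul signs arising from permuting graded factors past each other, assemble exactly into the operation $\wh\fp_{2,1,0}(\fm_t,\bbb_t)|_{\conn}$ as defined in~\S\ref{ss:IBL-basic}. Once this identification is secured, the chain of reductions above yields the conclusion.
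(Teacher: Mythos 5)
Your proposal is correct and follows essentially the same route as the paper: the paper's proof simply observes that in the IBL case $[\wh\fp(\bbb_te^{\fm_t})]_{\conn}=\wh\fp_{2,1,0}\bbb_t+\wh\fp_{2,1,0}(\fm_t,\bbb_t)|_{\conn}+\wh\fp_{1,2,0}\bbb_t$ and invokes equation~\eqref{eq:gaugesimpl}, which is exactly the reduction through the Weyl-algebra homotopy condition~\eqref{eq:BVhomot} that you reconstruct. Your extra step of explicitly building $\F(t),\K(t)$ and citing Proposition~\ref{prop:IBLWeylehomot} just makes visible what the paper already carried out in the derivation preceding the lemma.
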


\begin{proof}
Observe that in the IBL case 
$$
[\wh\fp(\bbb_te^{\fm_t})]_{\conn}=
\wh \fp_{2,1,0}\bbb_t+\wh \fp_{2,1,0}(\fm_t,\bbb_t)|_{\conn}+\wh \fp_{120}\bbb_t
$$
and use equation~\eqref{eq:gaugesimpl}.
\end{proof}

Note the similarity of equation~\eqref{eq:gaugeIBL} and
equation~\eqref{eq:MCdIBL}.

\begin{remark}\label{rem:gaugealg}
At the level $(\ell,g)$ equation~\eqref{eq:gaugeIBL} spells out
\begin{equation}\label{eq:gaugealg}
\ddt\fm_{(\ell,g)t}=\wh p_{2,1,0}\bbb_{(\ell+1,g-1)t}+\sum_{\substack{\ell_1+\ell_2=\ell+1\\ g_1+g_2=g}}
\wh \fp_{2,1,0}(\fm_{(\ell_1,g_1)t},\bbb_{(\ell_2,g_2)t})|_{\conn}+\wh \fp_{120}\bbb_{(\ell-1,g)t}.
\end{equation}
\end{remark}

\section{$\IBL_\infty$-structures associated to cochain complexes and DGAs}

In this section we recall from~\cite{Cieliebak-Fukaya-Latschev} the
canonical $\dIBL$ structure on the dual cyclic bar complex of a cylic
cochain complex, the canonical Maurer-Cartan element of a cyclic DGA,
and its pushforward to cohomology. The explicit formula~\eqref{cor:alg-MC}
for the latter will serve as a blueprint for subsequent constructions
on the de Rham complex.

\subsection{The ${\rm dIBL}$ structure associated to a cyclic cochain
  complex}\label{ss:cyc-cochain1} 

Let $(A=\bigoplus_{i=0}^nA^i,d,(\cdot,\cdot))$ be a nonnegatively
graded cyclic cochain complex of degree $n$.
To be compatible with~\cite{Cieliebak-Fukaya-Latschev}, we introduce
the {\em cyclic pairing}
\begin{equation}\label{eq:defcycpair}
  \la x,y\ra:=(-1)^{\deg x}(x,y).
\end{equation} 
Let $\{e_a\}$ be a basis of $A$, and $\{e^a\}$ the dual basis of $A$
defined by
$$
     \la e_a,e^b\ra = \delta_a^b.
$$
Recall the bar complex $BA$ and the operations $t_{alg},N_{alg}:BA\to
BA$ from~\ref{ss:gradedvect}.
We extend the differential $d$ from $A[1]$ to $BA$ as a derivation
with respect to the shifted degrees, so that it satisfies
$$
  d\circ t_{alg}=t_{alg}\circ d.
$$
We define the coproduct
\begin{equation*}
   c_{120}:BA\longrightarrow BA\otimes BA
\end{equation*}
on elementary tensors $x=x_1\otimes\cdots\otimes x_k$ of homogeneous degree by 
\begin{equation}\label{eq:defc120}
   c_{120}(x) := \sum_{k_1=0}^k
   (-1)^{|e^a||x^{(1)}|+|e_a|+(n-1)|e_ax^{(1)}|}(e_a\otimes
   x_1\otimes\cdots \otimes x_{k_1})\otimes (e^a\otimes x_{k_1+1}\otimes\cdots\otimes x_k),
\end{equation}
where we have abbreviated $x^{(1)}:=x_1\otimes\cdots\otimes x_{k_1}$.
Here and in the following the Einstein summation convention is understood. 
Analogously, we define the product 
\begin{equation*}
   c_{210}:BA\otimes BA\longrightarrow BA
\end{equation*}
on elementary tensors $x=x_1\otimes\cdots\otimes x_{k_1}$ and
$y=y_1\otimes\cdots\otimes y_{k_2}$ by
\begin{equation}\label{eq:defc210}
   c_{210}(x\otimes y):=\frac{1}{2}
   (-1)^{|e^a||x|+|e_a| + (n-1)|x|}e_a\otimes
   x_1\otimes\cdots\otimes x_{k_1}\otimes e^a\otimes y_1\otimes\cdots\otimes
   y_{k_2}. 
\end{equation}
It is straightforward to verify that the above maps
$c_{120}$ and $c_{210}$ do not depend on the chosen basis $\{e_a\}$.
They induce on the dual bar complex $B^*A$ the operations
\begin{equation}\label{eq:dIBLcanon}
\fp_{1,1,0}:=d^*,\qquad
\fp_{2,1,0}:=(c_{120}\circ N_{alg})^*,\qquad
\fp_{1,2,0}:=(c_{210}\circ N_{alg}^{\otimes 2})^*.
\end{equation}
It follows that the operations descend to the dual cyclic bar complex
$B^{\text{\rm cyc}*}A$ and we have

\begin{proposition}[{\cite{Cieliebak-Fukaya-Latschev}}]\label{prop:canondIBL}
The triple $(\fp_{1,1,0},\,\fp_{2,1,0},\,\fp_{1,2,0})$ defined in~\eqref{eq:dIBLcanon}
constitutes a ${\rm dIBL}$ structure  of degree $n-3$ on $B^{\text{\rm cyc}*}A[2-n]$.
\end{proposition}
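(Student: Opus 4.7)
The plan is to verify the dIBL-axioms for the triple $(\fp_{1,1,0},\fp_{2,1,0},\fp_{1,2,0})$ by dualizing: since the operations are obtained as linear duals of $d$, $c_{120}\circ N_{alg}$, $c_{210}\circ N_{alg}^{\otimes 2}$, each relation becomes an identity about $d$, $c_{120}$, $c_{210}$ on the cyclic bar complex $BA$. First I would check that the operations are well-defined, that is, that $c_{120}$ and $c_{210}$ descend from $BA$ to $B^{\text{cyc}}A$ after pre/post-composing with $N_{alg}$ symmetrizations. This is essentially a computation showing that the contributions involving $e_a\otimes e^a$ are cyclically invariant (using graded symmetry of $\la\cdot,\cdot\ra$); the operations on the dual then automatically respect the subcomplex of cyclically invariant functionals. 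Checking the degrees — which with the shift $[2-n]$ must come out to $|\fp_{1,1,0}|=-1$, $|\fp_{2,1,0}|=-2(n-3)-1$, $|\fp_{1,2,0}|=-1$ — reduces to bookkeeping for the pairing degree $n$ and the shifts.

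Next I would verify the symmetry properties $\fp_{2,1,0}\circ\tau_{alg}=\fp_{2,1,0}$ and $\tau_{alg}\circ\fp_{1,2,0}=\fp_{1,2,0}$. Dually, both amount to the statement that swapping the roles of $e_a$ and $e^a$ in the inserted pair leaves $c_{120}$ and $c_{210}$ unchanged; this is precisely the graded symmetry of the pairing together with the identity $\sum_a e_a\otimes e^a=(-1)^{\deg e_a\deg e^a}\sum_a e^a\otimes e_a$ in $A\otimes A$. The differential relations $\fp_{1,1,0}^2=0$, $[\fp_{1,1,0},\fp_{2,1,0}]=0$ and $[\fp_{1,1,0},\fp_{1,2,0}]=0$ follow from $d^2=0$ on $A$ and the compatibility~\eqref{eq:DiffCyclic} of $d$ with the pairing, which implies $d(e_a)\otimes e^a+(-1)^{\deg e_a}e_a\otimes d(e^a)=0$ inside $A\otimes A$.

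The core computations are the four quadratic relations: coJacobi for $c_{120}$, Jacobi for $c_{210}$, the Drinfeld-type compatibility between $c_{120}$ and $c_{210}$, and involutivity $\fp_{2,1,0}\circ\fp_{1,2,0}=0$. Each reduces to a double-insertion identity for two dual-basis pairs: composing $c_{210}$ after $c_{120}$ on a tensor $x=x_1\otimes\cdots\otimes x_k$ produces a sum of expressions of the form
\begin{equation*}
  \pm\ e_a\otimes x^{(1)}\otimes e^a\otimes e_b\otimes x^{(2)}\otimes e^b,
\end{equation*}
and one checks that the total contribution vanishes after the symmetrization by $N_{alg}^{\otimes 2}$ because of the symmetry between the two inserted pairs $(e_a,e^a)$ and $(e_b,e^b)$, combined with the relevant cyclic shuffles. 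The Jacobi and coJacobi identities have the same flavour: both sides are sums indexed by splitting patterns of a cyclic word, and one pairs cancelling terms by moving the inserted dual basis pair.

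The hard part will be the signs. All four quadratic identities are built from the same elementary operation (insert an $(e_a,e^a)$-pair, then use Koszul signs to move things around), but the sign exponents in~\eqref{eq:defc120}–\eqref{eq:defc210} are engineered so that these elementary operations assemble correctly modulo $N_{alg}$-symmetrization. I would verify one relation in full detail (say involutivity, which isolates the interaction of the two insertions most cleanly), then use the same sign bookkeeping for the remaining three, referring to~\cite{Cieliebak-Fukaya-Latschev} for the full verification. Because the maps are visibly basis-independent (as already noted after~\eqref{eq:defc210}), one may simplify sign computations by choosing a homogeneous basis and working modulo the graded-symmetric action of $S_2$ on each pair $(e_a,e^a)$.
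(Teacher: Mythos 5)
The paper itself gives no proof of this proposition: it is imported verbatim from~\cite{Cieliebak-Fukaya-Latschev} (the operations~\eqref{eq:dIBLcanon} are set up and the verification of the dIBL relations is left entirely to that reference), so there is no argument in the paper to compare yours against. Your outline is essentially the verification carried out in the cited source — dualize, check degrees and symmetries, get the relations involving $\fp_{1,1,0}$ from $d^2=0$ and closedness of the copairing $\sum_a e_a\otimes e^a$, and reduce the four quadratic relations to cancellations among double dual-basis insertions after $N_{alg}$-symmetrization — and the one hypothesis you use implicitly (perfectness of the pairing, which forces $A$ finite dimensional and makes the copairing, hence $c_{120}$ and $c_{210}$, well defined) is indeed part of the cyclicity assumption. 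Be aware, though, that as written your text is a plan rather than a proof: the descent to $B^{\text{\rm cyc}*}A$ and the sign-laden quadratic identities are precisely the content of the cited result, and you ultimately defer them back to~\cite{Cieliebak-Fukaya-Latschev}, exactly as the paper does.
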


Following~\cite{Hajek-thesis}, we denote this dIBL-algebra by 
$$
  \dIBL(A) := (B^{\text{\rm cyc}*}A[2-n],\,\fp_{1,1,0},\,\fp_{2,1,0},\,\fp_{1,2,0}).
$$

\subsection{The operations $\wh\fp_{1,1,0}$, $\wh\fp_{2,1,0}$ and
  $\wh\fp_{1,2,0}$}\label{ss:p210} 

Let $( A, d,(\cdot,\cdot))$ be a cyclic cochain complex and $\dIBL(A)$
the associated $\dIBL$ algebra. In this subsection we spell out in
this case the ``hat'' operations that appeared in the general context
in~\eqref{eq:phat}. 

The operations $c_{120}$ and $c_{210}$ on $BA$ from~\eqref{eq:defc120}
and \eqref{eq:defc210} induce operations on higher
tensor powers of $(BA)[3-n]$ by applying them to the first (resp.~first
two) tensor factors and applying the identity to all the others:
$$
   c_{120}^1:=c_{120}\otimes id^{\otimes k},\qquad c_{210}^{12}=c_{210}\otimes id^{\otimes k},
$$
with $k$ being the number extra tensor factors. Since we apply the operations 
to the first (resp.~first two) factor(s), the degree shift $[3-n]$ does not matter.
By dualizing we obtain operations on appropriate tensor powers of 
$(B^*A)[3-n]$ as follows:
\begin{gather*}
   b_{210}:=c_{120}^*,\qquad b_{210}^{12}:=c_{120}^{1*}=b_{210}\otimes id^{\otimes k},\cr
   b_{120}:=c_{210}^*,\qquad b_{120}^1:=c_{210}^{12*}=b_{120}\otimes id^{\otimes k}.
\end{gather*}
Abbreviate $C:=(B^{{\rm cyc}*}A)[2-n]$. 
Recall the operation of $N_{alg}$ on $BA$ and denote the dual operation 
on $B^*A$ by $N:=N_{alg}^*$. We extend $N$ to tensor powers of $(B^*A)[3-n]$ using 
the first (resp.~first two) tensor factor(s) again:
$$
   N^1:=N\otimes id^{\otimes k},\qquad N^{12}:=N\otimes N\otimes id^{\otimes k}.
$$
Using this, we define operations on tensor powers of $C[1]=B^{{\rm cyc}*}(A)[3-n]$ by
\begin{equation}\label{eq:p21012}
  p_{210}^{12}:=N^1\circ b_{210}^{12}=\fp_{2,1,0}\otimes\id^{\otimes k},\qquad
  p_{120}^1:=N^{12}\circ b_{120}^1 = \fp_{1,2,0}\otimes\id^{\otimes k}.
\end{equation}
We also define
$$
  p_{110}^1:=\fp_{1,1,0}\otimes id^{k}.
$$
To define the ``hat'' operations, we use the following shuffles acting on
$(B^{\rm cyc}A)[3-n]^{\otimes\ell}$ by permuting the tensor factors with signs. For
$i<j$, the shuffle $\eta_{i,j}$ moves the elements at positions $i$
and $j$ to positions $1$ and $2$, leaving the other ones in place. 
The shuffle $\eta_j$ moves the $j^{th}$ element to the first position.
Dualizing these actions, we get actions of the shuffles on the graded dual
$(B^{\rm cyc}A[3-n]^{\otimes\ell})^*$ that we denote by the same
letters. The ``hat'' operations are now given by 
\begin{equation}\label{eq:210120hat}
   \wh\fp_{1,1,0} = \sum_jp_{110}^1\circ\eta_j^{-1},\quad
   \wh\fp_{2,1,0} = \sum_{i<j}p_{210}^{12}\circ\eta_{i,j}^{-1},\quad
   \wh \fp_{1,2,0} = \sum_jp_{120}^1\circ\eta_j^{-1}.
\end{equation}
They descend to the completed symmetric product $\wh E_\ell C$ defined
in~\S\ref{ss:MC-cyc} below.


\subsection{Maurer-Cartan elements on the dual cyclic bar complex}\label{ss:MC-cyc}

As in the previous subsection, let $A$ be a cyclic cochain complex and
$$
  C:=B^{\text{\rm cyc}*}A[2-n]
$$
its dual cyclic bar complex with its canonical $\dIBL$ structure. Let
us first spell out the relevant completions in the $\IBL_\infty$-formalism in this case.
Recall from~\S\ref{ss:IBL-basic} the $k$-fold symmetric product 
$$
    E_k C = \left(C[1]\otimes \cdots \otimes C[1]\right)/S_k
    = \left(B^{\text{\rm cyc}*}A[3-n]\otimes \cdots \otimes B^{\text{\rm cyc}*}A[3-n]\right)/S_k. 
$$
The completed $k$-fold symmetric product $\wh E_k C$ is defined
in~\cite{Cieliebak-Fukaya-Latschev} as the completion of $E_kC$ with
respect to the canonical filtration on $C$ (see
Remark~\ref{rem:filtrations}). In the present case, this simply becomes
\begin{equation}\label{eq:whEk}
    \wh E_k C 
    = \left(B^{\text{\rm cyc}}A[3-n]\otimes \cdots \otimes B^{\text{\rm cyc}}A[3-n]\right)^*/S_k.
\end{equation}

In the case of a cyclic DGA we get a canonical Maurer-Cartan element:

\begin{prop}\cite[Proposition 12.5]{Cieliebak-Fukaya-Latschev}.\label{propIBLI2}
Let $(A,d,\wedge,(\ ,\ ))$ be a cyclic DGA of degree $n$. Then 
the triple intersection product $\m^\can$ in~\eqref{eq:deftriple}
defines Maurer-Cartan element $\fm^\can$ on $\dIBL(A)$ with
$(\fm^\can)_{1,0}=\m^\can$ and $(\fm^\can)_{\ell,g}=0$ for
$(\ell,g)\neq(1,0)$. 
The homology of the corresponding twisted twisted $\dIBL$-algebra 
$\dIBL^{\fm^\can}(A)$ is isomorphic to the cyclic cohomology 
$HC_{\lambda}^*(A)$.
\end{prop}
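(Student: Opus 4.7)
The proof proceeds in two parts corresponding to the two assertions.

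Part (1): Maurer--Cartan equation. Since $(\fm^\can)_{\ell,g}$ vanishes for $(\ell,g)\neq(1,0)$, the general $\dIBL$ Maurer--Cartan equation~\eqref{eq:MCdIBL} reduces via~\eqref{eq:discMC} to the two identities
\begin{equation*}
  \fp_{1,1,0}\m^\can + \tfrac{1}{2}\fp_{2,1,0}(\m^\can\otimes\m^\can)=0,
  \qquad \fp_{1,2,0}\m^\can = 0.
\end{equation*}
The second identity is automatic for length reasons: by~\eqref{eq:defc210}, the image of $c_{210}$ is supported in length $\ge 4$, so the dual $\fp_{1,2,0}$ vanishes on functionals of length $<4$, while $\m^\can$ is supported in length $3$. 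The first identity splits by length of the output: $\fp_{1,1,0}\m^\can$ lives in length $3$, whereas $\fp_{2,1,0}(\m^\can\otimes\m^\can)$ lives in length $3+3-2=4$, so the two summands must vanish independently.

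The length-$3$ vanishing $\fp_{1,1,0}\m^\can = d^*\m^\can = 0$ unfolds to a sum of three terms involving $\m^\can$ with one entry differentiated, which cancel by combining the Leibniz rule for $d$ on $\wedge$ with the pairing compatibility~\eqref{eq:DiffCyclic}. For the length-$4$ piece, I expand $\fp_{2,1,0}=(c_{120}\circ N_{alg})^*$ using~\eqref{eq:defc120}: cyclic symmetrization of $x_1\otimes x_2\otimes x_3\otimes x_4$ under $N_{alg}$ together with the length-preservation constraint forces the only surviving splittings to have the form $(e_a, y_i, y_{i+1})\otimes(e^a, y_{i+2}, y_{i+3})$ for cyclic rotations $(y_1,\dots,y_4)$ of $(x_1,\dots,x_4)$. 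Applying~\eqref{eq:deftriple} and the cyclicity~\eqref{Eq:ProdCyclic} of the pairing converts each factor $\m^\can(e_a,y_i,y_{i+1})$ into $\pm(e_a, y_i\wedge y_{i+1})$, after which the dual basis identity $\sum_a(e_a,u)\,e^a = u$ contracts the pair $(e_a,e^a)$ to a single element. Each term in the cyclic sum then takes the form of a quadruple pairing such as $\pm(y_{i+2}\wedge y_{i+3},\, y_i\wedge y_{i+1})$, and associativity of $\wedge$ combined with pairing cyclicity makes all these contributions cancel.

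Part (2): Identification of the homology. The twisted differential~\eqref{eq:dIBLtwist} on $\dIBL^{\fm^\can}(A)$ is $\fp_{1,1,0}^{\fm^\can} = d^* + \fp_{2,1,0}(\m^\can,\cdot)$. The main identification to establish is
\begin{equation*}
  \fp_{2,1,0}(\m^\can,\cdot) = b^*,
\end{equation*}
where $b$ is the Hochschild-type face boundary on $B^{\cyc}A$. For a functional $\alpha$ of length $k$ one unpacks $\fp_{2,1,0}(\m^\can\otimes\alpha)(x_1,\dots,x_{k+1})$ via~\eqref{eq:defc120}: only splittings placing two of the $x_j$'s on the $\m^\can$-side contribute, and $\sum_a \m^\can(e_a,x_j,x_{j+1})\cdot\alpha(e^a,\cdots)$ collapses under the dual basis identity to $\pm\alpha(\dots, x_j\wedge x_{j+1}, \dots)$. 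Summing over the cyclic rotations introduced by $N_{alg}$ then reproduces exactly the $b$-operator. Therefore $\fp_{1,1,0}^{\fm^\can} = (d+b)^*$, and its cohomology on $B^{\cyc*}A[2-n]$ is by definition the Connes cyclic cohomology $HC^*_\lambda(A)$, modulo the overall degree shift.

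The main obstacle will be sign bookkeeping. The multiple shifts (the $[2-n]$ on $B^{\cyc*}A$, the intrinsic $[1]$ in $BA$, the conversion~\eqref{eq:defcycpair} between $(\cdot,\cdot)$ and $\la\cdot,\cdot\ra$, and the signs introduced by $N_{alg}$, $c_{120}$, $c_{210}$ and~\eqref{eq:deftriple}) must conspire to produce precisely the standard signs in the Hochschild--Connes differential and in the associativity cancellation of Part~(1). These computations are straightforward but delicate, and follow the sign conventions of~\cite{Cieliebak-Fukaya-Latschev}.
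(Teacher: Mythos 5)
The paper does not actually prove this proposition; it is quoted verbatim from \cite{Cieliebak-Fukaya-Latschev} (Proposition~12.5), so there is no internal proof to compare against, and your sketch is essentially a reconstruction of the standard verification behind that citation. Your outline is correct: since $\fm^\can$ is concentrated in $(\ell,g)=(1,0)$ and in word-length $3$, the Maurer--Cartan equation reduces to~\eqref{eq:discMC}; the length filtration splits the first identity into the two independent statements $d^*\m^\can=0$ (Leibniz rule combined with~\eqref{eq:DiffCyclic}) and $\fp_{2,1,0}(\m^\can\otimes\m^\can)=0$ on length-$4$ words (associativity of $\wedge$ combined with the cyclicity~\eqref{Eq:ProdCyclic}); and $\fp_{1,2,0}\m^\can=0$ holds for exactly the length reason you give, since $c_{210}$ outputs words of length at least $4$. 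The identification of the twisted differential with the dual Hochschild--Connes differential via the contraction $\sum_a\la e_a,u\ra e^a=u$ is also the intended argument, and it is legitimate precisely because the pairing is perfect (so $A$ is finite dimensional). Two points you defer are where the actual work lies: first, the sign bookkeeping in both the length-$4$ cancellation and the matching with the Connes signs is genuinely delicate and is what occupies the reference; second, the final step $H_*(B^{\text{\rm cyc}*}A,(d+b)^*)\cong HC^*_\lambda(A)$ needs the degree~$-1$ identification of $A\otimes A[1]^{\otimes l}$ with $A[1]^{\otimes(l+1)}$ and a check that $t_{alg}$ corresponds to the cyclic operator of the Connes complex up to sign --- a bookkeeping step rather than a gap in the idea, but it should be stated rather than absorbed into ``modulo the overall degree shift''.
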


We call $\fm^\can$ the {\em canonical Maurer-Cartan
element} associated to the cyclic DGA $(A,d,\wedge,(\ ,\ ))$. 
According to~\eqref{eq:dIBLtwist}, the corresponding twisted $\dIBL$
structure is given by
\begin{equation*}
  \dIBL^{\m^\can}(A) = (\fp_{1,1,0}+\fp_{2,1,0}(\m^\can,\cdot),\,\fp_{2,1,0},\,\fp_{1,2,0}).
\end{equation*}

\subsection{The ${\rm dIBL}$ structure associated to a subcomplex}\label{ss:cyc-cochain} 

In this and the following subsection we recall the algebraic constructions
from~\cite{Cieliebak-Fukaya-Latschev} which, taken together, 
associate to each cyclic DGA a natural $\IBL_\infty$-strucure on its homology.
The most tricky part of these constructions are the signs, which we
will not spell out but refer to~\cite{Cieliebak-Fukaya-Latschev}.

Let $(A,d,(\ ,\ ))$ be a cyclic cochain complex of degree $n$, and
$B\subset A$ a quasi-isomorphic cyclic subcomplex. 
We will be mostly interested in the case that $B$ is isomorphic to the
cohomology of $(A,d)$, which is possible due to the following

\begin{lem}[{\cite[Lemma 11.1]{Cieliebak-Fukaya-Latschev}}]\label{lem:B}
There exists a quasi-isomorphic cyclic subcomplex $B\subset \ker d\subset A$ satisfying
$$
    \ker d = \im d\oplus B. 
$$ 
\end{lem}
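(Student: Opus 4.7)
The plan is to reduce the statement to the fact that the pairing on $A$ descends to a nondegenerate pairing on cohomology $H(A)$, and then to take $B$ to be the image of any graded linear section of the projection $\ker d \onto H(A)$.

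First, I would check that $(\cdot,\cdot)$ descends to a well-defined pairing on $H(A)$. If $x,y \in \ker d$ and $y'=y+dz$, then~\eqref{eq:DiffCyclic} gives $(x,y')=(x,y)+(x,dz)=(x,y)\pm(dx,z)=(x,y)$, so the pairing only depends on cohomology classes. For nondegeneracy, I would use the observation made in~\S\ref{ss:coch} that a nonnegatively graded cochain complex with a perfect pairing is finite dimensional. The perfect pairing then gives a degree-preserving isomorphism $A\to A^*[n]$ of graded vector spaces, and~\eqref{eq:DiffCyclic} translates into the statement that this isomorphism intertwines $d$ with $\pm d^*$, i.e.~it is an isomorphism of cochain complexes up to the sign $\pm$. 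Since cohomology commutes with dualization for bounded complexes of finite-dimensional vector spaces, this yields a duality isomorphism $H^i(A)\cong H^{n-i}(A)^*$ exhibiting the induced pairing on $H(A)$ as perfect.

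Next, I would construct $B$ by choosing in each degree a linear complement of $(\im d)^i$ in $(\ker d)^i$ and taking the direct sum, so that by construction $\ker d = \im d \oplus B$. Since $B\subset \ker d$, the differential $d$ acts trivially on $B$, so $B$ is automatically a subcomplex with vanishing differential. The composition $B\into \ker d \onto H(A)$ is a degree-preserving isomorphism of graded vector spaces, which shows that the inclusion $B\into A$ is a quasi-isomorphism. Finally, for $b_1,b_2\in B$ the value $(b_1,b_2)$ equals the induced pairing $([b_1],[b_2])$ on $H(A)$; since the latter is perfect by the previous step, so is the restriction of $(\cdot,\cdot)$ to $B$. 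Hence $B$ is cyclic in the sense of~\S\ref{ss:coch}.

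The main obstacle is the first step, which is essentially a Poincar\'e duality statement: one has to check that the compatibility~\eqref{eq:DiffCyclic} really promotes the algebraic duality $A\cong A^*[n]$ to one of cochain complexes (so that it descends to cohomology), and that the grading shifts work out correctly. No additional compatibility between $B$ and $d$ needs to be verified separately, as $B\subset \ker d$ trivially makes it a subcomplex.
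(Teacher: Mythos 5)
Your proof is correct. Note that the paper itself gives no argument for Lemma~\ref{lem:B} — it is quoted from~\cite[Lemma 11.1]{Cieliebak-Fukaya-Latschev} — so there is nothing to diverge from; your route (perfectness of the pairing forces finite dimensionality, the duality $A^i\cong (A^{n-i})^*$ is compatible with $d$ up to a degree-dependent sign and hence descends to a perfect pairing $H^i(A)\cong H^{n-i}(A)^*$, and then \emph{any} graded complement $B$ of $\im d$ in $\ker d$ works because the restricted pairing is tautologically the cohomology pairing under $B\cong H(A)$) is the standard one and matches the way the paper itself argues in the closely related Lemma~\ref{lem:Horthog}, where nondegeneracy on $H(A)$ is an assumption and the restriction to a harmonic subspace is identified with the cohomology pairing. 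The only blemish is cosmetic: with the paper's grading conventions the map $x\mapsto(x,\cdot)$ sends $A^i$ to $(A^{n-i})^*$, so it is a degree-reversing duality rather than a degree-preserving isomorphism onto $A^*[n]$ as literally stated; this does not affect the argument, since the correct formulation $H^i(A)\cong H^{n-i}(A)^*$ is what you actually use, and the sign $(-1)^{1+\deg x}$ in~\eqref{eq:DiffCyclic} can be absorbed by a degree-wise rescaling to make the duality an honest chain map.
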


Returning to the general case, 
we denote the induced structures on $B$ again by $d$ and
$\ (\ ,\ )$. Since $(B,d,(\ ,\ \ ))$ is again a cyclic cochain
complex, Proposition~\ref{prop:canondIBL} equips $(B^{\text{\rm cyc}*}B)[2-n]$ with a
canonical $\text{\rm dIBL}$ structure $\dIBL(B)$. 

\begin{thm}[{\cite[Theorem 11.3]{Cieliebak-Fukaya-Latschev}}]\label{thm:homotopyequiv}
Let $(A,d,(\ ,\ ))$ be a cyclic cochain complex of degree $n$, and
$B\subset A$ a quasi-isomorphic cyclic subcomplex. 
Then there exists an $\IBL_{\infty}$-homotopy equivalence 
$$
    \frak f : \dIBL(A) \to \dIBL(B)
$$
such that $\frak f_{1,1,0} : (B^{\text{\rm cyc}*}A)[2-n] \to (B^{\text{\rm cyc}*}B)[2-n]$ 
is the map induced by the dual of the inclusion $\iota:B\into A$.
\end{thm}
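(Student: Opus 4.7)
The plan is to build $\ff$ by adapting the homotopy transfer formalism to the cyclic setting, using the special propagator provided by Corollary~\ref{cor:retraction}. First, I would invoke that corollary to obtain a special propagator $P\colon A\to A$ whose symmetric projection $\pi_P$ satisfies $\im\pi_P=B$. The identities $dP+Pd=\pi_P-\id$, $P^2=0$, $P\pi_P=\pi_P P=0$ together with the symmetry $(Px,y)=(-1)^{\deg x}(x,Py)$ recalled in~\S\ref{ss:coch} give a symmetric deformation retract of $A$ onto $B$ compatible with the pairing, which is the analytic input the construction requires.

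Second, I would define the components $\ff_{k,\ell,g}\colon E_k\bigl((B^{\text{\rm cyc}*}A)[3-n]\bigr)\to E_\ell\bigl((B^{\text{\rm cyc}*}B)[3-n]\bigr)$ by sums indexed by isomorphism classes of connected ribbon graphs with $k$ ``input'' and $\ell$ ``output'' boundary loops and genus $g$. Each internal edge is weighted by the propagator $P$ (regarded as a bivector via the pairing), external flags on input loops consume entries of the $k$ input cyclic words, and external flags on output loops are contracted against dual pairs of basis elements of $B$ to produce the $\ell$ output cyclic words; the recipe mirrors the local operations $c_{120}$ and $c_{210}$ from~\eqref{eq:defc120}--\eqref{eq:defc210} iterated along the graph. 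In the degenerate case $(k,\ell,g)=(1,1,0)$ there are no internal edges and the operation reduces exactly to the restriction map $\iota^\ast$ induced by the inclusion $B\hookrightarrow A$.

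Third, I would verify the $\IBL_\infty$-morphism equation $e^{\ff}\wh\fp^A-\wh\fp^B e^{\ff}=0$. For each fixed $(k,\ell,g)$ this is an identity among graph sums in which contributing terms come either from applying $\fp^A$ before $\ff$ (cutting an edge emerging from an input boundary loop) or $\fp^B$ after $\ff$ (cutting an edge adjacent to an output boundary loop). The homotopy relation $dP+Pd=\pi_P-\id$ pairs these contributions up with opposite signs, $P^2=0$ and $P\pi_P=0$ kill the degenerate endpoint terms, and the symmetry of $P$ together with the shuffle symmetrisations $N_{alg}$ built into~\eqref{eq:dIBLcanon} makes the identity descend correctly from $BA$ to the dual cyclic bar complex.

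Finally, because $\ff_{1,1,0}=\iota^\ast$ is a quasi-isomorphism (with quasi-inverse $\pi_P^\ast$ and chain homotopy induced by $P^\ast$), the morphism $\ff$ is an $\IBL_\infty$-quasi-isomorphism, and so by the homotopy-inverse property recalled in~\S\ref{ss:IBL-basic} it is an $\IBL_\infty$-homotopy equivalence. The main obstacle will be sign and shuffle bookkeeping---in particular, reconciling the $\Z_k$-cyclic symmetry on each boundary loop with the $S_\ell$-graded symmetry on $E_\ell$ and the shuffle signs appearing in~\eqref{eq:210120hat}---an intricate but essentially combinatorial verification of the kind carried out in~\cite{Cieliebak-Fukaya-Latschev}.
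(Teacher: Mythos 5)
Your proposal follows essentially the same route as the paper's own (sketched) proof, which defers the tedious verification to~\cite{Cieliebak-Fukaya-Latschev}: take a special propagator $P$ with $\im\pi_P=B$ from Corollary~\ref{cor:retraction}, define the components $\ff_{k,\ell,g}$ as ribbon-graph state sums with $P$ weighting the internal edges as in~\eqref{fdefstatesum}, observe that the edgeless graphs give $\ff_{1,1,0}=\iota^*$, check the morphism relation via $dP+Pd=\pi_P-\Id$ together with $P^2=0$ and $P\pi_P=\pi_P P=0$, and conclude homotopy equivalence from the quasi-isomorphism property. The one presentational inaccuracy is that in the paper's construction the $k$ inputs are functionals $\varphi^v\in B^{\text{\rm cyc}*}_{d_v}A$ plugged into the $k$ \emph{vertices} of $\Gamma$ (equivalently, into incoming circles obtained by deleting disks around the vertices), while all $\ell$ boundary components of $\Sigma_\Gamma$ carry outputs; describing the inputs as ``cyclic words'' consumed along $k$ distinguished boundary loops does not quite type-check as written, though it amounts to the same picture once vertices are identified with incoming circles.
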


\begin{proof}[Sketch of proof]
We pick a basis $\{e_a\}_{a\in I}$ of $A$
and denote by $\{e^a\}_{a\in I}$ the dual basis of $A$ with
respect to the cyclic pairing $\la\ ,\ \ra$ defined in~\eqref{eq:defcycpair}, i.e.
$$
  \la e_a,e^b\ra = \delta_a^b.
$$
By Corollary~\ref{cor:retraction}, there exists a propagator $P$ whose
associated projection $\pi$ satisfies $\im\pi=B$. 
We set 
$$
    P_{ab} := \la Pe_a,e_b\ra,\quad 
    P^{ab} := \la Pe^a,e^b\ra. 
$$ 
Recall the set $RG_{k,\ell,g}$ of isomorphism classes of ribbon graphs
from Remark~\ref{rem:labelling}.
Consider $\Gamma \in RG_{k,\ell,g}$ with the $v$-th vertex of degree
$d_v$ and $s_b$ leaves ending on the $b$-th boundary component.
We associate to $\Gamma$ a map 
\begin{equation}\label{constructionf}
\mathfrak f_\Gamma : B^{\text{\rm cyc}*}_{d_1}A \otimes \cdots \otimes B^{\text{\rm cyc}*}_{d_k}A 
\to B^{\text{\rm cyc}*}_{s_1}B \otimes \cdots \otimes B^{\text{\rm cyc}*}_{s_\ell}B
\end{equation}
as follows. Let $\varphi^v \in B^{\text{\rm cyc}*}_{d_v}A$, $1\leq v\leq k$, and
$\alpha^b_j\in B$, $1\leq b\leq\ell$, $1\leq j\leq s_b$ be given. 
Let $\frak I = {\rm Map}(\Flag_\inn(\Gamma),I)$
be the set of maps from interior flags of $\Gamma$ to the index set $I$ for the
basis of $A$. Thus $\frak i \in \frak I$ assigns to each interior flag $(v,t)$
a basis vector $e_{\frak i(v,t)}$. For an exterior flag 
$(v,t)$ we {\it define} $e_{\frak i(v,t)} := \alpha^b_j$ when the leaf
$t$ meets $\p\Sigma$ in the $j$-th vertex on the $b$-th boundary
component. (Note that $\frak i(v,t)$ is a priori not defined for $\frak i \in \frak I$.)

For each vertex $v$ let $(v,1),\dots,
(v,d_v)$ be the adjacent flags in their chosen numbering.  
We arbitrarily orient the edges, so for each edge
$t$ we have two flags $(1,t)$, $(2,t)$. Now we set
\begin{equation}\label{fdefstatesum}
\begin{aligned}
    &\ \ \ \mathfrak f_\Gamma(\varphi^1 \otimes \cdots \otimes \varphi^k)
    (\alpha^1_1\cdots \alpha^1_{s_1}\otimes
    \cdots \otimes\alpha^\ell_1\cdots \alpha^\ell_{s_\ell})\cr
    &:= \frac 1 {\ell!\,\prod_v d_v}\sum_{\frak i \in
      \frak I}\pm\hspace{-10pt} \prod_{t\in
    \Edge(\Gamma)} \hspace{-10pt}P^{\frak i(1,t),\frak i(2,t)}\hspace{-10pt}
    \prod_{v \in \Ver(\Gamma)}\hspace{-10pt} \varphi^v(e_{\frak i(v,1)},
     \cdots, e_{\frak i(v,d_v)}),
\end{aligned}
\end{equation}
with suitable signs defined in~\cite{Cieliebak-Fukaya-Latschev}. 
The symmetry properties imply that $\ff_\Gamma$ induces a linear map 
$$
    \mathfrak f_\Gamma : E_k(B^{\text{\rm cyc}*}A)[2-n] \to E_\ell(B^{\text{\rm cyc}*}B)[2-n]
$$
and we define
$$
    \mathfrak f_{k,\ell,g} := \sum_{\Gamma\in RG_{k,\ell,g}}\mathfrak
    f_\Gamma: E_k(B^{\text{\rm cyc}*}A)[2-n] \to E_\ell(B^{\text{\rm cyc}*}B)[2-n]. 
$$
It is straightforward but tedious to verify that the collection of
maps $\mathfrak f_{k,\ell,g}$ defines an $\IBL_\infty$-morphism $\frak
f : \dIBL(A) \to \dIBL(B)$. Since $\frak f_{1,1,0} : (B^{\text{\rm
    cyc}*}A)[2-n] \to (B^{\text{\rm cyc}*}B)[2-n]$ induces an
isomorphism on homology, $\frak f$ is an $\IBL_\infty$-homotopy equivalence. 
\end{proof}

\begin{remark}
Equation (11.7) in~\cite{Cieliebak-Fukaya-Latschev}, which
corresponds to our equation~\eqref{fdefstatesum}, has an additional
factor $1/|\Aut(\Gamma)|$ on the right hand side which we do not have. 
The reason is the following. In~\cite{Cieliebak-Fukaya-Latschev},
$\Gamma$ denotes an unlabelled ribbon graph and equation (11.7) sums
over all ``labellings'' of $\Gamma$ (corresponding to items (i)--(iv)
in our Definition~\ref{def:labelling}). This leads to an overcounting
if different ``labellings'' are related by automorphisms of $\Gamma$,
which is compensated by dividing by $|\Aut(\Gamma)|$. By contrast, we
denote by $\Gamma$ an {\em isomorphism class of ``labelled'' ribbon graphs},
so different ``labellings'' related by automorphisms of the underlying
unlabelled graph are already identified and there is no further
division by the order of the automorphism group.  
\end{remark}

\subsection{Pushing forward the canonical Maurer-Cartan element to homology}\label{ss:MC-pushforward}

Recall from~\eqref{eq:deftriple} and Proposition~\ref{propIBLI2}
the canonical triple product MC element $\fm^{\can}$ for 
$\dIBL(A)$ and the corresponding twisted 
structure $\dIBL^{\fm^\can}(A)$.
This structure can be further pushed to the cohomology $H = H(A,d)$ of
the cyclic DGA $(A,d,\wedge,\la\ ,\ \ra)$ as follows. The
product and the pairing descend to cohomology and make it again a
cyclic DGA (with trivial differential)
$(H,d=0,\wedge,(\ ,\ \ ))$. Proposition~\ref{prop:canondIBL} 
associates to this cyclic DGA a canonical dIBL-algebra (actually an IBL-algebra) 
$$
   \dIBL(H) = \Bigl((B^{{\text{\rm cyc}}*}H)[2-n],\fp_{1,1,0}=0,\,\fp_{1,2,0},\,\fp_{2,1,0}\Bigr).
$$
By Lemma~\ref{lem:B}, we can embed $H$ as a subcomplex into $A$. Then
Theorem~\ref{thm:homotopyequiv} (with $B$ replaced by $H$)
provides an $\IBL_{\infty}$-homotopy equivalence 
$$
    \frak f : \dIBL(A) \to \dIBL(H)
$$
such that $\frak f_{1,1,0} : (B^{{\text{\rm cyc}}*}A)[2-n] \to (B^{{\text{\rm cyc}}*}H)[2-n]$ 
is the map induced by the dual of the inclusion $\iota:H\into
A$ from Lemma~\ref{lem:B}. According to Proposition~\ref{prop:MC}(b),
the Maurer-Cartan element $\fm^\can$ on
$(B^{{\text{\rm cyc}}*}A)[2-n]$ can be pushed forward via $\ff$ to a
Maurer-Cartan element $\ff_*\fm^\can$ on $(B^{{\text{\rm cyc}}*}H)[2-n]$. By
Proposition~\ref{prop:MC}(a), this induces a twisted $\IBL_\infty$-structure
$$
    \fp^{\ff_*\fm^\can} =
    \{\fp^{\ff_*\fm^\can}_{1,\ell,g},\fp_{2,1,0}\}_{\ell\geq 1,g\geq 0}
$$
on $(B^{\text{\rm cyc}*}H)[2-n]$. By Proposition~\ref{prop:MC}(c), 
this structure is homotopy equivalent to the twisted $\text{\rm dIBL}$-structure
$\dIBL^{\fm^\can}$. We summarize the discussion in the following theorem.

\begin{thm}\label{thm:homologyBLI}
Let $(A,d,\wedge,(\ ,\ \ ))$ be a cyclic DGA of degree $n$ with cohomology
$H=H(A,d)$. Then the triple intersection product $\m^{\can}$ induces via homotopy
transfer a Maurer-Cartan element 
$\ff_*\fm^{\can}$ on $\dIBL(H)$ such that
the associated twisted $\IBL_\infty$-structure $\dIBL^{\ff_*\fm^{\can}}(H)$ is
$\IBL_\infty$-homotopy equivalent to the twisted $\text{\rm
  dIBL}$-structure $\dIBL^{\fm^{\can}}(A)$ in Proposition~$\ref{propIBLI2}$. 
In particular, its homology equals the Connes cyclic cohomology $HC_{\lambda}^*(A)$ of 
$(A,d,\wedge)$. 
\qed
\end{thm}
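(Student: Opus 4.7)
The plan is to assemble the statement from results already established in the preceding subsections, essentially reading off the summary of the discussion above it. The only real content is tracking what each ingredient gives and why the homology assertion follows.

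First I would apply Lemma~\ref{lem:B} to choose a quasi-isomorphic cyclic subcomplex $H\subset A$ that is isomorphic to the cohomology $H(A,d)$ as a cyclic cochain complex (with trivial differential). Since $(H,0,\wedge,(\cdot,\cdot))$ inherits the structure of a cyclic DGA via the induced product, Proposition~\ref{prop:canondIBL} equips $(B^{\text{\rm cyc}*}H)[2-n]$ with the canonical $\dIBL$-structure $\dIBL(H)$. Next, Theorem~\ref{thm:homotopyequiv} provides an $\IBL_\infty$-homotopy equivalence $\ff:\dIBL(A)\to\dIBL(H)$ whose linear component $\ff_{1,1,0}$ is dual to the inclusion $H\hookrightarrow A$.

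Now I would feed the canonical Maurer-Cartan element $\fm^{\can}$ from Proposition~\ref{propIBLI2} into the machinery of Proposition~\ref{prop:MC}. Part (b) of that proposition produces the pushforward Maurer-Cartan element $\ff_*\fm^{\can}$ on $\dIBL(H)$, together with an $\IBL_\infty$-morphism
\[
   \ff^{\fm^{\can}}:\dIBL^{\fm^{\can}}(A)\longrightarrow \dIBL^{\ff_*\fm^{\can}}(H).
\]
Part (c) then upgrades this to an $\IBL_\infty$-homotopy equivalence, because $\ff$ itself is one.

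Finally, for the homology statement, I would combine two facts: by Proposition~\ref{propIBLI2}, the homology of the twisted complex $\dIBL^{\fm^{\can}}(A)$ equals the Connes cyclic cohomology $HC_\lambda^*(A)$; and by the ``Homotopy inverse'' property recalled in~\S\ref{ss:IBL-basic}, any $\IBL_\infty$-homotopy equivalence (in particular $\ff^{\fm^{\can}}$) induces an isomorphism on the homology of the linear part $\fp_{1,1,0}^{\ff_*\fm^{\can}}$. Stringing these two isomorphisms together yields the claimed identification of the homology of $\dIBL^{\ff_*\fm^{\can}}(H)$ with $HC_\lambda^*(A)$.

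There is no real obstacle here: the proof is purely a matter of citing Lemma~\ref{lem:B}, Theorem~\ref{thm:homotopyequiv}, Propositions~\ref{propIBLI2} and~\ref{prop:MC}, and the homotopy invariance of homology for $\IBL_\infty$-algebras. The nontrivial content has all been carried out earlier; the present theorem is simply the packaging of that content into a single statement, so the proof can be written in a few lines as a sketch of this assembly.
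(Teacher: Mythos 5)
Your proposal is correct and follows essentially the same route as the paper: the theorem is explicitly packaged as a summary of the preceding discussion, which combines Lemma~\ref{lem:B}, Proposition~\ref{prop:canondIBL}, Theorem~\ref{thm:homotopyequiv}, Proposition~\ref{prop:MC}(a)--(c), and Proposition~\ref{propIBLI2} exactly as you do. The final identification of twisted homology with $HC_\lambda^*(A)$ via homotopy invariance of the linear part is also how the paper concludes, so there is nothing to add.
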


The construction of the map $\ff$ leads to the following explicit
description of the pushforward Maurer-Cartan element $\ff_*\fm^{\can}$ on
$(B^{\text{\rm cyc}*}H)[2-n]$. 
Recall from Definition~\ref{def:R} the set $\RR_{\ell,g}$ of isomorphism classes of 
labelled connected trivalent ribbon graphs of genus $g$ with $\ell$
boundary components.

\begin{cor}[{\cite[Remark 12.11]{Cieliebak-Fukaya-Latschev}}]\label{cor:alg-MC}
In the situation of Theorem~\ref{thm:homologyBLI}, the value of 
$(\ff_*\fm^\can)_{\ell,g}$ on a tensor product $\alpha=\alpha^1_1\cdots \alpha^1_{s_1}\otimes\dots\otimes 
   \alpha^\ell_1\cdots \alpha^\ell_{s_\ell}$ of $\alpha^b_j\in
H$ is given by  
$$
   (\ff_*\fm^\can)_{\ell,g}(\alpha) = 
   \sum_{\Gamma\in\RR_{\ell,g}}(\ff_*\fm^\can)_\Gamma(\alpha),
$$
where 
\begin{align}\label{mdefstatesum}
    (\ff_*\fm^\can)_\Gamma(\alpha)
    := \frac 1 {\ell!}\sum_{\frak i \in
      \frak I}\pm\hspace{-10pt} \prod_{t\in
    \Edge(\Gamma)} \hspace{-10pt}P^{\frak i(1,t),\frak i(2,t)}\hspace{-10pt}
    \prod_{v \in \Ver(\Gamma)}\hspace{-10pt} \m^{\can}(e_{\frak i(v,1)}, e_{\frak i(v,2)}, e_{\frak i(v,3)}).
\end{align}
\end{cor}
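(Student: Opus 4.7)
The plan is to unwind the definition of the pushforward Maurer-Cartan element through the $\IBL_\infty$-morphism $\ff$ constructed in Theorem~\ref{thm:homotopyequiv}, and observe that the degree-3 nature of $\m^\can$ forces only trivalent ribbon graphs to contribute.

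First, I would invoke Proposition~\ref{prop:MC}(b) and its Weyl algebra incarnation in~\S\ref{ss:Weyl} to write $(\ff_*\fm^\can)_{\ell,g}$ as a sum of terms of the form $\frac{1}{k!}\ff_{k,\ell,g'}\bigl((\fm^\can_{1,0})^{\otimes k}\bigr)$, with $k$ and $g'$ subject to a genus bookkeeping constraint $g = g' + \text{(loops closed up)}$. Since $\fm^\can_{\ell',g'} = 0$ unless $(\ell',g')=(1,0)$, only the factor $\fm^\can_{1,0} = \m^\can$ appears at every vertex. Substituting $\varphi^v = \m^\can$ for all $v$ in formula~\eqref{fdefstatesum} is the entire content of $\ff_{k,\ell,g'}((\m^\can)^{\otimes k})$.

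Second, observe the crucial vanishing: $\m^\can \in B^{\text{cyc}*}_3 A$ is supported in bar-length exactly $3$, so $\varphi^v(e_{\frak i(v,1)},\dots,e_{\frak i(v,d_v)})$ vanishes unless $d_v = 3$. Consequently the sum over $\Gamma \in RG_{k,\ell,g}$ collapses to a sum over \emph{trivalent} ribbon graphs only. For fixed $\ell,g$ and leaf numbers $s_1,\dots,s_\ell$ (determined by $\alpha$), equation~\eqref{eq:k} uniquely determines $k = 2(2g+\ell-2)+s$, so the summation domain becomes $\RR_{\ell,g}(s_1,\dots,s_\ell) \subset \RR_{\ell,g}$ as claimed. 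The combinatorial prefactor $\tfrac{1}{\prod_v d_v} = 3^{-k}$ in~\eqref{fdefstatesum} is absorbed by the cyclic symmetry of $\m^\can$: for a trivalent vertex the three cyclic numberings of the adjacent flags give identical contributions under $\m^\can$, and summing these contributions against the $\sum_{\frak i \in \frak I}$ produces the canonical evaluation on the cyclically ordered triple appearing in~\eqref{mdefstatesum}. The $\tfrac{1}{\ell!}$ survives unchanged.

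The main obstacle is verifying that the signs in~\eqref{fdefstatesum} specialize correctly to those implicit in~\eqref{mdefstatesum}. Both sets of signs are fixed by the degree shift $[2-n]$, the cyclic pairing convention $\langle\cdot,\cdot\rangle = (-1)^{\deg x}(\cdot,\cdot)$, and the graded actions of the symmetric groups and cyclic rotations on $B^{\text{cyc}*}A$. Since~\eqref{fdefstatesum} is proved in~\cite{Cieliebak-Fukaya-Latschev} to define a genuine $\IBL_\infty$-morphism, the sign on $\m^\can$ inserted at each vertex is automatically the one compatible with cyclic invariance, so the matching of signs is a purely formal check that can be delegated to~\cite{Cieliebak-Fukaya-Latschev}. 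With the sign issue thus handled, the formula~\eqref{mdefstatesum} emerges directly from~\eqref{fdefstatesum} by restriction to trivalent graphs.
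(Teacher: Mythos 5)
Your overall route is the same as the paper's: expand the pushforward, observe that $\m^\can$ is supported in bar-length $3$ so only trivalent graphs contribute and $k$ is forced by \eqref{eq:k}, then pass from the sum over labelled graphs with full vertex data to the sum over $\RR_{\ell,g}$ using invariance of the summand, with the signs delegated to~\cite{Cieliebak-Fukaya-Latschev} (as the paper also does).

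There is, however, a gap in your combinatorial bookkeeping. You start from $\tfrac{1}{k!}\,\ff_{k,\ell,g}\bigl((\m^\can)^{\otimes k}\bigr)$ and you only explain why the factor $1/\prod_v d_v=3^{-k}$ in \eqref{fdefstatesum} disappears (the three cyclic flag numberings at a trivalent vertex give equal contributions because $\m^\can$ is cyclic); you never cancel the $1/k!$, so your derivation as written yields \eqref{mdefstatesum} with a spurious extra factor $1/k!$. The missing piece is the other half of the multiplicity of the forgetful map $RG_{k,\ell,g}^3\to\RR_{\ell,g}$, which forgets items (iii) and (iv) of Definition~\ref{def:labelling} and is $k!\,3^k$ to $1$: the $k!$ choices of vertex numbering all give the same value of $\ff_\Gamma\bigl((\m^\can)^{\otimes k}\bigr)$ because every vertex insertion is the same element $\m^\can$, and this $k!$ is what offsets the $1/k!$ coming from the exponential, while the $3^k$ offsets $1/\prod_v d_v$. (Relatedly, your phrase that the $3^{-k}$ is absorbed "against $\sum_{\frak i\in\frak I}$" points at the wrong sum: the absorption takes place in the sum over graph labellings, not in the state sum over $\frak i$; and your "genus bookkeeping" is resolved simply by noting that each component of $\fm^\can$ lies in $\wh E_1C$, so no genus is created and only $g'=g$ occurs.) Once the $k!\,3^k$-to-$1$ counting is inserted, your argument coincides with the paper's proof.
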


\begin{proof}
The statement follows from Theorem~\ref{thm:homologyBLI}. To see this,
define $k$ by equation~\eqref{eq:k} with $s:=s_1+\dots+s_\ell$ and
denote by $RG_{k,\ell,g}^3$ denote the subset of $RG_{k,\ell,g}$
corresponding to trivalent graphs. Observe that the natural forgetful map
$$ 
  RG_{k,\ell,g}^3\longrightarrow \RR_{\ell,g}
$$ 
is a $k!3^k$ to $1$ map. Indeed, the choice of item (iii) in
Definition~\ref{def:labelling} accounts for the factor $k!$,
and the choice of item (iv) accounts for the factor $3^k$ because
our graphs are trivalent. Using this, we compute
\begin{align*}
  (\ff_*\fm^\can)_{\ell,g}(\alpha)
  &= \frac{1}{k!}\ff_{k,\ell,g}(\m^\can\otimes\cdots\otimes\m^\can)(\alpha) \cr
  &= \sum_{\Gamma\in RG_{k,\ell,g}^3} \frac{1}{k!}\ff_\Gamma(\m^\can\otimes\cdots\otimes\m^\can)(\alpha) \cr
  &= \sum_{\Gamma\in \RR_{\ell,g}} 3^k\ff_\Gamma(\m^\can\otimes\cdots\otimes\m^\can)(\alpha).
\end{align*}
Here the first equality follws from definition of the pushforward of a
Maurer-Cartan element, the second one from the definition of
$\ff_{k,\ell,g}$, and the third one from the preceding discussion.
Now we substitute $\ff_\Gamma(\cdots)$ in the last line by
formula~\eqref{fdefstatesum}. Since $\Gamma$ is trivalent, the factor
$3^k$ offsets the factor $1/\prod_v d_v$ in~\eqref{fdefstatesum}
and we obtain~\eqref{mdefstatesum}.
\end{proof}


To prove Theorem~\ref{thm:existence-intro} from the Introduction, we
would like to apply Corollary~\ref{cor:alg-MC} to the de Rham algebra
$(\Om^*(M),d,(\cdot,\cdot))$ of a closed oriented manifold $M$ with
its intersection pairing~\eqref{eq:defineintpair}. Unfortunately, this
pairing is nondegenerate but not perfect. Hence the de Rham
algebra is not a cyclic DGA, the canonical Maurer-Cartan element does
not exist, and we cannot directly apply the constructions
of~\S\ref{ss:cyc-cochain}. In the next sections we will define a
Maurer-Cartan element on de Rham cohomology, which plays a role of the
``pushforward of the canonical one'', by replacing the sums over basis
elements by integrals over configuration spaces.

\section{Propagators}\label{sec:prop}

In this section we discuss the propagators (or integral kernels)
that will enter the construction of the Maurer-Cartan element. 
Our discussion is very similar to the one in~\cite{Hajek-thesis}, see
also~\cite{Cieliebak-Hajek-Volkov}. From the formal algebraic viewpoint 
we are in the setting of~\S\ref{ss:hodgesetup}, with the role of the
ambient cochain complex played by the de Rham algebra of a closed
oriented manifold.

\subsection{Fibre integration and Poincar\'e duality}

In this subsection we collect some standard facts about fibre
integration and Poincar\'e duality and fix some notation.

{\bf Fibre integration. }
Let $F\longrightarrow E\stackrel{p}{\longrightarrow}B$ be an oriented smooth fibre bundle with compact
oriented fibre manifold $F$ (possibly with boundary) over an oriented
base manifold $B$ (without boundary). Then the total space $E$
inherits an orientation according to the convention ``fibre first, base second''. 
The following result can be found in~\cite{Bott-Tu} with a different
orientation convention; a proof with our convention is given in~\cite{Hajek-thesis}.

\begin{lem}[Fibre integration]\label{lem:fibre-int}
There exists a unique linear map $p_*:\Om^*(E)\to\Om^{*-\dim F}(B)$
with the following properties for all $\alpha\in\Om^*(E)$ and $\beta\in\Om^*_c(B)$:
\begin{equation}\label{eq:fibre-Fubini}
   \int_E\alpha\wedge p^*\beta = \int_B p_*\alpha\wedge\beta;
\end{equation}
\begin{equation}\label{eq:fibre-projection}
   p_*(\alpha\wedge p^*\beta) = p_*\alpha\wedge\beta;
\end{equation}
\begin{equation}\label{eq:fibre-Stokes}
   (-1)^{\dim F}dp_*\alpha = p_*d\alpha - (p|_{\p F})_*(\alpha|_{\p E}).
\end{equation}
\end{lem}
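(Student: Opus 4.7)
The plan is to define $p_*$ locally by fibrewise integration in a trivialization, patch via a partition of unity, and then derive all three properties from the defining Fubini relation together with the ordinary Stokes theorem. Throughout, the orientation convention ``fibre first, base second'' on $E$ is what makes all signs come out consistently.

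First I would note that property \eqref{eq:fibre-Fubini} determines $p_*$ uniquely: if two candidates $p_*, p_*'$ both satisfy Fubini, then
\[
  \int_B (p_*\alpha - p_*'\alpha)\wedge\beta = 0
\]
for every $\beta\in\Om^*_c(B)$, and since the wedge pairing with compactly supported forms is nondegenerate on $\Om^*(B)$, we get $p_*\alpha=p_*'\alpha$. To construct $p_*$, I would work in a trivializing chart $U\times F\cong p^{-1}(U)$ and decompose $\alpha$ as a sum of terms $\alpha_I\wedge\omega_I$ where $\omega_I$ is a form of pure fibre degree; the terms of fibre degree $<\dim F$ contribute $0$ to the fibre integral, while the top-fibre-degree terms get integrated over $F$ with its given orientation. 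This yields a form $p_*\alpha\in\Om^{*-\dim F}(U)$. The orientation convention on $E$ guarantees that the local Fubini identity $\int_{U\times F}\alpha\wedge p^*\beta=\int_U p_*\alpha\wedge\beta$ holds, and consequently uniqueness implies that the local definitions agree on overlaps, so they glue to a global $p_*$ satisfying \eqref{eq:fibre-Fubini}.

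The projection formula \eqref{eq:fibre-projection} is then an immediate consequence: I would reduce it to \eqref{eq:fibre-Fubini} by pairing both sides with an arbitrary compactly supported test form $\gamma\in\Om^*_c(B)$, noting that $p^*(\beta\wedge\gamma)=p^*\beta\wedge p^*\gamma$ and using Fubini twice. Uniqueness from \eqref{eq:fibre-Fubini} closes the argument.

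For the Stokes-type formula \eqref{eq:fibre-Stokes} I would again test against $\beta\in\Om^*_c(B)$. On one side, the ordinary Stokes theorem on $E$ combined with Leibniz gives
\[
  \int_E d\alpha\wedge p^*\beta = \int_{\p E}\alpha\wedge p^*\beta - (-1)^{\deg\alpha}\int_E \alpha\wedge p^*d\beta,
\]
and rewriting both integrals on the right via Fubini expresses everything in terms of $p_*\alpha$, $p_*(d\alpha)$ and $(p|_{\p F})_*(\alpha|_{\p E})$. On the other side, Stokes on the boundaryless $B$ gives $\int_B d(p_*\alpha\wedge\beta)=0$, which rewrites $\int_B dp_*\alpha\wedge\beta$ in terms of $\int_B p_*\alpha\wedge d\beta$. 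Comparing the two computations, the $(-1)^{\dim F}$ appears precisely because passing $d$ through $p_*$ costs a sign equal to the fibre degree shift, after which the identity \eqref{eq:fibre-Stokes} follows by the uniqueness/nondegeneracy argument above.

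The main obstacle, as usual in fibre integration, will be bookkeeping for signs: one must keep track of $\deg\alpha$ when applying Leibniz, of $\dim F$ when commuting $d$ past $p_*$, and of the induced boundary orientation on $\p E=\p F\times B$ coming from the ``fibre first'' convention. All three signs conspire into the single factor $(-1)^{\dim F}$ in \eqref{eq:fibre-Stokes}, and the verification amounts to checking that the contributions from Stokes on $E$, Leibniz, and the integration-by-parts on $B$ balance correctly.
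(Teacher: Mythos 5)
The paper does not actually prove this lemma: it defers to \cite{Bott-Tu} (with a different orientation convention) and to \cite{Hajek-thesis} for a proof in the convention used here. Your argument is precisely the standard proof those references give — local fibrewise integration in a trivialization, uniqueness and gluing via the nondegeneracy of the pairing with compactly supported forms, and derivation of \eqref{eq:fibre-projection} and \eqref{eq:fibre-Stokes} by testing against $\beta\in\Om^*_c(B)$ — and the sign bookkeeping you describe does close up to give the factor $(-1)^{\dim F}$. The one step worth making explicit is that in the Stokes computation you apply the Fubini property \eqref{eq:fibre-Fubini} to the boundary fibration $\p F\to\p E\to B$, so you must check that the boundary orientation of $\p E$ coincides with the ``fibre first, base second'' orientation of $\p E$ as a $\p F$-bundle (with $\p F$ carrying its boundary orientation); with the fibre-first convention this holds with no extra sign, which is exactly why the final sign is the single factor $(-1)^{\dim F}$.
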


Note that if $\alpha$ has compact support we can take $\beta=1$ in~\eqref{eq:fibre-Fubini}
and obtain $\int_E\alpha = \int_B\int_F\alpha$ (which is a variant of
Fubini's theorem), and the general case
of~\eqref{eq:fibre-Fubini} follows from this special case by
integrating~\eqref{eq:fibre-projection} over $B$.  
Note also that in the case $\p F=\varnothing$ equation~\eqref{eq:fibre-Stokes} says that $p_*$
is a chain map up to sign. 

The map $p_*$ is called {\em pushforward} or {\em fibre integral}. We
will also write it as $\int_F$. In this notation, it is apparent that~\eqref{eq:fibre-Stokes}
is just a variant of Stokes' theorem:
\begin{equation}\label{eq:fibre-Stokes2}
   (-1)^{\dim F}d(\int_F\alpha) = \int_Fd\alpha - \int_{\p F}\alpha.
\end{equation}

{\bf Poincar\'e duality. }
Our next goal is to express the Poincar\'e dual of the diagonal 
$\Delta_2\subset M\times M$ in terms of the basis $e_a$. We first
recall the basic properties of cup and cap products, following the 
conventions in Hatcher~\cite{Hatcher}. Let $X$ be a closed oriented
manifold (which will later be $M\times M$). We denote homology classes on $X$ by
$a,b,c$ and cohomology classes by $\alpha,\beta,\gamma$. We write the
pairing between cohomology and homology as $\int_a\beta$. Then the cup
and cap product are related by
$$
   \int_c\alpha\cup\beta = \int_{c\cap\alpha}\beta.
$$
Cap product with the fundamental class defines the Poincar\'e
duality isomorphism
$$
   PD: H^k(X)\to H_{\dim X-k}(X),\qquad PD(\alpha):=[X]\cap \alpha. 
$$ 
So we have
$$
   \int_X\alpha\cup\beta = \int_{PD(\alpha)}\beta.
$$
We denote the inverse map to Poincar\'e duality also by $PD$. 
The intersection product of two homology classes is then given by
\begin{equation}\label{eq:PD-intersection}
   a\cap b = \int_a PD(b) = \int_X PD(a)\cup PD(b). 
\end{equation}

\subsection{Harmonic projections}\label{ss:harmproj}

From now on $M$ denotes a closed oriented manifold of dimension
$n$. Recall the de Rham complex
$$
   \bigl(\Om = \Om^*(M),d,\wedge\bigr)
$$
with the intersection pairing $(\alpha,\beta) = \int_M\alpha\wedge\beta$
defined in~\eqref{eq:defineintpair}.
Let us fix a complementary subspace $\HH$ to 
$\im d$ in $\ker d$,
i.e.~such that
$$
   \ker d = \im d\oplus\HH. 
$$
The space $\HH$ is a harmonic subspace in the sense of~\S\ref{ss:hodgesetup}
and we will refer to its elements as {\em harmonic forms}, although
they need not be harmonic with respect to any metric. The de Rham
cohomology $H^*(M)$ is finite dimensional and the induced pairing
on $H^*(M)$ is nondegenerate.
Therefore, by Lemma~\ref{lem:Horthog},
$$
   \HH^\perp = \{\alpha\in\Om\mid (\alpha,\beta)=0 \text{ for all }\beta\in\HH\}
$$
is a complement to $\HH$ in $\Om$ and we have the orthogonal projection
$$
   \Pi:\Om = \HH\oplus\HH^\perp \to \HH. 
$$
We pick a basis $h_i$ of $\HH$ and define its dual basis $h^i$ by 
$$
   (h_i,h^j) = \delta_i^j.
$$
In terms of these bases (see the proof of Lemma~\ref{lem:Horthog}) we have
\begin{equation}\label{eq:projdef}
   \Pi = \sum_i (h_i,\cdot)h^i,
\end{equation}
or more explicitly (using $\deg\alpha=\deg h^i$), 
\begin{equation}\label{eq:int-kernel}
   (\Pi\alpha)(y)
   = \sum_i \Bigl(\int_{M_x}h_i(x)\wedge\alpha(x)\Bigr)h^i(y)
   = \int_{M_x}\Pi(x,y)\wedge\alpha(x)
\end{equation}
with the smooth integral kernel $\Pi\in\Om^n(M_x\times M_y)$ (which we
denote by the same letter by a slight abuse of language) given by 
\begin{equation}\label{eq:Pi-kernel-h}
   \Pi(x,y) = \sum_i(-1)^{h^i}h_i(x)\wedge h^i(y).
\end{equation}
Here and in the sequel we sometimes denote by $M_x$ the factor of $M$
corresponding to the variable $x$. The integration over $M_x$ in~\eqref{eq:int-kernel} is
viewed as the fibre integral with respect to the projection $M_x\times
M_y\to M_y$ onto the second factor. This projection is chosen so that
the convention ``fibre first, base second'' gives the canonical
orientation of $M_x\times M_y$. 

\begin{lem}\label{lem:Poincare}
The integral kernel~\eqref{eq:Pi-kernel-h} is closed and represents
the Poincar\'e dual to the diagonal $\Delta_2=\{x=y\}\subset M\times M$.
Moreover, it has the symmetry
\begin{equation}\label{eq:Pi-symmetry}
   \Pi(x,y)=(-1)^n\Pi(y,x). 
\end{equation}
\end{lem}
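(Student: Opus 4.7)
Plan of the proof. I will handle the three assertions--closedness, the Poincar\'e dual property, and the pointwise symmetry~\eqref{eq:Pi-symmetry}--separately.

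For closedness, both $h_i$ and $h^i$ lie in $\HH\subset\ker d$, so each summand $h_i(x)\wedge h^i(y)=\pi_1^*h_i\wedge\pi_2^*h^i$ in~\eqref{eq:Pi-kernel-h} is a wedge product of pullbacks of closed forms and hence closed on $M\times M$. By $\R$-linearity, $d\Pi=0$.

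For the Poincar\'e dual claim, I will verify directly that $\int_{M\times M}\Pi\wedge\omega=\int_{\Delta_2}\omega$ for every closed $\omega\in\Om^n(M\times M)$. Since $\Pi$ is closed, both sides depend only on $[\omega]\in H^n(M\times M)$, so by the K\"unneth isomorphism it suffices to treat $\omega=\pi_1^*\alpha\wedge\pi_2^*\beta$ with $\alpha,\beta\in\HH$ and $\deg\alpha+\deg\beta=n$. For such $\omega$, Fubini, the projection formula~\eqref{eq:fibre-projection}, and the defining property~\eqref{eq:int-kernel} of the kernel give
\begin{equation*}
\int_{M\times M}\Pi\wedge\omega=\int_M(\pi_2)_*(\Pi\wedge\pi_1^*\alpha)\wedge\beta=\int_M\Pi(\alpha)\wedge\beta=\int_M\alpha\wedge\beta,
\end{equation*}
where the last equality uses $\Pi(\alpha)=\alpha$ for harmonic $\alpha$. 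This matches $\int_{\Delta_2}\omega=\int_M\Delta^*\omega=\int_M\alpha\wedge\beta$.

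For the symmetry~\eqref{eq:Pi-symmetry}, my plan is a direct term-by-term comparison between two presentations of the kernel. First, swapping the two factors in the defining formula and rewriting $h_i(y)\wedge h^i(x)=(-1)^{\deg h_i\deg h^i}h^i(x)\wedge h_i(y)$ yields
\begin{equation*}
\Pi(y,x)=\sum_i(-1)^{\deg h^i+\deg h_i\deg h^i}h^i(x)\wedge h_i(y).
\end{equation*}
Next, the graded symmetry of the pairing shows that the dual basis of $\{h^i\}$ in the sense $(h^i,h_j^*)=\delta_i^j$ is $\{h_i^*:=(-1)^{\deg h^i\deg h_i}h_i\}$. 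Since the smooth integral kernel of a smooth operator is uniquely determined, applying the general kernel construction of~\eqref{eq:Pi-kernel-h} to the pair $(\{h^i\},\{h_i^*\})$ must return the same form $\Pi(x,y)$, namely
\begin{equation*}
\Pi(x,y)=\sum_i(-1)^{\deg h_i+\deg h^i\deg h_i}h^i(x)\wedge h_i(y).
\end{equation*}
Comparing the two displays, the ratio of the coefficients of $h^i(x)\wedge h_i(y)$ is $(-1)^{\deg h^i-\deg h_i}=(-1)^n$, using $\deg h_i+\deg h^i=n$, which proves~\eqref{eq:Pi-symmetry}. The main obstacle is the sign bookkeeping in this last step; I prefer this combinatorial route to one based on self-adjointness of $\Pi$, since the latter only yields~\eqref{eq:Pi-symmetry} modulo exact forms rather than the pointwise equality required here.
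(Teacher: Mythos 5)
Your proof is correct, but the symmetry part follows a genuinely different route from the paper's, so let me compare. Closedness is handled identically. For the Poincar\'e-dual statement, both you and the paper test the defining identity against K\"unneth classes $\pi_1^*\alpha\wedge\pi_2^*\beta$ with $\alpha,\beta$ harmonic; the paper does an explicit Kronecker-delta sign computation in the bases $h_i,h^i$, whereas you route it through the operator identity $(\pi_2)_*(\Pi\wedge\pi_1^*\alpha)=\Pi\alpha=\alpha$, which is the same idea packaged more cleanly. The real divergence is \eqref{eq:Pi-symmetry}: the paper deduces it from the symmetry of the projection, $(\Pi\alpha,\beta)=(\alpha,\Pi\beta)$ (Lemma~\ref{lem:Horthog}), by writing both sides as integrals over $M\times M$ and then passing to the pointwise identity, while you rewrite $\Pi(y,x)$ by graded commutativity and compare it with the kernel built from the dual pair $(\{h^i\},\{h_i^*\})$, $h_i^*=(-1)^{\deg h^i\deg h_i}h_i$. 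Your sign bookkeeping is right, and the argument closes once you make explicit the (standard, but needed) bridge that $\sum_i(h^i,\cdot)\,h_i^*$ is again the projection onto $\HH$ along $\HH^\perp$ --- only then does uniqueness of the smooth kernel identify the two expressions; the paper records exactly this basis independence later when it notes that $\sum_a e_a\times e^a$ depends only on $\HH$. What each approach buys: the paper's is short because Lemma~\ref{lem:Horthog} is already in place; yours is purely term-by-term and yields the pointwise identity with no separation argument at the end. Finally, one correction to your closing remark: the self-adjointness route does not only give \eqref{eq:Pi-symmetry} ``modulo exact forms''. The difference $\Pi(x,y)-(-1)^n\Pi(y,x)$ lies in the finite-dimensional subspace spanned by cross products of harmonic forms, on which the intersection pairing is nondegenerate, so vanishing of its integrals against all $\alpha(x)\wedge\beta(y)$ forces it to vanish identically --- which is precisely how the paper obtains the pointwise equality.
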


\begin{proof}
Closedness of $\Pi$ follows from closedness of the $h_i$ and $h^i$. 
The previous subsection yields the following characterization 
of the Poincar\'e dual of a closed oriented submanifold $Y$ of a closed
oriented manifold $X$: A closed form $\alpha$ on $X$ represents the Poincar\'e dual of $Y$
if and only if for each closed form $\beta$ on $X$ we have
$$
   \int_X\alpha\wedge\beta=\int_Y\beta.  
$$
It is enough to check this equality on a set of closed forms
representing a basis of cohomology in the relevant degree. We now
apply this to $X:=M\times M$,
$Y:=\Delta_2$, $\alpha=\Pi$, and $\beta:=h^k(x)\wedge h_\ell(y)$ 
for some $k,\ell$. Then the right hand side of the above equation becomes
$$
   \int_M h^k(x)\wedge h_\ell(x) = (-1)^{h^kh_l}\delta_\ell^k.
$$ 
The left hand side becomes
\begin{align*}
  \sum_i\int_{M\times M}(-1)^{h^i}h_i(x)\wedge h^i(y)\wedge
  h^k(x)\wedge h_\ell(y) 
  = \sum_i(-1)^\sigma\delta_i^k\delta_\ell^i = (-1)^\sigma\delta_\ell^k
\end{align*}
with the sign exponent $h^i$ plus the one arising from moving $h^i(y)$ to the 
right past $h^k(x)\wedge h_\ell(y)$. Since both sides vanishes unless $i=\ell=k$, we obtain
$$
   \sigma = h^i + h^i(h^k+h_\ell) 
   \overset{i=\ell=k}{=} h^kh_k
$$
and the Poincar\'e duality assertion follows. For the symmetry of the
integral kernel $\Pi(x,y)$, we note that by Lemma~\ref{lem:Horthog}
the projection $\Pi$ is symmetric,
$$
   (\Pi\alpha,\beta) 
   = (\alpha,\Pi\beta),
$$
and compute both sides:
\begin{align*}
  (\Pi\alpha,\beta)
  &= \int_{M_y}\Bigl(\int_{M_x}\Pi(x,y)\wedge\alpha(x)\Bigr)\wedge\beta(y) 
  = \int_{M_x\times M_y}\Pi(x,y)\wedge\alpha(x)\wedge\beta(y), \cr
  (\alpha,\Pi\beta)   
  &= \int_{M_x}\alpha(x)\wedge\Bigl(\int_{M_y}\Pi(y,x)\wedge\beta(x)\Bigr) \cr
  &= (-1)^{\alpha\beta}\int_{M_x}\Bigl(\int_{M_y}\Pi(y,x)\wedge\beta(x)\Bigr)\wedge\alpha(x) \cr
  &= \int_{M_y\times M_x}\Pi(y,x)\wedge\alpha(x)\wedge\beta(y)
  = (-1)^n\int_{M_x\times M_y}\Pi(y,x)\wedge\alpha(x)\wedge\beta(y).
\end{align*}
Since this holds for all $\alpha,\beta$, the symmetry follows. 
\end{proof}

{\bf Switching to the algebraic convention. }
In order to be consistent with~\cite{Cieliebak-Fukaya-Latschev},
we now replace the pairing~\eqref{eq:defineintpair} by 
the cyclic one, see~\eqref{eq:defcycpair}. Explicitly, 
\begin{equation}\label{eq:pairing}
   \la\alpha,\beta\ra := (-1)^\alpha\int_M\alpha\wedge\beta.
\end{equation}
This leaves the harmonic subspace $\HH$ and the orthogonal
projection $\Pi:\Om\to\HH$ unchanged, and it makes
$(\HH,d,\la\ ,\ \ra)$ a cyclic
cochain complex in the sense of~\cite{Cieliebak-Fukaya-Latschev}. 
For a basis $e_a$ of $\HH$ we now define its dual basis $e^a$ by 
$$
   \la e_a,e^b\ra = \delta_a^b.
$$
Then bases $h_a,h^a$ as above determine bases $e_a,e^a$ by
$$
   e_a=h_a,\qquad e^a=(-1)^{e_a}h^a.
$$
The kernel of $\Pi$ writes in the new bases
\begin{equation*}
   \Pi(x,y) = \sum_i(-1)^{e^a+e_a}e_a(x)\wedge e^a(y) = (-1)^n\sum_ie_a(x)\wedge e^a(y),
\end{equation*}
which in view of the symmetry of $\Pi$ becomes
\begin{equation}\label{eq:Pi-kernel}
   \Pi(x,y) = \sum_ie_a(y)\wedge e^a(x).
\end{equation}

\subsection{Oriented real blow-up and propagators}\label{ss:blowupprop}

We denote by
$$
   \wt M^2 := \Bl(M^2,\Delta_2)
$$
the {\em oriented real blow-up} of the diagonal $\Delta_2$ in $M^2=M\times M$.
This is the compact oriented manifold with boundary obtained by
replacing the diagonal by its unit sphere normal bundle $N_{\Delta_2}$.
Thus the boundary $\p\wt M^2$ is canonically diffeomorphic to
$N_{\Delta_2}$. Note, however, that the orientation of $\p\wt M^2$ as
boundary of $\wt M^2$ is {\em opposite} to the orientation of
$N_{\Delta_2}$ as boundary of the unit disk normal bundle, oriented by the
usual convention ``fibre first, base second''.  

The oriented real blow-up comes with a smooth blow-down map
$$
    \wt M^2\stackrel{\pi}\longrightarrow M^2
$$
which restricts to a diffeomorphism $\wt M^2\setminus\p\wt M^2\to
M^2\setminus\Delta_2$ on the interior and to the bundle projection
$N_{\Delta_2}\to\Delta_2$ on the boundary. The projections $p_i:M\times M\to
M$ onto the two factors induce smooth fibre bundles
$$
   p_i:\wt M^2\to M,\qquad i=1,2
$$
with fibre the oriented real blow-up of $M$ at a point. The map
$\tau(x,y)=(y,x)$ canonically lifts to an involution
$$
   \tau:\wt M^2\to\wt M^2. 
$$
 
We denote the pullback of $\Pi\in\Om^n(M^2)$ from~\eqref{eq:Pi-kernel}
under the blow-down map by $\wt\Pi$. We will view $\wt M^2$ as the
fibre bundle 
$$
   p_2:F\to \wt M^2\to M
$$
via projection onto the second factor and denote by $\int_F$ the
corresponding fibre integration. We orient the sphere $\p F$ as the
boundary of $F$, which is {\em opposite} to its orientation as the
boundary of a unit normal disk. 


\begin{lemma}\label{lem:Green}
The form $\wt\Pi$ is exact. Moreover,
there exists a (non-unique) smooth $(n-1)$-form $\wt G$ on $\wt M^2$ such that
\begin{equation}\label{eq:propag}
   d\wt G = (-1)^n\wt\Pi.
\end{equation}
Any such $G$ satisfies
\begin{equation}\label{eq:fibre-int-G}
   \int_{\p F}\wt G=(-1)^n,
\end{equation}
and it can be chosen to also satisfy
\begin{equation}\label{eq:Green-symmetry}
   \tau^*\wt G = (-1)^n \wt G.
\end{equation}
\end{lemma}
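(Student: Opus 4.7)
First I would establish that $\wt\Pi$ is exact on $\wt M^2$ by a topological argument. Since the blow-down $\wt M^2\to M^2$ restricts to a diffeomorphism on the interior and $\wt M^2$ admits a collar neighbourhood of its boundary, it is homotopy equivalent to $M^2\setminus\Delta_2$; in particular, $H^n(\wt M^2)\cong H^n(M^2\setminus\Delta_2)$. The long exact sequence of the pair $(M^2,M^2\setminus\Delta_2)$ reads
$$ \cdots\to H^n(M^2,M^2\setminus\Delta_2)\xrightarrow{j^*} H^n(M^2)\xrightarrow{i^*} H^n(M^2\setminus\Delta_2)\to\cdots, $$
and Thom/excision identifies $H^n(M^2,M^2\setminus\Delta_2)$ with $H^0(\Delta_2)$ so that $j^*$ sends the generator to $PD([\Delta_2])$. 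By Lemma~\ref{lem:Poincare} this class coincides with $[\Pi]$, so $i^*[\Pi]=0$ and therefore $[\wt\Pi]=0$ in $H^n(\wt M^2)$. A smooth primitive $\wt G_1\in\Om^{n-1}(\wt M^2)$ with $d\wt G_1=(-1)^n\wt\Pi$ is obtained by rescaling the resulting primitive by $(-1)^n$.

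Next I would derive~\eqref{eq:fibre-int-G} from the fibre Stokes formula~\eqref{eq:fibre-Stokes2} applied to $p_2\colon\wt M^2\to M$, whose fibre $F$ is the oriented real blow-up of $M$ at a point, with boundary the unit normal sphere. Since $\wt G$ has degree $n-1$ and $\dim F=n$, the form $\int_F\wt G$ lies in $\Om^{-1}(M)=0$, so Stokes reduces to
$$ 0=\int_F d\wt G-\int_{\p F}\wt G=(-1)^n\int_F\wt\Pi-\int_{\p F}\wt G. $$
The exceptional divisor of $F\to M_x$ has measure zero and $\Pi$ is smooth on $M^2$, so $\int_F\wt\Pi$ equals the ordinary integral $\int_{M_x}\Pi(x,\cdot)=\Pi(1)$ via~\eqref{eq:int-kernel} with $\alpha\equiv 1$. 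Because $\Om^{-1}(M)=0$, the decomposition $\ker d=\HH\oplus\im d$ forces the constant function $1$ to lie in $\HH$, so $\Pi(1)=1$ and $\int_{\p F}\wt G=(-1)^n$.

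To enforce the symmetry $\tau^*\wt G=(-1)^n\wt G$, I would symmetrize by averaging. The relation~\eqref{eq:Pi-symmetry} lifts to $\tau^*\wt\Pi=(-1)^n\wt\Pi$, and for any primitive $\wt G_1$ of $(-1)^n\wt\Pi$ the form
$$ \wt G:=\tfrac{1}{2}\bigl(\wt G_1+(-1)^n\tau^*\wt G_1\bigr) $$
satisfies $\tau^*\wt G=(-1)^n\wt G$ (immediate from $\tau^2=\id$), while
$$ d\wt G=\tfrac{1}{2}\bigl((-1)^n\wt\Pi+(-1)^n\tau^*((-1)^n\wt\Pi)\bigr)=(-1)^n\wt\Pi. $$
Since~\eqref{eq:fibre-int-G} depends only on $d\wt G=(-1)^n\wt\Pi$, it persists after this modification.

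The only non-formal ingredient is the vanishing of $[\Pi]$ on $M^2\setminus\Delta_2$, which reduces to the Thom isomorphism; the remaining steps are direct consequences of the fibre Stokes formula, the Hodge-type decomposition applied to the constant function $1$, and the standard averaging trick. I do not anticipate any significant obstacle beyond being careful with signs when $n$ is odd versus even.
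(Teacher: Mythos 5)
Your proof is correct, but on two of the three steps it runs along a different track than the paper. For exactness, the paper uses $H^n(\wt M^2;\R)\cong{\rm Hom}(H_n(\wt M^2;\R),\R)$ and evaluates $\wt\Pi$ on $n$-cycles: any cycle in $\wt M^2$ can be pushed off $\p\wt M^2$, so it projects to a cycle $c$ in $M^2$ disjoint from $\Delta_2$, and $\int_c\Pi=c\cap\Delta_2=0$ by the Poincar\'e duality statement of Lemma~\ref{lem:Poincare}; you instead pass through the long exact sequence of $(M^2,M^2\setminus\Delta_2)$ and the Thom isomorphism, which is equally valid (the normal bundle of $\Delta_2\cong M$ is orientable since $M$ is, and $i^*\circ j^*=0$ kills $i^*[\Pi]$), just a more machinery-laden packaging of the same geometric fact. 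For the normalization~\eqref{eq:fibre-int-G}, the paper again argues geometrically: it takes the cycle $c=\{p\}\times M$, lifts it to a chain $\wt c\subset\wt M^2$ whose boundary is the exceptional sphere over $(p,p)$, and applies Stokes together with $c\cap\Delta_2=1$; you instead apply the fibre-integration Stokes formula~\eqref{eq:fibre-Stokes2} to $p_2$, use the degree observation $\int_F\wt G\in\Om^{-1}(M)=0$, and identify $\int_F\wt\Pi=\Pi(1)=1$ via $1\in\HH$ (correct, since $\im d\cap\Om^0(M)=0$ forces $\ker d\cap\Om^0(M)\subset\HH$). Your version has the small advantages of proving~\eqref{eq:fibre-int-G} directly as an identity of functions of the base point and of being exactly the computation reused for the chain homotopy~\eqref{eq:homo}; the paper's is a shorter intersection-number argument. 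The symmetrization step is identical to the paper's. No gaps.
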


\begin{proof}
Since $H^n(\wt M^2;\R)\cong{\rm Hom}(H_n(\wt M^2;\R),\R)$,
exactness of $\wt\Pi$ is equivalent to
the vanishing of $\int_{\wt c}\wt\Pi$ for each $n$-cycle
$\wt c$ in $\wt M^2$. Now each such $\wt c$ can be deformed to make it
disjoint from $\p\wt M^2$, so that $\wt c$ projects to an $n$-cycle
$c$ in $M^2$ disjoint from $\Delta_2$ and using
equation~\eqref{eq:PD-intersection} we obtain 
$$
   \int_{\wt c}\wt\Pi = \int_c\Pi = c\cap\Delta_2 = 0.
$$
This shows exactness $\wt\Pi$ and thus  the existence of a primitive $\wt G$ of 
$(-1)^n\wt\Pi$. For the second assertion, consider the $n$-cycle
$c=\{p\}\times M$ in $M^2$ for some $p\in M$. It lifts to an $n$-chain
$\wt c$ in $\wt M^2$ whose boundary equals the fibre $F$ over $p$, 
so by Stokes'theorem and equation~\eqref{eq:PD-intersection} we find  
$$
   \int_{F}\wt G = \int_{\wt c}d\wt G = (-1)^n\int_c\Pi = (-1)^nc\cap\Delta_2 = (-1)^n.
$$
The last assertion follows by replacing $\wt G$ by $\frac12(
\wt G+(-1)^n\tau^*\wt G)$, recalling that $\tau^*\Pi=(-1)^n\Pi$.
\end{proof}

By a slight abuse of language we will call $\wt G$ as in
Lemma~\ref{lem:Green} a {\em propagator}. It 
gives rise to a linear map
$$
   P:\Om^*(M)\longrightarrow \Om^{*-1}(M)
$$
by the formula
\begin{equation}\label{eq:HG}
   P\alpha(y) := \int_{x\in M}G(x,y)\alpha(x) = \int_F\wt G\wedge p_1^*\alpha,
\end{equation}
where the right hand side is the fibre integral with respect to the
projection $p_2:\wt M^2\to M$ onto the second factor as in Lemma~\ref{lem:Green}. 
 
\begin{lemma}
The map $P$ defines a chain homotopy between $\Id$ and $\Pi$,
\begin{equation}\label{eq:homo}
   d\circ P + P\circ d = \Pi - \Id.
\end{equation}
\end{lemma}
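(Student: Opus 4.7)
The plan is to apply the fibre Stokes formula~\eqref{eq:fibre-Stokes} to the fibre integral $P\alpha = \int_F \wt G \wedge p_2^*\alpha$. Since $\dim F = n$, this gives
$$
(-1)^n d(P\alpha) = \int_F d(\wt G \wedge p_2^*\alpha) - \int_{\p F}(\wt G \wedge p_2^*\alpha)\bigr|_{\p \wt M^2}.
$$
Expanding the integrand on the right by the Leibniz rule and substituting $d\wt G = (-1)^n \wt\Pi$ from~\eqref{eq:propag} splits the interior contribution into a ``bulk'' piece built from $\wt\Pi$ and a piece that reassembles into $Pd\alpha$ with an explicit sign.

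For the bulk piece, I would use that $\wt\Pi = \pi^*\Pi$ and that the blow-down $\pi$ is a diffeomorphism off the measure-zero locus $\p\wt M^2$, so the fibre integral $\int_F \wt\Pi \wedge p_2^*\alpha$ reduces to the integral~\eqref{eq:int-kernel} representing $\Pi\alpha$. For the boundary piece, the fibre $\p F_y \subset \p\wt M^2$ is the unit normal sphere over a single point $y \in \Delta_2$; on it, the form $p_2^*\alpha$ is constant (equal to $\alpha(y)$), so the integrand factors, and the scalar fibre integral evaluates by~\eqref{eq:fibre-int-G} to $\int_{\p F_y}\wt G = (-1)^n$. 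Hence the boundary piece equals $(-1)^n \alpha$.

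Collecting the three contributions and dividing through by $(-1)^n$ yields
$$
dP\alpha = \Pi\alpha - P d\alpha - \alpha,
$$
which is~\eqref{eq:homo}. The step likely to require the most care is matching orientations: the formula~\eqref{eq:fibre-int-G} was derived using the convention, fixed in~\S\ref{ss:blowupprop}, that $\p\wt M^2$ is oriented as the boundary of $\wt M^2$, which is opposite to its orientation as the boundary of the unit disk normal bundle; one must check that this is the same orientation used in the fibre Stokes formula~\eqref{eq:fibre-Stokes} applied to $p_2 \colon \wt M^2 \to M$, so that the sign $(-1)^n$ from~\eqref{eq:fibre-int-G} enters exactly as written. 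Once signs are aligned, no further calculation is needed.
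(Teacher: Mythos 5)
Your proof is correct and follows essentially the same route as the paper: apply the fibre Stokes formula~\eqref{eq:fibre-Stokes2} to $P\alpha=\int_F\wt G\wedge p_2^*\alpha$, expand by Leibniz, substitute $d\wt G=(-1)^n\wt\Pi$, and evaluate the boundary term via the projection formula~\eqref{eq:fibre-projection} together with the normalization $\int_{\p F}\wt G=(-1)^n$. The orientation issue you flag is already taken care of by the convention fixed in~\S\ref{ss:blowupprop} (the sphere $\p F$ is oriented as the boundary of $F$), which is exactly the convention under which~\eqref{eq:fibre-int-G} was derived, so the signs come out as in~\eqref{eq:homo}.
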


\begin{proof}
For $\alpha\in\Om^*(M)$ we have by definition
$$
   P\alpha = \int_F\wt G\wedge p_1^*\alpha,\qquad P(d\alpha) = \int_F\wt G\wedge p_1^*d\alpha.
$$
We compute
\begin{align*}
  d\circ P\alpha
  &= d\Bigl(\int_F\wt G\wedge p_1^*\alpha\Bigr) \cr
  &= (-1)^n\int_Fd(\wt G\wedge p_1^*\alpha)
  - (-1)^n\int_{\p F}\wt G\wedge p_1^*\alpha \cr
  &= (-1)^n\int_Fd\wt G\wedge p_1^*\alpha - \int_F\wt G\wedge d(p_1^*\alpha)
  - (-1)^n\int_{\p F}\wt G\wedge p_1^*\alpha \cr
  &= \int_F\Pi\wedge p_1^*\alpha - \int_F\wt G\wedge p_1^*d\alpha
  - (-1)^n\Bigl(\int_{\p F}\wt G\Bigr)\wedge\alpha \cr
  &= \Pi\alpha - P\circ d\alpha - \alpha. 
\end{align*}
Here in the second equality we have used~\ref{eq:fibre-Stokes2}, in
the fourth equality~\eqref{eq:propag}, and in the fifth
equality~\eqref{eq:fibre-projection} and~\eqref{eq:fibre-int-G}.
\end{proof}

\begin{remark}\label{rem:symmGG}
The symmetry~\eqref{eq:Green-symmetry} for the integral kernel $\wt G$
implies that the homotopy operator $P$ is symmetric, i.e., it is a
propagator in the terminology of \S\ref{ss:coch} (see
equation~\eqref{eq:symmpropprelim}). 
\end{remark}

\begin{remark}\label{rem:spec}
We can apply Lemma~\ref{lem:propagator}
to the propagator $P$ to get a semi-special 
propagator $P_2$ for the same projection 
$\Pi$. Propositions~4.2.8 and 4.2.11 of~\cite{Hajek-thesis} imply that
there exists a smooth $(n-1)$-form $\wt G_2$ on $\wt M^2$
satisfying the conditions~\eqref{eq:propag},
\eqref{eq:fibre-int-G} and~\eqref{eq:Green-symmetry} of Lemma~\ref{lem:Green}
such that $P_2$ is obtained from $G_2$ by the formula~\eqref{eq:HG}.
(The corresponding assertions for a special propagator in the
  literature contain gaps.)
\end{remark}

We will denote the pushforward of $\wt G$ to $M\times M$ (which is singular along the diagonal) again by $G$.  
Combining~\eqref{eq:propag}, ~\eqref{eq:Pi-kernel}
and~\eqref{eq:Pi-symmetry} we then have
$$
   dG(x,y) = (-1)^n\Pi(x,y) = \Pi(y,x) = \sum_ae_a(x)\wedge e^a(y),
$$
hence
\begin{equation}\label{eq:dG}
   d\wt G = \sum_a\pi_1^*e_a\wedge \pi_2^*e^a. 
\end{equation}
In the sequel we will often abbreviate the above sum as $e_a\times
e^a$, using the cross product notation and the Einstein summation
convention, or even as $e_ae^a$ to save space. Let 
$$
\iota:\HH\otimes\HH^*\to\HH\otimes\HH\to
\Om^*(M^2)
$$
denote the composition of the following two maps: the first one is the
identification of $\HH^*$ with $\HH$ by means of the pairing
$\la\cdot,x\ra\mapsto x$, and the second one is the cross product.
Observe that $e_a\times e^a\in \Om^*(M^2)$
is the image under $\iota$ of the identity
$$
\id=\sum_a e_a\otimes \la\cdot,e^a\ra \in  
\HH\otimes\HH^*\cong \Hom(\HH,\HH).
$$
In particular the sum $e_a\times e^a$ depends only on $\HH$ and not on
the choice of the basis. The discussion in this section can be
summarized in the following result.  

\begin{proposition}\label{prop:existsprop}
Let $M$ be a closed oriented manifold and $\bigl(\Om =
\Om^*(M),d,\,\wedge\bigr)$ its de Rham algebra equipped with 
the algebraic pairing $\la\cdot,\cdot\ra$, see~\eqref{eq:pairing}. Fix any 
complement $\HH$ of $\im d$ in $\ker d$ and define the projection $\Pi$ onto 
$\HH$ by~\eqref{eq:projdef}. Let $e_a$ be a basis 
of $\HH$ and $e^a$ the dual basis with respect to $\la\cdot,\cdot\ra$. Let 
$\wt G$ be any symmetric primitive of $\pi^*(\sum_a e_a\times e^a)$
on $\wt M^2$. Then the integral operator $P$ defined by~\eqref{eq:HG}
is a propagator with respect to $\Pi$ in the sense of
\S\ref{ss:coch}. Moreover,  the primitive $\wt G$ can be chosen in
such a way that the propagator $P$ is semi-special.
\end{proposition}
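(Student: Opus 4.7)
The plan is to assemble the pieces that have already been proved in this section. First I would observe that, by equation~\eqref{eq:dG} together with Lemma~\ref{lem:Poincare}, we have the identity $\pi^*\bigl(\sum_a e_a\times e^a\bigr)=(-1)^n\wt\Pi$, so asking for a primitive of the left-hand side is the same (up to the sign $(-1)^n$ absorbed into $\wt G$) as asking for a primitive of $\wt\Pi$. Lemma~\ref{lem:Green} produces exactly such a smooth $(n-1)$-form $\wt G$ on $\wt M^2$, and moreover shows that $\wt G$ can be arranged to satisfy the symmetry $\tau^*\wt G = (-1)^n\wt G$, which is precisely the symmetry needed for the conclusion.

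Next, for a fixed symmetric primitive $\wt G$, I would define $P$ by~\eqref{eq:HG} and check the defining properties of a propagator in the sense of~\S\ref{ss:coch}. The chain-homotopy identity $d\circ P+P\circ d=\Pi-\Id$ of~\eqref{eq:homo} shows that the associated projection is $\pi_P=\Pi$; since every de Rham class has a (unique) representative in $\HH$, the projection $\Pi\colon \Om\to\HH$ is a quasi-isomorphism, so $P$ is a homotopy operator with respect to $\Pi$. The symmetry property~\eqref{eq:symmpropprelim} of $P$ then follows from the symmetry of $\wt G$ via a change of variables under $\tau$ combined with Fubini, as recorded in Remark~\ref{rem:symmGG}. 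Hence $P$ is a propagator with respect to $\Pi$.

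Finally, to upgrade $P$ to a special propagator, I would invoke Lemma~\ref{lem:propagator}: setting $P_2:=(\pi-\Id)\circ P\circ(\pi-\Id)$ and $P_3:=-P_2\circ d\circ P_2$ yields a special propagator for the same projection $\Pi$. The one remaining point is that $P_3$ must still be of integral-operator form against a smooth $(n-1)$-form on $\wt M^2$ satisfying the same properties~\eqref{eq:propag}--\eqref{eq:Green-symmetry}; this is precisely the content of Remark~\ref{rem:spec}, which cites Propositions~4.2.8 and~4.2.11 of~\cite{Hajek-thesis} to produce a smooth kernel $\wt G_3$ on $\wt M^2$ representing $P_3$. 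Replacing $\wt G$ by $\wt G_3$ then gives the desired special propagator, completing the proof.

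I expect the only non-trivial step to be the last one: turning the algebraic modification $P\mapsto P_3$ into a statement about kernels that extend smoothly across $\p\wt M^2$. The algebraic manipulations preserve the propagator/projection relations immediately, but a priori they could spoil the smoothness of the integral kernel on the blow-up; fortunately, since $P$ is itself built from a smooth form on $\wt M^2$, the composition $P\circ d\circ P$ unfolds as a fibre integral against a smooth form on a fibre product of two copies of $\wt M^2$, and the cited results of Hájek ensure that the resulting pushforward kernel is again smooth on $\wt M^2$.
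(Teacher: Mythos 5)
Your proposal is correct and follows essentially the same route as the paper: Proposition~\ref{prop:existsprop} is stated there as a summary of the preceding discussion, namely Lemma~\ref{lem:Green} for the existence of a symmetric primitive, equation~\eqref{eq:homo} and Remark~\ref{rem:symmGG} for the propagator property of $P$, and Lemma~\ref{lem:propagator} together with Remark~\ref{rem:spec} (citing H\'ajek's thesis for smoothness of the kernel of $P_3$ on $\wt M^2$) for the special case. Your closing observation about the only delicate point being the smoothness of the modified kernel on the blow-up is exactly the issue the paper delegates to Remark~\ref{rem:spec}.
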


\section{Configuration spaces}\label{sec:config}

Throughout this section, $\Gamma$ will denote a trivalent ribbon graph of
signature $(k,\ell,g)$
with a chosen extended labelling in the sense of
Definition~\ref{def:labelling}. We will use the notation from \S\ref{sec:graphs}. 
In particular, $s_b$ is the number of leaves on the $b$-th boundary component, 
$s=s_1+\cdots+s_\ell$, $k$ is the number of vertices determined
by~\eqref{eq:k}, $e$ is the number of edges, and 
$$
\bar R_\Gamma:=O_v^{-1}\circ O_e:
\{1,\dots,|\Flag(\Gamma)|\}\longrightarrow \{1,\dots,|\Flag(\Gamma)|\}
$$
is the reordering permutation from~\eqref{eq:reod}. 

In addition, we fix an $n$-dimensional oriented closed manifold $M$. 

\subsection{Configuration spaces associated to trivalent ribbon graphs}\label{ss:config}

The manifold $M$ defines a contravariant functor from
the category of finite sets to the category of manifolds, associating
to each finite set $S$ the manifold $M^S:={\rm Map}(S,M)$ and to each
map $\phi:S\to T$ the smooth map 
$$
   M^\phi:M^T\to M^S,\qquad x\mapsto x\circ\phi.
$$
If $S=\{1,\dots,s\}$ we identify $x\in M^S$ via $x_i:=x(i)$ with the
$s$-tuple $(x_1,\dots,x_s)\in M^s$, and the map induced by $\phi:S\to T=\{1,\dots,t\}$
is identified with
$$
   M^\phi:M^t\to M^s,\qquad (x_1,\dots,x_t)\mapsto(x_{\phi(1)},\dots,x_{\phi(s)}).
$$
In this notation, the reordering permutation $\bar R_\Gamma$ induces a diffeomorphism
$$
   R_\Gamma:=M^{\bar R_\Gamma}:M^{3k}\stackrel{\cong}\longrightarrow M^{3k}.
$$ 
Recall from \S\ref{ss:basiccomb} that precomposing the vertex order
with a permutation $\sigma^{-1}$ and the edge order with
a permutation $\eta$ results in replacing
$\bar R_\Gamma$ by $\sigma\bar R_\Gamma\eta$. Since the functor defined by $M$ is contravariant, $R_\Gamma$ gets replaced by
$M^\eta R_\Gamma M^\sigma$. It will be convenient to denote the domain
and target of the map $R_\Gamma$ by different symbols $Y_\Gamma$ and
$X_\Gamma$ so that we have
$$
   R_\Gamma:Y_\Gamma\longrightarrow X_\Gamma.
$$
We think of $Y_\Gamma$ and $X_\Gamma$ as $M^{3k}$ written in the
vertex and edge order, respectively. To emphasize this, we will also write
$$
   Y_\Gamma = (M^3)^k,\qquad
   X_\Gamma = (M^2)^e\times M^s.
$$
Let $\Delta_3\subset M^3$ be the triple diagonal and define the (slim)
{\em vertex diagonal}
$$
   \Delta_3^k\subset Y_\Gamma
$$
with its natural orientation, as well as its image
\begin{equation}\label{eq:Delta3Gamma}
   \Delta_{3\Gamma}:=(-1)^{\bar
     R_\Gamma+(n-1)\eta_3(\Gamma)}R_\Gamma(\Delta_3^k)\subset X_\Gamma.
\end{equation}
Here we orient $R_\Gamma(\Delta_3^k)$ by declaring the restriction
$R_\Gamma|_{\Delta_3^k}$ to be orientation preserving and writing
the desired sign explicitly in front. The exponent $\bar R_\Gamma$  
denotes the sign exponent of the permutation $\bar R_\Gamma$, and 
$\eta_3(\Gamma)$ was introduced 
in \S\ref{ss:basiccomb}.

\begin{lemma}\label{lem:Delta3Gamma}
The orientation of $\Delta_{3\Gamma}$ does not depend on the vertex
order of flags, i.e.~items (iii) and (iv) in Definition~\ref{def:labelling}.
\end{lemma}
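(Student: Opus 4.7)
The plan is to reduce the invariance check to two elementary moves that generate all changes of items (iii) and (iv): (A) a cyclic rotation by one step of the flags at a single vertex (generating changes in (iv)), and (B) a transposition of two adjacent vertices in the vertex numbering (generating changes in (iii)). Any change of (iii)-(iv) factors as a composition of such moves, so it will be enough to verify the invariance of $\Delta_{3\Gamma}$ under each one separately.

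Each such move induces a permutation $\sigma\in S_{3k}$ with $\bar R_\Gamma \mapsto \sigma\circ\bar R_\Gamma$ and correspondingly $R_\Gamma\mapsto R_\Gamma\circ M^\sigma$. A crucial preliminary observation I plan to establish is that the oriented subset $\Delta_3^k\subset Y_\Gamma$ itself does not change under either move: both (A) and (B) preserve the partition of the $3k$ coordinate positions of $Y_\Gamma=M^{3k}$ into $k$ consecutive blocks of three, and the natural parameterization $(p_1,\dots,p_k)\mapsto (p_1,p_1,p_1,\dots,p_k,p_k,p_k)$ of $\Delta_3^k$ by $M^k$ makes no reference to which vertex labels which block. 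Consequently the comparison of the old and new $\Delta_{3\Gamma}$ reduces to a product of three sign contributions: the parity of $\sigma$ (from $(-1)^{\bar R_\Gamma}$), the change of $\eta_3(\Gamma)$ (from $(-1)^{(n-1)\eta_3(\Gamma)}$), and the orientation sign of the restriction $M^\sigma|_{\Delta_3^k}$, which enters because the orientation of $R_\Gamma(\Delta_3^k)$ is defined by declaring $R_\Gamma|_{\Delta_3^k}$ to be orientation preserving.

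For move (A), $\sigma$ is a $3$-cycle, hence even; $\eta_3$ is independent of (iv) by the properties recalled in \S\ref{ss:basiccomb}; and $M^\sigma|_{\Delta_3^k}$ is the identity, since $\sigma$ permutes three coordinates that coincide on the diagonal. All three contributions are trivial. For move (B), $\sigma$ swaps two adjacent blocks of three, so its parity is $(-1)^{3\cdot 3}=-1$; $\eta_3(\Gamma)$ changes by $1$ (again by \S\ref{ss:basiccomb}), contributing $(-1)^{n-1}$; and the induced self-map of $\Delta_3^k\cong M^k$ is the swap of two adjacent $M$-factors, whose orientation sign is $(-1)^{n\cdot n}=(-1)^n$. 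The product is
$$
(-1)\cdot(-1)^{n-1}\cdot(-1)^n=(-1)^{2n}=1,
$$
so $\Delta_{3\Gamma}$ is unchanged. The only step that really requires care is the preliminary observation on $\Delta_3^k$: one might initially expect its orientation to depend on the vertex order because the identification with $M^k$ is phrased in terms of vertices, but the subset and its parameterization are in fact determined by the fixed block structure of $Y_\Gamma=M^{3k}$ alone. Once this is pinned down, the proof is the short sign cancellation above.
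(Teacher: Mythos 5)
Your proof is correct and follows essentially the same route as the paper: the paper likewise writes $\bar R_{\Gamma'}=\sigma\circ\bar R_\Gamma$, uses $M^\sigma(\Delta_3^k)=(-1)^{n\sigma}\Delta_3^k$ (your restriction sign $(-1)^{n}$ for a block swap, trivial for a rotation), and checks the same two elementary moves, with the swap case cancelling via $\sigma=\Delta\eta_3=1$ exactly as in your computation. The only cosmetic difference is that the paper packages the three sign contributions into the single formula $\Delta_{3\Gamma'}=(-1)^{(n-1)(\sigma+\Delta\eta_3)}\Delta_{3\Gamma}$ before specializing.
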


\begin{proof}
A change in items (iii) and (iv) of the extended labelling changes the
vertex order of flags by precomposition with a permutation $\sigma^{-1}$.
We denote the graph with the new extended labeling by $\Gamma'$ and
set $\Delta\eta_3:=\eta_3(\Gamma')-\eta_3(\Gamma)$.
Since the new reordering maps are $\bar R_{\Gamma'}=\sigma\bar
R_\Gamma$ and $R_{\Gamma'}=R_\Gamma M^\sigma$, the new triple diagonal is
$$
  \Delta_{3\Gamma'}
  = (-1)^{\sigma+\bar R_{\Gamma}+(n-1)\eta_3(\Gamma')}R_\Gamma
  M^\sigma(\Delta_3^k)
  = (-1)^{(n-1)(\sigma+\Delta\eta_3)}\Delta_{3\Gamma},
$$
where we have used $M^\sigma(\Delta_3^k)=(-1)^{n\sigma}\Delta_3^k$.
(Note that this last equality essentially uses the fact that our graph 
is odd-valent; this is in sharp contrast
with~\cite{Cieliebak-Fukaya-Latschev}, where statements analogous to
Lemma~\ref{lem:Delta3Gamma} do not rely on the valency, which is
allowed to be arbitrary.) 
Now we distinguish two cases.
If $\sigma$ corresponds to swapping the order of two adjacent
vertices, then the sign exponents are $\Delta\eta_3=1=\sigma$.   
If $\sigma$ corresponds to cyclically changing the order of adjacent
flags at a vertex, then the sign exponents are $\Delta\eta_3=0=\sigma$.   
In both cases we conclude $\Delta_{3\Gamma'}=\Delta_{3\Gamma}$. 
\end{proof}

Let now $l$ be an edge of $\Gamma$. Let 
$\Delta_2\subset M^2$
be the diagonal with its canonical orientation and define the double
diagonal corresponding to the edge $l$,
$$
 \Delta_2^l:=(M^2\times\cdots\times M^2\times \Delta_2\times
 M^2\times\cdots\times M^2)\times M^s\subset X_\Gamma.
$$
Here $\Delta_2$ comes at the position corresponding to the edge $l$ in the
numbering (v) in Definition~\ref{def:labelling}. We orient $\Delta_2^l$ as a product and
note that this orientation does not depend on the orientation of the edge $l$.
We also denote
$$
   \Delta_2^{\{l\}} := \Delta_2^l\setminus \bigcup_{k\in\Edge(\Gamma)\setminus \{l\}}\Delta_2^k
$$
and we define the (fat) {\em edge diagonal}
$$
   \Delta_{2\Gamma} := \bigcup_{l\in\Edge(\Gamma)}\Delta_2^l\subset X_\Gamma.
$$
Let
$$
   \widetilde X_{\Gamma,l}:=(M^2\times\cdots\times M^2\times \widetilde
   M^2\times M^2\times\cdots\times M^2)\times M^s
$$
be the (oriented real) blow-up of $X_\Gamma$ along 
$\Delta_2^l$, and
$$
   \widetilde X_\Gamma:=(\widetilde M^2)^e\times M^s
$$
the blow-up of $X_\Gamma$ along $\Delta_{2\Gamma}$. This is a manifold
with corners with smooth blow-down map 
$$
   \pi:\widetilde X_\Gamma\longrightarrow X_\Gamma.
$$

\begin{remark}\label{rem:edgeliftbup}
Consider a change of extension of a labelling of 
$\Gamma$ affecting the edge order only. That is we precompose the edge 
order by some
permutation $\eta$ of the set 
Assume that we changed the edge 
$\{1,\dots,3k\}$. The induced map
$M^\eta:X_\Gamma\longrightarrow X_\Gamma$ permutes the $M^2$ factors
in $X_\Gamma=(M^2)^e\times M^s$ (renumbering the edges) or flips the
two $M$ factors within one $M^2$ factor (reorienting an edge). It
lifts to the blow-up to give a map  
$$ 
   \wt M^\eta:\wt X_\Gamma\longrightarrow\wt X_\Gamma.
$$
\end{remark}

\begin{definition}\label{def:CGamma}
The {\em compactified configuration space} of $M$ corresponding to the graph $\Gamma$ is the
proper transform of the vertex diagonal in the blow-up of the edge diagonal,
$$
   \CC_\Gamma := PT(\Delta_{3\Gamma}) := \text{closure of
   }\pi^{-1}(\Delta_{3\Gamma}\setminus\Delta_{2\Gamma}) \text{ in } \widetilde X_\Gamma.
$$
\end{definition}

 The following properties on $\CC_\Gamma$ are proved
in~\cite{Cieliebak-Volkov-stringtop}. We recall the corresponding
notions and results in Appendix~\ref{sec:blow-up}, see
in particular Proposition~\ref{prop:Stokes} and
equation~\eqref{eq:descr-prim}. 

\begin{thm}
\label{thm:config-space}
(a) The space $\CC_\Gamma$ is the disjoint union
$$
  \CC_\Gamma = \CC_\Gamma^0\amalg \p^{\rm q-reg}\CC_\Gamma \amalg \p_{\geq 2}\CC_\Gamma
$$
of its interior $\CC_\Gamma^0=\pi^{-1}(\Delta_{3\Gamma}\setminus\Delta_{2\Gamma})$,
its codimension $1$ (so-called quasi-regular) boundary $\p^{\rm q-reg}\CC_\Gamma$, and
the part $\p_{\geq 2}\CC_\Gamma$ of codimension at least two. 
Stokes' theorem holds on $\CC_\Gamma$ in the sense that for each
$\gamma\in\Om^*(\wt X_\Gamma)$ the integral
$$
  \int_{\CC_\Gamma}\gamma := \int_{\CC_\Gamma^0}\gamma
$$
exists, and for each
$\beta\in\Om^*(\wt X_\Gamma)$ the following two integrals exist and
are equal:
$$
   \int_{\CC_\Gamma}d\beta = \int_{\p^{\rm q-reg}\CC_\Gamma}\beta.
$$
(b) The quasi-regular boundary
is the disjoint union
$$
   \p^{\rm q-reg}\CC_\Gamma = \p^{\rm main}\CC_\Gamma \amalg \p^{\rm hidden}\CC_\Gamma
$$
of its {\em main part} 
$$
   \p^{\rm main}\CC_\Gamma := \coprod_{l\in\Edge(\Gamma)}\p^l\CC_\Gamma, \qquad
   \p^l\CC_\Gamma := \p\CC_\Gamma\cap \pi^{-1}(\Delta_2^{\{l\}})
$$
and its {\em hidden part} $\p^{\rm hidden}\CC_\Gamma$. 
The component 
$\p^l\CC_\Gamma
=\p\widetilde X_{\Gamma,l}|_{\Delta_{3\Gamma}\cap\Delta_2^{\{l\}}}$
of the main part is the restriction of the $S^{n-1}$-bundle $\widetilde
X_{\Gamma,l}\to \Delta_2^l$ to $\Delta_{3\Gamma}\cap\Delta_2^{\{l\}}$.
\end{thm}

A fundamental difficulty arises from the fact that the hidden
part $\p^{\rm hidden}\CC_\Gamma$ is in general nonempty and may lead
to unwanted terms on the right hand side of Stokes' theorem. 
This difficulty is resolved in Theorem~\ref{thm:key-vanish} below.

\begin{remark}
Let us point out one apparent difficulty that arises with
$\p^l\CC_\Gamma$ if there are two edges $l,k$ with the same endpoints. 
Then the set $\Delta_2^{\{l\}}$ and thus
$\p^l\CC_\Gamma$ is empty. To deal with this, simply note that all
our discussions carry over to this case.
For example, equation~\eqref{eq:signinvol} holds true even in
this case, with both sides being empty sets, and any integral over the
empty set is zero. 
\end{remark}

\subsection{Duality}\label{ss:duality}

In this subsection we apply the duality operation
from~\S\ref{ss:graph-duality} to configuration spaces. 
Recall the setup: $(\Gamma,l)$ is an o-marked trivalent graph with an
extended labelling, 
$l=(u,v)$ is the oriented marked edge, 
$(z,w,u)$ and $(v,x,y)$ are the
two vertices connected by $l$, and $(I(\Gamma),I(l))$ is the new
o-marked extended labelled graph defined in Figure~\ref{fig:duality}. 
Recall from~\eqref{eq:dual-r} that the reordering maps $\bar
R_\Gamma$ and $\bar R_{I(\Gamma)}$ are related by 
\begin{equation}\label{eq:dual-r2}
   \bar R_{I(\Gamma)}^{-1}\circ \bar R_\Gamma = 
   O_e^{-1}\circ\sigma_4\circ O_e,
\end{equation}
where $\sigma_4$ is the cyclic permutation $(z,w,x,y)\mapsto
(y,z,w,x)$ leaving all other flags fixed. 
Applying the functor $S\mapsto M^S$ from~\S\ref{ss:config}, this
induces the following relation on diffeomorphisms of $M^{3k}$:
\begin{equation}\label{eq:dual-ph}
  \Phi := R_\Gamma\circ R_{I(\Gamma)}^{-1}
  = M^{\bar R_{I(\Gamma)}^{-1}\circ \bar R_\Gamma}
   = M^{O_e^{-1}\circ\sigma_4\circ O_e}.
\end{equation}
Since $\sigma_4$ fixes the flags $u,v$, the map $\Phi$ acts as the
identity on the corresponding $M^2$ factor of $X_\Gamma$, and its
blow-up $\wt\Phi$ acts as the identity on the $\widetilde 
M^2$ factor of $\wt X_{\Gamma,l}$. Therefore, $\wt\Phi$
restricts to the identity map between the boundary loci where the
variables corresponding to the flags $u,v,x,y,z,w$ all coincide,
\begin{equation}\label{eq:or-rev}
   \wt\Phi|_{\p^{I(l)}\CC_{I(\Gamma)}}=\id:\p^{I(l)}\CC_{I(\Gamma)}\longrightarrow \p^l\CC_\Gamma.
\end{equation}
On the other hand, applying equation~\eqref{eq:Delta3Gamma} to
$\Gamma$ and $I(\Gamma)$ yields the following equalities of oriented
manifolds:  
\begin{align*}
  \Delta_{3\Gamma}
  &= (-1)^{\bar R_\Gamma+(n-1)\eta_3(\Gamma)}R_\Gamma(\Delta_3^k) \cr
  &= (-1)^{\bar R_{I(\Gamma)}^{-1}\circ\bar R_\Gamma+(n-1)(\eta_3(\Gamma)+\eta_3(I(\Gamma)))}R_\Gamma\circ R_{I(\Gamma)}^{-1}(\Delta_{3I(\Gamma)}).
\end{align*}
Since $\eta_3(\Gamma)=\eta_3(I(\Gamma))$ by~\eqref{eq:eta3-duality}
and $\sigma_4$ is an odd permutation, in view of~\eqref{eq:dual-r2}
the sign exponent in the last displayed equation is odd, so
using~\eqref{eq:dual-ph} we get 
$$
  \Delta_{3\Gamma} = -\Phi(\Delta_{3I(\Gamma)}).
$$
This shows that the map $\Phi|_{\Delta_{3\Gamma}}:\Delta_{3\Gamma}\to\Delta_{3I(\Gamma)}$ 
is orientation reversing, hence the map in~\eqref{eq:or-rev} is
orientation reversing as well. In other words, even though the 
boundary loci coincide as manifolds, their boundary orientations differ:
\begin{equation}\label{eq:signinvol}
   \p^{I(l)}\CC_{I(\Gamma)}=-\p^l\CC_\Gamma.
\end{equation}
This relation will play a crucial role in the proof of the
Maurer-Cartan equation in~\S\ref{sec:proofMC}.

\subsection{Integrals over configuration spaces}\label{sec:opforms}



We retain the notation from above, so $\Gamma$ is a trivalent ribbon graph
with an extended labelling and $M$ is a closed oriented $n$-manifold.
Let $\wt G\in\Om^{n-1}(\wt M^2)$ be a propagator as in
\S\ref{sec:prop} and $G$ its pushforward to $M^2$ (singular
along the diagonal). In addition, let
$$
   \alpha = \alpha_1\otimes\dots\otimes\alpha_s 
$$
be a decomposable tensor of differential forms $\alpha_j\in\Om^*(M)$
corresponding to the leaves of $\Gamma$. We call such a decomposable 
$\alpha$ {\em adapted to $\Gamma$}. 
To this data we associate the following differential forms, where for the
last two we assume in addition that a marked oriented edge $l$ 
has been chosen:
\begin{itemize}
\item $\cross(\alpha):=\alpha_1\times\dots\times \alpha_s$ on $M^s$;
\item $G^e:=G\times\dots\times G$ on $(M^2)^e$;
\item $\wt G^e:=\wt G\times\dots\times\wt G$ on $(\widetilde M^2)^e$;
\item $G^e(\alpha):=G^e\times\cross(\alpha)$ on $X_\Gamma$; 
\item $\wt G^e(\alpha):=\wt G^e\times\cross(\alpha)$ on $\wt X_\Gamma$; 
\item $G^e_l:=G\times\dots\times dG\times\dots\times G$ on
  $(M^2)^e$ (with $dG$ at the position $n(l)$ of $l$);
\item $\wt G^e_l:=\wt G\times\dots\times dG\times\dots\times \wt G$ on
  $(\wt M^2)^e$ (with $dG$ at the position $n(l)$ of $l$);
\item $G^e_l(\alpha):=G^e_l\times\cross(\alpha)$ on $X_\Gamma$;
\item $\wt G^e_l(\alpha):=G^e_l\times\cross(\alpha)$ on $\wt X_\Gamma$.
\end{itemize}
If $\alpha$ is not adapted to $\Gamma$ we set $G^e(\alpha):=0$,
and similarly for all other quantities.
Note that the first five forms do not depend on $\Gamma$, and
the last four forms depend on $\Gamma$ only through the position 
$n(l)$ of the edge $l$ in the numbering of edges.
Consider the following integrals, where the equalities follow directly
from the definitions of $\Delta_{3\Gamma}$, $\Delta_3^k$, and
$\CC_\Gamma$:
\begin{align*}
  I_\Gamma(\alpha)
  &:= \int_{\CC_\Gamma}\wt G^e(\alpha)
  = \int_{\Delta_{3\Gamma}}G^e(\alpha)
  = (-1)^{\bar R_\Gamma+(n-1)\eta_3(\Gamma)}\int_{\Delta_3^k}R_\Gamma^*G^e(\alpha), \cr 
  I_{\Gamma,l}(\alpha)
  &:= (-1)^{(n-1)(n(l)-1)}\int_{\Delta_{3\Gamma}}G^e_l(\alpha) 
  = (-1)^{\bar R_\Gamma+(n-1)(\eta_3(\Gamma)+n(l)-1)}\int_{\Delta_3^k}R_\Gamma^*G^e_l(\alpha), \cr
  I_{\Gamma-l}(\alpha) &:= \int_{\p^l\CC_\Gamma}\wt G^e(\alpha).
\end{align*}
Here all the forms with $\sim$ live on the blow-up and are
smooth, hence integrable 
by Theorem~\ref{thm:config-space}(b). The forms without $\sim$ are singular, but
their integrals still exist in the Lebesgue sense.  
The integrals not involving the marked edge $l$ will enter the
definition of the Maurer-Cartan element in
Theorem~\ref{thm:existence-intro}, while the integrals involving $l$
will be used in the proof of the Maurer-Cartan relation. 

The following observation will prove useful. Let $\Gamma_1$,
$\Gamma_2$ be two extended labelled connected graphs and
$\Gamma:=\Gamma_1\amalg\Gamma_2$ with the canonical disjoint union
labelling. 
Then
\begin{equation}\label{eq:2comp}
I_\Gamma(\alpha_1\alpha_2)=(-1)^{(n-1)\alpha_1e_2}I_{\Gamma_1}(\alpha_1)I_{\Gamma_2}(\alpha_2),
\end{equation}
where the sign exponent results from pulling the decomposable tensor
$\alpha_1$ associated to $\Gamma_1$ to the right past the $e_2$
propagators associated to $\Gamma_2$ ($e_2$ being the number of edges
of $\Gamma_2$), whose total degree is $(n-1)e_2$. 

\begin{lemma}\label{lm:exten}
The integrals $I_\Gamma(\alpha)$, $I_{\Gamma,l}(\alpha)$ and $I_{\Gamma-l}(\alpha)$
do not depend on the extension of the labelling of $\Gamma$.
%
\end{lemma}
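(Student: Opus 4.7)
The plan is to verify invariance under the four elementary operations that generate all changes in items (iii)--(vi) of Definition~\ref{def:labelling}: swapping two adjacent vertices, cyclically rotating the flags at a single vertex, swapping two adjacent edges, and flipping the orientation of one edge.

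For the vertex-level operations (iii)--(iv), the integrands $G^e(\alpha)$, $G^e_l(\alpha)$, $\wt G^e(\alpha)$ are built from the edge data alone and are unaffected; and Lemma~\ref{lem:Delta3Gamma} already proves invariance of the signed submanifold $\Delta_{3\Gamma}\subset X_\Gamma$, giving at once the invariance of $I_\Gamma(\alpha)$ and $I_{\Gamma,l}(\alpha)$. The same argument applied to the proper transform shows that $\CC_\Gamma$ and its boundary stratum $\p_1^l\CC_\Gamma$ are invariant as oriented spaces, hence $I_{\Gamma-l}(\alpha)$ is invariant as well.

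For the edge-level operations (v)--(vi), a change is encoded by a permutation $\eta\in S_{3k}$ precomposed on the edge order, so $R_\Gamma$ is replaced by $M^\eta\circ R_\Gamma$ and $\eta_3(\Gamma)$ changes by $\Delta\eta_3=1$. Under the induced diffeomorphism $M^\eta$, the definition~\eqref{eq:Delta3Gamma} yields
\[
  \Delta_{3\Gamma'}=(-1)^{\eta+(n-1)\Delta\eta_3}(M^\eta)^{-1}(\Delta_{3\Gamma}),
\]
while the pullback of $G^e(\alpha)$ under $M^\eta$ absorbs a sign $(-1)^{n-1}$ when $\eta$ transposes two $M^2$ factors (two $G$'s of degree $n-1$), and a sign $(-1)^n$ when $\eta$ flips a single $M^2$ factor (since $\tau^*G=(-1)^n G$ by Lemma~\ref{lem:Green}). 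A direct check then gives total sign $(-1)^{(n-1)+(n-1)}=1$ for the swap and $(-1)^{1+(n-1)+n}=1$ for the flip, proving invariance of $I_\Gamma(\alpha)$.

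For $I_{\Gamma,l}(\alpha)$ the refinements are: (a) the explicit prefactor $(-1)^{(n-1)(n(l)-1)}$, which picks up an extra $(-1)^{n-1}$ exactly when a swap shifts $n(l)$; and (b) the replacement of one $G$-factor by $dG$ at position $n(l)$. Since swapping $dG$ (degree $n$) past $G$ (degree $n-1$) produces sign $(-1)^{n(n-1)}=1$ rather than $(-1)^{n-1}$, the two extra contributions cancel in precisely this case. For the flip of $l$ itself, $\tau^*dG = d\tau^*G = (-1)^n dG$ reproduces the sign $(-1)^n$ of the $G$-case, preserving the cancellation. For $I_{\Gamma-l}(\alpha)$, the blow-up $\wt M^\eta$ lifts $M^\eta$ in the sense of Remark~\ref{rem:edgeliftbup}, and $\p_1^l\CC_\Gamma$ carries the boundary orientation induced from $\CC_\Gamma$, so the identical sign analysis applies with $\wt G^e(\alpha)$ on $\wt X_\Gamma$ in place of $G^e(\alpha)$ on $X_\Gamma$.

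The main obstacle is sign bookkeeping. In particular one has to exploit the peculiarity of trivalency used in the proof of Lemma~\ref{lem:Delta3Gamma} (namely $M^\sigma(\Delta_3^k)=(-1)^{n\sigma}\Delta_3^k$ with $n$, not $n-1$, in the exponent), together with the compatibility $\tau^*dG=(-1)^n dG$, to have the four contributing signs --- from $\eta$, from $\Delta\eta_3$, from the pullback of the integrand, and (for $I_{\Gamma,l}$) from the shift in $n(l)$ --- conspire to cancel in every case.
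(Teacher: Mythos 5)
Your proposal is correct and follows essentially the same route as the paper: vertex data (iii)--(iv) are handled via Lemma~\ref{lem:Delta3Gamma} (hence invariance of $\Delta_{3\Gamma}$, $\CC_\Gamma$, $\p_1^l\CC_\Gamma$), and edge data (v)--(vi) via the two elementary operations, with the same sign bookkeeping --- $(-1)^{\eta+(n-1)\Delta\eta_3}$ for the oriented domain, $(-1)^{n-1}$ resp.\ $(-1)^n$ for the pullback of the integrand, and for $I_{\Gamma,l}$ the cancellation between the prefactor $(-1)^{(n-1)(n(l)-1)}$ and the parity difference of $G$ versus $dG$ when $l$ is one of the swapped edges. (The only cosmetic slip is writing $(M^\eta)^{-1}(\Delta_{3\Gamma})$ instead of $M^\eta(\Delta_{3\Gamma})$, which is harmless here since your elementary operations are involutions.)
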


\begin{proof}
%
Independence of these integrals on items (iii) and (iv) in
Definition~\ref{def:labelling} follows from
the fact (Lemma~\ref{lem:Delta3Gamma}) that the oriented manifold
$\Delta_{3\Gamma}$ does not depend on the vertex order, hence neither
do $\CC_\Gamma$ and $\p^l\CC_\Gamma$.

A change in items (v) and (vi)
changes the edge order of flags by precomposition with a permutation 
$\tau$. We denote the graph with the new extended labeling by 
$\Gamma'$, the new number of the marked edge by $n'(l)$, 
and set $\Delta\eta_3:=\eta_3(\Gamma')-\eta_3(\Gamma)$.
Since the new reordering maps are $\bar R_{\Gamma'}=\bar
R_\Gamma\tau$ and $R_{\Gamma'}=M^\tau R_\Gamma$, the new triple diagonal is
\begin{align}\label{eq:diagtrans}
  \Delta_{3\Gamma'}
  = (-1)^{\tau+\bar R_{\Gamma}+(n-1)\eta_3(\Gamma')}M^\tau R_\Gamma(\Delta_3^k)
  = (-1)^{\tau+(n-1)\Delta\eta_3}M^\tau(\Delta_{3\Gamma}).
\end{align}
Analogous transformations hold for $\CC_\Gamma$ and $\p^l\CC_\Gamma$:
\begin{align}\label{eq:diagtrans2}
  \CC_{\Gamma'} = (-1)^{\tau+(n-1)\Delta\eta_3}\wt M^\tau(C_\Gamma),\qquad
  \p^l\CC_{\Gamma'} = (-1)^{\tau+(n-1)\Delta\eta_3}\wt M^\tau(\p^lC_\Gamma),
\end{align}
where $\wt M^\tau$ is the lift of $M^\tau$ to blow-ups, see Remark~\ref{rem:edgeliftbup}.
Now we need to transform the integrands. We claim that 
\begin{equation}\label{eq:formstrans}
\begin{aligned}
  (M^\tau)^*G^e(\alpha) &= (-1)^{\tau+(n-1)\Delta\eta_3}G^e(\alpha),\cr
  (M^\tau)^*G^e_l(\alpha)
  &= (-1)^{\tau+(n-1)\Delta\eta_3+(n-1)(n'(l)-n(l))}G^e_l(\alpha).
\end{aligned}
\end{equation}
To see this we distinguish two cases.
Assume first that $\tau$ corresponds to swapping the order of two adjacent
edges. Then the sign exponents are $\tau=0$ and $\Delta\eta_3=1$, and
$(M^\tau)^*G^e(\alpha)=(-1)^{n-1}G^e(\alpha)$ because $G$ has degree
$n-1$. This yields the first equation. For  
the second equation assume first that $l$ is not one of the 
edges swapped. Then $(M^\tau)^*G^e_l(\alpha)=(-1)^{n-1}G^e_l(\alpha)$
and $n(l)=n'(l)$ giving us the second equation
in~\eqref{eq:formstrans}. If now $l$ is one of the edges swapped, then  
$(M^\tau)^*G^e_l(\alpha)=G^e_l(\alpha)$ (because $G$
and $dG$ have different parities)
and $n'(l)-n(l)=1$. This again gives us the second 
equation in \eqref{eq:formstrans}.


Now assume that $\tau$ corresponds to flipping the orientation of an edge.
Then the sign exponents are $\tau=1=\Delta\eta_3$, and
$(M^\tau)^*G^e(\alpha)=(-1)^nG^e(\alpha)$ by equation~\eqref{eq:Green-symmetry}. This gives us the first equation.
For the second equation note that 
$(M^\tau)^*G^e_l(\alpha)=(-1)^nG^e_l(\alpha)$ by
equation~\eqref{eq:Green-symmetry}, no matter whether
the flipped edge is $l$ or not.

The desired independence of $I_\Gamma(\alpha)$, $I_{\Gamma,l}(\alpha)$
and $I_{\Gamma-l}(\alpha)$ on the extension of the labelling  
follows from~\eqref{eq:diagtrans},~\eqref{eq:diagtrans2}
and~\eqref{eq:formstrans} by invariance of integration, noting the
prefactor $(-1)^{(n-1)(n(l)-1)}$ in the definition of $I_{\Gamma,l}(\alpha)$.
\end{proof}

As a conclusion of this lemma, the quantity $I_\Gamma(\alpha)$ is
defined for a labelled graph, and the quantities
$I_{\Gamma,l}(\alpha)$ and $I_{\Gamma-l}(\alpha)$ are defined for a marked
labelled graph.

The following key technical result is proved
in~\cite{Cieliebak-Volkov-stringtop}. We recall its derivation in
Appendix~\ref{sec:blow-up}, see 
Corollary~\ref{cor:Galpha-invar}.

\begin{thm}\label{thm:key-vanish}
For any labelled trivalent ribbon graph $\Gamma$ and any adapted
collection $\alpha$ of differential forms we have 
$$
  \int_{\p^{\rm hidden}\CC_\Gamma}\wt G^e(\alpha)=0.
$$ 
\end{thm}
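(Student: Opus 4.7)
The plan is to follow Kontsevich's symmetry argument from~\cite[Lemma~2.2]{Kontsevich-feynman}, adapted to the trivalent ribbon-graph setting. First I would stratify the hidden boundary: up to strata of codimension $\geq 2$, the smooth hidden face $\p_1^{\rm hidden}\CC_\Gamma$ decomposes as a disjoint union of open pieces $\p_S\CC_\Gamma$, indexed by ``collapsing subsets'' $S\subset\Ver(\Gamma)$ of cardinality $|S|\geq 3$ with the property that the induced subgraph $\Gamma_S$ (whose edges are those of $\Gamma$ both of whose endpoints lie in $S$) is connected and all vertices of $S$ collapse simultaneously to a single point of $M$. It is therefore enough to show that $\int_{\p_S\CC_\Gamma}\wt G^e(\alpha)=0$ for each such $S$ separately.

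Each stratum $\p_S\CC_\Gamma$ carries a natural fibre-bundle structure over a configuration space $\CC_{\Gamma/S}$ associated to the graph $\Gamma/S$ obtained from $\Gamma$ by collapsing $S$ to a single (higher-valent) vertex, with typical fibre the reduced configuration space $C_S$ of $|S|$ distinct labelled points in the tangent space $\R^n=T_pM$ modulo overall translation and positive rescaling. Under restriction to $\p_S\CC_\Gamma$ the integrand $\wt G^e(\alpha)$ factors as an exterior product of an ``internal'' form $\bigwedge_{t}\wt G_t$ on $C_S$, involving only the propagators of edges $t$ internal to $S$, and an ``external'' form on $\CC_{\Gamma/S}$ built from the remaining propagators together with $\cross(\alpha)$. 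By Fubini (justified on the blow-up via Theorem~\ref{thm:config-space}), it suffices to prove that the inner fibre integral
\[
   I_S \;:=\; \int_{C_S}\bigwedge_{t\in\Edge(\Gamma_S)}\wt G_t
\]
vanishes identically on $\CC_{\Gamma/S}$.

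To kill $I_S$ I would use the antipodal involution $\sigma\colon C_S\to C_S$, $(x_1,\dots,x_{|S|})\mapsto(-x_1,\dots,-x_{|S|})$ (after centring at the barycentre), which is free because $|S|\geq 2$. Each local propagator transforms under $\sigma$ by a controlled sign derived from the edge-reversing symmetry~\eqref{eq:Green-symmetry}, so the internal product picks up a total sign $(-1)^{n\cdot|\Edge(\Gamma_S)|}$; this must be compared with the sign by which $\sigma$ acts on the orientation of $C_S$ (dimension $n|S|-n-1$). A short parity calculation, using trivalency of $\Gamma$ and the constraint $2|\Edge(\Gamma_S)|+(\text{external half-edges at }S)=3|S|$, shows $\sigma^*I_S=-I_S$, and hence $I_S=0$. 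In the residual combinatorial cases where the naive sign computation comes out even, I would either appeal to a dimension/degree count (the internal form having the wrong total degree to produce a top form on $C_S$), or combine $\sigma$ with the fixed-point-free duality involution $\bar I$ of \S\ref{ss:graph-duality}, whose freeness (Lemma~\ref{lem:I-free}) supplies the missing sign.

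The main obstacle is the sign bookkeeping. For odd $n$ the cancellation is essentially Kontsevich's original one; for even $n$ the orientation signs conspire differently and trivalency of $\Gamma$, together with the freeness of $\bar I$, become essential. I expect the technical heart of the proof to parallel the sign computations of~\S\ref{sec:opforms} and~\S\ref{sec:proofMC}, with the remaining lengthy verifications deferred to~\cite{Volkov-thesis}, rather than to require any substantially new geometric input beyond the fibre-bundle description of $\p_S\CC_\Gamma$ furnished by Theorem~\ref{thm:config-space}.
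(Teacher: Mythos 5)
There is a genuine gap: the symmetry you propose does not produce the needed sign. For the antipodal involution $\sigma$ the integrand picks up $(-1)^{n}$ from each internal propagator (via~\eqref{eq:Green-symmetry}, since the flip acts on the boundary sphere directions as the antipode), i.e.\ a total of $(-1)^{n e_S}$, while $\sigma$ acts on the orientation of the fibre by $(-1)^{n(|S|-1)}$; the combined sign is $(-1)^{n(e_S+|S|-1)}$, which equals $+1$ for \emph{every} even $n$ and also in many odd-$n$ cases, so no parity juggling with $2e_S+h=3|S|$ can give $\sigma^*I_S=-I_S$ in general. Your fallbacks do not close this: the hidden faces in question are exactly the codimension-one ones, where the degree of $G^e(\alpha)$ matches the dimension of the face, so there is no cheap degree-count vanishing; and the duality involution $\ol I$ of~\S\ref{ss:graph-duality} changes the graph (it is used in~\S\ref{sec:proofMC} to cancel \emph{main} faces of $\Gamma$ against those of $I(\Gamma)$), hence it cannot furnish a self-map of a single hidden face $\CC_J$ of a fixed $\Gamma$ — and the theorem must hold graph by graph, face by face. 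A secondary inaccuracy: since $\CC_\Gamma$ blows up only the edge diagonals (not all diagonals, as in Fulton--MacPherson), a hidden face is not a bundle with fibre the configuration space of $|S|$ points modulo translation and scaling; its fibre is the space of multirays $(l_j)_{j\in J}$ attached to the internal \emph{edges}, constrained by the cycle equations~\eqref{eq:cycle1}.

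The symmetry that actually works (and is the one used in the paper) is local at a $2$-valent vertex of the collapsing subgraph $\Gamma_J$: because $\Gamma$ is trivalent, connected and has a leaf, $\Gamma_J\neq\Gamma$, so some vertex of $\Gamma_J$ has valence $<3$ in $\Gamma_J$; Corollary~\ref{cor:hidden} rules out $1$-valent vertices on codimension-one hidden faces, so $\Gamma_J$ has a $2$-valent vertex, and every cycle of $\Gamma_J$ uses its two adjacent edges $l_1,l_2$ either both or not at all, with coherent relative orientation. Swapping the two $\wt M^2$ factors of $l_1,l_2$ (composed with the flip in each factor in the orientation-reversing case) preserves $\CC_J$ because the cycle equations involve $x_1,x_2$ only through $x_1+\eps x_2$; it multiplies the integrand by $(-1)^{n-1}$ (two degree-$(n-1)$ forms are exchanged; the internal flips contribute $(-1)^{2n}=1$ by~\eqref{eq:Green-symmetry}) and the orientation of $\CC_J$ by $(-1)^{n}$, so the integral equals minus itself for all parities of $n$. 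If you want to salvage your outline, replace the antipodal map by this edge-swap; the antipodal argument alone cannot be completed.
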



Combined with Stokes' theorem this has the following consequence,
which will be the crucial step in our later proof of the Maurer-Cartan
relation. 

\begin{cor}\label{cor:fulldiff}
For any labelled trivalent ribbon graph $\Gamma$ and any adapted
collection $\alpha$ of {\em closed} differential forms we have 
$$
  \sum_{l\in\Edge(\Gamma)}I_{\Gamma,l}(\alpha) =
  \sum_{l\in\Edge(\Gamma)}I_{\Gamma-l}(\alpha).
$$
\end{cor}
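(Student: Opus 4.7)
\textbf{Proof plan for Corollary~\ref{cor:fulldiff}.} The natural strategy is to apply Stokes' theorem in the form provided by Theorem~\ref{thm:config-space}(c) to the smooth form $\wt G^e(\alpha)$ on $\CC_\Gamma$. The identity
$$
   \int_{\CC_\Gamma} d\bigl(\wt G^e(\alpha)\bigr) \;=\; \int_{\p_1\CC_\Gamma} \wt G^e(\alpha)
$$
will, after a direct computation of the left-hand side and a decomposition of the right-hand side into main and hidden parts, yield exactly the claimed equality. All forms involved extend smoothly to $\wt X_\Gamma$, so the integrals are well defined.

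For the left-hand side, I would use the Leibniz rule for the external product together with closedness of $\alpha$. Since each $\alpha_j$ is closed, $d(\cross(\alpha))=0$, so
$$
   d\bigl(\wt G^e(\alpha)\bigr) \;=\; d(\wt G^e)\times\cross(\alpha),
$$
and applying Leibniz to the $e$-fold external product of $\wt G$ (degree $n-1$ at each slot) gives
$$
   d(\wt G^e) \;=\; \sum_{l\in\Edge(\Gamma)} (-1)^{(n-1)(n(l)-1)}\,
   \wt G\times\cdots\times d\wt G\times\cdots\times\wt G
   \;=\; \sum_{l} (-1)^{(n-1)(n(l)-1)}\,\wt G^e_l.
$$
Integrating, unravelling definitions, and using that $\wt G^e_l(\alpha)$ pushes down to $G^e_l(\alpha)$ on $\Delta_{3\Gamma}$,
$$
   \int_{\CC_\Gamma} d\bigl(\wt G^e(\alpha)\bigr) \;=\; \sum_{l}(-1)^{(n-1)(n(l)-1)}
   \int_{\Delta_{3\Gamma}} G^e_l(\alpha) \;=\; \sum_{l} I_{\Gamma,l}(\alpha),
$$
which matches the definition of $I_{\Gamma,l}$ precisely, including the sign prefactor.

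For the right-hand side, I would invoke the decomposition
$\p_1\CC_\Gamma=\p_1^{\rm main}\CC_\Gamma\amalg\p_1^{\rm hidden}\CC_\Gamma$ from Theorem~\ref{thm:config-space}(b). The main part splits further as a disjoint union over edges, giving
$$
   \int_{\p_1^{\rm main}\CC_\Gamma} \wt G^e(\alpha) \;=\; \sum_{l\in\Edge(\Gamma)} \int_{\p_1^l\CC_\Gamma} \wt G^e(\alpha) \;=\; \sum_{l} I_{\Gamma-l}(\alpha).
$$
The contribution of the hidden part vanishes by Theorem~\ref{thm:key-vanish}. Combining the two sides of Stokes completes the proof.

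The only nontrivial aspect is bookkeeping of the sign $(-1)^{(n-1)(n(l)-1)}$ arising when $d$ hits the $l$-th tensor factor of $\wt G^e$; but this is designed exactly to cancel the analogous prefactor in the definition of $I_{\Gamma,l}(\alpha)$, so no delicate sign analysis is needed. The real content is packaged into the two preceding results — the existence of a Stokes formula on the singular space $\CC_\Gamma$ (Theorem~\ref{thm:config-space}(c)) and the vanishing of the hidden-face integrals (Theorem~\ref{thm:key-vanish}); granting these, the corollary follows immediately.
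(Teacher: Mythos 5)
Your proposal is correct and follows essentially the same route as the paper: Stokes' theorem on $\CC_\Gamma$ (Theorem~\ref{thm:config-space}(c)), the Leibniz rule together with closedness of $\alpha$ to identify $\int_{\CC_\Gamma}d\wt G^e(\alpha)$ with $\sum_l I_{\Gamma,l}(\alpha)$, and Theorem~\ref{thm:key-vanish} to discard the hidden faces so that only $\sum_l I_{\Gamma-l}(\alpha)$ survives on the boundary. The sign bookkeeping you describe, with $(-1)^{(n-1)(n(l)-1)}$ cancelling the prefactor in the definition of $I_{\Gamma,l}$, is exactly how the paper handles it.
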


\begin{proof}
We pick an extension of the labelling of $\Gamma$ in which an edge $l$
has position $n(l)$. From the definition of $G^e(\alpha)$,
$G^e_l(\alpha)$ and closedness of $\alpha$ we get
$$
  dG^e(\alpha) = dG^e\times\cross(\alpha) =
  \sum_{l\in\Edge(\Gamma)}(-1)^{(n-1)(n(l)-1)}G^e_l(\alpha). 
$$
Using this, we compute
\begin{align*}
  \sum_{l\in\Edge(\Gamma)}I_{\Gamma,l}(\alpha)
  &\stackrel{(1)}= \sum_{l\in\Edge(\Gamma)}(-1)^{(n-1)(n(l)-1)}
  \int_{\Delta_{3\Gamma}}G^e_l(\alpha) \cr 
  &\stackrel{(2)}= \int_{\Delta_{3\Gamma}}dG^e(\alpha) 
  \stackrel{(3)}= \int_{\CC_\Gamma}d\wt G^e(\alpha) \cr
  &\stackrel{(4)}= \int_{\p\CC_\Gamma}\wt G^e(\alpha) 
  \stackrel{(5)}= \int_{\p^{\rm main}\CC_\Gamma}\wt G^e(\alpha) \cr
  &\stackrel{(6)}= \sum_{l\in\Edge(\Gamma)}\int_{\p^l\CC_\Gamma}\wt G^e(\alpha)
  \stackrel{(7)}= \sum_{l\in\Edge(\Gamma)}I_{\Gamma-l}(\alpha).
 \end{align*}
Here the equalities (1), (3), (6) and (7) are just the corresponding
definitions, (2) uses the equation above, (4) is Stokes' theorem in
the form of Theorem~\ref{thm:config-space}(b), and (5) follows from
Theorem~\ref{thm:key-vanish}. 
\end{proof}

{\bf Permutation action. }
Our next goal is to describe the behaviour of the integrals
$I_\Gamma(\alpha)$ and $I_{\Gamma,l}(\alpha)$ under change of
labelling of the graph $\Gamma$. We abbreviate
$$
  \Om := \Om^*(M),\qquad 
  \bs:=(s_1,\dots,s_\ell),\qquad
  \Om(\bs):=\bigotimes_{b=1}^\ell\Om[1]^{\otimes s_b}[3-n].
$$
Let $s=s_1+\cdots+s_\ell$ be the number of leaves of 
$\Gamma$. Note that abstractly 
$$
  \Om(\bs)\cong \Om[1]^{\otimes s}[(3-n)\ell],
$$
but it is important to keep in mind the additional structure induced by $\bs$.
Any decomposable 
$$
  \alpha=\alpha^1\otimes\dots\otimes\alpha^{\ell}\in \Om(\bs)
$$ 
with
$$
  \alpha^b=\alpha^b_1\otimes\dots\otimes\alpha^b_{s_b}\in \Om[1]^{\otimes s_b}
$$
can also be written in the form
$$
  \alpha=\alpha_1\otimes\dots\otimes\alpha_s\in \Om[1]^{\otimes s}.
$$
We recall the operation
$$
  P:\Om^{\otimes s} \to \Om^{\otimes s},\qquad 
  P(\alpha) = (-1)^{P(\alpha)}\alpha,\quad\text{with}\quad P(\alpha) = \sum_{j=1}^s(s-j)\deg\alpha_j
$$
defined by equation~\eqref{eq:defP}. By analogy we introduce the operation
$$
  P_b:\bigotimes_{b=1}^\ell\Om[1]^{\otimes s_b}\to \bigotimes_{b=1}^\ell\Om[1]^{\otimes s_b},\qquad 
  P_b(\alpha) := (-1)^{(3-n)P_b(\alpha)}\alpha
$$
(the subscript ``$b$'' stands for ``boundary'') with the sign exponent
$$
  P_b(\alpha) := \sum_{b=1}^\ell(\ell-b)|\alpha^b|,
$$
where $|\alpha^b|$ is the total shifted degree of
$\alpha^b\in\Om[1]^{\otimes s_b}$ corresponding to the $b$-th boundary component.  

\begin{definition}\label{def:Sls}
We denote by $S(\bs)$
the subgroup of the symmetric group $S_s$ on $s=s_1+\cdots+s_\ell$ elements 
consisting of compositions of permutations of the following two types: 
permutations of the $\ell$ boundary components, and cyclic permutations of the
$s_b$ leaves on one boundary component. 
\end{definition}

Let us discuss how $\eta\in S(\bs)$ acts on the tensor product            
$\Om(\bs)$.
Recall from \S\ref{ss:gradedvect} the naive action permuting factors without signs by
$$
   (\alpha_1\otimes\cdots\otimes\alpha_s)\eta = \alpha_{\eta(1)}\otimes\cdots\otimes\alpha_{\eta(s)}.
$$
For $\eta\in S(\bs)$ let $\eta_b$ be the corresponding permutation
of boundary components and set
$$
  (\bs\eta_b)_j:=(\bs)_{\eta_b(j)}.
$$
We define the analytic and algebraic actions
$$
  \Om[1]^{\otimes s}[(3-n)\ell]\cong\Om(\bs)\longrightarrow
  \Om(\bs\eta_b)\cong\Om[1]^{\otimes s}[(3-n)\ell]
$$
as follows:
$$
   \eta_{an}(\alpha):=(-1)^{\eta+(n-1)\eta_b+\eta_{an}(\alpha)}\eta(\alpha),\qquad
   \eta_{alg}(\alpha):=(-1)^{\eta_{alg}(\alpha)}\eta(\alpha).
$$
Here $(-1)^\eta$ is the sign of the permutation $\eta$, 
$\eta_{an}(\alpha)$ is the sign exponent for permuting the forms
$\alpha_j$ with their degrees in $\Om$, and $\eta_{alg}(\alpha)$ is the
sign exponent for cyclically permuting the forms on each boundary
component with their shifted degrees $|\alpha_j|=\deg(\alpha_j)-1$, and
then permuting the boundary words of forms according to $\eta_b$ with
degrees additionally shifted by $3-n\equiv n-1$.  
The two actions are related by the commuting diagram
\begin{equation}\label{eq:alg-ana-action2}
\xymatrix{
  \Om^{\otimes s} \ar[d]_{P_b\circ P} \ar[r]^{\eta_{an}} &\Om^{\otimes s} \ar[d]^{P_b\circ P} \\
  \Om[1]^{\otimes s}[(3-n)\ell] \ar[r]^{\eta_{alg}} &\Om[1]^{\otimes s}[(3-n)\ell].
}
\end{equation}
The above actions $\eta_{an}$ and $\eta_{alg}$ are not to be confused
with the corresponding ones on the total tensor product defined
in~\S\ref{ss:gradedvect}. In the present context the structure on the
total tensor product induced by $\bs$ plays an essential role.

\begin{lemma}\label{lem:act}
Let $M^\eta:M^s\to M^s$ be the diffeomorphism induced by $\eta\in S(\bs)$
and $\alpha\in\Om^*(M)^{\otimes s}$. Then
$$
   (M^\eta)^*\cross(\alpha) = (-1)^{\eta+(n-1)\eta_b} \cross\bigl(\eta_{an}^{-1}(\alpha)\bigr).
$$
\end{lemma}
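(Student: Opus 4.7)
The plan is to reduce the claim to the Koszul sign rule for reordering wedge products, together with the definition of $\eta_{an}$ from~\eqref{eq:actanalg} (as adapted to $\Omega(\bs)$ just above the lemma).

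First I would compute $(M^\eta)^*\cross(\alpha)$ directly from the definition $\cross(\alpha)=p_1^*\alpha_1\wedge\cdots\wedge p_s^*\alpha_s$. Since the functor $S\mapsto M^S$ is contravariant, $M^\eta(x_1,\dots,x_s)=(x_{\eta(1)},\dots,x_{\eta(s)})$, so $p_j\circ M^\eta=p_{\eta(j)}$ and therefore
\[
 (M^\eta)^*\cross(\alpha)=p_{\eta(1)}^*\alpha_1\wedge\cdots\wedge p_{\eta(s)}^*\alpha_s.
\]

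Next I would reorder this wedge product into the standard order $p_1^*\wedge\cdots\wedge p_s^*$ by commuting the factors past each other. The pair $p_{\eta(j)}^*\alpha_j$ must go into slot $\eta(j)$, so the form landing in slot $i$ is $p_i^*\alpha_{\eta^{-1}(i)}$; the resulting Koszul sign is by definition the sign exponent for permuting the sequence $\alpha_1,\dots,\alpha_s$ by $\eta^{-1}$ using the degrees $\deg\alpha_i$ in $\Omega$, which is precisely $(-1)^{\eta^{-1}_{an}(\alpha)}$ in the notation of~\S\ref{ss:gradedvect}. Thus
\[
 (M^\eta)^*\cross(\alpha)=(-1)^{\eta^{-1}_{an}(\alpha)}\,\cross\bigl(\eta^{-1}(\alpha)\bigr),
\]
where $\eta^{-1}(\alpha)$ denotes the naive action.

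Finally I would plug in the definition of the analytic action on $\Omega(\bs)$ given just before the lemma, namely
\[
 \eta^{-1}_{an}(\alpha)=(-1)^{\eta^{-1}+(n-1)\eta^{-1}_b+\eta^{-1}_{an}(\alpha)}\,\eta^{-1}(\alpha),
\]
to express $\cross(\eta^{-1}(\alpha))=(-1)^{\eta^{-1}+(n-1)\eta^{-1}_b+\eta^{-1}_{an}(\alpha)}\cross(\eta^{-1}_{an}(\alpha))$. Substituting this and using $(-1)^{\eta^{-1}}=(-1)^\eta$ and $(-1)^{\eta^{-1}_b}=(-1)^{\eta_b}$, the two occurrences of $(-1)^{\eta^{-1}_{an}(\alpha)}$ cancel and one is left with $(-1)^{\eta+(n-1)\eta_b}\cross(\eta^{-1}_{an}(\alpha))$, as claimed. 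The only delicate point is step two: one must check that the Koszul sign produced by the reordering agrees on the nose with the sign exponent $\eta^{-1}_{an}(\alpha)$ of~\eqref{eq:actanalg}, since the latter is defined as the permutation sign carrying degrees in $A=\Omega$ (unshifted), which is exactly what the wedge reordering sees; no structure from $S(\bs)$ beyond the naive permutation enters this step, so the argument works uniformly for all $\eta\in S(\bs)$.
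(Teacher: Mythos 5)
Your proposal is correct and follows essentially the same route as the paper: pull back through the projections using $\pi_j\circ M^\eta=\pi_{\eta(j)}$, reorder the wedge factors with the Koszul sign, and then absorb that sign into the definition of the analytic action, leaving the prefactor $(-1)^{\eta+(n-1)\eta_b}$. Your extra care in identifying the Koszul exponent as the one attached to $\eta^{-1}$ (and using $(\eta^{-1})_{an}=\eta_{an}^{-1}$ together with $(-1)^{\eta^{-1}}=(-1)^\eta$, $(-1)^{(\eta^{-1})_b}=(-1)^{\eta_b}$) is exactly the bookkeeping the paper compresses into its last two displayed lines.
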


\begin{proof}
With the projection $\pi_j:M^s\to M$ onto the $j$-th factor we have
$\pi_j\circ M^\eta=\pi_{\eta(j)}$, and therefore
\begin{align*}
   (M^\eta)^*\cross(\alpha)
   &= (M^\eta)^*\pi_1^*\alpha_1\wedge\cdots\wedge (M^\eta)^*\pi_s^*\alpha_s \cr 
   &= \pi_{\eta(1)}^*\alpha_1\wedge\cdots\wedge \pi_{\eta(s)}^*\alpha_s \cr 
   &= (-1)^{\eta_{an}(\alpha)}\pi_1^*\alpha_{\eta^{-1}(1)}\wedge\cdots\wedge \pi_s^*\alpha_{\eta^{-1}(s)} \cr 
   &= (-1)^{\eta+(n-1)\eta_b}\cross\bigl(\eta_{an}^{-1}(\alpha)\bigr).
\end{align*}
\end{proof}

\begin{lemma}\label{lem:invI}
Let $\Gamma\in \RR_{\ell,g}$ be a labelled graph with $s$ exterior flags, and $l$ an
edge of $\Gamma$. Let
$\eta\in S(\bs)$ describe a change of labelling (altering items (i) and
(ii) in Definition~\ref{def:labelling}) and denote by $\Gamma\eta$
the graph with the new labelling. Then
$$
 I_{\Gamma\eta}(\alpha)=I_\Gamma(\eta_{an}^{-1}\alpha),\qquad 
 I_{\Gamma\eta,l}(\alpha)=I_{\Gamma,l}(\eta_{an}^{-1}\alpha).
$$
\end{lemma}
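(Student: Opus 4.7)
The plan is to reduce the lemma to bookkeeping: express how each ingredient in $I_\Gamma(\alpha)=(-1)^{\bar R_\Gamma+(n-1)\eta_3(\Gamma)}\int_{\Delta_3^k}R_\Gamma^*G^e(\alpha)$ transforms when the labelling is changed by $\eta\in S(\bs)$, and verify that the resulting sign changes are exactly cancelled by the sign produced when $\cross(\alpha)$ is pulled back by $M^\eta$, which is precisely what Lemma~\ref{lem:act} computes.

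First I would determine how the reordering data transform. A change in items (i) and (ii) of Definition~\ref{def:labelling} affects only the exterior part of the edge order $O_e$, leaving the interior (edge) part untouched and leaving the vertex order completely alone. Writing $\tilde\eta\in S_{3k}$ for the extension of $\eta\in S_s$ by the identity on the first $2e$ positions, the general transformation rule from \S\ref{ss:basiccomb} gives $\bar R_{\Gamma\eta}=\bar R_\Gamma\circ\tilde\eta$, and by contravariance of $S\mapsto M^S$ also $R_{\Gamma\eta}=M^{\tilde\eta}\circ R_\Gamma$. Since $\tilde\eta$ agrees with $\eta$ on the nontrivial block, the parities satisfy $\bar R_{\Gamma\eta}\equiv\bar R_\Gamma+\eta\pmod 2$. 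Next, from the listed behaviour of $\eta_3$ in \S\ref{ss:basiccomb} (independent of item (ii), flipping by one per adjacent swap in (i)), I get $\eta_3(\Gamma\eta)\equiv\eta_3(\Gamma)+\eta_b\pmod 2$, where $\eta_b$ is the induced permutation of boundary components.

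For the integrand, I would observe that $M^{\tilde\eta}$ acts trivially on the edge factor $(M^2)^e$ of $X_\Gamma=(M^2)^e\times M^s$ and as $M^\eta$ on the leaf factor $M^s$, so by Lemma~\ref{lem:act}
\[
(M^{\tilde\eta})^*G^e(\alpha)=G^e\times(M^\eta)^*\cross(\alpha)=(-1)^{\eta+(n-1)\eta_b}\,G^e(\eta_{an}^{-1}\alpha).
\]
Substituting into the definition and using $R_{\Gamma\eta}^*=R_\Gamma^*\circ(M^{\tilde\eta})^*$ gives
\[
I_{\Gamma\eta}(\alpha)=(-1)^{\bar R_\Gamma+\eta+(n-1)(\eta_3(\Gamma)+\eta_b)}(-1)^{\eta+(n-1)\eta_b}\int_{\Delta_3^k}R_\Gamma^*G^e(\eta_{an}^{-1}\alpha),
\]
in which the sign exponents $\eta$ and $(n-1)\eta_b$ each appear twice and cancel, leaving $I_\Gamma(\eta_{an}^{-1}\alpha)$ as desired.

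The proof for $I_{\Gamma\eta,l}(\alpha)$ is essentially identical: $\eta$ touches neither item (v) nor (vi), so the position $n(l)$ of the marked edge in the edge order is unchanged and the prefactor $(-1)^{(n-1)(n(l)-1)}$ is the same on both sides; moreover $G^e_l(\alpha)=G^e_l\times\cross(\alpha)$ is pulled back by $M^{\tilde\eta}$ by exactly the same sign as $G^e(\alpha)$, since $(M^{\tilde\eta})^*$ acts trivially on the $(M^2)^e$ factor where the extra $dG$ lives. No genuine obstacle arises; the main thing to keep straight is the distinction between the parity of $\eta$ as a permutation of $\{1,\dots,s\}$ and the parity $\eta_b$ of the induced permutation of boundary blocks, both of which are built into Lemma~\ref{lem:act} and the invariance properties of $\eta_3$.
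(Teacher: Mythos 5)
Your proposal is correct and follows essentially the same route as the paper's proof: the same transformation rules $\bar R_{\Gamma\eta}\equiv\bar R_\Gamma+\eta$ and $\eta_3(\Gamma\eta)\equiv\eta_3(\Gamma)+\eta_b$, the same pullback identity $R_{\Gamma\eta}^*=R_\Gamma^*\circ(M^{\eta})^*$, and the same invocation of Lemma~\ref{lem:act} to produce the sign $(-1)^{\eta+(n-1)\eta_b}$ that cancels the change in the prefactor. Your explicit remarks that the marked-edge position $n(l)$ and the prefactor $(-1)^{(n-1)(n(l)-1)}$ are untouched simply spell out what the paper dismisses as "analogous", so there is nothing to add.
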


\begin{proof}
Choose an extended labelling for $\Gamma$. This gives rise to ordering
maps $O_v$ and $O_e$, and the change of labelling corresponds to
precomposition of $O_e$ by $\eta$. 
Let $\eta_b$ denote the permutation 
of boundary components corresponding 
to the change in item (i) in Definition~\ref{def:labelling}. Observe that
$$
\bar R_{\Gamma\eta}=\eta+\bar R_\Gamma,\quad
\eta_3(\Gamma\eta)=\eta_b+\eta_3(\Gamma).
$$
Since $M^\eta$ acts only on the
part of $G^e(\alpha)$ corresponding to $\alpha$, Lemma~\ref{lem:act} yields
$$
   R_{\Gamma\eta}^*G^e(\alpha) 
   = R_\Gamma^*(M^\eta)^*G^e(\alpha) 
   = R_\Gamma^*\bigl(G^e\times(M^\eta)^*\cross(\alpha)\bigr)
   = (-1)^{\eta+(n-1)\eta_b} R_\Gamma^*G^e(\eta_{an}^{-1}\alpha). 
$$
It follows that
\begin{align*}
   I_{\Gamma\eta}(\alpha) 
   &= (-1)^{\bar R_{\Gamma\eta}+(n-1)\eta_3(\Gamma\eta)}\int_{\Delta_3^k}R_{\Gamma\eta}^*G^e(\alpha)\cr
   &= (-1)^{\eta+(n-1)\eta_b+\bar R_\Gamma+(n-1)\eta_3(\Gamma)}
   \int_{\Delta_3^k}R_{\Gamma\eta}^*G^e(\alpha)\cr 
   &= (-1)^{\bar R_\Gamma+(n-1)\eta_3(\Gamma)}\int_{\Delta_3^k}R_\Gamma^*G^e(\eta_{an}^{-1}\alpha)
   = I_\Gamma(\eta_{an}^{-1}\alpha).
\end{align*}
The proof of the second equation is analogous.
\end{proof}

We define the sign exponents
\begin{equation}\label{eq:seGamma}
  se_\Gamma:=(\ell+1)(s+1),\qquad \wt{se}_\Gamma:=\ell(s+1)+1.
\end{equation}
Note the following two properties of 
$se_\Gamma$. For any two graphs $\Gamma_1$ and $\Gamma_2$ we have
\begin{equation}\label{eq:seGamma-sum}
se_{\Gamma_1}+se_{\Gamma_2}+se_{\Gamma_1\amalg\Gamma_2}=s_1\ell_2+s_2\ell_1+1,
\end{equation}
and for any tree $\Gamma$ we have
\begin{equation}\label{eq:seGammatree}
se_{\Gamma}=0.
\end{equation}
Note that the choice of $se_\Gamma$ with these properties is nonunique. 
We could add any linear function of the numerical invariants
of a graph that vanishes on trees, e.g.~the genus or the number of
connected components minus the number of boundary components. 
For $\Gamma$ and $\alpha$ as above we define
\begin{equation}\label{eq:mGamma}
\begin{aligned}
  \fm_\Gamma(\alpha) &:= (-1)^{s_\Gamma(\alpha)}I_{\Gamma}(\alpha), \cr
   \n_{\Gamma,l}(\alpha) &:= (-1)^{\tilde s_\Gamma(\alpha)}I_{\Gamma,l}(\alpha),\cr
   \n_{\Gamma-l}(\alpha) &:= (-1)^{\tilde s_\Gamma(\alpha)}I_{\Gamma-l}(\alpha).
\end{aligned}
\end{equation}
with the sign exponents
\begin{align}
  s_\Gamma(\alpha)
  &:= n\ell+s(s+1)/2+P(\alpha)+(n-1)(se_\Gamma+P_b(\alpha)), \label{eq:sGamma} \\
  \tilde s_\Gamma(\alpha)
  &:=
  n(\ell-1)+1+(s+1)(s+2)/2+P(\alpha)+(n-1)(\wt{se}_\Gamma+P_b(\alpha)).
  \label{eq:sGammatilde}
\end{align}
Now Lemma~\ref{lem:invI} together with the commuting
diagram~\eqref{eq:alg-ana-action2} gives  

\begin{lemma}\label{lem:action}
For $\Gamma$, $\alpha$ and $\eta$ as in Lemma~\ref{lem:invI} we have
$$
   \fm_{\Gamma\eta}(\alpha)=\fm_{\Gamma}(\eta_{alg}^{-1}\alpha),\qquad
   \n_{\Gamma\eta,l}(\alpha)=\n_{\Gamma,l}(\eta_{alg}^{-1}\alpha).
$$
\hfill$\square$
\end{lemma}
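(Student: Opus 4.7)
The plan is to reduce both identities to Lemma~\ref{lem:invI} by isolating in the sign exponents $s_\Gamma(\alpha)$ and $\tilde s_\Gamma(\alpha)$ the $\alpha$-dependent contribution that is precisely the sign produced by the operator $P_b \circ P$ appearing in the commutative diagram~\eqref{eq:alg-ana-action2}. In this way the analytic action supplied by Lemma~\ref{lem:invI} conjugates into the algebraic one, and the graph-only parts of the sign exponents are seen to be unchanged by the $S(\bs)$-action.

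First I separate
$$
s_\Gamma(\alpha) \equiv c_\Gamma + P(\alpha) + (n-1)P_b(\alpha), \qquad
\tilde s_\Gamma(\alpha) \equiv \tilde c_\Gamma + P(\alpha) + (n-1)P_b(\alpha) \pmod 2,
$$
where the graph-only constants $c_\Gamma := n\ell + s(s+1)/2 + (n-1)se_\Gamma$ and $\tilde c_\Gamma := n(\ell-1) + 1 + (s+1)(s+2)/2 + (n-1)\wt{se}_\Gamma$ depend only on $n$, $\ell$, and $s$, because $se_\Gamma$ and $\wt{se}_\Gamma$ do so by the formulas~\eqref{eq:seGamma}. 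Since $\ell$ and $s$ are preserved by every $\eta \in S(\bs)$, we have $c_{\Gamma\eta} = c_\Gamma$ and $\tilde c_{\Gamma\eta} = \tilde c_\Gamma$. Noting that $(P_b \circ P)(\alpha) = (-1)^{P(\alpha) + (n-1)P_b(\alpha)}\alpha$ on homogeneous $\alpha$ (using $3 - n \equiv n - 1 \pmod 2$), this rewrites
$$
\fm_\Gamma(\alpha) = (-1)^{c_\Gamma} I_\Gamma\bigl((P_b \circ P)\alpha\bigr), \qquad
\n_{\Gamma,l}(\alpha) = (-1)^{\tilde c_\Gamma} I_{\Gamma,l}\bigl((P_b \circ P)\alpha\bigr),
$$
and analogously for $\Gamma\eta$.

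Next I apply Lemma~\ref{lem:invI} with the argument $(P_b \circ P)\alpha$ in place of $\alpha$, yielding $I_{\Gamma\eta}\bigl((P_b \circ P)\alpha\bigr) = I_\Gamma\bigl(\eta_{an}^{-1}(P_b \circ P)\alpha\bigr)$ and likewise for $I_{\Gamma\eta,l}$. The diagram~\eqref{eq:alg-ana-action2} reads $(P_b \circ P) \circ \eta_{an} = \eta_{alg} \circ (P_b \circ P)$; since $P$ and $P_b$ are scalar-sign operators on homogeneous elements they commute, and each squares to the identity, so $(P_b \circ P)^2 = \id$. Taking inverses therefore gives $\eta_{an}^{-1} \circ (P_b \circ P) = (P_b \circ P) \circ \eta_{alg}^{-1}$, and substitution yields
$$
\fm_{\Gamma\eta}(\alpha) = (-1)^{c_\Gamma} I_\Gamma\bigl((P_b \circ P)\eta_{alg}^{-1}\alpha\bigr) = \fm_\Gamma(\eta_{alg}^{-1}\alpha),
$$
with the identical argument giving the statement for $\n_{\Gamma,l}$.

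There is no substantive obstacle: the sign exponents $s_\Gamma$ and $\tilde s_\Gamma$ have been built precisely so that their $\alpha$-dependent parts implement the change of convention between analytic and algebraic actions via $P_b \circ P$. The only points to check are the $S(\bs)$-invariance of $c_\Gamma$ and $\tilde c_\Gamma$ and the involutivity of $P_b \circ P$, both of which are immediate from the definitions.
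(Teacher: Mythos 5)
Your proof is correct and follows essentially the same route as the paper, which simply asserts that Lemma~\ref{lem:invI} together with the commuting diagram~\eqref{eq:alg-ana-action2} yields the claim; you have just made explicit the splitting of $s_\Gamma(\alpha)$ and $\tilde s_\Gamma(\alpha)$ into a relabelling-invariant constant plus the sign implemented by $P_b\circ P$. The only presentational caveat is to keep track that the operator $P_b\circ P$ on the two sides of the diagram refers to the boundary decompositions of $\Gamma\eta$ and of $\Gamma$ respectively, but this does not affect the argument.
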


From now on we will always mean the algebraic action when we say
``action'' and write $\eta$ and $\sigma$ instead of $\eta_{alg}$ and
$\sigma_{alg}$ to ease notation. 
Recall that according to our convention the action of $S(\bs)$ on
functionals is defined by dualizing the one on forms. 
Then the identities in Lemma~\ref{lem:action} take the form
\begin{equation}\label{eq:action}
   \fm_{\Gamma\eta} = \eta^{-1}\fm_{\Gamma},\qquad
   \n_{\Gamma\eta,l} = \eta^{-1}\n_{\Gamma,l}.
\end{equation}


\section{The Maurer-Cartan element}\label{sec:proofMC}


In this section we prove Theorem~\ref{thm:existence-intro} from the
Introduction. We begin in~\S\ref{ss:defMC} by defining the
Maurer-Cartan element $\fm:=\{\fm_{\ell,g}\}_{\ell\ge 1,g\ge 0}$, using
the integrals over configuration spaces from~\S\ref{sec:opforms}.
Subsections~\S\ref{ss:identities} -- \S\ref{ss:maincompute} are
devoted to the proof of the Maurer-Cartan equation for $\fm$. 
Finally, in~\S\ref{ss:cyc-coh} we prove that the $\fm$-twisted homology equals
the Connes cyclic cohomology of the de Rham complex. 

\subsection{Definition of the Maurer-Cartan element}\label{ss:defMC}

Let $M$ be a closed oriented manifold of dimension $n$, and $(\Om^*(M),d,(\cdot,\cdot))$
its de Rham algebra with the intersection pairing~\eqref{eq:defineintpair}.
Recall that this pairing is nondegenerate but not perfect.
We fix a complementary subspace $\HH$ to $\im d$ in $\ker d$,
i.e.~such that
$$
   \ker d = \im d\oplus\HH. 
$$
The pairing $(\cdot,\cdot)$ restricts to $\HH$ as a perfect pairing,
so we get a cyclic cochain complex $(\HH, d=0, (\cdot,\cdot))$.
Proposition~\ref{prop:canondIBL} associates to this cyclic complex a
canonical dIBL-algebra (actually an IBL-algebra)  
\begin{equation}\label{eq:deRhamcanon}
   \dIBL(\HH) = \Bigl(C:=(B^{{\text{\rm cyc}}*}\HH)[2-n],\fp_{1,1,0}=0,\,\fp_{1,2,0},\,\fp_{2,1,0}\Bigr).
\end{equation}
Given a number of boundary components $\ell\geq 1$ and a genus $g\geq 0$,  
recall from Definitions~\ref{def:R} and~\ref{def:Rlgs} the set $\RR_{\ell,g}$ 
and its finite subsets $\RR_{\ell,g}(s_1,\dots,s_\ell)$.
We define
\begin{equation}\label{eq:mlg}
   \fm_{\ell,g} := \frac{1}{\ell!}\sum_{\Gamma\in \RR_{\ell,g}}\fm_{\Gamma}\in 
   (B\HH[3-n]^{\otimes\ell})^*,
\end{equation}
with $\fm_{\Gamma}\in (B\HH[3-n]^{\otimes\ell})^*$ defined by
equation~\eqref{eq:mGamma} as an integral over the configuration space
associated to $\Gamma$. Here to define the $\fm_\Gamma$ we fix a (not
necessarily special) propagator $\wt G$ provided by Proposition~\ref{prop:existsprop}. 

Let us verify that $\fm_{g,\ell}$ defines an element in the completed symmetric
product 
\begin{equation*}
    \wh E_\ell C = (B^{\text{\rm cyc}}\HH[3-n]^{\otimes\ell})^*/S_\ell
\end{equation*}
(see~\ref{eq:whEk}). We first check that $\fm_{\ell,g}$ lies in $(B^\cyc\HH[3-n]^{\otimes\ell})^*$.
For this, consider for fixed $s_1,\dots,s_\ell$ the action of the group
$G=\Z_{s_1}\times\cdots\times\Z_{s_\ell}$ on $\RR_{\ell,g}(s_1,\dots,s_\ell)$ 
rotating the numberings of the flags on each boundary component.
For $\Gamma\in \RR_{\ell,g}(s_1,\dots,s_\ell)$ let $G_{\Gamma}$ be its
isotropy group and $G\cdot\Gamma$ its orbit for the $G$-action. 
Then the contribution of the orbit of $\Gamma$ to the sum in~\eqref{eq:mlg} is
$$
   \frac{1}{\ell!|G_{\Gamma}|}\sum_{\eta\in G}\fm_{\eta\Gamma} =
   \frac{1}{\ell!|G_{\Gamma}|}\fm_{\Gamma}\circ N^{\otimes\ell},
$$
where we have used
Lemma~\ref{lem:action} to trade all possible
rotations of a boundary component for the symmetrization operator $N$. 
Summing over all orbits, this shows that $\fm_{\ell,g}$ lies in 
$(B^\cyc\HH[3-n]^{\otimes\ell})^*$. Projecting
$\fm_{\ell,g}$ to the quotient under permutations of the $\ell$ tensor
factors thus gives an element in $\wh E_\ell C$. 
%

The following subsections are devoted to the proof that the collection 
$$
  \fm:=\{\fm_{\ell,g}\}_{\ell\ge 1,g\ge 0}
$$
defines a Maurer-Cartan element for $\dIBL(\HH)$.
The main part of the proof consists in showing that $\fm$ satisfies
the Maurer-Cartan equation~\eqref{eq:MCdIBL}, which in this case reduces to
\begin{equation}\label{eq:MCIBL}
  \wh\fp_{2,1,0}\fm+\frac{1}{2}\wh\fp_{2,1,0}(\fm\otimes\fm)|_{\conn}+\wh\fp_{1,2,0}\fm=0,
\end{equation}
because the differential on $\HH$ is zero. 

\subsection{Notation and useful identities}\label{ss:identities}

Throughout this section, by a ``graph'' we will always mean a
(possibly disconnected) trivalent labelled ribbon graph $\Gamma\in
\RR_{\ell,g}$ with a chosen extension of the labelling, and $\alpha$ 
will be a decomposable tensor of differential forms adapted to $\Gamma$.  
By Lemma~\ref{lm:exten} the expressions $I_\Gamma(\alpha)$
and $\fm_\Gamma(\alpha)$ do not depend on the extension of the labelling,
but the extension will enter various sign computations below.

Recall the notation related to a graph $\Gamma$:
$s$ is the number of flags, $k$ is the number of vertices,
$e$ is the number of edges, $\ell$ is the number of boundary
components, and $g$ is the genus of the associated surface.
Counting flags in two ways yields the identity
\begin{equation}\label{eq:flag}
  3k = 2e+s,
\end{equation}
and computing the Euler characteristic in two ways yields
\begin{equation}\label{eq:Euler}
  k-e = 2-2g-\ell.
\end{equation}
Recall that $n$ is the dimension of the manifold $M$. The condition that
the total degree of the form $G^e(\alpha)$ equals the dimension of the
configuration space $\CC_\Gamma$ reads
\begin{equation}\label{eq:deg-dim}
  nk = \deg\alpha + (n-1)e.
\end{equation}
For the degree discussions below we compute
\begin{equation}\label{eq:deg-s}
\begin{aligned}
  \deg\alpha-s &= (nk-(n-1)e)-(3k-2e)
  =(n-3)(k-e) \cr
  &=(n-3)(2-2g-\ell).
\end{aligned}
\end{equation}
For later use in sign computations, let us spell out these identities
mod $2$. Equation~\eqref{eq:flag} becomes
\begin{equation}\label{eq:flag-mod2}
  k \equiv s \mod 2.
\end{equation}
We use this to eliminate $k$ from the other
identities, so~\eqref{eq:Euler} and~\eqref{eq:deg-dim} become
\begin{equation}\label{eq:Euler-mod2}
  s+e \equiv \ell \mod 2,
\end{equation}
\begin{equation}\label{eq:deg-dim-mod2}
  \deg\alpha \equiv ns + (n-1)e \equiv \begin{cases}
    s & n \text{ odd} \\ e & n \text{ even}. \end{cases}
\end{equation}
Let now $\Gamma_i\in \RR_{\ell_i,g_i}$ for $i=1,2$ be two connected
trivalent ribbon graphs with extended labellings.
We denote by $s_i,k_i,e_i,\ell_i$ their number of flags, vertices,
edges, and boundary components. 
Let us introduce some notation for shuffle permutations in this
situation. Let $\mu_{\can}$ denote the shuffle of
$
\{1,\dots,\ell_1+\ell_2\}
$
sending $2$ to $\ell_1+1$ and observe that the sign of $\mu_{\can}$ is
$(-1)^{\ell_1-1}$. We denote by $G_1$ the set of shuffles of 
$
\{1,\dots,\ell_1\}
$
sending $1$ to some position in $\{1,\dots,\ell_1\}$, by $G_2$ the
set of shuffles of 
$
\{\ell_1+1,\dots,\ell_1+\ell_2\}
$
sending $\ell_1+1$ to some position in
$\{\ell_1+1,\dots,\ell_1+\ell_2\}$, 
and by $G$ the set of shuffles of
$
\{1,\dots,\ell_1+\ell_2\}
$
sending $1$ to $\{1,\dots,\ell_1\}$ and $2$ to 
$\{\ell_1+1,\dots,\ell_1+\ell_2\}$.
Note that there is a canonical bijection 
\begin{equation}\label{eq:G1G2}
  G_1\times G_2\longrightarrow G,\qquad 
  \xi\mapsto \xi\circ\mu_{\can}.
\end{equation}
Recall that we denote the resulting actions on graphs, forms and 
functionals by the same letters as the group elements themselves.
The following result is proved in~\cite{Volkov-thesis}, where
$g_{120}^l$ and $g_{210}$ are the gluing operations defined
in~\S\ref{ss:op-graphs} and $\eta_3(\Gamma)$ is the sign exponent
from~\S\ref{ss:basiccomb}. 
 
\begin{lemma}[\cite{Volkov-thesis}]\label{lem:eta3}
Let $\Gamma,\Gamma_i$ for $i=1,2$ be connected trivalent ribbon graphs
with extended labellings. Then for each $j\in \{3,\dots,s_1-1\}$ we have the following
identities mod $2$: 
\begin{align}
  \eta_3(\Gamma) + \eta_3\bigl(g_{120}^j(\Gamma)\bigr)
  &= s-1, \label{eq:eta3-1} \\
  \eta_3(\Gamma) + \eta_3\bigl(g_{210}(\Gamma)\bigr)
  &= s-1,  \label{eq:eta3-2} \\
  \eta_3(\Gamma_1) + \eta_3(\Gamma_2) + \eta_3\bigl(g_{210}((\Gamma_1\amalg\Gamma_2)
  \mu_{\can})\bigr)
  &= 1+\ell_1s_2+\ell_1+s+s_1s_2. \label{eq:eta3-3} 
\end{align}
\end{lemma}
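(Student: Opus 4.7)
The sign exponent $\eta_3(\Gamma)$ is characterized by the transformation rules recalled in \S\ref{ss:basiccomb}: it is independent of items (ii) and (iv) of Definition~\ref{def:labelling}, and changes by $1$ under each of the elementary moves (swap of adjacent boundary components in (i), swap of adjacent vertices in (iii), swap of adjacent edges in (v), flip of an edge orientation in (vi)). My plan is, for each of the three identities, to equip the glued graph with the natural extended labelling inherited from the original described in \S\ref{ss:op-graphs}, and then to compare both sides by counting, mod $2$, the elementary moves relating this natural labelling to a standard one. The explicit combinatorial formula from \cite[Appendix A]{Cieliebak-Fukaya-Latschev} will be needed to carry out the parity calculation.

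For identity~\eqref{eq:eta3-1} I start with an extended labelling of $\Gamma$ and induce on $g_{120}^j(\Gamma)$ the labelling in which the new oriented edge $l=(u_1,u_j)$ sits in first position of the edge order, the vertex order is unchanged (since $u_1, u_j$ merely switch from exterior to interior flags at fixed vertices), and the first boundary component of $\Gamma$ (with $s_1$ leaves) is replaced by the first two boundary components of $g_{120}^j(\Gamma)$ with $j-2$ and $s_1-j$ leaves respectively. Evaluating the formula for $\eta_3$ on both graphs and using~\eqref{eq:flag-mod2}--\eqref{eq:deg-dim-mod2} to reduce terms mod $2$, the dependence on $j$ drops out and the difference collapses to $s-1$. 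The argument for identity~\eqref{eq:eta3-2} is structurally identical: $g_{210}$ merges the first two boundary components into one while adding a new first edge, and the resulting parity is again $s-1$; the fact that the right-hand side coincides with that of~\eqref{eq:eta3-1} reflects that both operations add exactly one edge and change the number of boundary components by one.

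For identity~\eqref{eq:eta3-3} I first use the disjoint union convention $\eta_3(\Gamma_1 \amalg \Gamma_2) = \eta_3(\Gamma_1) + \eta_3(\Gamma_2)$ on the canonical disjoint union labelling. The shuffle $\mu_{\can}$ is the product of $\ell_1 - 1$ adjacent transpositions of boundary components, hence contributes $\ell_1 - 1$ to $\eta_3$. Applying $g_{210}$ then proceeds as in identity~\eqref{eq:eta3-2}, but now the two boundary components being merged lie on distinct connected components, so additional transpositions involving both $s_1$ and $s_2$ flags appear. Collecting all the mod-$2$ contributions (namely $\ell_1 - 1$ from $\mu_{\can}$, the analogue of $s-1$ from the gluing, and a cross term coming from interleaving the two sets of boundary data) yields $1 + \ell_1 s_2 + \ell_1 + s + s_1 s_2$.

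\textbf{Main obstacle.} Each elementary move is trivial to track individually, but the combined bookkeeping in~\eqref{eq:eta3-3} is delicate: one must carefully account for the interplay between the shuffle $\mu_{\can}$, the disjoint union convention, and the subsequent gluing, and extract exactly the cross term $\ell_1 s_2 + s_1 s_2$. This is the kind of lengthy sign computation the authors defer to \cite{Volkov-thesis}, and my plan is to follow the same route: set up the natural labellings explicitly, then grind through the parity count using the formula for $\eta_3$.
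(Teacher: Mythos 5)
Your proposal is an outline of the right general strategy (it is indeed how the computation in \cite{Volkov-thesis} is organised: equip the glued graph with the induced extended labelling, new edge first in the edge order, vertex order unchanged, disjoint-union convention plus the shuffle contributing $\ell_1-1$), but as written it is not a proof: the parities that constitute the entire content of the lemma are asserted rather than derived. The properties of $\eta_3$ recalled in \S\ref{ss:basiccomb} only describe how $\eta_3$ changes under a change of extended labelling of a \emph{fixed} graph; they cannot, by themselves, relate $\eta_3(\Gamma)$ to $\eta_3\bigl(g_{210}(\Gamma)\bigr)$ or $\eta_3\bigl(g_{120}^j(\Gamma)\bigr)$, which are different graphs. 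For that you must write down and evaluate the explicit definition of $\eta_3$ from \cite[Appendix A]{Cieliebak-Fukaya-Latschev}. You acknowledge this formula is needed, but you never state it or apply it, so the key steps --- that the $j$-dependence cancels in \eqref{eq:eta3-1}, that gluing two components of the same boundary structure gives exactly $s-1$ in \eqref{eq:eta3-2}, and above all that the disconnected case produces the cross term $1+\ell_1 s_2+s_1 s_2$ in \eqref{eq:eta3-3} --- are simply claimed ("collecting all the mod-2 contributions yields\dots"), which is circular: producing precisely those terms is what the lemma asserts. The appeal to \eqref{eq:flag-mod2}--\eqref{eq:deg-dim-mod2} is also misplaced, since \eqref{eq:deg-dim-mod2} concerns degrees of differential forms and the dimension $n$, which play no role in this purely graph-combinatorial parity statement; this suggests the computation was not actually set up.

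Note also that the paper itself gives no proof of this lemma: it is quoted from \cite{Volkov-thesis}, and its content is exactly the lengthy bookkeeping you defer in your final paragraph. So a submission that again defers the computation has not closed the gap; to make this a proof you would need to fix the CFL definition of $\eta_3$ (or an equivalent orientation/ordering model), compute it explicitly for $g_{120}^j(\Gamma)$, $g_{210}(\Gamma)$ and $g_{210}\bigl((\Gamma_1\amalg\Gamma_2)\mu_{\can}\bigr)$ with the induced extended labellings, and exhibit the mod-2 cancellations term by term.
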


As a corollary we get the following statement.

\begin{cor}\label{cor:deltaeta3}
Let $\Gamma_i\in \RR_{\ell_i,g_i}$ for $i=1,2$ be as above. Then for
any $\xi_i\in G_i$ we have
\begin{align*}
&\eta_3(
(g_{210}(\Gamma_1\xi_1\amalg\Gamma_2\xi_2)\mu_{\can})
)-
\eta_3((\Gamma_1\xi_1\amalg\Gamma_2\xi_2)
\mu_{\can})=\cr
&\eta_3((g_{210}(\Gamma_1\amalg\Gamma_2)
\mu_{\can}))-
\eta_3((\Gamma_1\amalg\Gamma_2)
\mu_{\can}).
\end{align*}
\end{cor}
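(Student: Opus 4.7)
\textbf{Proof proposal for Corollary~\ref{cor:deltaeta3}.}

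The plan is to reduce the statement to a direct consequence of equation~\eqref{eq:eta3-3} of Lemma~\ref{lem:eta3}, using the crucial observation that the right-hand side of~\eqref{eq:eta3-3},
$$
  1+\ell_1s_2+\ell_1+s+s_1s_2,
$$
depends only on the numerical invariants $\ell_1,\ell_2,s_1,s_2,s$ of the pair $(\Gamma_1,\Gamma_2)$ and not on the particular extended labellings. Since the shuffles $\xi_i\in G_i$ act on $\Gamma_i$ only by permuting boundary components within $\Gamma_i$, these numerical invariants are unchanged when we pass from $(\Gamma_1,\Gamma_2)$ to $(\Gamma_1\xi_1,\Gamma_2\xi_2)$.

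First I would apply equation~\eqref{eq:eta3-3} twice, once to $(\Gamma_1,\Gamma_2)$ and once to $(\Gamma_1\xi_1,\Gamma_2\xi_2)$, and subtract. Since the right-hand sides coincide, this yields the identity (mod $2$)
$$
  \eta_3\bigl(g_{210}((\Gamma_1\xi_1\amalg\Gamma_2\xi_2)\mu_{\can})\bigr)
  -\eta_3\bigl(g_{210}((\Gamma_1\amalg\Gamma_2)\mu_{\can})\bigr)
  =\eta_3(\Gamma_1)+\eta_3(\Gamma_2)-\eta_3(\Gamma_1\xi_1)-\eta_3(\Gamma_2\xi_2).
$$
Next I would compute the right-hand side using the bullet-point rules for $\eta_3$ recalled at the end of \S\ref{ss:basiccomb}: swapping two adjacent boundary components in item (i) changes $\eta_3$ by $1$, so expressing $\xi_i$ as a product of adjacent transpositions gives $\eta_3(\Gamma_i\xi_i)\equiv\eta_3(\Gamma_i)+|\xi_i|\pmod 2$, where $|\xi_i|$ denotes the parity of $\xi_i$. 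Hence the right-hand side above equals $|\xi_1|+|\xi_2|$.

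Then I would carry out an analogous computation for $\eta_3((\Gamma_1\xi_1\amalg\Gamma_2\xi_2)\mu_{\can})-\eta_3((\Gamma_1\amalg\Gamma_2)\mu_{\can})$. By definition $\eta_3$ of a disjoint union with its canonical extended labelling is additive, and the shuffles $\xi_1\amalg\xi_2$ and $\mu_{\can}$ appear as extra permutations of boundary components on top of this canonical labelling. The same bullet rule therefore shows that this second difference is also congruent to $|\xi_1|+|\xi_2|\pmod 2$. Since $\eta_3$ is valued in $\Z/2$, equating the two differences gives the desired equality, after rearranging it to the form asserted in the corollary.

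The only delicate point — and arguably the main potential obstacle — is making sure that the two different ways of producing $\eta_3((\Gamma_1\xi_1\amalg\Gamma_2\xi_2)\mu_{\can})$ (namely: first act by $\xi_i$ on each $\Gamma_i$ and take the canonical disjoint union, versus: take the canonical disjoint union of $\Gamma_1$ and $\Gamma_2$ and then act by the boundary permutation $\xi_1\amalg\xi_2$) yield the same value of $\eta_3$. This reduces to checking that the parity of a permutation of boundary components equals the corresponding number of adjacent transpositions mod $2$, which is immediate but should be written out carefully to avoid sign slips. Beyond this, the argument is a short bookkeeping exercise in Lemma~\ref{lem:eta3} and the four invariance rules of~\S\ref{ss:basiccomb}.
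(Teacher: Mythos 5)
Your argument is correct and is essentially the derivation the paper intends: the corollary is stated as an immediate consequence of Lemma~\ref{lem:eta3}\eqref{eq:eta3-3}, whose right-hand side depends only on $\ell_1,\ell_2,s_1,s_2,s$, combined with the additivity of $\eta_3$ on disjoint unions and the rule that permuting boundary components (item (i)) changes $\eta_3$ by the parity of the permutation, exactly as you use. The consistency point you flag (acting by $\xi_i$ before versus after forming the canonical disjoint union) is indeed the only bookkeeping to spell out, and it follows directly from those same invariance rules.
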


\subsection{Degree of $\fm_{\ell,g}$}\label{ss:mdegree}

Recall from Proposition~\ref{prop:canondIBL} that $\dIBL(\HH)$ has
degree $d=n-3$. Therefore, the degree~\eqref{eq:MCdegrees} of a
Maurer-Cartan element specifies to 
$$
  |\fm_{\ell,g}|=-2(n-3)(g-1).
$$
Let us verify this for $\fm_{\ell,g}$ defined in~\S\ref{ss:defMC}.  
For this, it is enough to compute the degree of $\fm_\Gamma$ for some
$\Gamma\in\RR_{\ell,g}$. Pick a decomposable tensor
$$
  \alpha=\alpha^1\otimes\dots\otimes\alpha^\ell
  \in (B\HH[3-n])^{\otimes\ell},
$$ 
where $\alpha^b=\alpha_1^b\otimes\dots\otimes \alpha_{s_b}^b\in B\HH[3-n]$,
and assume that 
$
\fm_\Gamma(\alpha)\ne 0.
$
Recall that $|\alpha^j|$ denotes the degree of $\alpha^j$ in $B\HH$,
and let $\|\alpha^j\|$ denote its degree in $B\HH[3-n]$. We extend 
$\|\cdot\|$ to elements of $(B\HH[3-n])^{\otimes\ell}$ and compute
\begin{align*}
  \|\alpha\| &= \sum_{b=1}^\ell\|\alpha^b\| 
  = \sum_{b=1}^\ell|\alpha^b|-\ell(3-n) \cr
  &= \sum_{b=1}^\ell(\deg\alpha^b-s_b)+\ell(n-3) 
  = \deg\alpha-s+\ell(n-3)\cr
  &= -2(n-3)(g-1),
\end{align*}
where we have used equation~\eqref{eq:deg-s} for the last equality.
Therefore,
$$
  |\fm_{\ell,g}| = |\fm_\Gamma| = \|\alpha\| = -2(n-3)(g-1).
$$

\subsection{Sign computations 1: connected versus disconnected graphs}\label{ss:conndisc}

Recall from~\S\ref{ss:op-graphs} the cutting operation $c$ which is
inverse to the gluing operation $g$. It associates to a
connected o-marked graph $\wh\Gamma$ the graph $c(\wh\Gamma)$
obtained by cutting open the marked oriented edge.
Recall that $c(\wh\Gamma)$ may be disconnected, and a special extended
labelling of $\wh\Gamma$ induces an extended labelling of $c(\wh\Gamma)$. 
The proof of the Maurer-Cartan equation involves a comparison of
integrals associated to $\wh\Gamma$ and $c(\wh\Gamma)$. In the case
that $c(\wh\Gamma)$ is disconnected this requires a nontrivial sign
computation. To formulate its outcome, we introduce the auxiliary quantity
$$
  \kkk_{\wh\Gamma}(\alpha) :=
  (-1)^{(n-1)\bigl(\eta_3(\wh\Gamma)-\eta_3(c(\wh\Gamma))+s-1\bigr)}\fm_{c(\wh\Gamma)}(\alpha). 
$$
Our first goal is to get an analog of equation~\eqref{eq:action} for $\kkk$. 

\begin{lemma}\label{lem:action-k}
Let $\Gamma_i\in \RR_{\ell_i,g_i}$ for $i=1,2$ be two connected
trivalent ribbon graphs with extended labellings. 
Let $\xi_i\in G_i$ for $i=1,2$ and set $\xi:=\xi_1\times \xi_2$.
Then
$$
  \mu_{\can}^{-1}\xi^{-1}\mu_{\can}\kkk_{g_{210}
  ((\Gamma_1\amalg\Gamma_2)\mu_{\can})} =
  \kkk_{g_{210}((\Gamma_1\xi_1\amalg\Gamma_2\xi_2)\mu_{\can})}.
$$
\end{lemma}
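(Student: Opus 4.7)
The plan is to unfold the definition of $\kkk$ on both sides in terms of $\fm$, transfer the action of the shuffle $\mu_{\can}^{-1}\xi^{-1}\mu_{\can}$ from outside to a relabelling of the cut graph via equation~\eqref{eq:action}, and then check that the $\eta_3$-signs appearing in the definition of $\kkk$ match up via Corollary~\ref{cor:deltaeta3}.

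First, I would set $\wh\Gamma := g_{210}((\Gamma_1\amalg\Gamma_2)\mu_{\can})$, $\wh\Gamma' := g_{210}((\Gamma_1\xi_1\amalg\Gamma_2\xi_2)\mu_{\can})$, and $\tilde\xi := \mu_{\can}^{-1}\xi\mu_{\can}$, so that $\mu_{\can}^{-1}\xi^{-1}\mu_{\can} = \tilde\xi^{-1}$. The key observation is that acting by $\xi_1$ on $\Gamma_1$ and by $\xi_2$ on $\Gamma_2$ separately is the same as acting by $\xi = \xi_1 \times \xi_2$ on the disjoint union, giving
$$
c(\wh\Gamma') = (\Gamma_1\amalg\Gamma_2)\,\xi\,\mu_{\can} = \bigl((\Gamma_1\amalg\Gamma_2)\mu_{\can}\bigr)\,\tilde\xi = c(\wh\Gamma)\,\tilde\xi.
$$
Applying equation~\eqref{eq:action} then immediately produces $\fm_{c(\wh\Gamma')} = \tilde\xi^{-1}\fm_{c(\wh\Gamma)}$.

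Next, I would compare the sign prefactors in the definition of $\kkk$. Since the total number $s$ of exterior flags coincides for $\wh\Gamma$ and $\wh\Gamma'$, matching the prefactors reduces to the congruence
$$
\eta_3(\wh\Gamma') - \eta_3(c(\wh\Gamma')) \equiv \eta_3(\wh\Gamma) - \eta_3(c(\wh\Gamma)) \pmod 2,
$$
which is exactly Corollary~\ref{cor:deltaeta3} applied to the pairs $(\Gamma_1\xi_1,\Gamma_2\xi_2)$ and $(\Gamma_1,\Gamma_2)$. Combining the two steps yields
$$
\kkk_{\wh\Gamma'} = (-1)^{(n-1)(\eta_3(\wh\Gamma) - \eta_3(c(\wh\Gamma)) + s - 1)}\,\tilde\xi^{-1}\fm_{c(\wh\Gamma)} = \tilde\xi^{-1}\kkk_{\wh\Gamma},
$$
which, after substituting $\tilde\xi^{-1} = \mu_{\can}^{-1}\xi^{-1}\mu_{\can}$, is the identity of the lemma.

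The only real subtlety I expect is convention bookkeeping: the right action of $S(\bs)$ on labelled graphs (modifying items (i) and (ii) of the extended labelling) must dualize, through the construction of $\fm_\Gamma$, to precisely the algebraic action used on the right-hand side of~\eqref{eq:action}, so that no stray signs appear when passing between the graph side and the functional side. Once that compatibility is in hand, the argument is purely algebraic and requires no analytic input beyond Corollary~\ref{cor:deltaeta3}.
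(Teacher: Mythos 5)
Your proposal is correct and follows essentially the same route as the paper: unfold the definition of $\kkk$, use Corollary~\ref{cor:deltaeta3} to match the $\eta_3$-sign prefactors, and reduce to the equivariance of $\fm$ under relabelling via equation~\eqref{eq:action}. The only cosmetic difference is that you compose the relabellings on the graph side and apply~\eqref{eq:action} once, whereas the paper applies it three times; the content is identical.
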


\begin{proof}
The definition of $\kkk$ together with Corollary~\ref{cor:deltaeta3}
reduces the statement to 
$$
  \mu_{\can}^{-1}\xi^{-1}\mu_{\can}\fm_{(\Gamma_1\amalg\Gamma_2)\mu_{\can}}
  = \fm_{(\Gamma_1\xi_1\amalg\Gamma_2\xi_2)\mu_{\can}},
$$
which follows by three consecutive applications of
equation~\eqref{eq:action}. 
\end{proof}

Now the sign comparison takes the following form. 

\begin{lemma}\label{lm:disc}
(a) Let $\alpha$ be a decomposable tensor adapted to a connected graph
  $\Gamma$. Then for each $j$ we have 
$$
  \kkk_{g_{120}^j(\Gamma)}(\alpha)=\fm_\Gamma(\alpha)\quad\text{and}\quad
  \kkk_{g_{210}(\Gamma)}(\alpha)=\fm_\Gamma(\alpha).
$$
(b) Let $\alpha_1$ and $\alpha_2$ be decomposable tensors adapted to
  connected graphs $\Gamma_1$ and $\Gamma_2$, respectively, 
and set $\alpha:=\alpha_1\alpha_2$.
Then
$$
  \kkk_{g_{210}((\Gamma_1\amalg \Gamma_2)\mu_{\can})}(\mu_{\can}\alpha) = (\fm_{\Gamma_1}\otimes \fm_{\Gamma_2})(\alpha).
$$
In other words,  
$$
  \kkk_{g_{210}((\Gamma_1\amalg \Gamma_2)\mu_{\can})}
  = \mu_{\can}^{-1}(\fm_{\Gamma_1}\otimes \fm_{\Gamma_2}).
$$
\end{lemma}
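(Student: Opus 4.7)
The plan is to reduce both identities to explicit mod-$2$ sign computations using Lemma~\ref{lem:eta3}, together with the action formula~\eqref{eq:action}, the product formula~\eqref{eq:2comp}, and the additivity~\eqref{eq:seGamma-sum} of $se_\Gamma$. The starting point is that cutting inverts the relevant gluings: on connected labelled graphs $c\circ g_{120}^j=\id$ and $c\circ g_{210}=\id$, while on disjoint unions $c(g_{210}((\Gamma_1\amalg\Gamma_2)\mu_{\can}))=(\Gamma_1\amalg\Gamma_2)\mu_{\can}$. In particular, each $\fm_{c(\wh\Gamma)}$ appearing in the definition of $\kkk$ is directly expressed in terms of the $\fm_{\Gamma_i}$, and only signs remain to be checked. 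For part (a), unwinding $\kkk$ gives
\[
\kkk_{g_{120}^j(\Gamma)}(\alpha)=(-1)^{(n-1)(\eta_3(g_{120}^j(\Gamma))-\eta_3(\Gamma)+s-1)}\fm_\Gamma(\alpha),
\]
and by~\eqref{eq:eta3-1} the exponent collapses mod $2$ to $(n-1)\cdot 2(s-1)\equiv 0$. The identity for $g_{210}$ is identical but uses~\eqref{eq:eta3-2}.

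For part (b), I first note that the two displayed identities are equivalent: applying $\mu_{\can}^{-1}(\fm_{\Gamma_1}\otimes\fm_{\Gamma_2})$ to $\mu_{\can}\alpha$ and using $(\eta^{-1}\phi)(x)=\phi(\eta^{-1}x)$ reproduces the left-hand side of the first formula, so it suffices to prove the first. Unwinding $\kkk$ and using~\eqref{eq:action} to absorb the $\mu_{\can}$-shift in $\fm$, the claim reduces to
\[
(-1)^E\fm_{\Gamma_1\amalg\Gamma_2}(\alpha)=(\fm_{\Gamma_1}\otimes\fm_{\Gamma_2})(\alpha),
\]
where $E$ is $(n-1)$ times an explicit combination of $\eta_3$-differences and $s-1$. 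Since $\mu_{\can}$ is a product of $\ell_1-1$ adjacent transpositions of boundary components, each contributing $1$ to $\eta_3$, we have $\eta_3((\Gamma_1\amalg\Gamma_2)\mu_{\can})\equiv\eta_3(\Gamma_1)+\eta_3(\Gamma_2)+\ell_1-1 \bmod 2$, and combining this with~\eqref{eq:eta3-3} expresses $E$ mod $2$ as an explicit polynomial in $\ell_1,s_1,s_2$. On the other side, \eqref{eq:2comp} gives $I_{\Gamma_1\amalg\Gamma_2}(\alpha_1\alpha_2)=(-1)^{(n-1)(\deg\alpha_1)e_2}I_{\Gamma_1}(\alpha_1)I_{\Gamma_2}(\alpha_2)$, and expanding $s_{\Gamma_1\amalg\Gamma_2}$ using~\eqref{eq:sGamma}, \eqref{eq:seGamma-sum} and the concatenation formulas for $P$ and $P_b$ turns the desired equality into a single mod-$2$ identity whose two sides differ by the Koszul sign $(-1)^{|\alpha_1||\fm_{\Gamma_2}|}$ of the tensor product of functionals.

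The main obstacle will be the bookkeeping in this final verification, in which four separate sources of signs (the $\eta_3$-shift, the integral product formula, the $s_\Gamma$-difference, and the Koszul sign) have to cancel exactly. To close the computation I would repeatedly apply the parity relations~\eqref{eq:flag-mod2}, \eqref{eq:Euler-mod2} and~\eqref{eq:deg-dim-mod2} of \S\ref{ss:identities} in order to eliminate the vertex count $k$ and edge count $e$ in favour of $s$, $\ell$ and $\deg\alpha$, and use that $\fm_{\Gamma_i}(\alpha_i)$ is nonzero only when $|\alpha_i|=|\fm_{\Gamma_i}|=-2(n-3)(g_i-1)$, so that $|\alpha_i|$ can be freely substituted in the sign exponents. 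This is precisely the kind of parity bookkeeping carried out in~\cite{Volkov-thesis}, whose template I would follow.
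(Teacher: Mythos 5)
Your part (a) is exactly the paper's argument: unwind the definition of $\kkk$ and observe that \eqref{eq:eta3-1}, resp.\ \eqref{eq:eta3-2}, makes the exponent $(n-1)\bigl(\eta_3(\wh\Gamma)+\eta_3(\Gamma)+s-1\bigr)$ even. Nothing to add there.

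For part (b) you follow the same route as the paper (reduce via the definition of $\kkk$, the relation $\eta_3\bigl((\Gamma_1\amalg\Gamma_2)\mu_{\can}\bigr)\equiv\eta_3(\Gamma_1)+\eta_3(\Gamma_2)+\ell_1-1$, identity \eqref{eq:eta3-3}, the product formula \eqref{eq:2comp}, and the equivariance \eqref{eq:action}), but you stop precisely where the actual content of the lemma lies: you assert that the resulting mod-2 identity "has to cancel exactly" and that you "would" finish it with \eqref{eq:flag-mod2}, \eqref{eq:Euler-mod2}, \eqref{eq:deg-dim-mod2}, without carrying out the cancellation. That verification is not a formality here -- the definitions of $s_\Gamma$ in \eqref{eq:sGamma} and of $se_\Gamma$ in \eqref{eq:seGamma} were calibrated so that it works, and checking it is the whole point. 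Concretely, the paper writes the total exponent as $X=S+(n-1)(\alpha_1e_2+\ell_1s_2+s_1s_2+1)$ with $S=S_1+S_2+(n-1)S_3+(n-1)S_4$, computes $S_1=s_1s_2$, $S_2=s_2\alpha_1$, $S_3=s_1\ell_2+s_2\ell_1+1$ (via \eqref{eq:seGamma-sum}), $S_4=\ell_2(\alpha_1+s_1)$, and then splits into the cases $n$ odd (where $\alpha_1\equiv s_1$ by \eqref{eq:deg-dim-mod2}) and $n$ even (where $s_2+\ell_2\equiv e_2$ by \eqref{eq:Euler-mod2}) to get $X\equiv 0$. Until you produce this (or an equivalent) computation, the proof of (b) is a plan rather than a proof. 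Two minor remarks: your reduction of the second displayed identity to the first is fine and is how the paper reads it; and the Koszul sign $(-1)^{|\alpha_1||\fm_{\Gamma_2}|}$ you introduce is vacuous, since $\fm_{\Gamma_2}$ is supported in even shifted degree $-2(n-3)(g_2-1)$, so it can only muddy, not affect, the bookkeeping -- the paper's convention carries no such sign.
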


\begin{proof}
For the first equality in (a) we consider the o-marked connected graph
$\wh\Gamma:=g_{120}^j(\Gamma)$, so that $c(\wh\Gamma)=\Gamma$.
Then using the definition of 
$\kkk_{\wh\Gamma}$ and~\eqref{eq:eta3-1} we obtain
$$
  \kkk_{g_{120}^j(\Gamma)}(\alpha)
  = (-1)^{(n-1)\bigl(\eta_3(g_{120}^j(\Gamma))-\eta_3(\Gamma)+s-1\bigr)}\fm_\Gamma(\alpha)
  =\fm_\Gamma(\alpha).
$$
The second equality in (a) follows analogously using~\eqref{eq:eta3-2}.

For (b) we consider the disconnected graph $\Gamma:=\Gamma_1\amalg\Gamma_2$
and the o-marked connected graph $\wh\Gamma:=g_{210}(\Gamma\mu_{\can})$, so
that $c(\wh\Gamma)=\Gamma\mu_{\can}$. Abbreviating
$$
  s := s_1+s_2\quad\text{and}\quad
  S:=s_{\Gamma_1}(\alpha_1)+s_{\Gamma_2}(\alpha_2)+s_\Gamma(\alpha),
$$
we compute
\begin{align*}
  (\fm_{\Gamma_1}\otimes \fm_{\Gamma_2})(\alpha)
  &\stackrel{(1)}{=}
  (-1)^{s_{\Gamma_1}(\alpha_1)+s_{\Gamma_2}(\alpha_2)}(I_{\Gamma_1}\otimes I_{\Gamma_2})(\alpha) \cr
  &\stackrel{(2)}{=}
  (-1)^{s_{\Gamma_1}(\alpha_1)+s_{\Gamma_2}(\alpha_2)+(n-1)\alpha_1e_2}I_\Gamma(\alpha) \cr
  &\stackrel{(3)}{=}
  (-1)^{S+(n-1)\alpha_1e_2}\fm_\Gamma(\alpha) \cr
  &\stackrel{(4)}{=}
  (-1)^{S+(n-1)\alpha_1e_2}\fm_{\Gamma\mu_{\can}}(\mu_{\can}\alpha) 
  \cr
  &\stackrel{(5)}{=}
  (-1)^{S+(n-1)\bigl(\alpha_1e_2+
  \eta_3(g_{210}(\Gamma\mu_{\can}))-\eta_3(\Gamma\mu_{\can})+s-1\bigr)}
  \kkk_{\wh\Gamma}(\mu_{\can}\alpha) 
  \cr
  &\stackrel{(6)}{=}
  (-1)^X\kkk_{g_{210}(\Gamma\mu_{\can})}(\mu_{\can}\alpha)
\end{align*}
with the sign exponent
$$
  X = S + (n-1)\bigl(\alpha_1e_2+\ell_1s_2+s_1s_2+1\bigr).
$$
Here for equality (1) we use the definitions of $\fm_{\Gamma_1}(\alpha_1)$ and
$\fm_{\Gamma_1}(\alpha_2)$; for (2) we use equation~\eqref{eq:2comp};
for (3) we use the definition of $\fm_\Gamma$; for (4) we use the
equivariance property~\eqref{eq:action} of $\fm_\Gamma$; 
for (5) we use the definition of $\kkk_{\wh\Gamma}=\kkk_{g_{210}(\Gamma\mu_{can})}$;
and (6) follows from
$\eta_3(\Gamma\mu_{can})=\eta_3(\Gamma_1)+\eta_3(\Gamma_2)+\eta$, $\eta=\ell_1-1$
and~\eqref{eq:eta3-3} via
\begin{align*}
  \eta_3(g_{210}(\Gamma\mu_{can}))+\eta_3(\Gamma\mu_{can})
  &= \eta_3(g_{210}(\Gamma\mu_{can}))+\eta_3(\Gamma_1)+\eta_3(\Gamma_2)+\ell_1-1\cr
  &= 1+\ell_1s_2+\ell_1+s+s_1s_2+\ell_1-1 \cr
  &= \ell_1s_2+s+s_1s_2.
\end{align*}
It remains to prove that $X=0$. By definition~\eqref{eq:sGamma} of
$s_\Gamma$, the sign exponent $S$ decomposes into four terms
$$
  S = S_1+S_2+(n-1)S_3+(n-1)S_4,
$$
where 
\begin{align*}
  S_1 &= \frac{s_1(s_1-1)}{2}+\frac{s_2(s_2-1)}{2}+\frac{s(s-1)}{2} =
  s_1s_2, \cr
  S_2 &= P(\alpha_1)+P(\alpha_2)+P(\alpha) \cr
  &= \sum_{j=1}^{s_1}(s_1-j)\alpha^j + \sum_{j=s_1+1}^{s}(s-j)\alpha^j
  + \sum_{j=1}^{s}(s-j)\alpha^j \cr
  &= \sum_{j=1}^{s_1}(s-j+s_1-j)\alpha^j
  = s_2\alpha_1, \cr
  S_3 &= se_{\Gamma_1}+se_{\Gamma_2}+se_\Gamma
  = s_1\ell_2+s_2\ell_1+1, \cr
  S_4 &= P_b(\alpha_1)+P_b(\alpha_2)+P_b(\alpha) \cr
  &= \sum_{b=1}^{\ell_1}(\ell_1-b)|\alpha^b| +
  \sum_{b=\ell_1+1}^{\ell}(\ell-b)|\alpha^b| +
  \sum_{b=1}^{\ell}(\ell-b)|\alpha^b| \cr
  &= \sum_{b=1}^{\ell_1}(\ell-b+\ell_1-b)|\alpha^b|
  = \ell_2|\alpha_1| = \ell_2(\alpha_1+s_1).
\end{align*}
Here for $S_2$ we have written $\alpha=\alpha^1\cdots\alpha^s$ with
$\alpha^j$ corresponding to the $j$-th leaf of $\Gamma$;  
for $S_3$ we have used property~\eqref{eq:seGamma-sum} of $se_\Gamma$;
and for $S_4$ we have set $\ell:=\ell_1+\ell_2$ and written
$\alpha=\alpha^1\cdots\alpha^\ell$ with $\alpha^b$ corresponding to
the $b$-th boundary component of $\Gamma$. 

Now we distinguish two cases according to the parity of $n$. 
For $n$ odd we have $\alpha_1 = s_1$ by~\eqref{eq:deg-dim-mod2},
and therefore
$$
  X = S = S_1+S_2 = s_1s_2 + s_2\alpha_1 = 0.
$$
For $n$ even we use $s_2+\ell_2=e_2$ from~\eqref{eq:Euler-mod2} to compute 
\begin{align*}
  S &= S_1+S_2+S_3+S_4 \cr
  &= s_1s_2+s_2\alpha_1+s_1\ell_2+s_2\ell_1+1+\ell_2(\alpha_1+s_1) \cr
  &= s_1s_2+e_2\alpha_1+s_2\ell_1+1.
\end{align*}
This again implies $X=0$ and concludes the proof of Lemma~\ref{lm:disc}.
\end{proof}

\subsection{Sign computations 2: assembling the full differential}

Recall from equation~\eqref{eq:dG} the formula $dG=e_ae^a$, where
$\{e_a\}$ is a basis of harmonic forms with dual basis $\{e^a\}$ and
summation over $a$ is understood. We will use this in the proof of the
Maurer-Cartan equation to relate the total differential of the
integrand to the operations $\fp_{2,1,0}$ and $\fp_{1,2,0}$, which are
defined by inserting $e_a$ and $e^a$ in suitable positions and summing
over $a$. In this subsection we work out the corresponding signs.

Recall the gluing operation $g_{210}$ from~\S\ref{ss:op-graphs},
the product $b_{210}^{12}$ from~\S\ref{ss:p210}, 
and the operations $\n_{\wh\Gamma}$ from equation~\eqref{eq:mGamma}
and $\kkk_{\wh\Gamma}$ from~\S\ref{ss:conndisc} associated to an
o-marked graph $\wh\Gamma$. 
Note that $g_{210}(\Gamma)$ is an o-marked graph, so the operations
$\n_{g_{210}(\Gamma)}$ and $\kkk_{g_{210}(\Gamma)}$ are defined for
each graph $\Gamma$.

\begin{lemma}\label{lem:b_{210}conn}
Let $\Gamma$ be a (possibly disconnected) graph such that
$g_{210}(\Gamma)$ is connected. Then 
\begin{equation}\label{eq:b210conn}
   b_{210}^{12}\kkk_{g_{210}(\Gamma)}=\n_{g_{210}(\Gamma)}.
\end{equation}
\end{lemma}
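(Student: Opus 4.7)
The plan is to evaluate both sides on a decomposable $\alpha=\alpha^1\otimes\dots\otimes\alpha^{\ell_0-1}$ and reduce them to a common state sum. The key analytic ingredient is the formula $dG=\sum_a e_a\times e^a$ from \eqref{eq:dG}, which converts a $dG$-insertion at the marked edge $l=(u_1,u_2)$ of $\wh\Gamma:=g_{210}(\Gamma)$ into a pair of harmonic insertions at the endpoints $u_1,u_2$. Under cutting, these endpoints become the first leaves of the first two boundary components of $\Gamma$, which is exactly where $b_{210}^{12}$ inserts its summed $e_a,e^a$ pair. Once this correspondence is established, the remaining task is sign matching.

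For the left hand side I would write $\alpha^1=\alpha^1_1\otimes\dots\otimes\alpha^1_{s_1+s_2-2}$ and apply $b_{210}^{12}=(c_{120}^1)^*$ using \eqref{eq:defc120}. The functional $\kkk_{\wh\Gamma}$ is built from $\fm_\Gamma$, which is nonzero only on tensors whose boundary words have lengths $(s_1,s_2,s_3,\dots)$ matching the boundary component sizes of $\Gamma$; hence only the splitting $k_1=s_1-1$ in the sum of \eqref{eq:defc120} contributes. Unfolding $\kkk_{\wh\Gamma}$ via its definition then produces an explicit sign times $\sum_a\fm_\Gamma(\tilde\alpha_a)$, where $\tilde\alpha_a$ is obtained from $\alpha$ by inserting $e_a$ at position~$1$ of the first boundary word and $e^a$ at position~$1$ of the second.

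For the right hand side I would substitute $dG=\sum_a e_a\times e^a$ at the position $n(l)=1$ of the factor $G^e_l$. Since $\wh\Gamma$ and $\Gamma$ share the same vertex set, $\Delta_{3,\wh\Gamma}$ and $\Delta_{3,\Gamma}$ agree as subsets of $M^{3k}$ up to orientation, the latter controlled by $\bar R$ and $\eta_3$. Reordering the integrand so that $e_a,e^a$ move from the first two positions of the edge order of $\wh\Gamma$ (where $u_1,u_2$ sat as interior flags) to their leaf positions in the edge order of $\Gamma$ identifies the integral with $\sum_a I_\Gamma(\tilde\alpha_a)$, again up to an explicit permutation sign controlled by \eqref{gluing-sign}.

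The remaining step, and the main obstacle, is the sign bookkeeping. I would collect on the LHS the $\kkk$-prefactor $(n-1)(\eta_3(\wh\Gamma)-\eta_3(\Gamma)+s-1)$, the splitting sign from \eqref{eq:defc120} at $k_1=s_1-1$, and $s_\Gamma(\tilde\alpha)$ from \eqref{eq:sGamma}; on the RHS the prefactor $\tilde s_{\wh\Gamma}(\alpha)+(n-1)(n(l)-1)=\tilde s_{\wh\Gamma}(\alpha)$ from \eqref{eq:sGammatilde} with $n(l)=1$, the discrepancy $\bar R_{\wh\Gamma}\equiv\bar R_\Gamma+s_1-1$ from \eqref{gluing-sign}, and the sign from reindexing the factors. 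Reducing the net discrepancy modulo $2$ to zero relies on Lemma~\ref{lem:eta3}, specifically \eqref{eq:eta3-2} when $\Gamma$ is connected or \eqref{eq:eta3-3} when $\Gamma$ has two components, together with the parity identities \eqref{eq:flag-mod2}, \eqml{eq:Euler-mod2}, \eqref{eq:deg-dim-mod2}. No conceptual difficulty arises, but the bookkeeping is lengthy; the disconnected case is heavier because the $\eta_3$-difference is no longer trivial and interacts with the shuffle $\mu_{\can}$ implicit in the bijection \eqref{eq:glue2}, for which I would likely invoke Lemma~\ref{lm:disc}(b) to replace $\fm_\Gamma$ by $\mu_{\can}^{-1}(\fm_{\Gamma_1}\otimes\fm_{\Gamma_2})$ and reduce to a factored calculation.
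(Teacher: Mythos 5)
Your reduction is the same as the paper's: evaluate on decomposables of the form $(e_a\alpha^1)\otimes(e^a\alpha^2)\otimes\cdots$, use $dG=\sum_a e_a\times e^a$ from \eqref{eq:dG} to trade the $dG$-insertion at the marked edge for exactly the pair of harmonic insertions produced by $b_{210}^{12}=(c_{120}^1)^*$, note that by adaptedness only the splitting of length $s_1-1$ survives in the coproduct, and identify the two integration domains through the reordering maps and \eqref{gluing-sign}. Where you diverge is in the plan for the residual sign. The paper's verification is a single uniform parity computation (the exponent $\star\star\equiv 0$ mod $2$) that never invokes Lemma~\ref{lem:eta3} and makes no distinction between connected and disconnected $\Gamma$: the correction $(n-1)\bigl(\eta_3(\wh\Gamma)-\eta_3(c(\wh\Gamma))+s-1\bigr)$ built into the definition of $\kkk$ cancels identically against the $\eta_3$-factors carried by $I_{\Gamma}$ and $I_{\wh\Gamma,l}$, which is precisely what $\kkk$ was introduced for and why the lemma holds for arbitrary labelled (possibly disconnected) $\Gamma$ with one proof; the identities \eqref{eq:eta3-2}, \eqref{eq:eta3-3} are only needed later, in Lemma~\ref{lm:disc}, to convert $\kkk$ back into $\fm$. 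Your route through \eqref{eq:eta3-2}/\eqref{eq:eta3-3} and Lemma~\ref{lm:disc}(b) is not circular (both are proved beforehand, independently of this lemma) and can be made to work, but it is a detour that reintroduces the case split you do not need, and Lemma~\ref{lm:disc}(b) directly covers only the disjoint-union labelling shuffled by $\mu_{\can}$, so for a general labelled disconnected $\Gamma$ you would additionally have to invoke the equivariance statements \eqref{eq:action} and Lemma~\ref{lem:action-k} to reduce to that case. After the $\eta_3$-cancellation the remaining bookkeeping is elementary ($\bar R_\Gamma+\bar R_{\wh\Gamma}\equiv s_1$, $e^a=e_a+n$, $\wh s=s-2$, the behaviour of $P$, $P_b$, $se_\Gamma$, $\wt{se}_{\wh\Gamma}$), with no graph-theoretic input beyond \eqref{gluing-sign}; since that computation is the actual content of the lemma, it still has to be carried out in full in either organization.
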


\begin{proof}
Set $\wh\Gamma:=g_{210}(\Gamma)$ and let $l$ be its o-marked edge.
We equip $\Gamma$ with an extended labelling and
$\wh\Gamma$ with the induced special extended labelling, in which $l$
has position $n(l)=1$.
In the following we will denote quantities associated to $\wh\Gamma$
with a hat, and those associated to 
$\Gamma$ without. 
Let $\ell$ be the number of boundary components of $\Gamma$, so that
$\wh\ell=\ell-1$ is the number of boundary components of $\wh\Gamma$. 

Consider a collection of decomposable tensors of harmonic forms 
$\alpha^1,\alpha^2,\dots,\alpha^\ell$ such that 
$(e_a\alpha^1)\otimes(e^a\alpha^2)\otimes\dots\otimes \alpha^\ell$ is
adapted $\Gamma$. Then 
$(\alpha^1\alpha^2)\otimes \alpha^3\otimes\dots\otimes 
\alpha^\ell$ is adapted to $\wh\Gamma$ and we will plug it into
both sides of equation~\eqref{eq:b210conn}. 
For $b=1,\dots,\ell$ let $s_b$ be the number of harmonic forms in the tensor $\alpha^b$.
Then $\wh s = s_1+\cdots+s_\ell$ and $s=\wh s+2$ are the numbers of
leaves of $\wh\Gamma$ and $\Gamma$, respectively. 
We have the following equality between forms on the configuration
space $X_{\wh\Gamma}$, where summation over $a$ is understood:
\begin{equation}\label{eq:integrands}
   (-1)^{e^a\alpha^1}(R_{\wh\Gamma}^{-1})^*R_\Gamma^*
   G^e(e_a\alpha^1\otimes e^a\alpha^2\otimes\dots\otimes\alpha^\ell)
  = G_l^{e+1}(\alpha^1\alpha^2\otimes\dots\otimes\alpha^\ell).
\end{equation}
This follows from the definitions of the reordering map $R_\Gamma$
in~\S\ref{ss:config} and the integrands $G^e(\alpha)$, $G^e_l(\alpha)$
in~\S\ref{sec:opforms} via the following computation, using
$dG=e_ae^a$ from equation~\eqref{eq:dG}:
\begin{align*}
  &\ \ \ \ (-1)^{e^a\alpha^1}(R_{\wh\Gamma}^{-1})^*R_\Gamma^*G^e
  (e_a\alpha^1\otimes e^a\alpha^2\otimes\dots
  \otimes\alpha^\ell) \cr 
  &= (-1)^{e^a\alpha^1}(R_{\wh\Gamma}^{-1})^*R_\Gamma^*(G^e\times
  e_a\alpha^1\times e^a\alpha^2\times\dots\times \alpha^\ell) \cr 
  &= e_ae^a\times G^e\times \alpha^1\times\dots\times\alpha^\ell \cr 
  &= dG\times G^e\times \alpha^1\times\dots\times \alpha^\ell \cr 
  &= G_l^{e+1}(\alpha^1\alpha^2\otimes\dots\otimes\alpha^\ell).
\end{align*}
%
%
We abbreviate the sign exponents 
$P((e_a\alpha^1)\otimes (e^a\alpha^2)\otimes\dots\otimes
\alpha^\ell)$
and $P_b((e_a\alpha^1)\otimes 
(e^a\alpha^2)\otimes\dots\otimes
\alpha^\ell)$ that correspond to the
conjugating operators $P$ and $P_b$ for the graph $\Gamma$ by $P$
and $P_b$, respectively, and the sign exponents
$P(\alpha^1\alpha^2\otimes\dots\otimes\alpha^\ell)$ and 
$P_b(\alpha^1\alpha^2\otimes\dots\otimes
\alpha^\ell)$ 
for the graph $\wh\Gamma$ by $\wh P$ and $\wh P_b$.
Multiplying both sides of~\eqref{eq:integrands} by $(-1)^\star$,
pulling back with $R_{\wh\Gamma}^*$ and integrating over $\Delta_3^k$ yields
\begin{equation}\label{eq:GGtilde}
(-1)^{e^a\alpha^1+\star} \int_{\Delta_3^k}R_\Gamma^*G^e(e_a\alpha^1\otimes e^a\alpha^2\otimes\dots\otimes\alpha^\ell)=
(-1)^{\star}\int_{\Delta_3^k}R_{\wh\Gamma}^*
G_l^{e+1}(\alpha^1\alpha^2\otimes\dots
\otimes\alpha^\ell).
\end{equation}
Here we define the sign exponent
\begin{align*}
\star := &e^a\alpha^1 + \bar R_\Gamma+(n-1)\eta_3(\Gamma)\cr
&+ n\ell+P+s(s+1)/2+(n-1)(P_b+se_\Gamma) \cr
&+ (n-1)\bigl(\eta_3(\wh\Gamma)-\eta_3(\Gamma)+s-1\bigr) \cr
&+ |e^a||\alpha^1|+|e_a|+(n-1)|e_a\alpha^1|. 
\end{align*}
Then the left hand side of~\eqref{eq:GGtilde} becomes the term in
$$
  b_{210}^{12}(\kkk_{\wh\Gamma})(\alpha^1\alpha^2\otimes\alpha^3\otimes\dots\otimes\alpha^\ell) 
  = \kkk_{\wh\Gamma}\circ c_{120}^1(\alpha^1\alpha^2\otimes\alpha^3
  \otimes\dots\otimes\alpha^\ell)
$$
corresponding to the splitting of $\alpha^1\alpha^2$ into 
$\alpha^1$ and $\alpha^2$. Indeed, the first line in the definition of
$\star$ offsets $e^a\alpha^1$ and gives the sign exponent
$R_\Gamma+(n-1)\eta_3(\Gamma)$ in the operation $I_\Gamma$;
the second line is the sign exponent~\eqref{eq:sGamma} for converting $I_\Gamma$ to $\fm_\Gamma$;
the third line is the sign exponent for converting $\fm_\Gamma$ to
$\kkk_{\wh\Gamma}$;
and the last line is the sign exponent in the definition of $c_{120}^1$
in~\S\ref{ss:p210}. 

To understand the right hand side of~\eqref{eq:GGtilde} we define
\begin{align*}
\star\star := &\star + \bar R_{\wh\Gamma} + (n-1)\eta_3(\wh\Gamma)\cr
&+ n(\wh\ell-1) +1 + (\wh s+1)(\wh s+2)/2  +\wh P +
(n-1)(\wt{se}_{\wh\Gamma} + \wh P_b).
\end{align*}
Here the first line offsets $\star$ and gives the sign exponent
$\bar R_{\wh\Gamma} + (n-1)\eta_3(\wh\Gamma)$ in the operation $I_{\wh\Gamma}$,
and the second line is the sign exponent~\eqref{eq:sGammatilde} for
converting $I_{\wh\Gamma}$ to $\n_{\wh\Gamma}$. 
So the right-hand side of~\eqref{eq:GGtilde} equals
$\n_{\wh\Gamma}(\alpha^1\alpha^2\otimes\dots\otimes\alpha^\ell)$ and the lemma follows provided 
that $\star\star = 0$ mod $2$. 

In order to prove $\star\star=0$, we regroup $\star\star$ after some
obvious cancellations as
\begin{align*}
  \star\star
  = &\bar R_\Gamma + \bar R_{\wh\Gamma} \cr
  &+ P + \wh P \cr
  &+ (n-1)(P_b + \wh P_b + |e_a\alpha^1|) \cr
  &+ (n-1)(se_\Gamma + \wt{se}_{\wh\Gamma} + s-1) \cr
  &+ e^a\alpha^1 + 1 + |e^a|\,|\alpha^1| + |e_a| \cr
  &+ s(s+1)/2 + (\wh s+1)(\wh s+2)/2 \cr
\end{align*}
and compute it line by line.
Since the first flags appear in the order $e_a\alpha^1e^a\alpha^2$ for $\Gamma$
and $e_ae^a\alpha^1\alpha^2$ for $\wh\Gamma$, their edge orders differ my moving
the flag corresponding to $e^a$ past the $s_1$ flags corresponding to
$\alpha^1$ and the first line becomes
$$
  \bar R_\Gamma + \bar R_{\wh\Gamma} = s_1.
$$
The second line $P+\wh P$ equals the sum of the sign exponents for
moving the formal degree variables $\theta$ to the left of all the
other variables in the expressions $\theta e_a\theta^{s_1}\alpha^1\theta
e^a\theta^{s_2+\dots+s_\ell}$ (for $\Gamma$) and
$\theta^{s_1}\alpha^1\theta^{s_2+\dots+s_\ell}$ (for $\wh\Gamma$). 
Moving $\theta^{s_1}$ past $e_a$ gives the sign exponent $s_1e_a$,
moving the $\theta$ to the right of $\alpha^1$ past $e_a$ and $\alpha^1$ gives
$e_a+\alpha^1$, and moving $\theta^{s_2+\dots+s_\ell}$ past $e_a$ and $e^a$
gives $(e_a+e^a)(s_2+\dots+s_\ell)=n(s_2+\dots+s_\ell)$, hence
$$
  P+\wh P = s_1e_a + e_a + \alpha^1 + n(s_2+\dots+s_\ell).
$$
The sum $P_b+\wh P_b$ equals the sum of the sign exponents for
moving the formal degree variables $\theta$ to the left of all the
other variables in the expressions $\theta e_a\alpha^1\theta
e^a\alpha^2\theta^{\ell-2}$ (for $\Gamma$) and
$\theta \alpha^1\alpha^2\theta^{\ell-2}$ (for $\wh\Gamma$). 
Moving the $\theta$ to the right of $\alpha^1$ past $e_a\alpha^1$ gives
$|e_a\alpha^1|$, and moving $\theta^{\ell-2}$ past $e_a$ and $e^a$
gives $|e_ae^a|(\ell-2)=n(\ell-2)$, hence the third line becomes
$$
  (n-1)(P_b + \wh P_b + |e_a\alpha^1|) = (n-1)\bigl(|e_a\alpha^1| + n(\ell-2) +
|e_a\alpha^1|\bigr) = 0.
$$
By definition~\eqref{eq:seGamma} and $\wh s=s-2$ we get
$se_\Gamma + \wt{se}_{\wh\Gamma} = (\ell+1)(s+1)+\wh\ell(\wh s+1)+1
= (\ell+1)(s+1)+(\ell-1)(s-1)+1 = 1$, so the fourth line becomes
$$
  (n-1)(se_\Gamma + \wt{se}_{\wh\Gamma} + s-1) = (n-1)s.
$$
Using $e^a=e_a+n$, the fifth line becomes
$$
  e^a\alpha^1+1+|e^a|\,|\alpha^1|+|e_a|
  = (e_a+n)\alpha^1+(e_a+n-1)(\alpha^1+s_1)+e_a
  = \alpha^1+e_as_1+e_a+(n-1)s_1.
$$
Using $\wh s=s-2$ the sixth line becomes
$$
  \frac{s(s+1)}{2} + \frac{(\wh s+1)(\wh s+2)}{2}
  = \frac{(s(s+1) + (s-1)s}{2}
  = \frac{2s^2}{2} = s.
$$
Summing up all the lines and using $s_1+\cdots+s_\ell\equiv s$ mod $2$
we get 
\begin{align*}
  \star\star
  &= s_1 + s_1e_a + e_a + \alpha^1 + n(s_2+\dots+s_\ell) + 0 \cr
  &\ \ \ + (n-1)s + \alpha^1+e_as_1+e_a+(n-1)s_1 + s \cr
  &= n(s_1+\dots+s_\ell) + ns = 0.
\end{align*}
This concludes the proof of Lemma~\ref{lem:b_{210}conn}. 
\end{proof}

Similarly, with the gluing operation $g_{120}^j$ from~\S\ref{ss:op-graphs}
and the coproduct $b_{120}^1$ from~\S\ref{ss:p210} we have 

\begin{lemma}\label{lem:b_{120}}
Let $\Gamma$ be a connected graph with $s_1\geq 4$ leaves on its first
boundary component. Then for every $i\in\{3,s_1-1\}$ we have 
$$
  b_{120}^1\kkk_{g_{120}^i(\Gamma)} = \sum_{j=3}^{s_1-1}\frac{1}{2}\n_{g_{120}^j(\Gamma)}.
$$
\end{lemma}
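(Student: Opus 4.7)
The plan is to follow the strategy of the proof of Lemma~\ref{lem:b_{210}conn}, with the coproduct $c_{120}$ replaced by the product $c_{210}$ and the gluing operation $g_{210}$ replaced by $g_{120}^j$. First I would equip $\Gamma$ with an extended labelling, inducing on each $\wh\Gamma_j := g_{120}^j(\Gamma)$ the induced special extended labelling with marked edge $l$ at position $n(l)=1$. By Lemma~\ref{lm:disc}(a) combined with~\eqref{eq:eta3-1}, the functional $\kkk_{\wh\Gamma_i}$ agrees with $\fm_\Gamma$ independently of $i$, so the left-hand side of the claimed identity is just $b_{120}^1\fm_\Gamma$.

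Next I would evaluate both sides on a decomposable tensor $\beta=\beta^1\otimes\cdots\otimes\beta^{\ell+1}$ of harmonic forms adapted to $\wh\Gamma_j$ for a specific $j$, so that $\beta^1$ has $j-2$ factors and $\beta^2$ has $s_1-j$ factors. For every $j'\ne j$ the term $\n_{\wh\Gamma_{j'}}(\beta)$ vanishes for length reasons, reducing the identity to $(b_{120}^1\fm_\Gamma)(\beta)=\n_{\wh\Gamma_j}(\beta)$. Unfolding $b_{120}=c_{210}^*$ via~\eqref{eq:defc210}, the left-hand side becomes, up to an explicit sign, $\sum_a\fm_\Gamma\bigl((e_a\otimes\beta^1\otimes e^a\otimes\beta^2)\otimes\beta^3\otimes\cdots\bigr)$, whose first slot has length $s_1$ matching the first boundary of $\Gamma$. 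Using $\sum_a e_a\times e^a=dG$ from~\eqref{eq:dG} and comparing the reordering maps $R_\Gamma$ and $R_{\wh\Gamma_j}$, one derives an integrand identity parallel to~\eqref{eq:integrands}, in which the inserted $dG$ on the right plays the role of the newly-glued edge $l$ of $\wh\Gamma_j$.

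Integrating over $\Delta_3^k$, the right-hand side yields $I_{\wh\Gamma_j,l}(\beta)$ (the prefactor $(-1)^{(n-1)(n(l)-1)}$ is trivial since $n(l)=1$) and hence, after the sign~\eqref{eq:sGammatilde}, $\n_{\wh\Gamma_j}(\beta)$. The main obstacle is the sign verification, in direct analogy with the computation of $\star\star$ in the proof of Lemma~\ref{lem:b_{210}conn}. The total sign decomposes into contributions from $\bar R_\Gamma+\bar R_{\wh\Gamma_j}$ (the edge-list reordering), $\eta_3(\wh\Gamma_j)+\eta_3(\Gamma)$ (which is $s-1$ mod $2$ by~\eqref{eq:eta3-1}), the differences $P+\wh P$, $P_b+\wh P_b$ and $se_\Gamma+\wt{se}_{\wh\Gamma_j}$ coming from the sign conventions~\eqref{eq:sGamma} and~\eqref{eq:sGammatilde}, and the sign from $c_{210}$ itself. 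A line-by-line computation in both parity cases of $n$, modelled on that in the proof of Lemma~\ref{lem:b_{210}conn}, should yield the desired vanishing modulo $2$.
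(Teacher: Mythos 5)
Your proposal follows the paper's own (very brief) proof sketch for this lemma: reduce the left-hand side to $b_{120}^1\fm_\Gamma$ via Lemma~\ref{lm:disc}, evaluate on a decomposable input adapted to $g_{120}^j(\Gamma)$ so that by the adaptedness convention only the $j$-th term on the right survives, and then redo the integrand identity (via $dG=e_a\times e^a$) and the $\star\star$-type sign computation of Lemma~\ref{lem:b_{210}conn} with $c_{210}$ in place of $c_{120}$. This is essentially the same argument, spelled out in somewhat more detail than the paper's sketch, which likewise defers the actual sign verification to the analogy with Lemma~\ref{lem:b_{210}conn}.
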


Note that, in particular, this means that the left hand side of the
equation does not depend on $i$. This makes sense because the graph
$g_{120}^i(\Gamma)$ is obtained from $\Gamma$ by gluing the
leaves $1$ and $i$ on its first boundary component, and $\kkk_{g_{120}^i(\Gamma)}$
is defined by cutting the graph $g_{120}^i(\Gamma)$ back to $\Gamma$. 
The summation over $j$ on the right hand side arises from the
definition of the operation $b_{120}^1$.

\begin{proof}[Sketch of proof]
For the coproduct we start with a 
$\alpha^1\otimes\alpha^2\otimes\dots\otimes\alpha^{\ell}$ and
split the decomposable tensor $\alpha^1$ as
$\alpha^1_{(1)}\otimes\alpha^1_{(2)}$ in all possible ways. Now with
$\alpha^1_{(1)}$ in place of $\alpha^1$, the signs are completely
analogous to those in Lemma~\ref{lem:b_{210}conn}.  
\end{proof}

\subsection{Proof of the Maurer-Cartan equation}\label{ss:maincompute}

We recall equation~\eqref{eq:MCIBL} and set 
$$
   MC:=\wh\fp_{2,1,0}\m+\frac{1}{2}\wh\fp_{2,1,0}(\m\otimes
   \m)|_{conn}+\wh\fp_{1,2,0}\m \in\wh E C,
$$
where in the second term we take the part of $\wh\fp_{2,1,0}(\m\otimes\m)$
corresponding to connected graphs. This means that when we sum over
shuffles we take only those ones that lead to a connected graph when
we apply the gluing operation $g_{210}$ 
(the proper set of shuffles will again be denoted by $Sh$). 
The $(\ell,g)$ part of $MC$ is
\begin{equation}\label{eq:MC-lg}
   MC_{\ell,g} = \wh\fp_{2,1,0}\fm_{\ell+1,g-1} + 
   \frac{1}{2}\sum_{\substack{\ell_1+\ell_2=\ell+1\\g_1+g_2=g}}\wh\fp_{2,1,0}(\fm_{\ell_1,g_1}\otimes
   \fm_{\ell_2,g_2})|_{conn} + \wh\fp_{120}\fm_{\ell-1,g}.
\end{equation}
Our goal is to prove
\begin{equation}\label{eq:MC1}
   MC_{\ell,g}=0.
\end{equation}
For this, we manipulate the $3$ summands in $MC_{\ell,g}$ one by one.

{\bf The first summand. }
We rewrite the first summand as
\begin{align}\label{eq:summand1}
   \wh\fp_{2,1,0}\fm_{\ell+1,g-1}
   &\stackrel{(1)}{=} \sum_{\eta\in Sh_{2,\ell-1}}p^{12}_{210}\eta^{-1}\fm_{\ell+1,g-1} \cr
   & \stackrel{(2)}{=} \frac{(\ell+1)\ell}{2}\,p_{210}^{12}\fm_{\ell+1,g-1} \cr
   &\stackrel{(3)}{=} \frac{1}{2(\ell-1)!}\sum_{\Gamma\in \RR_{\ell+1,g-1}}p_{210}^{12}\fm_{\Gamma}\cr
   &\stackrel{(4)}{=} \frac{1}{2(\ell-1)!}\sum_{\Gamma\in
     \RR_{\ell+1,g-1}}\sum_{\rho\in\Z_{\wt s_1}}\rho\,b_{210}^{12}\kkk_{g_{210}(\Gamma)}\cr
   &\stackrel{(5)}{=} \frac{1}{2(\ell-1)!}\sum_{\Gamma\in
     \RR_{\ell+1,g-1}}\sum_{\rho\in\Z_{\wt s_1}}\rho\,\n_{g_{210}(\Gamma)}\cr
     &\stackrel{(6)}{=} \frac{1}{2(\ell-1)!}
     \sum_{\wh\Gamma\in
     \RR_{\ell,g,1c}^{oms}}\sum_{\rho\in\Z_{\wt s_1}}\rho^{-1}\,\n_{\wh\Gamma}\cr
     &\stackrel{(7)}{=} \frac{1}{2(\ell-1)!}
     \sum_{\wh\Gamma\in
     \RR_{\ell,g,1c}^{oms}}\sum_{\rho\in\Z_{\wt s_1}}\n_{\wh\Gamma\rho}\cr
     &\stackrel{(8)}{=} \frac{1}{2(\ell-1)!}
     \left(\frac{1}{\ell}\sum_{\tau\in Sh_{1,\ell-1}}\tau^{-1}\right)
     \sum_{\wh\Gamma\in
     \RR_{\ell,g,1c}^{oms}}\sum_{\rho\in\Z_{\wt s_1}}\n_{\wh\Gamma\rho}\cr
     & \stackrel{(9)}{=}\frac{1}{2\ell!}\sum_{\wh\Gamma\in
     \RR_{\ell,g,1c}^{oms}}\sum_{\tau\in Sh_{1,\ell-1}}\sum_{\rho\in\Z_{\wt s_1}}\n_{\wh\Gamma\rho\tau}\cr
   &\stackrel{(10)}{=}\frac{1}{2\ell!}\sum_{\wh\Gamma\in R^{om}_{\ell,g,1c}}\n_{\wh\Gamma}\cr
   &\stackrel{(11)}{=}\frac{1}{\ell!}
   \sum_{\wh\Gamma\in R^{m}_{\ell,g,1c}}\n_{\wh\Gamma}.
  \end{align}
Here $(1)$ holds by definition of $\wh\fp_{2,1,0}$;
$(2)$ by invariance of $\fm_{\ell+1,g-1}$ under reordering of
the boundary components and $|Sh_{2,\ell-1}|=(\ell+1)\ell/2$;
$(3)$ by definition of $\fm_{\ell+1,g-1}$; 
$(4)$ by Lemma~\ref{lm:disc} and definition of 
$p_{210}^{12}$, where $\wt s_1$ is
the number of leaves on the first boundary component of the
glued graph $g_{210}(\Gamma)$; 
$(5)$ by Lemma~\ref{lem:b_{210}conn}; for
$(6)$ by the bijection~\eqref{eq:glue1} and 
the fact that taking the inverse induces a bijection on 
any group;
$(7)$ by equation~\eqref{eq:action};
$(8)$ because the operation in the big round brackets 
is the identity on the quotient space $\wh E_\ell C$
(recall that $|Sh_{1,\ell-1}|=\ell$)
and the expression to its right defines an element of $\wh E_\ell C$;
$(9)$ by equation~\eqref{eq:action};
$(10)$ by the bijection~\eqref{eq:slab1};
and $(11)$ by independence of $\n_{\wh\Gamma}$ of the orientation of
the marked edge.

{\bf The second summand. }
For each $\ell_1,\ell_2,g_1,g_2$ with
$\ell_1+\ell_2=\ell_1$ and $g_1+g_2=g-1$ we rewrite the corresponding
term in the second summand as
\begin{align*}
   &\frac{1}{2}\wh \fp_{2,1,0}(\fm_{\ell_1,g_1}\otimes \fm_{\ell_2,g_2})|_{conn}\cr
   &\stackrel{(1)}{=}\frac{1}{2}\sum_{\eta\in G}p^{12}_{210}\eta^{-1}(\fm_{\ell_1,g_1}\otimes \fm_{\ell_2,g_2}) \cr
   &\stackrel{(2)}{=} \frac{1}{2\ell_1!\ell_2!}\sum_{\eta\in G}\,\sum_{\Gamma_i\in
    \RR_{\ell_i,g_i}}p_{210}^{12}(\eta^{-1}\mu_{\can})
    \mu_{\can}^{-1}(\fm_{\Gamma_1}\otimes\fm_{\Gamma_2})\cr 
   &\stackrel{(3)}{=} 
   \frac{1}{2\ell_1!\ell_2!}\sum_{\xi\in G_1\times G_2}\,
   \sum_{\Gamma_i\in
    \RR_{\ell_i,g_i}}p_{210}^{12}
    (\mu_{\can}^{-1}\xi^{-1}\mu_{\can})
    \kkk_{g_{210}(\Gamma)}\cr 
   &\stackrel{(4)}{=} \frac{1}{2(\ell_1-1)!(\ell_2-1)!}\sum_{\Gamma_i\in
    \RR_{\ell_i,g_i}}p_{210}^{12}\kkk_{g_{210}(\Gamma)}\cr 
   &\stackrel{(5)}{=} \frac{1}{2(\ell_1-1)!(\ell_2-1)!}\sum_{\Gamma_i\in
    \RR_{\ell_i,g_i}}\sum_{\rho\in\Z_{\wt s_1}}\rho\,b_{210}^{12}\kkk_{g_{210}(\Gamma)}\cr 
   &\stackrel{(6)}{=} \frac{1}{2(\ell_1-1)!(\ell_2-1)!}\sum_{\Gamma_i\in
    \RR_{\ell_i,g_i}}\sum_{\rho\in\Z_{\wt s_1}}\rho\,\n_{g_{210}(\Gamma)}\cr 
    &\stackrel{(7)}{=}\frac{1}{2(\ell_1-1)!(\ell_2-1)!}
        \sum_{\wh\Gamma\in\RR^{oms}_{\ell_1,\ell_2,g_1,g_2,1dc}}
        \sum_{\rho\in\Z_{\wt s_1}}\rho^{-1}\,\n_{\wh\Gamma}\cr 
        &\stackrel{(8)}{=}\frac{1}{2(\ell_1-1)!(\ell_2-1)!}
        \sum_{\wh\Gamma\in\RR^{oms}_{\ell_1,\ell_2,g_1,g_2,1dc}}
        \sum_{\rho\in\Z_{\wt s_1}}
        \n_{\wh\Gamma\rho}\cr
    &\stackrel{(9)}{=}\frac{1}{2(\ell_1-1)!(\ell_2-1)!}
    \left(\frac{(\ell_1-1)!(\ell_2-1)!}{\ell!}\sum_{\tau\in Sh_{1,\ell_1-1,\ell_2-1}}\tau^{-1}\right)
    \sum_{\wh\Gamma\in\RR^{oms}_{\ell_1,\ell_2,g_1,g_2,1dc}}
    \sum_{\rho\in\Z_{\wt s_1}}
    \n_{\wh\Gamma\rho}\cr 
    &\stackrel{(10)}{=} \frac{1}{2\ell!}
    \sum_{\wh\Gamma\in \RR^{oms}_{\ell_1,\ell_2,g_1,g_2,1dc}} 
   \sum_{\tau\in Sh_{1,\ell_1-1,\ell_2-1}}
   \sum_{\rho\in\Z_{\wt s_1}}
   \n_{\wh\Gamma\rho\tau}\cr
   &\stackrel{(11)}{=} \frac{1}{2\ell!}
   \sum_{\wh\Gamma\in
       \RR^{om}_{\ell_1,\ell_2,g_1,g_2,1dc}}\n_{\wh\Gamma}. 
   \end{align*}
Hence, the second summand becomes
\begin{align}\label{eq:summand2}
   & \frac{1}{2}\sum_{\substack{\ell_1+\ell_2=\ell+1\\g_1+g_2=g}}
   \wh \fp_{2,1,0}(\fm_{\ell_1,g_1}\otimes \fm_{\ell_2,g_2})|_{conn} \cr
   &  =\sum_{\substack{\ell_1+\ell_2=\ell+1 \\ g_1+g_2=g}}
     \frac{1}{2\ell!}\sum_{\wh\Gamma\in \RR^{om}_{\ell_1,\ell_2,g_1,g_2,1dc}}
            \n_{\wh\Gamma}
   \stackrel{(12)}{=} \frac{1}{2\ell!}\sum_{\wh\Gamma\in R^{om}_{\ell,g,1dc}}\n_{\wh\Gamma}
   \stackrel{(13)}{=} \frac{1}{\ell!}\sum_{\wh\Gamma\in R^{m}_{\ell,g,1dc}}\n_{\wh\Gamma}.
\end{align}
Here $(1)$ holds by definition of the connected part of
$\wh\fp_{2,1,0}(\fm_{\ell_1,g_1}\otimes \fm_{\ell_2,g_2})$, recalling the subset 
$G\subset Sh_{2,\ell_1+\ell_2}$ of shuffles sending $1$ to
$\{1,\dots,\ell_1\}$ and $2$ to $\{\ell_1+1,\dots,\ell_1+\ell_2\}$;
$(2)$ by definition of $\fm_{\ell_i,g_i}$;
$(3)$ by the bijection~\eqref{eq:G1G2} and Lemma~\ref{lm:disc}, setting
$\eta=\xi\circ\mu_{can}$ and $\Gamma=(\Gamma_1\amalg \Gamma_2)\mu_{\can}$;
$(4)$ from Lemma~\ref{lem:action-k} and $|G|=\ell_1\ell_2$, adjusting
the combinatorial factors for the corresponding overcounting since
$\fm_{\Gamma_i}$ is invariant under permutations of boundary components;
$(5)$ by definition of $p_{210}^{12}$, where $\wt s_1$ is
the number of leaves on the first boundary component of the
glued graph $g_{210}(\Gamma)$; 
$(6)$ by Lemma~\ref{lem:b_{210}conn}; 
$(7)$ by the bijection~\eqref{eq:glue2} and 
the fact that taking the inverse induces a bijection on any group;
$(8)$ by equation~\eqref{eq:action};
$(9)$ because the operation in the big round brackets 
is the identity on the quotient space $\wh E_\ell C$
(recall that $|Sh_{1,\ell_1-1,\ell_2-1}|=\frac{\ell!}{(\ell_1-1)!(\ell_2-1)!}$)
and the expression to its right defines an element of $\wh E_\ell C$;
$(10)$ by equation~\eqref{eq:action};
$(11)$ by the bijection~\eqref{eq:slab2};
$(12)$ by the relation~\eqref{eq:ell12ell};
and $(13)$ by independence of 
$\n_{\wh\Gamma}$ of the orientation of
the marked edge.

{\bf The third summand. }
We rewrite the third summand as
\begin{align}\label{eq:summand3}
   \wh\fp_{1,2,0}\fm_{\ell-1,g}
   &\stackrel{(1)}{=} \sum_{\eta\in Sh_{1,\ell-2}}\fp^{1}_{120}\eta^{-1}\fm_{\ell-1,g} \cr
   &\stackrel{(2)}{=} (\ell-1)\,p_{120}^{1}\fm_{\ell-1,g} \cr
   &\stackrel{(3)}{=} \frac{1}{(\ell-2)!}\sum_{\Gamma\in \RR_{\ell-1,g}}p_{120}^{1}\fm_{\Gamma}\cr
   &\stackrel{(4)}{=}\frac{1}{(\ell-2)!}\sum_{\Gamma\in \RR_{\ell-1,g}}
   \sum_{j=3}^{s_1-1}
   \sum_{\rho\in\Z_{\wt s_1}\times\Z_{\wt s_2}}\rho\,b_{120}^{1}
   \kkk_{g_{120}^i(\Gamma)}\cr 
   &\stackrel{(5)}{=} \frac{1}{2(\ell-2)!}\sum_{\Gamma\in \RR_{\ell-1,g}}
   \sum_{j=3}^{s_1-1}\sum_{\rho\in\Z_{\wt s_1}\times\Z_{\wt
       s_2}}\rho^{-1}\,\n_{g_{120}^j(\Gamma)}\cr 
       &\stackrel{(6)}{=} \frac{1}{2(\ell-2)!}
       \sum_{\wh\Gamma\in \RR_{\ell,g,12}^{oms}}\sum_{\rho\in\Z_{\wt s_1}\times\Z_{\wt
       s_2}}\rho^{-1}\,\n_{\wh\Gamma}\cr 
      &\stackrel{(7)}{=} \frac{1}{2(\ell-2)!}
       \sum_{\wh\Gamma\in \RR_{\ell,g,12}^{oms}}\sum_{\rho\in\Z_{\wt s_1}\times\Z_{\wt
       s_2}}\,\n_{\wh\Gamma\rho}\cr 
   &\stackrel{(8)}{=} 
   \frac{1}{2(\ell-2)!}\left(\frac{1}{\ell(\ell-1)}\sum_{\tau\in Sh_{1,1,\ell-2}}\tau^{-1}\right) 
   \sum_{\wh\Gamma\in\RR_{\ell,g,12}^{oms}}
   \sum_{\rho\in\Z_{\wt s_1}\times\Z_{\wt
       s_2}}\,\n_{\wh\Gamma\rho}\cr 
   &\stackrel{(9)}{=}\frac{1}{2\ell!}
   \sum_{\tau\in Sh_{1,1,\ell-2}}
   \sum_{\wh\Gamma\in\RR_{\ell,g,12}^{oms}}
   \sum_{\rho\in\Z_{\wt s_1}\times\Z_{\wt
       s_2}}\,\n_{\wh\Gamma\rho\tau}\cr 
   &\stackrel{(10)}{=} \frac{1}{2\ell!}\sum_{\wh\Gamma\in R^{om}_{\ell,g,12}}\n_{\wh\Gamma}\cr
   &\stackrel{(11)}{=} \frac{1}{\ell!}\sum_{\wh\Gamma\in R^{m}_{\ell,g,12}}\n_{\wh\Gamma}.
\end{align}
Here equality $(1)$ holds by definition of $\wh\fp_{120}$;
$(2)$ by invariance of $\fm_{\ell-1,g}$ under reordering of
the boundary components and $|Sh_{1,\ell-2}|=\ell-1$;
and $(3)$ by definition of $\fm_{\ell-1,g}$. 
For $(4)$ we use the equality $\kkk_{g_{120}^i(\Gamma)}=\fm_{\Gamma}$
(for an arbitrary $i$) from Lemma~\ref{lm:disc} and the definition
$p_{120}^1=N^{12}\circ b_{120}^1$ from~\eqref{eq:p21012}. Here we
write out the double cyclization $N^{12}$ over the first two boundary
components of the glued graph $g_{120}^i(\Gamma)$ as a sum over
$j=3,\dots,s_1-1$ so that these boundary components have $\tilde
s_1=j-2$ and $\tilde s_2=s_1-j$ leaves, respectively, and then
cyclically permute the leaves on these components by $\rho\in\Z_{\wt
  s_1}\times\Z_{\wt s_2}$. 
Equality $(5)$ holds by Lemma~\ref{lem:b_{120}} and because 
taking the inverse induces a bijection on any group;
$(6)$ by the bijection~\eqref{eq:glue3};
$(7)$ by equation~\eqref{eq:action};
$(8)$ because the operation in the big round brackets 
is the identity on the quotient space $\wh E_\ell C$
(recall that $|Sh_{1,1,\ell-2}|=\ell(\ell-1)$)
and the expression to its right defines as an element of $\wh E_\ell C$;
$(9)$ by equation~\eqref{eq:action};
$(10)$ by the bijection~\eqref{eq:slab3};
and $(11)$ by independence of $\n_{\wh\Gamma}$ of the orientation of
the marked edge.

{\bf The sum of the three terms. }
Now we recombine the three summands in~\eqref{eq:MC-lg}, which
correspond to the three types of graphs in the decomposition
$$
   \RR_{\ell,g}^m = \RR_{\ell,g,1c}^m\amalg \RR_{\ell,g,1dc}^m\amalg \RR_{\ell,g,12}^m.
$$
Thus equations~\eqref{eq:summand1}, \eqref{eq:summand2} and~\eqref{eq:summand3}
sum up to
$$
   MC_{\ell,g} = \frac{1}{\ell!}
   \sum_{\wh\Gamma\in \RR_{\ell,g}^{m}}
   \n_{\wh\Gamma}. 
$$
We multiply this by $\ell!$ to obtain
\begin{align}\label{eq:3terms}
   \ell!MC_{\ell,g} 
   = \sum_{\wh\Gamma\in \RR_{\ell,g}^{m}}
   \n_{\wh\Gamma} 
   = \sum_{\Gamma\in \RR_{\ell,g}}
   \sum_{l\in \Edge(\Gamma)}\n_{\Gamma,l}
   = \sum_{\Gamma\in \RR_{\ell,g}}\sum_{l\in \Edge(\Gamma)}\n_{\Gamma-l}.
\end{align}
Here to see the second equality consider the projection forgetting 
the marked edge
$$
   \RR_{\ell,g}^{m}\stackrel{\pi_m}\longrightarrow \RR_{\ell,g}.
$$
Let a trivalent labelled graph $\Gamma$ represent an element 
$[\Gamma]$ in $\RR_{\ell,g}$. The natural map 
$$
  \Edge(\Gamma)\longrightarrow 
  \pi_m^{-1}([\Gamma]),\qquad l\mapsto [(\Gamma,l)].
$$
is clearly surjective. Since by Lemma~\ref{lem:no-auto1} there are
no nontrivial automorphisms of $\Gamma$, it is also injective and
yields the second equality.
The third equality follows from Corollary~\ref{cor:fulldiff}. 

Recall now the involution $\ol I$ on $\RR_{\ell,g}^{m}$ defined in
\S\ref{ss:op-graphs}. According to Lemma~\ref{lem:I-free} it has
no fixed points. 
Therefore, the terms in the last sum in~\eqref{eq:3terms} occur in pairs 
$$
   P(\Gamma,l) := \n_{\Gamma-l}+\n_{\ol I(\Gamma-l)}
$$ 
and it is enough to prove that each $P(\Gamma,l)$ equals zero.
We apply this expression to a tensor $\alpha$ of harmonic forms
corresponding to the $s$ leaves of $\Gamma$. By definition of
$\n_{\Gamma-l}$ we obtain 
$$
   (\n_{\Gamma-l}+\n_{I(\Gamma-l)})(\alpha) = (-1)^{\star}\left(\int_{\p^l\CC_\Gamma}
   \wt G^e(\alpha)+
   \int_{\p^{I(l)}\CC_{I(\Gamma)}}
   \wt G^e(\alpha)\right)
$$
with 
$$
\star:=n(\ell-1)+1+(s+1)(s+2)/2+P(\alpha)+(n-1)(\wt{se}_\Gamma+P_b(\alpha)).
$$ 
By~\cite[Lemma~12.10]{Cieliebak-Volkov-stringtop}, the last two
integrals do not depend on the extension of the labelling. Therefore, we can assume that
the extensions of labellings of $\Gamma$ and $I(\Gamma)$ are related
as in~\S\ref{ss:graph-duality}. (The sign in front of the integral
in~\cite[Lemma~12.10]{Cieliebak-Volkov-stringtop} does not
explicitly show up in the present context because 
it is built into the orientation of $\CC_\Gamma$, see~\eqref{eq:Delta3Gamma}.)
Therefore, the results of~\S\ref{ss:duality} apply
and the sum of the two integrals above vanishes because
by equation~\eqref{eq:signinvol} we have
$\p^l\CC_\Gamma=-\p^{I(l)}\CC_{I(\Gamma)}$ as oriented
manifolds. This proves $P(\Gamma,l)=0$ for each $(\Gamma,l)$, and
thus concludes the proof of the Maurer-Cartan equation $MC=0$. 

\subsection{Twisted homology equals cyclic cohomology}\label{ss:cyc-coh}

As above, we consider a harmonic subspace $\HH\subset\Om^*(M)$ of the
de Rham complex $\Om^*(M)$. We identify $\HH$ with the de Rham
cohomology $H^*(M)$ via the canonical projection. Let
$\fm=\{\fm_{\ell,g}\}_{\ell\ge 1,g\ge 0}$ be the Maurer-Cartan element
on the dual cyclic bar complex $B^{\rm cyc*}\HH[2-n]$ defined
in~\S\ref{ss:defMC}, and $\fp^\fm = \{\fp^\fm_{k,\ell,g}\}_{k,\ell\ge 1,g\ge 0}$
the corresponding twisted $\IBL_\infty$ structure. 
Its $(1,1,0)$ part defines a differential $\fp^\fm_{1,1,0}$ on $B^{\rm cyc*}\HH$. 

Let $HC_\lambda^*(A)$ denote the Connes version of cyclic cohomology
of an $A_\infty$-algebra $A$ (see~\cite{Loday} for the case of an
algebra, \cite{Cieliebak-Volkov-cyc} for the case of a DGA,
and~\cite{Cieliebak-Volkov-stringtop} for the general case).
The following proposition
completes the proof of Theorem~\ref{thm:existence-intro} from the
Introduction.  

\begin{prop}
The homology of $B^{\rm cyc*}\HH$ with respect to the differential
$\fp^\fm_{1,1,0}$ equals the Connes cyclic cohomology of the de Rham
DGA, 
$$
   H_*(B^{\rm cyc*}\HH,\fp^\fm_{1,1,0}) \cong HC_\lambda^*(\Om^*(M)).
$$
\end{prop}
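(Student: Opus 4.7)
The strategy is to interpret the Maurer-Cartan element $\fm$ as a configuration-space analogue of the algebraic pushforward element from Corollary~\ref{cor:alg-MC}, and then to leverage the algebraic theorem (Theorem~\ref{thm:homologyBLI}) together with homotopy-invariance of cyclic cohomology. The plan has three main steps.

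\emph{Step 1: Identify the $\IBL_\infty$-twist as an $A_\infty$-transfer.} Since $\fp_{1,1,0}=0$ on $\HH$ and $\dIBL(\HH)$ only has the two binary operations $\fp_{2,1,0}$ and $\fp_{1,2,0}$, the twisted differential $\fp^\fm_{1,1,0}$ on $B^{\cyc*}\HH$ is obtained by summing all connected compositions with one input, one output, and insertions from components of $\fm$. A combinatorial analysis of the differential Weyl algebra formula~\eqref{eq:MCWeyl} (or equivalently, iterating~\eqref{eq:dIBLtwist}) shows that the contribution of the tree-level piece $\fm_{1,0}=\sum_{\Gamma\in \RR_{1,0}}\fm_\Gamma$ is exactly the cyclic Hochschild differential for a certain cyclic $A_\infty$-structure $\{m_k\}_{k\geq 2}$ on $\HH$. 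The formula~\eqref{eq:mGamma} defining each $\fm_\Gamma$ over a planar trivalent tree with harmonic-form inputs and propagator $G$ on internal edges is precisely the Kontsevich--Soibelman Feynman-diagram formula for homotopy transfer; hence $\{m_k\}$ is the cyclic $A_\infty$-structure on $\HH\cong H^*(M)$ transferred from the wedge product on $\Om^*(M)$ via the harmonic projection $\pi_\HH$ and propagator $P$.

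\emph{Step 2: Handle the higher components of $\fm$.} The components $\fm_{\ell,g}$ with $\ell\geq 2$ or $g\geq 1$ come from configuration-space integrals on connected trivalent graphs of higher topological type. Since the only operations available in the untwisted $\dIBL(\HH)$ are the binary product and coproduct, a graph-theoretic input-output count shows that such higher components cannot contribute to a connected composition with total arity $(1,1)$ unless one couples them to $\fp_{1,2,0}$ in a way that either reduces (using the MC equation~\eqref{eq:MCIBL}) to the tree-level term, or produces operations of higher output arity rather than contributing to $\fp^\fm_{1,1,0}$. Thus $\fp^\fm_{1,1,0}$ is determined by $\fm_{1,0}$ alone and agrees with the Connes cyclic differential $d_\lambda$ on $B^{\cyc*}\HH$ associated to $\{m_k\}$.

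\emph{Step 3: Invoke cyclic cohomology invariance.} The cyclic $A_\infty$-structure $(\HH,\{m_k\})$ obtained in Step 1 is, by the general theory of homotopy transfer, cyclically $A_\infty$-quasi-isomorphic to the de Rham DGA $(\Om^*(M),d,\wedge)$ via an explicit quasi-isomorphism built from $\pi_\HH$ and the inclusion $\HH\hookrightarrow\Om^*(M)$. Connes cyclic cohomology is invariant under cyclic $A_\infty$-quasi-isomorphisms (see \cite{Loday}, and \cite{Cieliebak-Volkov-cyc} for the $A_\infty$ version used here), so
\[
    H_*(B^{\cyc*}\HH,\fp^\fm_{1,1,0}) \;=\; HC^*_\lambda(\HH_\infty) \;\cong\; HC^*_\lambda(\Om^*(M)),
\]
which is the desired conclusion.

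The main obstacle is Step~2: verifying rigorously that higher-genus and higher-arity components of $\fm$ do not affect $\fp^\fm_{1,1,0}$, which requires unwinding the full $\IBL_\infty$ twist formula and checking all connected-diagram contributions with the right signs. An alternative route that sidesteps this difficulty is to invoke Theorem~\ref{thm:uniqueness-intro} together with the Lambrechts--Stanley finite-dimensional cyclic DGA model $A$ quasi-isomorphic to $\Om^*(M)$ (as in the remark following Theorem~\ref{thm:uniqueness-intro}): apply Theorem~\ref{thm:homologyBLI} to $A$ to obtain a Maurer-Cartan element $\ff_*\fm^\can_A$ on $\dIBL(H^*(M))$ with twisted homology $HC^*_\lambda(A)\cong HC^*_\lambda(\Om^*(M))$, and then use $\IBL_\infty$-gauge equivalence of $\ff_*\fm^\can_A$ and $\fm$ to transport the cohomology identification.
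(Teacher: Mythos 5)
Your proposal follows essentially the same route as the paper: the twisted differential $\fp^\fm_{1,1,0}$ is identified with $\fp_{2,1,0}(\fm_{1,0},\cdot)$, i.e.\ the dual Hochschild (Connes cyclic) differential of the $A_\infty$-structure given by the tree-level part $\fm_{1,0}$, that structure is recognized as the Kontsevich--Soibelman homotopy transfer of the de Rham DGA, and one concludes by invariance of cyclic cohomology under $A_\infty$ homotopy equivalences. Your ``main obstacle'' in Step~2 is in fact immediate from the genus and output bookkeeping of the twisting formula (this is exactly what~\eqref{eq:dIBLtwist} records: components $\fm_{\ell,g}$ with $\ell\geq 2$ or $g\geq 1$ feed into $\fp^\fm_{1,\ell,g}$ with $(\ell,g)\neq(1,0)$, not into $\fp^\fm_{1,1,0}$), while the genuinely delicate point is the sign comparison in the transfer identification of Step~1, which the paper also does not carry out but defers to~\cite{Cieliebak-Volkov-stringtop}.
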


\begin{proof}
By equation~\eqref{eq:dIBLtwist} and the vanishing of the differential
on $\HH$, the twisted differential is given by
$$
  \fp_{1,1,0}^\fm = \fp_{2,1,0}(\fm_{1,0},\cdot)
$$  
in terms of the $(1,0)$ part of the Maurer-Cartan element $\fm$. 
It is shown in~\cite[Proposition~12.3]{Cieliebak-Fukaya-Latschev} that
the components of 
$$
  \fm_{1,0}\in B^{\cyc*}\HH = \prod_{i\geq 1}B^{\cyc*}_i\HH
$$
define an $A_\infty$-structure on $\HH$ which we will also denote by $\fm_{1,0}$.
We need the following two facts:
\begin{enumerate}
\item The twisted differential $\fp_{1,1,0}^\fm$ on $B^{\rm cyc*}\HH$
  equals the dual of the Hochschild differential of the $A_\infty$-structure $\fm_{1,0}$. 
\item The $A_\infty$-structure $\fm_{1,0}$ on $\HH$ is obtained from the DGA
  structure on $\Om^*(M)$ by homotopy transfer as
  in~\cite{Kontsevich-Soibelman}, i.e.~there exists an $A_\infty$
  homotopy equivalence
$$
  G: (\HH,\fm_{1,0}) \to \Om^*(M). 
$$
\end{enumerate}
Fact (i) follows from equation~(12.6) in~\cite{Cieliebak-Fukaya-Latschev}.
Fact (ii) is clear on the picture level, but it involves a nontrivial
sign comparison for which we refer to~\cite{Cieliebak-Volkov-stringtop}.
Since the homology of $B^{\rm cyc*}\HH$ with respect to the dual 
Hochschild differential is the Connes version of cyclic cohomology,
Fact (i) gives 
$$
  H_*(B^{\rm cyc*}\HH,\fp^\fm_{1,1,0}) = HC_\lambda^*(\HH,\fm_{1,0}).
$$
Now it is a standard fact that an $A_\infty$ homotopy equivalence
induces an isomorphism on cyclic homology
(see~\cite{Cieliebak-Volkov-stringtop}), and therefore also on cyclic
cohomology by the universal coefficient theorem. Thus Fact (ii) gives an isomorphism
$$
  G^*: HC_\lambda^*(\Om^*(M)) \stackrel{\cong}\longrightarrow HC_\lambda^*(\HH,\fm_{1,0}),
$$
which together with the previous displayed equation proves the proposition.
\end{proof}

\section{Gauge equivalence}\label{sec:gauge}

In this section we prove Theorem~\ref{thm:uniqueness-intro} from the
Introduction.
As before, $M$ is a closed oriented manifold of dimension $n$. 
Recall from~\S\ref{ss:harmproj} the de Rham complex
$\bigl(\Om^*(M),d,\wedge)$ with its intersection pairing
$(\cdot,\cdot)$ and the cyclic pairing
$\la x,y\ra=(-1)^{\deg x}(x,y)$ (see~\ref{eq:defcycpair}). 
Note that the notion of orthogonality is the same for both
pairings. By ``dual'' we will mean dual with respect to the
cyclic pairing $\la\cdot,\cdot\ra$. 

Recall the notions of propagators $\wt G$ in the de Rham case
from $\S\ref{ss:blowupprop}$ (see e.g.~Proposition~\ref{prop:existsprop}). Recall 
from~\S\ref{sec:proofMC} that given a complement $\HH$ of $\im d$ in
$\ker d$ and a (not necessarily special) propagator 
$\wt G$ for $\HH$ we can construct a Maurer-Cartan (MC) element $\fm$ on $B^{\cyc *}\HH[2-n]$.
The goal of this section is to show that this MC element is 
independent of the choices of $\HH$ and $\wt G$ up to gauge
equivalence. Throughout this section, $I\subset \R$ will denote any
interval containing zero and $t\in I$ will denote the time
parameter. In the course of the proof we will have to consider forms
of degree $n-2$. For this to make sense in the case
$n=1$ as well, we stipulate that $\Om^{-1}(M)=0$.

In order to compare the MC elements we need to have them on the same 
footing. Therefore, we will consider them as MC elements on 
$B^{\cyc *}H^*(M)[2-n]$, where $H^*(M)$ is the de Rham 
cohomology of $M$. 

\subsection{Strategy}\label{ss:intro}

The proof of independence of choices splits into two cases. 

{\bf Case 1. }
Let $\HH_0$ and $\HH_1$ be two complements of $\im d$ in $\ker d$ as above,
and $\wt G_0$ and $\wt G_1$ respective (not necessarily special)
propagators. Assume that $\wt G_0$ and $\wt G_1$ are connected by a
smooth path $\{\wt G_t\}_{t\in [0,1]}$ of propagators (for some
possibly varying $\HH_t$) with the additional property that they solve
the Cauchy problem~\eqref{eq:diffeq} below. Then the corresponding MC
elements $\fm_0$ and $\fm_1$ are gauge equivalent.

{\bf Case 2. }
Let $\HH$ be a complement of $\im d$ in $\ker d$ as above, and $\wt G_0$ and 
$\wt G_1$ two propagators for $\HH$ such that 
\begin{equation}\label{eq:mu}
  \wt G_1-\wt G_0=d\mu
\end{equation}
for some $\mu\in \Om^{n-2}(\wt M^2)$. Then the corresponding MC
elements $\fm_0$ and $\fm_1$ are gauge equivalent. 

This section is organized as follows. In \S\ref{ss:compl} we introduce
the necessary language and notation. In \S\ref{ss:constrprim} we 
explain how to fix the necessary choices in order to take unique
primitives of exact forms on products $M^q$, $q\in\N$. In
\S\ref{ss:families} we construct families of propagators that solve
the Cauchy problem needed for Case 1. In \S\ref{ss:vanbilin}
we introduce the key property for propagators, {\em vanishing bilinear
  form}, which is a weakening of being special. It turns out that the
propagators in Case 1 are not necessarily special, but they have vanishing bilinear
form (Lemma~\ref{lem:vanishbilin}). This property is still
strong enough to imply that the difference of two propagators 
(for the same $\HH$) with vanishing bilinear form is always exact
(Lemma~\ref{lem:vanishexact}). This will be crucial for combining 
Cases 1 and 2 later in the proof of the main theorem. In
\S\ref{ss:defs} we construct the path $\bbb_t$ needed for the gauge
equivalence equation~\eqref{eq:gaugealg} in both cases. 
In \S\ref{ss:case1} we solve Case 1, and in \S\ref{ss:case2} we solve 
Case 2. In \S\ref{ss:mainres} we restate Theorem~\ref{thm:uniqueness-intro} 
in a more precise form (Theorem~\ref{thm:gauge}) and prove it. The
proof consists in joining $\fm_0$ and $\fm_1$ by a piecewise smooth path
composed of three smooth pieces; the middle one is taken care of by
Case 1, and the other two by Case 2.  

\subsection{Complements and propagators}\label{ss:compl}

Let $\{\HH_t\}_{t\in I}$ be a smooth family of subspaces of $\ker d$
complementing $\im d$ in $\ker d$. Let $C_t$ be a complement of the
space of closed forms $\ker d$ in the space of all forms $\Om^*(M)$
varying smoothly with $t$, so that
$$
  \Om^*(M) = \im d\oplus \HH_t\oplus C_t.
$$
We introduce the following notation. For $\alpha\in H^*(M)$ we denote
by $\alpha_t\in \HH_t$ the unique representative of $\alpha$ in
$\HH_t$. More generally, for $q\in\N$ and decomposable $\alpha\in
H^*(M)^{\otimes q}$ we denote by $\alpha_t\in \HH_t\otimes\dots\otimes\HH_t$ 
the unique representative of $\alpha$. Sometimes the image of 
$\alpha_t$ in $\Om^*(M^q)$ will also be denoted by $\alpha_t$ by a
slight abuse of language. For a constant family $\HH_t$ we will write
$\alpha$ for $\alpha_t$, again abusing the notation slightly.

Let $\{e_a\}$ be a basis of $H^*(M)$, and $\{e^a\}$ the dual basis of
$H^*(M)$ with respect to the pairing $\la\cdot,\cdot\ra$.
Let $\{e_{at}\}$ and $\{e^a_t\}$ be the corresponding bases of $\HH_t$.
Recall that $\wt G_t\in \Om^{n-1}(\wt M^2)$ is a propagator for $\HH_t$ iff it has
the correct symmetry with respect to the flip self-diffeomorphism 
$\wt\tau:\wt M^2\rightarrow \wt M^2$,
$$
  \wt\tau^*\wt G_t=(-1)^n\wt G_t,
$$
and is satisfies
$$
  d\wt G_t=e_{at}\times e^a_t.
$$
Recall that we use the Einstein summation convention. 
Recall also that the right hand side of the last displayed equation
depends only on the space $\HH_t$ itself and not on the basis. This
notation can be applied for a constant family $\HH_t$ as well.

\subsection{Constructing primitives}\label{ss:constrprim}

We work in the notation of \S\ref{ss:compl}. Let 
$$
  d_t^{-1}:=(d|_{C_t})^{-1}:\im d\longrightarrow C_t
$$
denote the inverse to the restriction of $d$ to $C_t$. This allows us
to take unique primitives of exact forms by requiring that the
primitives lie in $C_t$. We want to extend this to a certain class of
exact forms in $\Om^*(M^q)$ for $q\in\N$. For this, we extend 
the notion $C_t$ from a subspace of $\Om^*(M)$ to a subspace of
$\Om^*(M^q)$ as follows. Recall the cross product map
$$
  \times:\Om^*(M)^{\otimes q}\longrightarrow \Om^*(M^q).
$$
Define 
$$
  C_t^j:=\times(\HH_t \otimes\dots\otimes\HH_t\otimes C_t\otimes
  \HH_t\otimes\dots\otimes\HH_t)\subset \Om^*(M^q),\qquad j=1,\dots, q,
$$
where $C_t$ sits at position $j$. 

\begin{lemma}\label{lem:directsum}
The subspaces $\{C_t^j\}_{j=1}^q$ form a direct sum,
i.e.~$\sum_{j=1}^qx_j=0$ for $x_j\in C_t^j$ implies $x_j=0$ for all
$j$. The same is true for the family $\{dC_t^j\}_{j=1}^q$.
\end{lemma}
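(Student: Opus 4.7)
The plan is to reduce both statements to two separate facts: (i) that the cross product map $\times : \Omega^*(M)^{\otimes q} \to \Omega^*(M^q)$ is injective, and (ii) that the given direct sum decomposition $\Omega^*(M) = \im d \oplus \HH_t \oplus C_t$ induces a direct sum decomposition at the level of $q$-fold tensor products, sorted according to which slot carries which type of factor.

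First I would establish injectivity of the cross product. If $\sum_{\alpha} \omega_\alpha^1 \times \cdots \times \omega_\alpha^q = 0$ in $\Omega^*(M^q)$, then pairing against a test form $\phi^1 \times \cdots \times \phi^q$ with each $\phi^i \in \Omega_c^*(M)$ of complementary degree and integrating gives a multilinear relation of the form $\sum_\alpha \prod_i \int_M \omega_\alpha^i \wedge \phi^i = 0$. Since the integration pairing $\Omega^*(M) \times \Omega_c^*(M) \to \R$ is non-degenerate on the left, one can choose dual families of test forms to read off all coefficients and conclude injectivity.

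For the first claim, $C_t^j$ is the image under $\times$ of $\HH_t^{\otimes(j-1)} \otimes C_t \otimes \HH_t^{\otimes(q-j)}$. These $q$ source subspaces sit inside $(\HH_t \oplus C_t)^{\otimes q} \subseteq \Omega^*(M)^{\otimes q}$, and they are in direct sum there because any elementary tensor in their common span carries $C_t$ in exactly one designated slot and $\HH_t$ in all the others, so the slot index is determined by the tensor and the decomposition $\HH_t \oplus C_t$ in that slot is unique. Combining with injectivity of $\times$ yields directness of $\{C_t^j\}$ in $\Omega^*(M^q)$.

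For the second claim, I would apply the graded Leibniz rule: since every factor other than the $j$-th lies in $\HH_t \subseteq \ker d$, for $\omega = h_1 \times \cdots \times c \times \cdots \times h_q \in C_t^j$ one has $d\omega = \pm\, h_1 \times \cdots \times dc \times \cdots \times h_q$, so $dC_t^j \subseteq \times\bigl(\HH_t^{\otimes(j-1)} \otimes \im d \otimes \HH_t^{\otimes(q-j)}\bigr)$. Because $\im d$, $\HH_t$, $C_t$ are still in direct sum, exactly the same argument as above (with $C_t$ replaced by $\im d$) shows that these target subspaces form a direct sum in $\Omega^*(M^q)$, hence so do the $\{dC_t^j\}$. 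The only non-bookkeeping point is Step~(i), the injectivity of the cross product, which is the main ingredient but is standard.
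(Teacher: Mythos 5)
Your argument is correct, but it takes a genuinely different route from the paper. You reduce everything to two facts: injectivity of the cross product map $\times:\Om^*(M)^{\otimes q}\to\Om^*(M^q)$ (proved via the nondegenerate integration pairing on $\Om^*(M)$ and Fubini), and the elementary multilinear-algebra fact that the tensor powers of the slot-wise decompositions ($\HH_t\oplus C_t$, respectively $\HH_t\oplus\im d$) split by multi-index, so that the source subspaces $\HH_t^{\otimes(j-1)}\otimes C_t\otimes\HH_t^{\otimes(q-j)}$ (and their $d$-images, which by the Leibniz rule land in the slots with $\im d$ at position $j$) are independent; this treats both families $\{C_t^j\}$ and $\{dC_t^j\}$ uniformly and without induction. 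The paper instead argues by induction on $q$ for the family $\{dC_t^j\}$: given $\sum_j x_j=0$ on $M^{q+1}$, it wedges with $\pi_{q+1}^*\alpha$ for $\alpha\in\HH_t$ and integrates out the last factor, which kills the $(q+1)$-st term (Stokes) and reduces the rest to the inductive hypothesis, with nondegeneracy of the pairing on cohomology then forcing $x_j=0$; the case of $\{C_t^j\}$ is deduced afterwards by applying $d$ and using that $d$ is injective on $C_t$. Your approach buys a cleaner, non-inductive statement at the cost of explicitly invoking injectivity of the Künneth map (standard, and your pairing argument for it is the right one, though the phrase ``choose dual families'' should be implemented by first writing the tensor with linearly independent factors in each slot); the paper's induction stays entirely within the tools it already uses elsewhere (fibre integration against harmonic forms and the nondegenerate pairing on cohomology), at the cost of a two-step structure and an implicit appeal to injectivity of $d$ on $C_t^j$ in the final reduction. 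Both proofs are sound.
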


\begin{proof}
We start with the family $\{dC_t^j\}_{j=1}^q$ and proceed by induction on 
$q$. The case $q=1$ is trivial. Assume that the statement is true for some 
$q$, consider the product $M^{q+1}$ of $q+1$ copies of $M$, and assume that
\begin{equation}\label{eq:directsum}
  \sum_{j=1}^{q+1}x_j=0
\end{equation}
for some $x_j\in dC_t^j$.
Let $\pi:M^{q+1}\rightarrow M^q$ denote the projection forgetting the
last factor, and $\pi_{q+1}:M^{q+1}\rightarrow M$ the projection onto
the last factor. Recall that the pushforward $\pi_*$ on forms simply
means ``integrating out the last $M$ factor''. 
For each $\alpha\in\HH_t$, applying wedge with $\pi_{q+1}^*\alpha$ followed by 
$\pi_*$ to equation~\eqref{eq:directsum} yields 
\begin{equation*}
  \sum_{j=1}^{q}\pi_*(x_j\wedge \pi_{q+1}^*\alpha)=0.
\end{equation*}
Here the $j$-th term belongs to $C_t^j\subset\Om^*(M^q)$ for $1\leq
j\leq q$, and the term with $j=q+1$ is not present because $\pi_*(x_{q+1}\wedge
\pi_{q+1}^*\alpha)=0$. By induction hypothesis we conclude
$\pi_*(x_j\wedge \pi_{q+1}^*\alpha)=0$ for $j=1,\dots,q$. Since this
holds for all $\alpha\in\HH_t$, by nondegeneracy of the cup product on
cohomology of the last factor it implies $x_j=0$ for all
$j=1,\dots,q$, and therefore also $x_{q+1}=0$. 

For the family $\{C_t^j\}_{j=1}^q$, assume that $\sum_{j=1}^qx_j=0$
for some $x_j\in C_t^j$. Since the exterior
differential $d$ restricts as a monomorphism to each $C_t^j$, applying
$d$ to this equation brings us the case we just proved. 
\end{proof}

For any $q\in\N$ we define
\begin{equation}\label{eq:defCt}
  C_t:=\bigoplus_{j=1}^qC_t^j\subset \Om^*(M^q).
\end{equation}
By Lemma~\ref{lem:directsum} the exterior derivative on $\Om^*(M^q)$
restricts to a linear monomorphism
$$
  d|_{C_t}:C_t\longrightarrow\im(\times)\cap\im d
$$
and we introduce its inverse
$$
  d_t^{-1}:=(d|_{C_t})^{-1}:dC_t\longrightarrow C_t.
$$
Since $d$ commutes with the flip self-diffeomorphism $\tau:M^2\to M^2$
and restricts to $C_t$ as a monomorphism, the inverse $d_t^{-1}$ for
$q=2$ commutes with $\tau$ as well. 

\begin{lemma}\label{lem:2factors}
In the above setting, for all $a,b\in \HH_t$ and $\xi,\eta\in \im d$
we have 
$$
  \xi\times b+a\times\eta\in dC_t
$$
and 
\begin{equation}\label{eq:primprod}
d_t^{-1}(\xi\times b+a\times\eta)=d_t^{-1}(\xi)\times b+(-1)^aa\times d_t^{-1}(\eta).
\end{equation}
\end{lemma}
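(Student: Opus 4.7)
The plan is to prove both assertions simultaneously by producing an explicit preimage under $d|_{C_t}$ and invoking the injectivity supplied by Lemma~\ref{lem:directsum}. The two statements are really one: if the right-hand side of \eqref{eq:primprod} is in $C_t$ and its differential equals $\xi\times b+a\times\eta$, then by injectivity of $d|_{C_t}:C_t\to dC_t$ (and the definition of $d_t^{-1}$ as its inverse) we get both that $\xi\times b+a\times\eta\in dC_t$ and the claimed formula.

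First I would check that $d_t^{-1}(\xi)$ and $d_t^{-1}(\eta)$ are defined. Since $\Om^*(M)=\im d\oplus\HH_t\oplus C_t$ with $d$ vanishing on the first two summands, the restriction $d|_{C_t}\colon C_t\to\Om^*(M)$ has image exactly $\im d$; hence $\xi,\eta\in\im d$ admit unique preimages $d_t^{-1}(\xi),d_t^{-1}(\eta)\in C_t\subset\Om^*(M)$. Then, directly from the definitions of $C_t^1$ and $C_t^2$ (for $q=2$), we have
\[
d_t^{-1}(\xi)\times b\in C_t^1,\qquad a\times d_t^{-1}(\eta)\in C_t^2,
\]
so the candidate primitive $d_t^{-1}(\xi)\times b+(-1)^{a}\,a\times d_t^{-1}(\eta)$ lies in $C_t^1\oplus C_t^2=C_t\subset\Om^*(M^2)$.

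Next I would compute its differential using the graded Leibniz rule for the cross product, $d(\alpha\times\beta)=d\alpha\times\beta+(-1)^{\deg\alpha}\alpha\times d\beta$, together with $da=db=0$ (both $a,b\in\HH_t\subset\ker d$) and $d\,d_t^{-1}(\xi)=\xi$, $d\,d_t^{-1}(\eta)=\eta$. The first summand contributes $\xi\times b$ because the boundary term $(-1)^{\deg d_t^{-1}(\xi)}d_t^{-1}(\xi)\times db$ vanishes. For the second summand, the sign $(-1)^{a}$ out front combines with the Leibniz sign $(-1)^{\deg a}$ from pulling $d$ past $a$ to yield $(-1)^{2\deg a}=1$, so this contributes exactly $a\times\eta$ (the other Leibniz term $da\times d_t^{-1}(\eta)$ vanishes). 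Thus
\[
d\bigl(d_t^{-1}(\xi)\times b+(-1)^{a}\,a\times d_t^{-1}(\eta)\bigr)=\xi\times b+a\times\eta.
\]

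Finally, since the left side above is the differential of an element of $C_t$, we conclude $\xi\times b+a\times\eta\in dC_t$, and by uniqueness of the preimage under $d|_{C_t}$ (i.e.\ injectivity from Lemma~\ref{lem:directsum}), equation~\eqref{eq:primprod} follows. The only subtlety is the sign bookkeeping in the Leibniz rule, which is minor; no deeper obstacle arises.
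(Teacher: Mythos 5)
Your proof is correct and follows essentially the same route as the paper: observe that the right-hand side of~\eqref{eq:primprod} lies in $C_t$ by construction, apply $d$ and use the Leibniz rule together with $da=db=0$, and conclude by the injectivity of $d|_{C_t}$ guaranteed by Lemma~\ref{lem:directsum}. The paper's proof is just a terser version of the same argument, and your sign bookkeeping is accurate.
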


\begin{proof}
The right hand side of equation~\eqref{eq:primprod} belongs to $C_t$
by construction. The result follows by applying the exterior
derivative to both sides of~\eqref{eq:primprod}. 
\end{proof}

We apply this as follows.
Since the cohomology class $[e_{at}]=e_a\in H^*(M)$ is constant in
$t$, the corresponding time derivatives are exact, and similarly for $e^a_t$. 
We set
\begin{equation}\label{eq:lambda}
  \lambda_{at}:=d_t^{-1}\ddt e_{at},\qquad \lambda^a_t:=d_t^{-1}\ddt e^a_t
\end{equation}
to get
\begin{equation}\label{eq:prime}
  d_t^{-1}\ddt(e_{at}\times e^a_t)=\lambda_{at}\times e^a_t+(-1)^{e_a}e_{at}\times \lambda^a_t.
\end{equation}

\begin{lemma}\label{lem:sym}
For all $t\in I$ we have 
\begin{equation}\label{eq:symmprim}
\tau^*(\lambda_{at}\times e^a_t+(-1)^{e_a}e_{at}\times \lambda^a_t)=
(-1)^n(\lambda_{at}\times e^a_t+(-1)^{e_a}e_{at}\times \lambda^a_t).
\end{equation}
\end{lemma}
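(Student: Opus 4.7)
The plan is to derive the symmetry of the primitive directly from the symmetry of the form it is primitive of, using the fact that $d_t^{-1}$ commutes with $\tau^*$. The key observation is that, by equation~\eqref{eq:prime}, the right-hand side of~\eqref{eq:symmprim} (up to the sign $(-1)^n$) is nothing other than $d_t^{-1}\tfrac{d}{dt}(e_{at}\times e^a_t)$, so the content of the lemma is the identity
\[
   \tau^*\,d_t^{-1}\tfrac{d}{dt}(e_{at}\times e^a_t)
   \;=\;(-1)^n\,d_t^{-1}\tfrac{d}{dt}(e_{at}\times e^a_t).
\]

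First I would verify that $\tau^*$ commutes with $d_t^{-1}$ on $dC_t$. This was already noted in the paragraph preceding~\eqref{eq:lambda}, but here I would make it explicit: with the decomposition $C_t=C_t^1\oplus C_t^2$ for $q=2$, one has $C_t^1=C_t\otimes\HH_t$ and $C_t^2=\HH_t\otimes C_t$, and $\tau^*$ swaps these two summands, so $\tau^*C_t=C_t$. Since $\tau^*$ commutes with $d$ and preserves the image $dC_t$, the uniqueness part of the defining property of $d_t^{-1}$ (Lemma~\ref{lem:directsum}) gives $\tau^*\circ d_t^{-1}=d_t^{-1}\circ\tau^*$ on $dC_t$.

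Second, I would establish the symmetry of the form being differentiated. Since $\wt G_t$ is a propagator, it satisfies $\tau^*\wt G_t=(-1)^n\wt G_t$ and $d\wt G_t=e_{at}\times e^a_t$. Pushing forward to $M^2$ and applying $\tau^*$ (which commutes with $d$) yields
\[
   \tau^*(e_{at}\times e^a_t)=(-1)^n(e_{at}\times e^a_t).
\]
Differentiating in $t$ (noting that $\tau$ is independent of $t$) gives $\tau^*\tfrac{d}{dt}(e_{at}\times e^a_t)=(-1)^n\tfrac{d}{dt}(e_{at}\times e^a_t)$, and both sides lie in $dC_t$.

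Applying $d_t^{-1}$ to both sides and using the commutation from the first step yields the displayed identity, and then~\eqref{eq:prime} translates this into~\eqref{eq:symmprim}. There is no real obstacle here beyond the bookkeeping for the commutation $\tau^*\circ d_t^{-1}=d_t^{-1}\circ\tau^*$, which is the only place the hypothesis on the choice of complement $C_t$ enters in an essential way.
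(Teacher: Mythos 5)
Your proof is correct and follows essentially the same route as the paper: apply $\ddt$ and then $d_t^{-1}$ to the symmetry $\tau^*(e_{at}\times e^a_t)=(-1)^n\,e_{at}\times e^a_t$, using that $d_t^{-1}$ commutes with $\tau^*$. The only cosmetic differences are that the paper quotes this symmetry directly from Lemma~\ref{lem:Poincare} instead of deriving it from the existence of a symmetric propagator for $\HH_t$, and your explicit check that $\tau^*$ swaps $C_t^1$ and $C_t^2$ merely spells out the commutation fact already recorded after the definition of $d_t^{-1}$.
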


\begin{proof}
Recall from Lemma~\ref{lem:Poincare} that
$$
\tau^*(e_{at}\times e^a_t)=(-1)^ne_{at}\times e^a_t.
$$
Recall also that $d_t^{-1}$ commutes with $\tau$. Now we apply $\ddt$
and then $d_t^{-1}$ to both sides of the last displayed equation 
to get equation~\eqref{eq:symmprim}.
\end{proof}

\subsection{Constructing propagators via the Cauchy problem}\label{ss:families}

In this subsection we construct smooth families of propagators using
the Cauchy (initial value) problem. We retain the setup of
\S\ref{ss:compl} and \S\ref{ss:constrprim}.  

\begin{lemma}\label{lem:Cauchyprop}
Let $\wt K$ be a propagator for $\HH_0=\HH_t|_{t=0}$.
Define a smooth family $\{\wt G_t\}_{t\in I}$ of 
$(n-1)$-forms on the blow-up $\wt M^2$ by the following Cauchy problem:
\begin{equation}
\begin{cases}\label{eq:diffeq}
  \ddt\wt G_t=\lambda_{at}\times e^a_t+(-1)^{e_a}e_{at}\times \lambda^a_t,\\
  \wt G_0=\wt K.
\end{cases}
\end{equation}
Then $\wt G_t$ is a propagator for $\HH_t$ for all $t\in I$.
\end{lemma}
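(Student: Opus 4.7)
The plan is to verify the two defining properties of a propagator for $\HH_t$, namely the symmetry $\wt\tau^*\wt G_t=(-1)^n\wt G_t$ and the structure equation $d\wt G_t=e_{at}\times e^a_t$. Both hold at $t=0$ by the hypothesis that $\wt K$ is a propagator for $\HH_0$. Since the defining equation for $\wt G_t$ is a first-order ODE in $t$ (pointwise on $\wt M^2$), it suffices to show that the $t$-derivative of each ``defect'' vanishes; then uniqueness of the solution to the Cauchy problem forces each defect to remain zero on all of $I$.

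For the symmetry, I would apply $\wt\tau^*-(-1)^n$ to both sides of the Cauchy equation. The right-hand side $\lambda_{at}\times e^a_t+(-1)^{e_a}e_{at}\times\lambda^a_t$ is pulled back from $M^2$, and by Lemma~\ref{lem:sym} transforms by exactly $(-1)^n$ under $\tau^*$. Hence $\ddt\bigl(\wt\tau^*\wt G_t-(-1)^n\wt G_t\bigr)=0$, and combined with the initial condition this gives the symmetry for every $t\in I$.

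For the structure equation, differentiate the proposed identity in $t$: the left-hand side becomes $d\ddt\wt G_t = d\bigl(\lambda_{at}\times e^a_t+(-1)^{e_a}e_{at}\times\lambda^a_t\bigr)$ by the Cauchy equation, while the right-hand side is $\ddt(e_{at}\times e^a_t)$. But these two expressions agree by applying $d$ to both sides of~\eqref{eq:prime} (which is the defining property of $d_t^{-1}$). Thus $\ddt\bigl(d\wt G_t-e_{at}\times e^a_t\bigr)=0$, and together with $d\wt G_0=e_{a0}\times e^a_0$ this propagates the structure equation to all $t\in I$.

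I do not expect a serious obstacle here: the argument is purely formal, with Lemma~\ref{lem:sym} handling the symmetry propagation and equation~\eqref{eq:prime} (a direct consequence of Lemma~\ref{lem:2factors}) handling the structure equation. The only small points to check are that the right-hand side of the Cauchy problem, being a pullback from $M^2$, is indeed a smooth form on $\wt M^2$, so that the ODE admits a unique smooth solution with initial value $\wt K$, and that $d$ and $\tau^*$ commute with the blow-down so that all identities established on $M^2$ transport to $\wt M^2$.
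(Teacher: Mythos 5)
Your proposal is correct and follows essentially the same route as the paper: the symmetry of $\wt G_t$ is propagated from $t=0$ via equation~\eqref{eq:symmprim} and uniqueness for the Cauchy problem, and the structure equation follows by applying $d$ to~\eqref{eq:diffeq}, using~\eqref{eq:prime}, and observing that $d\wt G_t-e_{at}\times e^a_t$ is constant in $t$ and vanishes at $t=0$.
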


\begin{proof}
The $\tau$-symmetry of $\wt G_t$ follows from
equation~\eqref{eq:symmprim} and the $\tau$-symmetry of the initial
condition $\wt K$ by uniqueness of solutions of the Cauchy problem. To
obtain the desired condition on $d\wt G_t$ we apply $d$ to both sides
of the first equation in~\eqref{eq:diffeq} and use
equation~\eqref{eq:prime} to get 
$$
\ddt(d\wt G_t-e_{at}\times e^a_t)=0.
$$
Therefore, $d\wt G_t-e_{at}\times e^a_t$ is constant in $t$, and
since it vanishes for $t=0$ it vanishes for all $t$.
\end{proof}

\subsection{Vanishing bilinear form}\label{ss:vanbilin}

Let $\pi:\wt M^2\to M^2$ denote the natural blow-down map.
Let $\HH$ be a complement of $\im d$ in $\ker d$ and $\wt G$ an
associated propagator. We say that $\wt G$ has {\em vanishing bilinear
  form} if for every $\alpha,\beta\in \HH$ we have  
\begin{equation}\label{eq:vanish1}
  \int_{\wt M^2}\wt G\wedge\pi^*(\alpha\times\beta)=\int_{M^2}G(x,y)\wedge\alpha(x)\wedge
  \beta(y)=0.
\end{equation}
The main reason such propagators are interesting is the following result.

\begin{lemma}\label{lem:vanishexact}
(a) Each special propagator $\wt G$ has vanishing bilinear form.\\
(b) Let $\wt G_1$ and $\wt G_2$ be two propagators for $\HH$ with
vanishing bilinear forms. Then their difference is exact,
$$
  \wt G_1-\wt G_2=d\alpha
$$
for some $(n-2)$-form $\alpha$ on $\wt M^2$.
\end{lemma}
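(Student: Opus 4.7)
Part (a) is immediate. A special propagator satisfies $P\circ\pi_\HH=0$ by~\eqref{eq:P4}, so $P|_\HH=0$, and by~\eqref{eq:HG} the bilinear form in~\eqref{eq:vanish1} coincides up to sign with the pairing $\int_M P\alpha\wedge\beta$, which therefore vanishes for all $\alpha,\beta\in\HH$.

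For part (b) the plan is to set $\wt H:=\wt G_1-\wt G_2$. Since both propagators are associated to the same $\HH$, they share the same differential, so $d\wt H=0$ on the compact manifold-with-boundary $\wt M^2$. By de Rham's theorem it then suffices to show $[\wt H]=0$ in $H^{n-1}(\wt M^2;\R)$, and by Poincar\'e--Lefschetz duality this is equivalent to checking that $\int_{\wt M^2}\wt H\wedge\theta=0$ for every class $[\theta]\in H^{n+1}(\wt M^2,\p\wt M^2)$.

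The main topological input will be the identification
\[
H^{n+1}(\wt M^2,\p\wt M^2)\;\cong\;H^{n+1}_c(M^2\setminus\Delta_2)\;\cong\;H^{n+1}(M^2),
\]
where the first isomorphism comes from excision and the second from the long exact sequence in compactly supported cohomology
\[
\cdots\to H^n(M^2)\to H^n(M)\to H^{n+1}_c(M^2\setminus\Delta_2)\to H^{n+1}(M^2)\to H^{n+1}(M)=0,
\]
together with surjectivity of the restriction $H^n(M^2)\to H^n(\Delta_2)=H^n(M)$ (for instance $1\otimes\mathrm{vol}_M\mapsto\mathrm{vol}_M$). A basis of $H^{n+1}(\wt M^2,\p\wt M^2)$ is then represented by compactly supported forms $\theta_{ab}$ cohomologous in $H^{n+1}(M^2)$ to $e_a\times e^b$ for $\deg e_a+\deg e^b=n+1$, i.e.\ $\theta_{ab}=e_a\times e^b-d\xi_{ab}$ for some smooth $\xi_{ab}\in\Om^n(M^2)$.

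The pairing then splits as
\[
\int_{\wt M^2}\wt H\wedge\theta_{ab}=\int_{M^2} H\wedge(e_a\times e^b)\;-\;\int_{M^2\setminus\Delta_2} H\wedge d\xi_{ab}.
\]
The first term vanishes by the vanishing-bilinear-form hypothesis applied to both $\wt G_i$. For the second, $d\wt H=0$ together with Stokes' theorem on $\wt M^2$ (equivalently, $\lim_{\eps\to 0}$ of Stokes on $M^2\setminus N_\eps(\Delta_2)$) rewrites it as $\pm\int_{\p\wt M^2}\wt H\wedge\pi^*\xi_{ab}$. The projection formula applied to the $S^{n-1}$-bundle $p:\p\wt M^2\to\Delta_2=M$ turns this into $\int_M p_*(\wt H)\wedge\xi_{ab}|_\Delta$, where $p_*(\wt H)$ is the function $y\mapsto\int_{S^{n-1}_y}\wt H=(-1)^n-(-1)^n=0$ by Lemma~\ref{lem:Green}. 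Hence $[\wt H]=0$, and the desired primitive $\alpha\in\Om^{n-2}(\wt M^2)$ exists. The hard part will be pinning down the topological identification of $H^{n+1}(\wt M^2,\p\wt M^2)$ with $H^{n+1}(M^2)$ (so that the pairing can be tested against explicit forms pulled back from $M^2$); once that is in hand, the cancellation is essentially forced by the vanishing bilinear form and the normalization $\int_{\p F}\wt G=(-1)^n$.
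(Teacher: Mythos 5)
Your proof is correct, and part (a) is essentially the paper's argument: both come down to the special-propagator identities~\eqref{eq:P4} (the paper quotes this through the orthogonality $\HH\perp\im P$ of~\eqref{eq:keyortho}, you use $P\circ\pi_\HH=0$, i.e.\ $P|_\HH=0$; either kills $\int_M P\alpha\wedge\beta$).

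For part (b) your skeleton is the same as the paper's — reduce exactness of $\eta=\wt G_1-\wt G_2$ via Lefschetz duality to the vanishing of $\int_{\wt M^2}\eta\wedge\theta$ for classes in $H^{n+1}(\wt M^2,\p\wt M^2)$, identify these with classes coming from $M^2$, split off the K\"unneth/harmonic part $e_a\times e^b$ (killed by the vanishing bilinear form~\eqref{eq:vanish1}) and dispose of an exact remainder by Stokes — but you handle the exact remainder differently. The paper writes the test form as $\alpha_j\wedge\beta^j+d\phi$ with the primitive $\phi$ chosen to vanish near the diagonal, so the Stokes boundary term dies for support reasons; you instead allow an arbitrary smooth primitive $\xi_{ab}$ on $M^2$ and kill the resulting boundary term $\int_{\p\wt M^2}\eta\wedge\pi^*\xi_{ab}$ by fibre integration over the sphere bundle $\p\wt M^2\to\Delta_2$, using that both propagators have the same fibre integral $(-1)^n$ by~\eqref{eq:fibre-int-G}, so $p_*\eta=0$. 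You also make the identification of the relative group explicit through $H^{n+1}_c(M^2\setminus\Delta_2)\cong H^{n+1}(M^2)$ via the compactly supported long exact sequence, where the paper simply asserts that relative classes are represented by pullbacks of closed forms vanishing near the diagonal. Your variant buys independence from the claim that the primitive can be chosen to vanish near the diagonal (a point the paper glosses over, since $d\phi=-\alpha_j\wedge\beta^j\ne 0$ near $\Delta_2$ makes that choice delicate as literally stated), at the cost of invoking the normalization of Lemma~\ref{lem:Green} at every diagonal point (which is fine: the proof there works fibrewise, or note $p_*\eta$ is locally constant); the paper's version is shorter where its support claim is granted.
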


\begin{proof}
Part (a) follows directly from the orthogonality
relation~\eqref{eq:keyortho} for a special propagator.
For (b), denote $N:=\wt M^2$. Since $d\wt G_1-d\wt G_2=0$, the difference
$\eta:=\wt G_1-\wt G_2$ is closed and thus represents a cohomology
class $[\eta]\in H^{n-1}(N)$. By the Universal Coefficient Theorem it
is enough to show that the evaluation of $[\eta]$ on $H_{n-1}(N)$ vanishes. 
Poincar\'e-Lefschetz duality $H_*(N)\cong H^{2n-*}(N,\p N)$ for
$*=n-1$ allows us to replace evaluation on $H_{n-1}(N)$ by taking the
cup product with elements of $H^{n+1}(N,\p N)$ followed by 
evaluating on the fundamental class $[N]\in H_{2n}(N,\p N)$. That is, 
we have to show 
$$
  ([\eta]\cup z)[N]=0
$$
for each $z\in H^{n+1}(N,\p N)$.
Any $z\in H^{n+1}(N,\p N)$ can be represented by a closed form $\wt
\xi\in \Om^{n+1}(\wt M^2)$ vanishing near the boundary, and any such
$\wt \xi$ is the pullback under $\pi$ of a closed form on $M^2$
vanishing near the diagonal. Therefore, the last displayed equation is implied by 
\begin{equation}\label{eq:cup}
  \int_{\wt M^2}\eta\wedge \pi^*\xi=0
\end{equation}
for each closed form $\xi$ on $M^2$ that vanishes near the
diagonal. By the K\"unneth formula, for each such $\xi$ there exist
finite collections $\{\alpha_j\}\subset \HH$ and
$\{\beta^j\}\subset\HH$ and a form $\phi\in \Om^n(M^2)$
 vanishing near the diagonal such that
$$
  \xi=\alpha_j\wedge \beta^j+d\phi.
$$
Now the left hand side of equation~\eqref{eq:cup} with
$\pi^*(\alpha_j\wedge \beta^j)$ in place of $\pi^*\xi$ vanishes
because both $\wt G_1$ and $\wt G_2$ have vanishing bilinear forms; 
with $\pi^*d\phi$ in place of $\pi^*\xi$ 
it becomes
$$
  \int_{\wt M^2}\eta\wedge \pi^*d\phi = (-1)^{n-1}\int_{\p\wt
    M^2}\eta\wedge \pi^*\phi = 0
$$
because $\phi=0$ near the diagonal. 
\end{proof}

The next lemma tells us how to obtain propagators with vanishing bilinear form.

\begin{lemma}\label{lem:vanishbilin}
Let a family $\wt G_t$ of propagators be defined as in Lemma~\ref{lem:Cauchyprop}. 
Assume in addition that the initial propagator $\wt G_0=\wt K$ has vanishing bilinear form 
and that the complement $C_t$ to $\ker d$ in $\Om^*(M)$ from
\S\ref{ss:compl} is orthogonal to $\HH_t$, that is $C_t\perp \HH_t$. 
Then for each $t\in I$ the propagator $\wt G_t$ has vanishing bilinear form.
\end{lemma}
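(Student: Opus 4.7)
The plan is to differentiate the bilinear form
$$F_t(\alpha,\beta):=\int_{\wt M^2}\wt G_t\wedge \pi^*(\alpha_t\times \beta_t),\qquad \alpha,\beta\in H^*(M),$$
in $t$ and show that $\ddt F_t(\alpha,\beta)=0$. Since $F_0(\alpha,\beta)=0$ by the hypothesis on $\wt G_0=\wt K$, this will yield $F_t(\alpha,\beta)=0$ for all $t\in I$ and hence vanishing of the bilinear form of $\wt G_t$. Writing $\ddt \alpha_t=d\mu_{\alpha,t}$ with $\mu_{\alpha,t}:=d_t^{-1}\ddt\alpha_t\in C_t$ and similarly for $\beta$, we obtain three contributions to $\ddt F_t(\alpha,\beta)$ coming from the derivatives of $\wt G_t$, $\alpha_t$ and $\beta_t$; I would handle them separately.

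For the $\wt G_t$-term, the Cauchy problem~\eqref{eq:diffeq} gives
$$\int_{\wt M^2}\ddt\wt G_t\wedge\pi^*(\alpha_t\times\beta_t)=\int_{M^2}\bigl(\lambda_{at}\times e^a_t+(-1)^{e_a}e_{at}\times\lambda^a_t\bigr)\wedge(\alpha_t\times\beta_t),$$
which decomposes by Fubini into products $(\int_M \lambda_{at}\wedge\alpha_t)(\int_M e^a_t\wedge\beta_t)$ and $(\int_M e_{at}\wedge\alpha_t)(\int_M \lambda^a_t\wedge\beta_t)$, up to sign. Since $\lambda_{at},\lambda^a_t\in C_t$ by construction and $\alpha_t,\beta_t\in\HH_t$, the assumption $C_t\perp\HH_t$ forces each such factor to vanish.

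For the $\alpha_t$-term, using $d\alpha_t=0$ and $\ddt\alpha_t=d\mu_{\alpha,t}$ we write $d\mu_{\alpha,t}\times\beta_t=d(\mu_{\alpha,t}\times\beta_t)$ (as $\beta_t$ is closed), pull back to $\wt M^2$ and apply Stokes' theorem on the smooth manifold-with-boundary $\wt M^2$:
$$\int_{\wt M^2}\wt G_t\wedge \pi^*d(\mu_{\alpha,t}\times\beta_t)=\pm\int_{\wt M^2}d\wt G_t\wedge \pi^*(\mu_{\alpha,t}\times\beta_t)\pm\int_{\p\wt M^2}\wt G_t\wedge\pi^*(\mu_{\alpha,t}\times\beta_t).$$
The bulk term, using $d\wt G_t=\pi^*(e_{at}\times e^a_t)$, factors into $(\int_M e_{at}\wedge\mu_{\alpha,t})(\int_M e^a_t\wedge\beta_t)$, which vanishes by $C_t\perp\HH_t$. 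The boundary term, by fibre integration over the $S^{n-1}$ fibres of $\p\wt M^2\to\Delta_2\cong M$ and equation~\eqref{eq:fibre-int-G}, equals $\pm\int_M \mu_{\alpha,t}\wedge\beta_t$, which again vanishes by $C_t\perp\HH_t$. The $\beta_t$-term is treated identically after moving the differential to the second factor with the appropriate sign. This completes the plan.

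The main subtlety I anticipate is keeping track of the various signs (particularly from the Koszul signs in the cross product and in Stokes' theorem), but no sign has the possibility to prevent the orthogonality arguments from killing each term, so the vanishing is structurally robust. No step should pose a conceptual obstacle beyond this bookkeeping.
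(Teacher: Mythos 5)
Your proposal is correct and follows essentially the same route as the paper: differentiate the bilinear form in $t$, kill the $\ddt\wt G_t$-contribution via the Cauchy problem and $C_t\perp\HH_t$, and handle the $\ddt\alpha_t$, $\ddt\beta_t$ contributions by writing them as exact forms, integrating by parts with $d\wt G_t=e_{at}\times e^a_t$ for the bulk term and the fibre normalization~\eqref{eq:fibre-int-G} for the boundary term, everything vanishing by the orthogonality $C_t\perp\HH_t$. The only (cosmetic) difference is that the paper combines the two form-derivative terms into a single exact form via Lemma~\ref{lem:2factors} before integrating by parts, whereas you treat them separately.
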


\begin{proof}
Let $t_0\in I$ be any time moment. Observe that any $\eta\in
\HH_{t_0}$ can be included in a smooth family as follows. Let
$[\eta]\in H^*(M)$ be the cohomology class represented by $\eta$, and
for every $t\in I$ let $x_t\in \HH_t$ be the unique element of $\HH_t$
with $[x_t]=[\eta]$. Then $\{x_t\}_{t\in I}$ is a smooth family with $x_{t_0}=\eta$. 

Therefore, it suffices to consider two smooth families $\alpha_t,\beta_t\in \HH_t$ 
with $[\alpha_t]=const_1\in H^*(M)$, $[\beta_t]=const_2\in H^*(M)$ and show vanishing of
$$
  B_t:=\int_{\wt M^2}\wt G_t\wedge(\alpha_t\times\beta_t)
$$
for all $t\in I$. Since $\wt G_0=\wt K$ has vanishing bilinear form,
it suffices to show vanishing of the time derivative of $B_t$. Since
the cohomology classes represented by $\alpha_t$ and $\beta_t$ are
constant in $t\in I$, their time derivatives are exact and we can apply 
Lemma~\ref{lem:2factors} to get
\begin{equation*}
  \ddt(\alpha_t\times\beta_t)=d(d_t^{-1}\dot\alpha_t\times\beta_t+(-1)^{\alpha_t}\alpha_t\times d_t^{-1}\dot\beta_t).
\end{equation*}
Using this and equation~\eqref{eq:diffeq}, we obtain 
\begin{align*}
  \ddt B_t
  &= \int_{\wt M^2}\left(\ddt\wt G_t\right)\wedge(\alpha_t\times\beta_t) +
  \int_{\wt M^2}\wt G_t\wedge\ddt(\alpha_t\times\beta_t)\cr
  &= \int_{M^2}(\lambda_{at}\times e^a_t+(-1)^{e_a}e_{at}\times \lambda^a_t)  \wedge(\alpha_t\times\beta_t)\cr
  & \ \ \ + \int_{\wt M^2}\wt G_t\wedge  
  d(d_t^{-1}\dot\alpha_t\times\beta_t+(-1)^{\alpha_t}\alpha_t\times d_t^{-1}\dot\beta_t).
\end{align*}
We continue with the last expression which is the sum of two integrals.
In the first integral there are two summands. The first summand
involves pairing $\lambda_{at}$ with $\alpha_t$, and the second
summand involves pairing $\lambda^a_t$ with $\beta_t$. Both pairings
vanish by orthogonality $\HH_t\perp C_t$. We rewrite the second
integral as
\begin{align*}
  &(-1)^{n-1}\int_{\wt M^2}\wt G_t\wedge 
  d(d_t^{-1}\dot\alpha_t\times\beta_t+(-1)^{\alpha_t}\alpha_t\times
  d_t^{-1}\dot\beta_t)\cr 
  &= \int_{\wt M^2}d(\wt G_t\wedge
  (d_t^{-1}\dot\alpha_t\times\beta_t+(-1)^{\alpha_t}\alpha_t\times
  d_t^{-1}\dot\beta_t))\cr
  &\ \  \ -\int_{\wt M^2}d\wt G_t\wedge 
  (d_t^{-1}\dot\alpha_t\times\beta_t+(-1)^{\alpha_t}\alpha_t\times d_t^{-1}\dot\beta_t)\cr
  &=: I_1+I_2.
\end{align*}
We apply Stokes and then integration over the fibre 
with $\wt G_t$ to evaluate the first integral:
\begin{align*}
  I_1
  &= \int_{\wt M^2}d\Bigl(\wt G_t\wedge
  (d_t^{-1}\dot\alpha_t\times\beta_t+(-1)^{\alpha_t}\alpha_t\times
  d_t^{-1}\dot\beta_t)\Bigr) \cr
  &=
  (-1)^n\int_M(d_t^{-1}\dot\alpha_t\wedge\beta_t+(-1)^{\alpha_t}\alpha_t\wedge
  d_t^{-1}\dot\beta_t) 
  =0,
\end{align*}
where both summands vanish because of orthogonality $\HH_t\perp C_t$ again.
We use the expression for $d\wt G_t$ to evaluate the second integral:
\begin{align*}
  -I_2
  &= \int_{\wt M^2}d\wt G_t\wedge 
  (d_t^{-1}\dot\alpha_t\times\beta_t+(-1)^{\alpha_t}\alpha_t\times d_t^{-1}\dot\beta_t)\cr
  &= \int_{M^2}(e_{at}\times e^a_t)\wedge 
  (d_t^{-1}\dot\alpha_t\times\beta_t+(-1)^{\alpha_t}\alpha_t\times d_t^{-1}\dot\beta_t)
  =0.
\end{align*}
Here the first summand involves pairing $e_{at}$ and
$d_t^{-1}\dot\alpha_t$, and the second one pairing $e^a_t$ with
$d_t^{-1}\dot\beta_t$. Both pairings vanish because of $\HH_t\perp C_t$.
\end{proof}

\subsection{Defining the basic integrals}\label{ss:defs}

Let $\Gamma\in \RR_{\ell,g}$ be a trivalent ribbon graph. We use
the terminology concerning labellings, extended labellings, 
marked edges etc from \S\ref{ss:basiccomb}. Let
as usual $s$ denote the number of leaves of $\Gamma$,
$e$ the number of edges, $k$ the number of vertices,
and $\beta\in \Om^*(M)^{\otimes s}$ a decomposable tensor. 
Let $\HH_t\subset \ker d$ be a family as in \S\ref{ss:compl}, and
$\wt G_t$ a smooth family of propagators for $\HH_t$. 
In analogy to~\S\ref{sec:opforms} we define (using slightly different notation)
\begin{itemize}
\item $G_{\Gamma t}:=G_t\times\dots\times G_t$ on $(M^2)^e$;
\item $\wt G_{\Gamma t}:=\wt G_t\times\dots\times\wt G_t$ on 
$(\widetilde M^2)^e$;
\item $G_{\Gamma t}(\beta):=G_{\Gamma t}\times\beta$ on $X_\Gamma$; 
\item $\wt G_{\Gamma t}(\beta):=\wt G_{\Gamma t}\times\beta$ on $\wt X_\Gamma$. 
\end{itemize}
A choice of an extended labelling for $\Gamma$ allows us to define the integrals
$$
  I_{\Gamma t}(\beta) := \int_{\Delta_{3\Gamma}}G_{\Gamma t}(\beta) =
  (-1)^{R_\Gamma+(n-1)\eta_3(\Gamma)}\int_{\Delta_3^k}R_\Gamma^*G_{\Gamma t}(\beta).
$$
Given a marked edge $l\in \Edge(\Gamma)$ and a form 
$\gamma\in \Om^*(\wt M^2)$, the expression 
$$
  G_t\times\dots\times \gamma\times\dots\times G_t
$$
will denote the cross product of $e$ forms, with $\gamma$ sitting at
the position that corresponds to $l$ and all the others being
$G_t$. To write down the integrals we need to pick an extension of the
labelling, but the result ultimately depends only on the labelling
itself. This can be proved in complete parallel to \S\ref{sec:opforms}.
We will skip some details in the sign computations below. 
The notation $s(\Gamma)$ will stand for a suitable sign exponent that
depends on $\Gamma$ (and possibly on the labelling/extended labelling,
interior edge etc) and may change from line to line.
The notation $s(\Gamma,\beta)$ will stand for a
sign exponent that depends on $\Gamma$ 
(with possible decorations) and $\beta$. This notation allows
us to write the MC element corresponding to $\HH_t$
and $\wt G_t$ as follows. For decomposable
$\beta\in \Om^*(M)^{\otimes s}$ we define
\begin{equation}\label{eq:mGammat}
   \fm_{\Gamma t}(\beta):=(-1)^{s(\Gamma,\beta)}I_{\Gamma t}(\beta). 
\end{equation}
The definition of $\fm_{\Gamma t}$ extends to elements of 
$C_t\subset \Om^*(M^s)$ by linearity. Now for 
$\alpha\in
H^*(M)^{\otimes s}$ we use the corresponding
$\alpha_t\in\HH_t^{\otimes s}$ to define
\begin{equation}\label{eq:m}
  \fm_{(\ell,g)t}(\alpha):=\frac{1}{\ell !}\sum_{\Gamma\in R_{\ell,g}}
  \fm_{\Gamma t}(\alpha_t). 
\end{equation}
By~\S\ref{sec:proofMC}, this defines a MC element on $B^{\cyc
  *}H^*(M)[2-n]$ for all $t\in I$. We now define the smooth family
$\bbb_t$ needed for the gauge equivalence equation~\eqref{eq:gaugeIBL} 
in the two cases which will be treated in \S\ref{ss:case1} and \S\ref{ss:case2}.

{\bf Case 1 (Cauchy problem). }
In this case we assume that $\wt G_t$ solves the Cauchy
problem~\eqref{eq:diffeq} and define
\begin{equation}\label{eq:bspeccond}
  \bbb_{\Gamma t}(\alpha):=\fm_{\Gamma t}(d_t^{-1}\ddt\alpha_t).
\end{equation}
Note that $C_t$ implicitly enters the definition via $d_t^{-1}$.

{\bf Case 2 (exact difference). }
In this case we assume that the family $\HH_t$ is
constant, pick some $\mu\in \Om^{n-2}(M)$ satisfying~\eqref{eq:mu}, 
and define 
\begin{equation}\label{eq:exactdiff}
\bbb_{\Gamma t}(\alpha):=
\sum_{l\in \Edge(\Gamma)}(-1)^{s(\Gamma,\alpha)}\int_{\Delta_{3\Gamma}}
G_t\times\dots\times\mu\times\dots\times G_t\times\alpha.
\end{equation}
In both cases $\bbb_{\Gamma t}$ enjoys an equivariance property with
respect to relabellings analogous to equation~\eqref{eq:action}: 
Let $\Gamma$ be a labelled graph with $s$ exterior flags, and $l$ an
edge of $\Gamma$. Let $\eta\in S(\bs)$ describe a change of
labelling (altering items (i) and (ii) in Definition~\ref{def:labelling}).
Then
\begin{equation}\label{eq:actiongauge}
  \bbb_{\Gamma\eta t}=\eta^{-1}\bbb_{\Gamma t}.
\end{equation}
In both cases the element $\bbb_{(\ell,g)t}$ is defined 
as a sum of over isomorphism classes of labelled graphs,
\begin{equation}\label{eq:b}
\bbb_{(\ell,g)t}:=\frac{1}{\ell !}\sum_{\Gamma\in R_{\ell,g}}\bbb_{\Gamma t}.
\end{equation}

\subsection{Case 1 (Cauchy problem)}\label{ss:case1}

Recall that $\HH_t\subset \ker d$ is a smooth family of subspaces
complementing $\im d$ in $\ker d$, and $C_t\subset\Om^*(M^q)$ is a
smooth family of subspaces defined by~\eqref{eq:defCt} for some $q$
which will always be clear from the context.
Recall the correspondence between $\alpha\in H^*(M)$ and
$\alpha_t\in\HH_t$, and in particular the bases $e_a,e^a$ of $H^*(M)$
and $e_{at},e^a_t$ of $\HH_t$.

Let $\wt G_t$ be a smooth family of propagators for $\HH_t$ solving
the Cauchy problem~\eqref{eq:diffeq}. Define the family
$\fm_{(\ell,g)t}$ of MC elements by formulas~\eqref{eq:mGammat}
and~\eqref{eq:m}, and the family $\bbb_{(\ell,g)t}$ by formulas~\eqref{eq:bspeccond} and~\eqref{eq:b}. 
The proof of the following result will occupy the rest of this subsection.

\begin{lemma}\label{lem:case1}
In the above setup the families $\fm_{(\ell,g)t}$ and $\bbb_{(\ell,g)t}$
satisfy equation~\eqref{eq:gaugealg}. In particular, all the MC elements
$\fm_t$ are gauge equivalent to each other.
\end{lemma}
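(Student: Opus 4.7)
The plan is to differentiate the sum~\eqref{eq:m} defining $\fm_{(\ell,g)t}$ term by term and show that the result equals the right-hand side of~\eqref{eq:gaugealg}. For each labelled trivalent graph $\Gamma\in\RR_{\ell,g}$, differentiating $I_{\Gamma t}(\alpha_t) = \int_{\Delta_{3\Gamma}} G^e_t \times \alpha_t$ produces an ``edge part'' $\sum_l X^l_\Gamma$ (from $\ddt$ acting on a single propagator factor at edge $l$) and a ``leaf part'' $\sum_j Y^j_\Gamma$ (from $\ddt$ acting on a single harmonic insertion $\alpha^j_t$ at leaf $j$). The Cauchy problem~\eqref{eq:diffeq} rewrites $X^l_\Gamma$ as a cut-edge integral with insertions $(\lambda_a, e^a)$ or $(e_a, \lambda^a)$ at the two new leaves, summed over $a$. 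Because $[\alpha^j] \in H^*(M)$ is constant in $t$, one has $\ddt\alpha^j_t = d\lambda^j_t$ with $\lambda^j_t := d_t^{-1}\ddt\alpha^j_t \in C_t$, so the Leibniz rule together with $dG_t = e_{at}\times e^a_t$ decomposes each $Y^j_\Gamma$ as a total derivative $d(\wt G^e_t\times\tilde\beta^j_t)$ on $\CC_\Gamma$ minus a sum of ``$(e_a\times e^a)$ at edge $k$, $\lambda^j$ at leaf $j$'' integrals; here $\tilde\beta^j_t$ denotes the tensor with $\lambda^j_t$ replacing $\alpha^j_t$ at leaf $j$.

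Next I would convert the total derivative into a boundary integral on $\partial_1\CC_\Gamma$ via Theorem~\ref{thm:config-space}(c) and argue that it vanishes after summing over $(\Gamma,l)$. The hidden contribution vanishes directly by Theorem~\ref{thm:key-vanish}, which holds for any adapted collection of forms and is not restricted to closed ones. The main contribution $\sum_l\int_{\partial_1^l\CC_\Gamma}$ pairs up under the duality involution $\ol I$ of~\S\ref{ss:graph-duality}: the leaf $j$ carrying $\lambda^j$ is never adjacent to the cut edge $l$, so the integrand is invariant under the identification of boundary loci while the orientations are opposite by~\eqref{eq:signinvol}, causing the integrals to cancel in pairs exactly as in the final step of~\S\ref{ss:maincompute}. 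What survives in $\sum_\Gamma\ddt\fm_{\Gamma t}(\alpha_t)$ is $\sum_{\Gamma,l}X^l_\Gamma$ minus the sum over $(\Gamma,l,j)$ of ``$(e_a\times e^a)$ at edge $l$, $\lambda^j$ at leaf $j$'' terms. Extending Lemma~\ref{lem:2factors} componentwise yields the derivation identity
\begin{equation*}
    d_t^{-1}\ddt(\alpha_t\otimes e_{at}\otimes e^a_t) = \sum_j \pm\,(\alpha_t|_{j\mapsto\lambda^j_t})\otimes e_{at}\otimes e^a_t \pm \alpha_t\otimes\lambda_{at}\otimes e^a_t \pm \alpha_t\otimes e_{at}\otimes\lambda^a_t,
\end{equation*}
which collapses the surviving combination at each cut edge $l$ into $\sum_a \pm\,\bbb_{(\Gamma\setminus l)\,t}(\alpha\otimes e_a\otimes e^a)$, namely a $\bbb$-expression for the cut graph with $(e_a, e^a)$ inserted at the two new leaves.

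The final step is to reassemble the sum over $(\Gamma,l)$ into the three terms of~\eqref{eq:gaugealg} via the cut/glue bijections from~\S\ref{ss:op-graphs}: cuts yielding a connected graph with merged boundaries give $\wh\fp_{2,1,0}\bbb_{(\ell+1,g-1),t}$ via~\eqref{eq:glue1}; disconnecting cuts give $\wh\fp_{2,1,0}(\fm_t,\bbb_t)|_{\conn}$ via~\eqref{eq:glue2}; and $g_{120}^j$-type cuts give $\wh\fp_{1,2,0}\bbb_{(\ell-1,g),t}$ via~\eqref{eq:glue3}, with combinatorial factors absorbed by the shuffle-rotation bijections~\eqref{eq:slab1}--\eqref{eq:slab3}. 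The hard part will be the sign verification: the signs parallel those in~\S\ref{ss:maincompute}, but one must track the extra $\lambda$-decoration in $\bbb$ through sign lemmas analogous to Lemmas~\ref{lem:b_{210}conn} and~\ref{lem:b_{120}}. Because $\lambda_{at}\in C_t$ carries a well-defined degree shift determined by $d_t^{-1}$ (which is absorbed into the configuration-space dimension count), this sign bookkeeping is systematic though tedious. Once~\eqref{eq:gaugealg} is established, Lemma~\ref{lem:gaugeIBLinfty} immediately yields gauge equivalence of the family $\{\fm_t\}_{t\in I}$.
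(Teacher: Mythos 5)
Your proposal is correct and is essentially the paper's own proof: the paper likewise splits $\ddt\fm_{(\ell,g)t}$ into the edge part and the (exact) leaf part, kills the total-derivative term by Stokes together with Theorem~\ref{thm:key-vanish} and the duality cancellation (its equation~\eqref{eq:vanish}), collapses the surviving edge-plus-leaf combination at the cut edge via the derivation property of $d_t^{-1}$ (its identity~\eqref{eq:fulldifft}, which is exactly your extension of Lemma~\ref{lem:2factors} combined with the Cauchy problem~\eqref{eq:diffeq}), and then reassembles over marked graphs via the cut/glue and shuffle--rotation bijections before invoking Lemma~\ref{lem:gaugeIBLinfty}. One small remark: your claim that the leaf carrying $\lambda^j$ is never adjacent to the cut edge is false in general, but also unnecessary --- the duality cancellation only uses that the integrand is literally the same form on $\wt X_\Gamma=\wt X_{I(\Gamma)}$ (forms are attached to edges and leaves, which $I$ does not change) while the boundary orientations are opposite by~\eqref{eq:signinvol}.
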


{\bf Useful identities.}
We collect some useful integral identities for the proof
of Lemma~\ref{lem:case1}. First, we compute
\begin{align*}
&\sum_{\Gamma\in \RR_{\ell,g}}\int_{\Delta_{3\Gamma}}d(G_{\Gamma t}\times d_t^{-1}\ddt\alpha_t)\cr
=&\sum_{\Gamma\in \RR_{\ell,g}}\int_{\CC_\Gamma}d(\wt G_{\Gamma t}\times d_t^{-1}\ddt\alpha_t)\cr
=&\sum_{\Gamma\in \RR_{\ell,g}}\int_{\p\CC_\Gamma}\wt G_{\Gamma t}\times d_t^{-1}\ddt\alpha_t\cr
=&\sum_{\Gamma\in \RR_{\ell,g}}\int_{\p^{\rm main}\CC_\Gamma}\wt G_{\Gamma t}\times d_t^{-1}\ddt\alpha_t+
\sum_{\Gamma\in \RR_{\ell,g}}\int_{\p^{\rm hidden}\CC_\Gamma}\wt G_{\Gamma t}\times d_t^{-1}\ddt\alpha_t\cr
=&0
\end{align*}
Here in the last line the first sum vanishes by the duality argument at the end of 
\S\ref{ss:maincompute}, and each integral in the second sum vanishes by
Theorem~\ref{thm:key-vanish}. This implies
\begin{equation}\label{eq:vanish}
\begin{aligned}
&\sum_{\Gamma\in \RR_{\ell,g}}\int_{\Delta_{3\Gamma}}G_{\Gamma t}\times d\circ d_t^{-1}\ddt\alpha_t\\
&=(-1)^{\star}\sum_{\Gamma\in \RR_{\ell,g}}\sum_{l\in \Edge(\Gamma)} 
\int_{\Delta_{3\Gamma}}G_t\times\dots\times dG_t\times\dots\times G_t\times d_t^{-1}\ddt\alpha_t
\end{aligned}
\end{equation}
with the sign exponent
$$
  \star=(n-1)(n(l)-e)+n.
$$ 
Here $e$ is the number of edges of $\Gamma$ and $n(l)$ is the number
of the marked edge in the order of edges. To see this, we apply the
Leibniz rule to $d(G_{\Gamma t}\times d_t^{-1}\ddt\alpha_t)$ and note
that the term with $dG_t$ at the $n(l)$-th position comes with the
sign $(-1)^{(n(l)-1)(n-1)}$, while the term $G_{\Gamma t}\times d\circ
d_t^{-1}\ddt\alpha_t$ comes with the sign $(-1)^{e(n-1)}$.

We will need one more identity.

\begin{lemma}
For any $\alpha\in H^*(M)^{\otimes q}$ we have
\begin{equation}\label{eq:fulldifft}
\left(\ddt G_t\right)\times \alpha_t+(-1)^ndG_t\times d_t^{-1}\ddt \alpha_t=d_t^{-1}\ddt
(e_{at}\times e^a_t\times\alpha_t).
\end{equation}
\end{lemma}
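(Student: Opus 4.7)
The plan is to prove~\eqref{eq:fulldifft} by explicit computation: expand both sides using the Cauchy problem defining $\wt G_t$ and the Leibniz rule for $\ddt$, invert $d$ term by term on the right using the multi-factor generalization of Lemma~\ref{lem:2factors}, and check that the resulting three summands match.

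First I would record the following extension of Lemma~\ref{lem:2factors} to $q+2$ factors, which is automatic from the definition~\eqref{eq:defCt} of $C_t\subset \Om^*(M^{q+2})$ as the direct sum of the subspaces $C_t^j$ and the fact that $d|_{C_t}$ is injective with image containing all relevant forms: for $a\in\HH_t^{\otimes(j-1)}$, $\xi\in dC_t\subset\Om^*(M)$ sitting in position $j$, and $b\in\HH_t^{\otimes(q+2-j)}$,
\begin{equation*}
  d_t^{-1}(a\times\xi\times b) = (-1)^{|a|}\,a\times d_t^{-1}(\xi)\times b,
\end{equation*}
since the right hand side lies in $C_t^j$ and has the correct differential. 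I would then apply this separately to each of the three terms obtained by the Leibniz rule on the right hand side of~\eqref{eq:fulldifft}.

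Concretely, using $\ddt e_{at}=d\lambda_{at}$, $\ddt e^a_t=d\lambda^a_t$ and $\ddt\alpha_t=d(d_t^{-1}\ddt\alpha_t)$, the extended lemma yields
\begin{align*}
  d_t^{-1}\ddt(e_{at}\times e^a_t\times\alpha_t)
  =\ &\lambda_{at}\times e^a_t\times\alpha_t \\
  &+(-1)^{e_a}\,e_{at}\times\lambda^a_t\times\alpha_t \\
  &+(-1)^{e_a+e^a}\,e_{at}\times e^a_t\times d_t^{-1}\ddt\alpha_t,
\end{align*}
where in the last summand the sign is $(-1)^{e_a+e^a}=(-1)^n$ because $\deg e_a+\deg e^a=n$. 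On the other hand, the Cauchy problem~\eqref{eq:diffeq} gives
\begin{equation*}
  \Bigl(\ddt\wt G_t\Bigr)\times\alpha_t
  =\lambda_{at}\times e^a_t\times\alpha_t+(-1)^{e_a}e_{at}\times\lambda^a_t\times\alpha_t,
\end{equation*}
and $dG_t=e_{at}\times e^a_t$ gives $(-1)^n dG_t\times d_t^{-1}\ddt\alpha_t=(-1)^n e_{at}\times e^a_t\times d_t^{-1}\ddt\alpha_t$, so the two sides agree.

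The main (and essentially only) obstacle is the bookkeeping in the multi-factor extension of Lemma~\ref{lem:2factors}: one must check that inserting the primitive in the $j$-th slot produces an element of $C_t^j$ and has the claimed differential, so that uniqueness of $d_t^{-1}$ forces the formula. Once this is in place, the identity~\eqref{eq:fulldifft} follows by direct comparison of the three matching summands and the sign analysis of $\deg e_a+\deg e^a=n$.
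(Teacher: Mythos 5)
Your argument is correct and rests on the same principle as the paper's proof: both sides of~\eqref{eq:fulldifft} lie in $C_t$ and $d|_{C_t}$ is injective, so it suffices to match differentials (equivalently, primitives) of the same three summands. The paper simply runs this in the opposite direction — it notes that the left-hand side belongs to $C_t$ and applies $d$ to it, using the Leibniz rule and $d\alpha_t=0$, which avoids spelling out the multi-factor extension of Lemma~\ref{lem:2factors}; your version needs that extension also for the third term, where the exact factor $\ddt\alpha_t$ occupies several slots rather than one, but it holds by the same uniqueness argument (and the paper itself invokes the obvious generalization of~\eqref{eq:primprod} elsewhere).
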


\begin{proof}
The explicit expressions for $\ddt G_t$ from
equation~\eqref{eq:diffeq} and $dG_t$ show that the left hand side
belongs to $C_t$. Therefore, it is enough to apply $d$ to the left
hand side and compute  
\begin{align*}
d\left(\ddt G_t\times \alpha_t+(-1)^ndG_t\times d_t^{-1}\ddt \alpha_t\right)
{=}
&\ddt dG_t\times \alpha_t+dG_t\times\ddt\alpha_t\cr
{=}
&\ddt (dG_t\times \alpha_t)\cr
{=}
&\ddt (e_{at}\times e^a_t\times \alpha_t).
\end{align*}
Since the last expression is the image under $d$ of the right hand
side of equation~\eqref{eq:fulldifft}, this proves the lemma.
\end{proof}


{\bf Homotopy computation.}
In the following computation we use the notation from~\S\ref{ss:op-graphs}.
For a marked graph $\wh\Gamma$, we denote by $c(\wh\Gamma)$ the result of
cutting $\wh\Gamma$ at its marked edge $l$.
For decomposable $\alpha\in H^*(M)^{\otimes s}$ we compute
\begin{align*}
&\ddt\fm_{(\ell,g)t}(\alpha)\cr
&\stackrel{(1)}
{=}\frac{1}{\ell!}\sum_{\Gamma\in \RR_{\ell,g}
}(-1)^{s(\Gamma,\alpha)}
\left(\int_{\Delta_{3\Gamma}}\ddt G_{\Gamma t}\times\alpha_t+
\int_{\Delta_{3\Gamma}}G_{\Gamma t}\times d\circ d_t^{-1}\ddt\alpha_t\right)\cr
&\stackrel{(2)}
{=}\frac{1}{\ell!}\sum_{\Gamma\in \RR_{\ell,g}}\sum_{l\in \Edge(\Gamma)}
\bigg((-1)^{s(\Gamma,\alpha)}\int_{\Delta_{3\Gamma}}G_t\times\dots\times\ddt G_t\times\dots\times G_t\times\alpha_t\cr
&\ \ \ +(-1)^{s(\Gamma,\alpha)+(n-1)(n(l)-e)+n}\int_{\Delta_{3\Gamma}}G_t\times\dots\times dG_t\times\dots\times G_t\times d_t^{-1}\ddt\alpha_t\bigg)\cr
&\stackrel{(3)}
{=}\frac{1}{2\ell!}\sum_{\wh\Gamma\in 
\RR_{\ell,g}^{om}}
\bigg((-1)^{s(\wh\Gamma,\alpha)}  \int_{\Delta_{3\wh\Gamma}}G_t\times\dots\times\ddt G_t\times\dots\times G_t\times\alpha_t\cr
&\ \ \ +(-1)^{s(\wh\Gamma,\alpha)+(n-1)(n(l)-e)+n}\int_{\Delta_{3\wh\Gamma}}G_t\times\dots\times dG_t\times\dots\times G_t\times d_t^{-1}\ddt\alpha_t\bigg).
\end{align*}
Here
$(1)$ follows from the definition of $\fm_{(\ell,g)t}$, the Leibniz
rule for $\ddt$, and the definition of $d_t^{-1}$; 
$(2)$ uses the Leibniz rule for the first integral and
equation~\eqref{eq:vanish} for the second one; 
and for $(3)$ we unite the double sum into one sum over o-marked graphs,
where the factor of $2$ compensates for passing from ``marked'' to ``o-marked''.

Our next goal is to manipulate $2\ell !$ times the last expression, namely
\begin{equation}\label{eq:zeta}
\zeta(\alpha):=\sum_{\wh\Gamma\in \RR_{\ell,g}^{om}}X_{\wh\Gamma}(\alpha),
\end{equation}
where 
\begin{align*}
  X_{\wh\Gamma}(\alpha):=
  &(-1)^{s(\wh\Gamma,\alpha)}\int_{\Delta_{3\wh\Gamma}}G_t\times\dots\times\ddt G_t\times\dots\times G_t\times\alpha_t\cr
  &+(-1)^{s(\wh\Gamma,\alpha)+
  (n-1)(n(l)-e)+n}\int_{\Delta_{3\wh\Gamma}}G_t\times\dots\times dG_t\times\dots\times G_t\times d_t^{-1}\ddt\alpha_t.
\end{align*}
This enjoys an equivariance property
analogous to~\eqref{eq:action} with respect to $\eta\in S(\bs)$,
\begin{equation}\label{eq:X}
  X_{\wh\Gamma\eta}=\eta^{-1}X_{\wh\Gamma}.
\end{equation}
Recall from~\eqref{eq:R-disjoint-union} and~\eqref{eq:ell12ell} the
disjoint union decomposition  
\begin{equation}\label{eq:disj}
\RR_{\ell,g}^{om} =
\RR_{\ell,g,1c}^{om}\amalg
   \coprod_{\substack{\ell_1+\ell_2=\ell+1 \\ g_1+g_2=g}}\RR_{\ell_1,\ell_2,g_1,g_2,1dc}^{om} 
     \amalg \RR_{\ell,g,12}^{om}.
\end{equation}
To $\wh\Gamma\in \RR_{\ell,g}^{om}$ we associate the set of shuffles
\begin{equation*}
Sh(\wh\Gamma) := \begin{cases}
Sh_{1,\ell-1},\,\, &\wh\Gamma\in \RR_{\ell,g,1c}^{oms}
\,\, 
\\
Sh_{1,\ell_1-1,\ell_2-1} ,\,\, &\wh\Gamma\in 
\RR_{\ell_1,\ell_2,g_1,g_2,1dc}^{oms}  
\\ 
Sh_{1,1,\ell-2},\,\, &\wh\Gamma\in 
\RR_{\ell,g,12}^{oms}
\end{cases}
\end{equation*}
and the group of cyclic permutations
\begin{equation*}
G_{cyc}(\wh\Gamma) := \begin{cases}
\Z_{s_1},\,\, &\wh\Gamma\in \RR_{\ell,g,1c}^{oms}
\,\, 
\\
\Z_{s_1} ,\,\, &\wh\Gamma\in 
\RR_{\ell_1,\ell_2,g_1,g_2,1dc}^{oms}  
\\ 
\Z_{s_1}\times\Z_{s_2},\,\, &\wh\Gamma\in 
\RR_{\ell,g,12}^{oms}
\end{cases}
\end{equation*}
Here $s_1$ and $s_2$ are the numbers of leaves on the first two
boundary components of $\wh\Gamma$. Note that these sets of permutations
depend on the graph $\wh\Gamma$ only through its ``type'' with respect to
the decomposition~\eqref{eq:disj}. The cardinalities of the shuffle
sets are 
\begin{equation}\label{eq:cardshuffle}
\begin{cases}
|Sh_{1,\ell-1}|=\ell,
\\
|Sh_{1,\ell_1-1,\ell_2-1}|=\frac{\ell !}
{(\ell_1-1)!(\ell_2-1)!},
\\ 
|Sh_{1,1,\ell-2}|=\ell(\ell-1).
\end{cases}
\end{equation}

Now we can rewrite $\zeta$ from~\eqref{eq:zeta} as
\begin{align*}
  \zeta \stackrel{(1)}{=}& \sum_{\substack{\wh\Gamma\in
      \RR_{\ell,g}^{oms}\\ (\tau,\kappa)\in Sh(\wh\Gamma)\times G_{cyc}(\wh\Gamma)}}
  X_{(\wh\Gamma\kappa^{-1})\tau}\cr
  \stackrel{(2)}{=}&
  \sum_{\wh\Gamma\in \RR_{\ell,g,1c}^{oms}}\Bigl(\sum_{\tau\in Sh(\wh\Gamma)}\tau^{-1}\Bigr)\sum_{\kappa \in G_{cyc}(\wh\Gamma)}
  X_{\wh\Gamma\kappa^{-1}} +
  \sum_{\wh\Gamma\in \RR_{\ell,g,12}^{oms}}\Bigl(\sum_{\tau\in Sh(\wh\Gamma)}\tau^{-1}\Bigr)\sum_{\kappa \in G_{cyc}(\wh\Gamma)}
  X_{\wh\Gamma\kappa^{-1}}  \cr
&
  + \sum_{\substack{\ell_1+\ell_2=\ell+1 \\ g_1+g_2=g}} \sum_{\wh\Gamma\in 
  \RR_{\ell_1,\ell_2,g_1,g_2,1dc}^{oms}} \Bigl(\sum_{\tau\in
    Sh(\wh\Gamma)}\tau^{-1}\Bigr) \sum_{\kappa \in G_{cyc}(\wh\Gamma)} X_{\wh\Gamma\kappa^{-1}}
\cr
\stackrel{(3)}{=}&
\sum_{\substack{\wh\Gamma\in \RR_{\ell,g}^{oms}
\\\kappa\in G_{cyc}(\wh\Gamma)}}|Sh(\wh\Gamma)|\,
X_{\wh\Gamma\kappa^{-1}}.
\end{align*}
Here $(1)$ follows from the bijections~\eqref{eq:slab1},\,\eqref{eq:slab2} 
and~\eqref{eq:slab3} applied to the first, second and third set in the
disjoint union~\eqref{eq:disj};
for $(2)$ we split the sum according to~\eqref{eq:disj} and apply
equation~\eqref{eq:X} with $\eta:=\tau$;
and for $(3)$ we regard each sum following the expression in big round
brackets as an element of the quotient space $\wh E_\ell C$, observe
that the corresponding expression in big round brackets acts as
multiplication by the cardinality of the corresponding set of shuffles,
and assemble the three summands back according to~\eqref{eq:disj}.
Using equation~\eqref{eq:X} with $\eta:=\kappa^{-1}$, we therefore get 
\begin{equation}\label{eq:key}
  \zeta(\alpha) = \sum_{\substack{\wh\Gamma\in \RR_{\ell,g}^{oms}
  \\\kappa\in G_{cyc}(\wh\Gamma)}}|Sh(\wh\Gamma)|\,X_{\wh\Gamma}(\kappa\alpha).
\end{equation}
This allows us to continue
\begin{align*}
&\ddt\fm_{(\ell,g)t}(\alpha)\cr
&\stackrel{(KEY)}{=} \sum_{\substack{\wh\Gamma\in \RR_{\ell,g}^{oms}
\\\kappa\in G_{cyc}}} \frac{|Sh(\wh\Gamma)|}{2\ell!}
\bigg((-1)^{s(\wh\Gamma,\kappa\alpha)}\int_{\Delta_{3\wh\Gamma}}G_t\times\dots\times\ddt G_t\times\dots\times G_t\times\kappa\alpha_t\cr
&\ \ \ \ \ \ \ \ 
+(-1)^{s(\wh\Gamma,\kappa\alpha)+(n-1)(n(l)-e)+n}\int_{\Delta_{3\wh\Gamma}}G_t\times\dots\times dG_t\times\dots\times G_t\times d_t^{-1}\ddt\kappa\alpha_t\bigg)\cr
&\stackrel{(1)}{=}\frac{1}{2\ell!}
\sum_{\substack{\wh\Gamma\in \RR_{\ell,g}^{oms}\\\kappa\in G_{cyc}}}
(-1)^{s(\wh\Gamma,\kappa\alpha)
+(n-1)(n(l)-e)}
|Sh(\wh\Gamma)|
\int_{\Delta_{3c(\wh\Gamma)}}G_t\times\dots\times G_t \\
&\ \ \ \ \ \ \ \ \ \ \  \ \ \ \ \ \ \times\left(\ddt G_t\times\kappa\alpha_t+(-1)^ndG_t\times d_t^{-1}\ddt\kappa\alpha_t\right)\cr
&\stackrel{(2)}{=}\frac{1}{2\ell!}
\sum_{\substack{\wh\Gamma\in \RR_{\ell,g}^{oms}
\\\kappa\in G_{cyc}}}
(-1)^{s(\wh\Gamma,\kappa\alpha)}|Sh(\wh\Gamma)|
\int_{\Delta_{3c(\wh\Gamma)}}G_t\times\dots\times G_t\times d_t^{-1}\ddt \left(e_{at}\times e_t^a\times\kappa\alpha_t\right)
\cr
&\stackrel{(3)}{=}\frac{1}{2\ell!}
\sum_{\substack{\wh\Gamma\in \RR_{\ell,g}^{oms}
\\\kappa\in G_{cyc}}}
(-1)^{s(\wh\Gamma,\kappa\alpha)}|Sh(\wh\Gamma)|
\int_{\Delta_{3c(\wh\Gamma)}}G_t\times\dots\times G_t\times d_t^{-1}\ddt 
\left(e_{at}\times \kappa\alpha_{1t}\times e_t^a\times \kappa\alpha_{2t}\right)\cr
&\stackrel{(4)}{=}
\frac{1}{2\ell!}\frac{\ell!}{(\ell_1-1)!(\ell_2-1)!}
\sum_{\substack{(\Gamma_1,\Gamma_2)\in \RR_{g_1,\ell_1}\times \RR_{g_2,\ell_2}\\
g_1+g_2=g\\ \ell_1+\ell_2=\ell+1}}  
p_{210}^{12}(\bbb_{(\Gamma_1\amalg\Gamma_2)t})(\alpha)\cr
&+\frac{1}{2\ell!}\ell\sum_{\Gamma\in \RR_{g-1,\ell+1}}p_{210}^{12}
(\bbb_{\Gamma t})(\alpha)
+\frac{1}{\ell!}\ell(\ell-1)\sum_{\Gamma\in \RR_{g,\ell-1}}p_{120}^{1}
(\bbb_{\Gamma t})(\alpha).
\end{align*}
Here equality $(KEY)$ follows by inserting the definitions of $\zeta$
and $X_{\wh\Gamma}$ in~\eqref{eq:key}.
For $(1)$ we replace the domain of integration $\Delta_{3\wh\Gamma}$ by
$\Delta_{3c(\wh\Gamma)}$, using an appropriate composition of reordering maps
(note that the number of integration variables is the same in both
cases, since it equals the number of vertices of $\wh\Gamma$), and 
move $\ddt G_t$ and $dG_t$ to the right past all the other propagators, 
the additional sign $(-1)^{(n-1)(n(l)-e)}$ resulting from the move of $\ddt G_t$; 
Equality $(2)$ follows from equation~\eqref{eq:fulldifft}, 
absorbing the common sign exponent $(n-1)(n(l)-e)$ in $s(\wh\Gamma,\kappa\alpha)$.
For $(3)$ we apply a shuffle permutation to move $e_t^a$ to the right
past the first tensor factor $\alpha_t^1$ of
$\alpha_t=\alpha_t^1\otimes\dots\otimes\alpha_t^s$ if 
$\wh\Gamma$ belongs to the last set in the disjoint union~\eqref{eq:disj}
(the case of $p_{120}^1$), or past those elements of the
first tensor factor $\alpha_t^1$ of $\alpha_t$ that get cut as a result of cutting the graph 
$\wh\Gamma$ along the marked edge if 
$\wh\Gamma$ belongs to either of the first two
sets in~\eqref{eq:disj} 
(the case of $p_{210}^{12}$). 
For $(4)$ we use the definition~\eqref{eq:bspeccond} of $\bbb_{\Gamma t}$,
the definitions of the operations $p_{210}^{21}$ and $p_{210}^1$, as well as 
equation~\eqref{eq:cardshuffle}. Here the first summand corresponds
to $c(\wh\Gamma)$ being disconnected, and the other two correspond  
to $c(\wh\Gamma)$ being connected. Note that the cyclic permutations
needed in the definitions of $p_{210}^1$ and $p_{120}^1$ are taken care of 
by the group $G_{\cyc}$ and equation~\eqref{eq:actiongauge}. 
We recap and continue
\begin{align*}
&\ddt\fm_{(\ell,g)t}
=\frac{1}{2\ell!}\frac{\ell!}{(\ell_1-1)!(\ell_2-1)!}
\sum_{\substack{(\Gamma_1,\Gamma_2)\in \RR_{g_1,\ell_1}\times \RR_{g_2,\ell_2}\\
g_1+g_2=g\\ \ell_1+\ell_2=\ell+1}}  
p_{210}^{12}(\bbb_{(\Gamma_1\amalg\Gamma_2)t})\cr
&\ \ \ +\frac{1}{2\ell!}\ell\sum_{\Gamma\in \RR_{g-1,\ell+1}}p_{210}^{12}
(\bbb_{\Gamma t})
+\frac{1}{\ell!}\ell(\ell-1)\sum_{\Gamma\in \RR_{g,\ell-1}}p_{120}^{1}
(\bbb_{\Gamma t})\cr
&\stackrel{(1)}{=}
\frac{1}{2\ell!}\frac{\ell!}{(\ell_1-1)!(\ell_2-1)!}\frac{1}{\ell_1\ell_2}
\sum_{\substack{(\Gamma_1,\Gamma_2)\in \RR_{g_1,\ell_1}\times \RR_{g_2,\ell_2}\\
g_1+g_2=g\\ \ell_1+\ell_2=\ell+1}}  
\wh \fp_{2,1,0}(\bbb_{(\Gamma_1\amalg\Gamma_2)t})\cr
&\ \ \ +\frac{1}{2\ell!}\ell\frac{2}{\ell(\ell+1)}\sum_{\Gamma\in \RR_{g-1,\ell+1}}
\wh \fp_{2,1,0}(\bbb_{\Gamma t})
+\frac{1}{\ell!}\ell(\ell-1)\frac{1}{\ell-1}\sum_{\Gamma\in \RR_{g,\ell-1}}\wh \fp_{1,2,0}(\bbb_{\Gamma t})\cr
&\stackrel{(2)}{=}\frac{1}{2}
\sum_{\substack{g_1+g_2=g\\ \ell_1+\ell_2=\ell+1}}\Bigl(\wh\fp_{2,1,0}(\fm_{(g_1,\ell_1)t},\bbb_{(g_2,\ell_2)t})+
\wh \fp_{2,1,0}(\bbb_{(g_1,\ell_1)t},\fm_{(g_2,\ell_2)t})\Bigr)\cr
&\ \ \ +\wh \fp_{2,1,0}(\bbb_{(g-1,\ell+1)t})+
\wh \fp_{1,2,0}(\bbb_{(g,l-1)t})\cr
&\stackrel{(3)}{=}
\sum_{\substack{g_1+g_2=g\\ \ell_1+\ell_2=\ell+1}}\wh \fp_{2,1,0}(\fm_{(g_1,\ell_1)t},\bbb_{(g_2,\ell_2)t})
+\wh \fp_{2,1,0}(\bbb_{(g-1,\ell+1)t})+ 
\wh \fp_{1,2,0}(\bbb_{(g,l-1)t}).
\end{align*}
Here equality $(1)$ follows from the definition~\eqref{eq:210120hat}
of $\wh \fp_{2,1,0}$ and $\wh \fp_{1,2,0}$ in terms of $p_{210}^{12}$ and $p_{120}^1$
using shuffles, together with equivariance~\eqref{eq:actiongauge} of 
$\bbb$ under shuffling of boundary components; 
$(2)$ follows from the definition of $\fm_{(\ell,g)t}$ and
$\bbb_{(\ell,g)t}$ and the obvious generalization of
equation~\eqref{eq:primprod} from $M^2$ to higher powers of $M$;
and for $(3)$ we rewrite the first summand using 
the symmetry of $\wh \fp_{2,1,0}$.

\subsection{Case 2 (exact difference)}\label{ss:case2}

Let $\HH\subset \ker d$ be a subspace complementing $\im d$ in $\ker
d$, and $\wt G_0$ an associated propagator. Let $\mu$ be an
$(n-2)$-form on $\wt M^2$ with $\tau^*d\mu=(-1)^nd\mu$ and  
consider the smooth family   
$$
  \wt G_t:=\wt G_0+td\mu.
$$ 
Then each $\wt G_t$ is a propagator for $\HH$. Define the family
$\fm_{(\ell,g)t}$ of MC elements and the family $\bbb_{(\ell,g)t}$ as
in \S\ref{ss:defs} by the formulas~\eqref{eq:mGammat},  
\eqref{eq:m}, \eqref{eq:exactdiff} and~\eqref{eq:b}.
The proof of the following result will occupy the rest of this subsection.

\begin{lemma}\label{lem:case2}
In the above setup, the families $\fm_{(\ell,g)t}$ and
$\bbb_{(\ell,g)t}$ satisfy equation~\eqref{eq:gaugealg}. In
particular, all the MC elements $\fm_t$ are gauge equivalent.
\end{lemma}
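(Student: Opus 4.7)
Since the complement $\HH$ does not vary in $t$, each harmonic representative $\alpha_t = \alpha$ is $t$-independent, so when we differentiate $\fm_{\Gamma t}(\alpha)$ only the propagators in the integrand contribute. Using $\ddt \wt G_t = d\mu$ and Leibniz,
\begin{equation*}
   \ddt \fm_{\Gamma t}(\alpha) = (-1)^{s(\Gamma,\alpha)}\sum_{l\in\Edge(\Gamma)} (-1)^{(n-1)(n(l)-1)} \int_{\Delta_{3\Gamma}} G_t\times\cdots\times d\mu|_l\times\cdots\times G_t\times\alpha ,
\end{equation*}
where $d\mu$ replaces the $l$-th copy of $G_t$. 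The strategy is to mimic the proof of the Maurer--Cartan equation in \S\ref{ss:maincompute}, with $\bbb$ replacing one copy of $\fm$, and to produce the desired identity~\eqref{eq:gaugealg} by a single Stokes computation applied to $\omega_{\Gamma,l} := \wt G_t\times\cdots\times\mu|_l\times\cdots\times \wt G_t\times\alpha$ on $\CC_\Gamma$.

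Since $\alpha$ is closed and $d\wt G_t = e_a\times e^a$ is closed, Leibniz gives
\begin{equation*}
   d\omega_{\Gamma,l} = \pm\, d\mu|_l\times\cdots + \sum_{l'\neq l}\pm\,(e_a\times e^a)|_{l'}\times \mu|_l\times\cdots .
\end{equation*}
Stokes' theorem (Theorem~\ref{thm:config-space}(c)) equates the integral of this form with $\int_{\p_1\CC_\Gamma}\omega_{\Gamma,l}$, decomposed into main and hidden parts. The hidden part vanishes by a direct adaptation of Theorem~\ref{thm:key-vanish}: the Kontsevich-style symmetry argument uses only smoothness of the integrand on $\wt X_\Gamma$ together with the $\tau$-symmetry $\tau^*\mu = (-1)^n\mu$, which we may assume after averaging $\mu$. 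The main face $\p_1^l\CC_\Gamma$ (at the edge carrying $\mu$) vanishes by a degree count: its $S^{n-1}$ fiber over $\Delta_{3\Gamma}\cap\Delta_2^{\{l\}}$ would receive the restriction of the $(n{-}2)$-form $\mu$, whose fiber integral over an $(n{-}1)$-sphere is zero. The main face $\p_1^{l'}\CC_\Gamma$ for $l'\neq l$ produces $(-1)^n$ via~\eqref{eq:fibre-int-G} and leaves the remaining integral over the contracted configuration space with $\mu|_l$ still inserted.

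Summing the resulting Stokes identities over $l\in\Edge(\Gamma)$ and $\Gamma\in\RR_{\ell,g}$, and running the combinatorial assembly of~\S\ref{ss:maincompute} (using the decomposition $\RR^{om}_{\ell,g} = \RR^{om}_{\ell,g,1c}\amalg\coprod\RR^{om}_{\ell_1,\ell_2,g_1,g_2,1dc}\amalg\RR^{om}_{\ell,g,12}$ and the bijections~\eqref{eq:slab1}--\eqref{eq:slab3}) with $\bbb$ replacing one $\fm$ factor, the sum of main-boundary contributions repackages exactly into
\begin{equation*}
   \wh\fp_{2,1,0}\bbb_{(\ell+1,g-1)t} + \sum_{\substack{\ell_1+\ell_2=\ell+1\\ g_1+g_2=g}}\wh\fp_{2,1,0}(\fm_{(\ell_1,g_1)t},\bbb_{(\ell_2,g_2)t})|_{\conn} + \wh\fp_{1,2,0}\bbb_{(\ell-1,g)t} .
\end{equation*}
The interior $(e_a\times e^a)|_{l'}$ bulk terms cancel in pairs by the duality involution $\ol I$ of~\S\ref{ss:graph-duality} applied at $l'$: because $\ol I$ is fixed-point-free (Lemma~\ref{lem:I-free}) and fixes the two flags of $l'$ carrying $e_a\times e^a$, the analogue of~\eqref{eq:signinvol} for the bulk diagonals (arising from $\Delta_{3\Gamma} = -\Phi(\Delta_{3 I_{l'}(\Gamma)})$ together with triviality of $\Phi$ on the $l'$-factor and the $\tau$-symmetry $\tau^*(e_a\times e^a) = (-1)^n (e_a\times e^a)$) pairs the contribution of $(\Gamma,l')$ with that of $(I_{l'}(\Gamma),l')$ with opposite signs. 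Combining gives equation~\eqref{eq:gaugealg}; gauge equivalence of $\fm_0$ and $\fm_1$ then follows from Lemma~\ref{lem:gaugeIBLinfty}.

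\textbf{Main obstacle.} The hardest step is the sign bookkeeping: extending the three-summand assembly of~\S\ref{ss:maincompute} to allow a $\bbb$-insertion at one position requires re-checking the identities of Lemma~\ref{lem:eta3} and the sign exponents~\eqref{eq:sGamma}--\eqref{eq:sGammatilde} in the presence of the marker $\mu|_l$, and in particular verifying the cancellation of the interior $(e_a\times e^a)|_{l'}$ terms when the $\mu$-edge $l$ is adjacent to $l'$, in which case $\Phi$'s cyclic action $\sigma_4$ on the four neighboring flags non-trivially permutes the $\mu$-integrand. We expect this to follow by the same mechanism as in the proof of the Maurer--Cartan equation, with the combinatorial details parallel to those worked out in~\cite{Volkov-thesis}.
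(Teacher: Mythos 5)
Your Stokes set-up is a reasonable starting point, but the way you distribute the resulting terms between ``algebra'' and ``cancellation'' is inverted, and with it the argument collapses. In the paper's mechanism (both for the Maurer--Cartan equation and here), the operations $\wh\fp_{2,1,0}$ and $\wh\fp_{1,2,0}$ applied to $\bbb_t$ are, by definition, sums over the dual basis of harmonic insertions $e_a\otimes e^a$ at the cut edge; via $d\wt G_t=\sum_a e_a\times e^a$ these are exactly the \emph{bulk} integrals over $\Delta_{3\Gamma}$ carrying $dG_t$ at one edge $l'$ and $\mu$ at another edge $l$. So the right-hand side of~\eqref{eq:gaugealg} is manufactured from the interior $(e_a\times e^a)|_{l'}$ terms that you propose to cancel, while the main boundary faces, which you propose to ``repackage exactly'' into $\wh\fp_{2,1,0}\bbb+\wh\fp_{2,1,0}(\fm,\bbb)|_{\conn}+\wh\fp_{1,2,0}\bbb$, must instead be shown to vanish. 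Neither of your two assignments can be repaired: a main face $\p_1^{l'}\CC_\Gamma$ is a restriction to the locus where the two endpoints of $l'$ coincide, i.e.\ an integral over a contracted-graph configuration space, and there is no chain-level identity converting such a diagonal restriction into the dual-basis insertion defining the IBL operations (this is precisely the discrepancy measured by $G$ itself, since $\Pi$ is Poincar\'e dual to the diagonal only in cohomology). And the duality cancellation~\eqref{eq:signinvol} is an orientation-reversing identification of \emph{boundary} loci which literally coincide as sets; for the bulk terms the diagonals $\Delta_{3\Gamma}$ and $\Phi(\Delta_{3I(\Gamma)})$ differ and carry different integrands (the rewiring $\sigma_4$ changes which points the neighbouring propagators connect), so no pairwise cancellation occurs. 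Indeed, if the bulk $dG_t$-and-$\mu$ terms did cancel, then the right-hand side of~\eqref{eq:gaugealg} --- which by~\eqref{eq:exactdiff}, $dG=e_a\times e^a$ and the cut--glue bijections is exactly a sum of such bulk integrals --- would vanish identically, forcing $\ddt\fm_t=0$ and making Case 2 vacuous; this is false in general.

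The paper's route is the mirror image of yours: one writes the $d\mu$-insertion as a total derivative minus the $dG_t$-at-$l'$, $\mu$-at-$l$ bulk terms; the bulk terms are assembled through the marked-graph/cutting combinatorics into the right-hand side of~\eqref{eq:gaugealg} with $\bbb$ defined by~\eqref{eq:exactdiff}; and the total-derivative terms are killed by Stokes, with the main faces cancelling in pairs by the duality involution and the hidden faces handled by the argument of~\S\ref{ss:gaugevan} (Lemma~\ref{lem:vanish2}). On that last point your proposal is also too quick: as the paper stresses, when $\mu$ sits on one edge the individual hidden-face integrals need not vanish (the Kontsevich swap exchanges the two possible positions of $\mu$ at the chosen $2$-valent vertex), and only the sum over these two insertions vanishes, using the degree difference between $\mu$ and $\wt G_t$; averaging $\mu$ to make it $\tau$-symmetric does not remove this issue.
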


To prove this, we compute for $\alpha\in\HH^{\otimes s}$:
\begin{align*}
&\ddt\fm_{(\ell,g)t}(\alpha)\cr
&\stackrel{(1)}{=}\frac{1}{\ell !}\sum_{\Gamma\in \RR_{\ell,g}}\sum_{l_1\in \Edge(\Gamma)}(-1)^{s(\Gamma,\alpha)}
\int_{\Delta_{3\Gamma}}G_t\times\dots\times d\mu\times\dots\times G_t\times\alpha\cr
&\stackrel{(2)}{=}
\frac{1}{\ell !}\sum_{\Gamma\in\RR_{\ell,g}}\sum_{l_1\in \Edge(\Gamma)}(-1)^{s(\Gamma,\alpha)+(n(l_1)-1)(n-1)}
\int_{\Delta_{3\Gamma}}d(G_t\times\dots\times \mu\times\dots\times G_t\times\alpha)\cr
& +\frac{1}{\ell !}
\sum_{\Gamma\in \RR_{\ell,g}}
\sum_{l_1\ne l_2\in \Edge(\Gamma)}(-1)^{s(\Gamma,\alpha)}\cr
&\int_{\Delta_{3\Gamma}}
G_t\times\dots\times G_t\times dG_t\times G_t\times\dots\times G_t\times\mu\times G_t\times\dots\times G_t\times\alpha\cr
&\stackrel{(3)}{=}\frac{1}{2\ell !}
\sum_{(\Gamma,l_2)\in \RR_{\ell,g}^{om}}
\sum_{l_1\ne l_2\in \Edge(\Gamma)}(-1)^{s(\Gamma,\alpha)}\cr
&\int_{\Delta_{3\Gamma}}
G_t\times\dots\times G_t\times dG_t\times G_t\times\dots\times G_t\times\mu\times G_t\times\dots\times G_t\times\alpha\cr
&\stackrel{(4)}{=}
\sum_{\substack{(\Gamma,l_2)\in \RR_{\ell,g}^{oms}\\\kappa\in G_{cyc}(\Gamma,l_2)}}\frac{|Sh(\Gamma,l_2)|}{2\ell !}
\sum_{l_1\ne l_2\in \Edge(\Gamma)}
(-1)^{s(\Gamma,\kappa\alpha)}\cr
&\int_{\Delta_{3\Gamma}}
G_t\times\dots\times G_t\times dG_t\times G_t\times\dots\times G_t\times\mu\times G_t\times\dots\times G_t\times\kappa\alpha\cr
&\stackrel{(5)}{=}\frac{1}{2\ell !}\sum_{(\Gamma,l_2)\in\RR_{\ell,g}^{om}}\sum_{l_1\in \Edge(\Gamma)\setminus
\{l_2\}}(-1)^{s(\Gamma,\alpha)}|Sh(\Gamma,l_2)|\cr
&\int_{\Delta_{3c(\Gamma)}}G_t\times\dots\times\mu\times\dots\times G_t\times
\left(e_a\times\kappa\alpha_1\times e^a\times\kappa\alpha_2\right)\cr
&\stackrel{(6)}{=}\frac{1}{2\ell}
\sum_{\substack{(\Gamma_1,\Gamma_2)\in \RR_{g_1,\ell_1}\times \RR_{g_2,\ell_2}\\
g_1+g_2=g,\ell_1+\ell_2=\ell+1}}\frac{\ell !}{(\ell_1-1)!(\ell_2-1)!}
p_{210}^{12}(\bbb_{(\Gamma_1\amalg\Gamma_2)t})(\alpha)\cr
&+\frac{1}{2\ell !}\ell \sum_{\Gamma\in \RR_{g-1,\ell+1}}p_{210}^{12}(\bbb_{\Gamma t})(\alpha)
+\frac{1}{\ell !}\ell(\ell-1) \sum_{\Gamma\in \RR_{g,\ell-1}}p_{120}^1(\bbb_{\Gamma t})(\alpha)\cr
&\stackrel{(7)}{=}\frac{1}{2\ell}
\sum_{\substack{(\Gamma_1,\Gamma_2)\in \RR_{g_1,\ell_1}\times \RR_{g_2,\ell_2}\\
g_1+g_2=g,\ell_1+\ell_2=\ell+1}}\frac{\ell !}{(\ell_1-1)!(\ell_2-1)!}\frac{1}{\ell_1\ell_2}
\wh \fp_{2,1,0}(\bbb_{(\Gamma_1\amalg\Gamma_2)t})(\alpha)\cr
&+\frac{1}{2\ell !}\ell\frac{2}{\ell(\ell+1)}\sum_{\Gamma\in \RR_{g-1,\ell+1}}\wh \fp_{2,1,0}
(\bbb_{\Gamma t})(\alpha)
+\frac{1}{\ell !}\ell(\ell-1)\frac{1}{\ell-1}\sum_{\Gamma\in \RR_{g,\ell-1}}\wh \fp_{1,2,0}(\bbb_{\Gamma t})(\alpha)\cr
&\stackrel{(8)}{=}
\sum_{\substack{g_1+g_2=g,\\ \ell_1+\ell_2=\ell+1}}
\wh \fp_{2,1,0}(\fm_{(g_1,\ell_1)t},\bbb_{(g_2,\ell_2)t})(\alpha)
+\wh \fp_{2,1,0}(\bbb_{(g-1,\ell+1)t})(\alpha)+
\wh \fp_{1,2,0}(\bbb_{(g,l-1)t})(\alpha).
\end{align*}
Here $(1)$ follows from the definition of $\fm_{(\ell,g)t}$ and the
Leibniz rule for $\ddt$; $(2)$ follows from the Leibniz rule for $d$ (note
that $l_1$ is responsible for the position $n(l_1)$ of $\mu$, while $l_2$ is
responsible for the position $n(l_2)$ of $dG$). For $(3)$ we apply
Lemma~\ref{lem:vanish2} below to show vanishing of the first integral,   
view $(\Gamma,l_2)$ as a marked graph, and sum over both possible
orientations of $l_2$ at the cost of additional factor of $1/2$ in the
second integral.  
Here the sign $(-1)^{(n(l_2)-1)(n-1)}$ in front of the second integral is absorbed  
in $s(\Gamma,\alpha)$, so that the integral together with the sign prefactor does not 
depend on the position of $l_2$ which is part of the extension of the labelling for 
$(\Gamma,l_2)$, cf.~Lemma~\ref{lm:exten}.
The argument to get $(4)$ is analogous to the one giving us equality
$(KEY)$ in Case 1. We briefly indicate the main points, 
retaining the notation $Sh(\wh\Gamma)$ and $G_{cyc}(\wh\Gamma)$ from
Case 1 applied to the marked graph $\wh\Gamma=(\Gamma,l_2)$. We
replace the summation over $\RR_{\ell,g}^{om}$ with the one over $\RR_{\ell,g}^{oms}$, 
at the cost of introducing additional summations over $\kappa$ belonging to $G_{cyc}(\wh\Gamma)$
and $\tau$ belonging to a suitable set of shuffles 
using the bijections~\eqref{eq:slab1},\,\eqref{eq:slab2}
and~\eqref{eq:slab3}. The sum over $\tau$ is converted
to multiplication with $Sh(\wh\Gamma)$ by means of passing 
to the quotient space $\wh E_\ell C$.
For $(5)$ we we replace the domain of integration $\Delta_{3\Gamma}$
by $\Delta_{3c(\Gamma)}$, using an appropriate composition of reordering maps, move 
$dG_t=e_a\times e^a$ to the right past all other 
propagators, and then move $e^a$ to the right past the first tensor factor of $\alpha_t$
if $\Gamma$ belongs to the last set in the disjoint
union~\eqref{eq:disj} (the case of $p_{120}^1$), or past the first
several elements of the first tensor factor
of $\alpha_t$
if $\Gamma$ belongs to
either of the first two sets of the disjoint union~\eqref{eq:disj}
(the case of $p_{210}^{12}$).
This is analogous to combining steps (1) and (3) in the homotopy computation 
after equation~\eqref{eq:key}.
For $(6)$ we use the definitions of $\bbb_\Gamma$ and the operations
$p_{210}^{21}$ and $p_{210}^1$, as well as
equation~\eqref{eq:cardshuffle}. Here the first summand corresponds to
$c(\Gamma)$ being disconnected, and the other two correspond  
to $c(\Gamma)$ being connected. Note that the cyclic permutations
needed in the definitions of $p_{210}^1$ and $p_{120}^1$ are taken
care of by the group $G_{\cyc}$ and equation~\eqref{eq:actiongauge}.
Equality $(7)$ follows from definition~\eqref{eq:210120hat} of $\wh
\fp_{2,1,0}$ and $\wh \fp_{1,2,0}$ in terms of $p_{210}^{12}$ and $p_{120}^1$
using shuffles, together with equivariance~\eqref{eq:actiongauge} of 
$\bbb$ under shuffling of boundary components; $(8)$ follows from the
definitions of $\fm_{(l,g)t}$ and $\bbb_{(l,g)t}$ and the symmetry of
$\wh p_{210}$ for the first summand. 
This concludes the proof of Lemma~\ref{lem:case2} modulo the following
vanishing result.

\begin{lemma}\label{lem:vanish2}
For $\alpha,\mu$ as above and each $\Gamma\in \RR_{\ell,g}$ we have
$$
\sum_{\Gamma\in \RR_{\ell,g}}\sum_{l\in
  \Edge(\Gamma)}(-1)^{s(\Gamma,\alpha)+(n(l)-1)(n-1)}
\int_{\Delta_{3\Gamma}}d(G_t\times\dots\times \mu\times\dots\times G_t\times\alpha)=0.
$$
\end{lemma}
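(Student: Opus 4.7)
\textbf{Proof proposal for Lemma~\ref{lem:vanish2}.}
The plan is a direct Stokes argument on the compactified configuration space, combined with the same two cancellation mechanisms used in the proof of the Maurer-Cartan equation in~\S\ref{ss:maincompute}. First, pick an extension of the labelling of $\Gamma$ so that the edge $l$ has position $n(l)$, and observe that the form
$$
  \wt\omega_{\Gamma,l} := \wt G_t\times\cdots\times\mu\times\cdots\times\wt G_t\times\alpha
$$
on $\wt X_\Gamma$ (with $\mu$ at position $n(l)$) is smooth, since both $\wt G_t$ and $\mu$ extend smoothly to $\wt M^2$. Therefore Theorem~\ref{thm:config-space}(c) (Stokes on $\CC_\Gamma$) gives
$$
  \int_{\Delta_{3\Gamma}}d(G_t\times\cdots\times\mu\times\cdots\times G_t\times\alpha)
  = \int_{\CC_\Gamma} d\wt\omega_{\Gamma,l}
  = \int_{\p_1^{\main}\CC_\Gamma}\wt\omega_{\Gamma,l}
  + \int_{\p_1^{\hidden}\CC_\Gamma}\wt\omega_{\Gamma,l}.
$$

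Next, I would show that the hidden boundary contribution vanishes by the same Kontsevich-type symmetry argument that proves Theorem~\ref{thm:key-vanish}. The argument there only relies on the propagators $\wt G_t$ being smooth on $\wt M^2$ and on the natural $SO(n)$-action on collapsing subconfigurations; replacing one $\wt G_t$ by the smooth form $\mu$ does not disturb any of this structure, so the proof from Appendix~\ref{sec:hiddenvanish} applies verbatim to $\wt\omega_{\Gamma,l}$.

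For the main boundary $\p_1^{\main}\CC_\Gamma=\coprod_{k\in\Edge(\Gamma)}\p_1^k\CC_\Gamma$, I would separate the contribution of the distinguished edge $l$ from those of the other edges. When $k=l$, the sphere fibre $S^{n-1}$ of $\p_1^l\CC_\Gamma\to\Delta_{3\Gamma}\cap\Delta_2^{\{l\}}$ lies in the $\wt M^2$-factor that hosts $\mu$, and all other factors of $\wt\omega_{\Gamma,l}$ are pulled back from the base. Fibre integration of $\mu\in\Om^{n-2}(\wt M^2)$ over $S^{n-1}$ therefore produces a form of negative degree, which vanishes. Hence the summand $k=l$ contributes zero to the main boundary. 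For $k\neq l$, I would invoke the duality involution $\ol I$ acting on the edge $k$, as in~\S\ref{ss:duality}. Since $\ol I$ only redistributes flags at the two vertices adjacent to $k$ and keeps the edge set (and hence the positions of the edges $k$ and $l$, as well as the $\eta_3$-exponent by~\eqref{eq:eta3-duality}) unchanged, the integrand $\wt\omega_{\Gamma,l}$ and the sign prefactor $(-1)^{s(\Gamma,\alpha)+(n(l)-1)(n-1)}$ are preserved under $\ol I$, while $\p_1^k\CC_\Gamma$ reverses orientation by the key identity~\eqref{eq:signinvol}. Pairing $(\Gamma,k)$ with $\ol I(\Gamma,k)$ (using that $\ol I$ has no fixed points, Lemma~\ref{lem:I-free}) then cancels every pair of contributions with $k\neq l$ in the double sum over $\Gamma\in\RR_{\ell,g}$ and $k\in\Edge(\Gamma)\setminus\{l\}$.

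The main obstacle is the sign bookkeeping: one has to verify that the duality involution at $k$ preserves not only $\eta_3(\Gamma)$ but also the position $n(l)$ of $\mu$ in the edge order and the abbreviated exponent $s(\Gamma,\alpha)$, so that the contributions really come with opposite signs. This is entirely parallel to the sign verification carried out in~\S\ref{ss:maincompute}, the only new ingredient being that the marked edge $l$ carries $\mu$ instead of $\wt G_t$; since $\deg\mu$ and $\deg\wt G_t$ differ, one has to check that this parity shift is absorbed consistently, which it is because the only place $l$ enters the sign is through the prefactor $(-1)^{(n(l)-1)(n-1)}$ that is independent of $k$.
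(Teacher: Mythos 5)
Your Stokes set-up and your treatment of the main boundary are fine: splitting into the face $k=l$, killed by fibre-degree counting since $\deg\mu=n-2<n-1$, and the faces $k\neq l$, killed in pairs by the duality involution and~\eqref{eq:signinvol}, is a legitimate variant of the paper's blanket ``usual duality argument'' (which in fact handles $k=l$ by duality as well). The genuine gap is your claim that the hidden-face contributions vanish \emph{individually}, ``verbatim'' by the proof of Theorem~\ref{thm:key-vanish}. That proof does not rest on an $SO(n)$-action; it rests on the discrete swap $\tau_{12}$ of the two $\wt M^2$-factors attached to two edges $l_1,l_2\in J$ adjacent to a $2$-valent vertex of $\Gamma_J$, and the swap multiplies the integrand by $(-1)^{n-1}$ precisely because both swapped factors carry the \emph{same} degree-$(n-1)$ form $\wt G_t$. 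If the marked edge $l$ carrying $\mu$ is one of $l_1,l_2$ --- which cannot in general be avoided, e.g.\ when $\Gamma_J$ has a single $2$-valent vertex and $l$ is adjacent to it --- the swap transports $\mu$ to the other edge and does not reproduce your integrand up to a sign, so the individual integral over $\CC_J$ need not vanish. The paper says exactly this: unlike in Theorem~\ref{thm:key-vanish}, here the individual hidden-face integrals may be nonzero.

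The repair, carried out in \S\ref{ss:gaugevan}, uses the sum over $l$ that is present in the statement but which you never exploit on the hidden part. For a fixed hidden face $\CC_J$, the terms with $l\notin\{l_1,l_2\}$ do vanish one by one by your argument; the two terms $l=l_1$ and $l=l_2$ must be added first, producing the combination $\mu\times\wt G_t+(-1)^{n-1}\wt G_t\times\mu$ in the two distinguished factors. Because $\deg\mu=\deg\wt G_t-1$, the swap $\tau_{12}$ multiplies this combination by $(-1)^{n-1}$ while being $(-1)^n$-serving on the orientation of $\CC_J$, so the combined contribution vanishes. Thus the summation over $l$ is essential for the hidden faces, not merely a bookkeeping convenience, and your proof as written is incomplete at exactly this point.
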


\begin{proof}
As usually, we apply Stokes' theorem and then split the boundary into
the main and hidden parts: 
\begin{align*}
 &\sum_{\Gamma\in \RR_{\ell,g}}\sum_{l\in \Edge(\Gamma)}(-1)^{s(\Gamma,\alpha)
 +(n(l)-1)(n-1)}
\int_{\Delta_{3\Gamma}}d(G_t\times\dots\times \mu\times\dots\times G_t\times\alpha)\cr
=&\sum_{\Gamma\in \RR_{\ell,g}}\sum_{l\in \Edge(\Gamma)}(-1)^{s(\Gamma,\alpha)+(n(l)-1)(n-1)}
\int_{\CC_{\Gamma}}d(\wt G_t\times\dots\times \mu\times\dots\times \wt G_t\times\alpha)\cr
=&\sum_{\Gamma\in \RR_{\ell,g}}\sum_{l\in \Edge(\Gamma)}(-1)^{s(\Gamma,\alpha)+(n(l)-1)(n-1)}
\int_{\p\CC_{\Gamma}^{\rm main}}\wt G_t\times\dots\times \mu\times\dots\times \wt G_t\times\alpha\cr
&+\sum_{\Gamma\in \RR_{\ell,g}}\sum_{l\in \Edge(\Gamma)}(-1)^{s(\Gamma,\alpha)+(n(l)-1)(n-1)}
\int_{\p\CC_{\Gamma}^{\rm hidden}}\wt G_t\times\dots\times \mu\times\dots\times \wt G_t\times\alpha\cr
=&0
\end{align*}
Here in the last expression the first summand vanishes by the usual
duality argument as in~\S\ref{ss:maincompute},
see equation~\eqref{eq:gauge-vanish}.
For the second summand we need to consider hidden faces. Unlike in the
computation for the Maurer-Cartan  
relation (see Theorem~\ref{thm:key-vanish}) or in~\S\ref{ss:case1}
above, here the individual integrals over a hidden face may not
vanish. We prove the vanishing of the sum in
Corollary~\ref{cor:gauge-vanish}.  
\end{proof}


\subsection{Main result}\label{ss:mainres}

Now we state and prove the main result of this section, which is a
more precise version of Theorem~\ref{thm:uniqueness-intro} in the Introduction.

\begin{theorem}\label{thm:gauge}
Let $M$ be a closed oriented manifold.
Let $\HH_0$ and $\HH_1$ be two complements of $\im d$ in $\ker d$. 
Define Maurer-Cartan elements elements $\fm_0$ and $\fm_1$ on $\dIBL(H_{dR}^*(M))$
using {\em special} propagators $\wt G_0$ for $\HH_0$ and $\wt G_1$
for $\HH_1$ (whose existence follows from Proposition~\ref{prop:existsprop}).
Then $\fm_0$ is gauge equivalent to $\fm_1$.
\end{theorem}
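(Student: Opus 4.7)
The plan is to connect $\fm_0$ and $\fm_1$ by a piecewise smooth path of Maurer-Cartan elements coming from a piecewise smooth family of propagators, on each smooth piece of which a path $\{\bbb_t\}$ satisfying~\eqref{eq:gaugeIBL} is produced by Lemma~\ref{lem:case1} or Lemma~\ref{lem:case2}. Each smooth piece then yields a gauge equivalence via Lemma~\ref{lem:gaugeIBLinfty}, and composing these equivalences gives $\fm_0\sim\fm_1$. The middle piece will be handled by Case~1 (the Cauchy-problem construction) and the outer piece(s) by Case~2, with the bridge between them furnished by Lemma~\ref{lem:vanishexact}(b): two propagators for the same $\HH$ with vanishing bilinear form automatically differ by an exact $(n-2)$-form.

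First I will choose a smooth family $\{\HH_t\}_{t\in[0,1]}$ of complements of $\im d$ in $\ker d$ joining $\HH_0$ and $\HH_1$, together with a smooth family $\{C_t\}$ of complements of $\ker d$ in $\Om^*(M)$ satisfying $C_t\perp\HH_t$; existence of such $C_t$ follows from the splitting $\Om^*(M)=\HH_t\oplus\im d\oplus C_t$ guaranteed by Lemma~\ref{lem:Horthog}, by taking $C_t$ to be a smooth family of complements to $\im d$ inside $\HH_t^\perp$. Applying Lemma~\ref{lem:Cauchyprop} with initial condition $\wt K:=\wt G_0$ produces a smooth family $\{\wt G_t^{CP}\}$ of propagators for $\HH_t$ solving the Cauchy problem~\eqref{eq:diffeq}. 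Since $\wt G_0$ is special, Lemma~\ref{lem:vanishexact}(a) gives that it has vanishing bilinear form, and Lemma~\ref{lem:vanishbilin} then transports this property along the whole family. Case~1 (Lemma~\ref{lem:case1}) now produces a gauge equivalence between $\fm_0$ and the intermediate Maurer-Cartan element $\fm'$ built from $\wt K_1:=\wt G_1^{CP}$.

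The final step is to bridge $\wt K_1$ and $\wt G_1$, both of which are propagators for the same $\HH_1$ with vanishing bilinear form (the latter because it is special). Lemma~\ref{lem:vanishexact}(b) supplies an $(n-2)$-form $\mu$ on $\wt M^2$ with $\wt K_1-\wt G_1=d\mu$; replacing $\mu$ by $\tfrac12(\mu+(-1)^n\wt\tau^*\mu)$ leaves $d\mu$ unchanged and secures the symmetry $\wt\tau^*d\mu=(-1)^n d\mu$ required by~\eqref{eq:mu}, since the symmetry of the two propagators forces $\wt\tau^*(\wt K_1-\wt G_1)=(-1)^n(\wt K_1-\wt G_1)$. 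Case~2 (Lemma~\ref{lem:case2}) applied to the affine path $\wt G_1+t\,d\mu$ then yields a gauge equivalence between $\fm'$ and $\fm_1$, and composing the two equivalences completes the proof. (A trivial constant Case~2 piece at the $\HH_0$-end can be inserted to realize the three-piece decomposition mentioned in~\S\ref{ss:intro} without altering the argument.)

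The main obstacle lies less in the high-level composition, which is clean, than in coordinating the simultaneous smooth choices of $\HH_t$ and $C_t$ subject to $C_t\perp\HH_t$ (a linear-algebra issue solvable by constructing smooth sections of the relevant Grassmann bundle over $[0,1]$), and in verifying that the symmetrization of $\mu$ truly lives in $\Om^{n-2}(\wt M^2)$ with the correct behavior on the blow-up locus, which is inherited from the corresponding property of $\wt K_1$ and $\wt G_1$. The genuine heavy lifting has already been carried out in Lemmas~\ref{lem:case1} and~\ref{lem:case2}, whose sign-tracking Stokes-plus-duality computations in \S\ref{ss:case1}--\S\ref{ss:case2} form the core on which this theorem is assembled.
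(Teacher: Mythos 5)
Your proposal is correct and follows essentially the same route as the paper: a piecewise path of Maurer--Cartan elements assembled from the Cauchy-problem piece (Lemma~\ref{lem:case1}) and exact-difference pieces (Lemma~\ref{lem:case2}), glued together by the vanishing-bilinear-form results Lemma~\ref{lem:vanishexact} and Lemma~\ref{lem:vanishbilin}; your version even drops one Case-2 piece by starting the Cauchy problem at $\wt G_0$ itself, which is legitimate since $\wt G_0$ is special and hence has vanishing bilinear form. The only place where you are thinner than the paper is the smooth choice of $C_t$ with $C_t\perp\HH_t$: instead of appealing to sections of an (infinite-dimensional) Grassmann bundle, the paper constructs $C_t$ concretely as the image of the integral operator of an auxiliary smooth family of \emph{special} propagators $\wt G^2_t$ (Lemma~\ref{lem:propagator}, Remark~\ref{rem:spec}), so that $\HH_t\perp C_t$ is immediate from~\eqref{eq:keyortho}; substituting that construction closes this one soft spot in your argument.
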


\begin{proof}
The proof has 3 steps.

{\bf Step 1. }
Let $\{e_a\}$ be a basis of $H^*(M)$. This uniquely defines bases
$\{e_{a0}\}$ and $\{e_{a1}\}$ of $\HH_0$ and $\HH_1$,
respectively. Define the linear interpolation
$$
  e_{at}:=(1-t)e_{a0}+te_{a1},\qquad t\in [0,1].
$$
Note that for each $t\in [0,1]$ the element $e_{at}$ projects to
$e_a\in H^*(M)$. Therefore, the span
$$
  \HH_t:={\rm span}\{e_{at}\}
$$
complements $\im d$ in $\ker d$ and $\{e_{at}\}$ is a basis of
$\HH_t$. Following the convention from~\S\ref{ss:compl}, we denote by
$e^a_t\in \HH_t$ the unique representative of $e^a\in H^*(M)$ in $\HH_t$.

{\bf Step 2. }
We pick a smooth family of propagators 
$\{\wt G_t^1\}_{t\in [0,1]}$ with
$$
d\wt G_t^1=e_{at}\times e^a_t.
$$
There are many ways to do so and the precise choice is irrelevant. One
way is by using the Cauchy problem~\eqref{eq:diffeq}. Here the complement
for defining $d_t^{-1}$ is irrelevant and can be chosen to be constant 
for simplicity.
Another way to fix $\wt G_t^1$ uniquely is to impose Neumann
conditions at the boundary (contraction of the form with any vector
normal to the boundary is zero), see equation~(1) in~\cite{Cappell-DeTurck-Gluck-Miller}. 
Let $\wt G_t^2$ be the special propagator associated to $\wt G_t^1$,
see Lemma~\ref{lem:propagator} and Remark~\ref{rem:spec}.
We now take the subspace
$$
  C_t:=\im\left(\Om^*(M)\ni\gamma\mapsto\int_{x\in M}G_t^2(x,y)\gamma(x)\right)
$$
as the complement of $\ker d$ in $\Om^*(M)$ to define $d_t^{-1}$.
Since $\wt G_t^2$ is special, equation~\eqref{eq:keyortho} yields the key orthogonality
$$
  \HH_t\perp C_t,
$$
used below in Lemma~\ref{lem:vanishbilin}.

{\bf Step 3. }
We define $\lambda_{at}$ and $\lambda^a_t$ by equation~\eqref{eq:lambda} with $d_t^{-1}$
from Step 2. Let $\wt G_t^3$ be the unique solution of the Cauchy problem~\eqref{eq:diffeq} with 
$\wt K:=\wt G^2_0$. Lemma~\ref{lem:case1} shows that the MC elements $\fm_0^3$ and 
$\fm_1^3$ defined by $\wt G_0^3$ respectively $\wt G_1^3$ are gauge
equivalent. We indicate this as  
$$
  \fm_0^3\sim \fm_1^3.
$$
Both propagators $\wt G_0^3$ and $\wt G_0$ are special, in particular
their bilinear forms vanish by Lemma~\ref{lem:vanishexact}(a). Therefore, by 
Lemma~\ref{lem:vanishexact}(b) the difference $\wt G_0^3-\wt G_0$ is exact. Hence, by 
Lemma~\ref{lem:case2} the MC elements $\fm_0^3$ and $\fm_0$ are gauge equivalent,
$$
  \fm_0\sim\fm_0^3.
$$
By Lemma~\ref{lem:vanishbilin} the bilinear form of each $\wt G_t^3$ vanishes. 
In particular, this is true for $\wt G_1^3$. Since the propagator $\wt
G_1$ has vanishing bilinear form as well, by
Lemma~\ref{lem:vanishexact}(b) the difference $\wt G_1^3-\wt G_1$ is
exact. Hence, by Lemma~\ref{lem:case2} the MC elements $\fm_1^3$ and
$\fm_1$ are gauge equivalent. By transitivity of gauge equivalence we
conclude
$$
  \fm_0\sim\fm_1.
$$
\end{proof}

\section{Relation to physical Chern-Simons theory}\label{sec:physics}















\bigskip

In this section we discuss how the results in this paper relate to
perturbative Chern-Simons theory in the physics literature, see for
example~\cite{Witten-jones,Bar-Natan91,Bar-Natan95,Axelrod-Singer-I,
  Axelrod-Singer-II,Kontsevich-feynman,Bott-Taubes,Bott-Cattaneo98,Cattaneo-Mnev}.

We start by sketching this theory, mostly following the exposition in~\cite{Sawon}. 
Let $M$ be a closed oriented $3$-manifold, and $G$ a compact connected
Lie group with an $\Ad$-invariant inner product $\Tr$ on its Lie
algebra $\g$. The Lie group $G$ is usually taken to be $U(N)$ or $SU(N)$.
Let $\AA$ be the space of connections on the trivial prinipal
$G$-bundle $M\times G\to M$. We identify $\AA$ with $\Om^1(M,\g)$ by
writing a connection as $d+A$ with $A\in\Om^1(M,\g)$. The {\em
  Chern-Simons action} $S:\AA\to\R$ is defined by
$$
   S(A) := \frac{1}{4\pi}\int_M\Tr(A\wedge dA+\frac23 A\wedge A\wedge A).
$$
The action of the gauge group $\GG = \Om^0(M,G)$ given by $A\mapsto
g^{-1}dg+g^{-1}Ag$ changes $S(A)$ by integer multiples of $2\pi$, so
the expression $e^{ikS(A)}$ is gauge invariant for $k\in\Z$. Now one
formally considers the {\em partition function} 
$$
  Z_k(M) := \int_\AA e^{ikS(A)}\DD A, \qquad k\in\N.
$$
Since this expression does not depend on any auxiliary data such as a
metric, it is expected to give rise to invariants of the $3$-manifold $M$.

One approach to construct such invariants was pioneered by Witten in
his seminal paper~\cite{Witten-jones}. Using formal properties of path
integrals, he derives a surgery formula for $Z_k(M)$ which makes it
computable from $Z_k(S^3)$ by iterated surgeries. When including a
link in $M$ as a Wilson line, similar arguments lead to the skein
relation for the Jones polynomial. 

The other approach, which is more relevant for us, considers the
``semiclassical limit'' of $Z_k(M)$ as $k\to\infty$ (where $1/k$
plays the role of Planck's constant). By stationary phase
approximation, the integral over $\AA$ localizes at critical points of
$S$, i.e., at flat connections $A$ satisfying
$$
  F_A = dA + A\wedge A = 0.
$$
Writing connnections near a flat connection $A$ as $A+\alpha$ with
$\alpha\in\Om^1(M,\g)$ we get
$$
   S(A+\alpha) - S(A) = \frac{1}{4\pi}\int_M\Tr(\alpha\wedge
   d_A\alpha+\frac23 \alpha\wedge \alpha\wedge \alpha) =: S_A(\alpha), 
$$
where $d_A$ is the second map in the twisted de Rham complex 
$$
   \Om^0(M,\g)\stackrel{d_A}\longrightarrow
   \Om^1(M,\g)\stackrel{d_A}\longrightarrow
   \Om^2(M,\g)\stackrel{d_A}\longrightarrow
   \Om^3(M,\g).
$$
So the contribution of the flat connection $A$ to $Z_k(M)$ is given by
$$
  e^{ikS(A)}\int_\AA e^{ikS_A(\alpha)}\DD \alpha.
$$
Note that the function $S_A(\alpha)$ in the exponent is the sum of a
quadratic and a cubic term. If the linear operator $d_A$ defining the
quadratic term were invertible, then Wick expansion would give a
perturbative expansion of the integral in powers of $1/k$ in terms of 
configuration space integrals associated to trivalent ribbon graphs with 
edges labelled by the inverse of $d_A$. However, this is never the case. 
One way to remedy this is the BRST gauge fixing procedure, which
introduces a gauge fixing boson and two fermionic ghost fields.
As explained in~\cite{Axelrod-Singer-I,Cattaneo-Mnev},
identifying these fields with differential forms of degrees $0$, $2$
and $3$ one arrives at an expansion of the integral in powers of $1/k$
in terms of configuration space integrals associated to trivalent
ribbon graphs with leaves, where the edges are labelled by a
propagator in the sense of~\S\ref{sec:prop} and the leaves are
labelled by $\Delta_A$-harmonic forms.  

For the trivial connection $A=0$, the perturbative expansion of
physical Chern-Simons theory is thus described by the same
configuration space integrals that enter the definition of our
Maurer-Cartan element in~\S\ref{ss:defMC}. However, there are many
noteworthy differences between the two theories. 
\begin{enumerate}
\item Physical Chern-Simons theory is defined for $3$-manifolds,
  while our theory is defined for manifolds of any (odd or even) dimension.
\item Physical Chern-Simons theory requires the choice of a framing of
  the $3$-manifold,
  while our manifolds need not be parallelizable.
\item Physical Chern-Simons theory includes trivalent ribbon graphs
  without leaves, while in our theory these are ruled out by
  condition~\eqref{eq:one-boundary-vertex}. 
\item Physical Chern-Simons theory is derived from the partition
  function for the Chern-Simons action, while our theory is not
  derived in this way from an action functional.
\item Physical Chern-Simons theory has perturbative expansions at
  every flat connection,
  while our theory expands only at the trivial connection.
\item Physical Chern-Simons theory can be naturally enhanced by the
  inclusion of Wilson loops, while such an enhancement has not been
  developed for our theory.
\item Physical Chern-Simons theory couples the configuration space
  integrals to the Lie algebra of a Lie group $G$, 
  while no Lie group enters in our theory.
\end{enumerate}
Items (i)--(iii) say that our theory is more general than
physical Chern-Simons theory in terms of the underlying manifolds,
but more restrictive in terms of the allowable graphs. It would be
interesting to develop a ``curved'' version of our theory including
also graphs without leaves and see how it relates to the physical theory.

A first step towards (iv) may be the description of the canonical
Maurer-Cartan element on the dual cyclic bar complex in terms of a BV
action in~\cite[\S8]{Hajek-thesis}. 

We expect that our theory admits generalizations incorporating
items (v) and (vi). Expanding it around a nontrivial flat connection
should correspond to string topology with twisted coefficients, while
incorporating boundary conditions on submanifolds should lead to
relative string topology in the sense
of~\cite{Basu-thesis,Basu-McGibbon-Sullivan-Sullivan}. 

Item (vii) is still the most perplexing one for us. From experts in
Chern-Simons theory we have obtained at least three different
explanations for this discrepancy: we are doing the theory for the
gauge group $U(1)$; we are doing the ``universal'' part of the theory
which involves only configuration space integrals and can be coupled
to any Lie group; we are doing the large $N$ limit of the theory with
gauge group $U(N)$. We currently find the last explanation most
convincing in view of recent work by Ginot, Gwilliam, Hamilton
and Zeinalian~\cite{Ginot-Gwilliam-Hamilton-Zeinalian} relating the
large $N$ limit of $U(N)$ Chern-Simons theory to loop space homology
on the classical level. 

Further open questions concern the relation of our theory
to Kontsevich's graph complexes~\cite{Kontsevich-formal},
to Witten's work relating Chern-Simons theory to string
theory~\cite{Witten95} (see also~\cite{Ekholm-strings}),
and to Costello's work on topological conformal field
theories~\cite{Costello-TCFT-gauge,Costello-TCFT-CY},


\appendix

\section{Blow-ups and compactifications}\label{sec:blow-up}

In this section we recall background on manifolds with
corners, blow-ups, and compactified configuration spaces
from~\cite{Cieliebak-Volkov-stringtop}. 
Throughout this section, $X$ denotes a manifold without boundary. All
submanifolds of $X$ will have no boundary as well.  

\subsection{Manifolds with corners}\label{ss:corners}

Let $Y$ be  a {\em manifold with
corners}. Recall that is is defined like a manifold,
with open subsets of $\R^n$ replaced by open subsets of $[0,\infty)^n$;
see e.g.~\cite{Joyce-corners}. 
For $k\geq 0$ we denote by $\p_kY$ its codimension $k$ stratum (where
exactly $k$ of the local coordinates are zero), and by $\p_{\ge k}Y$
the (closed) union of strata of codimension at least $k$. 
The interior $\p_0Y$ will also sometimes be denoted by $Y_0$. By a
``closed submanifold'' we will mean a submanifold which is closed 
as a subset (such as $\R\subset\C$). 

{\bf Transverse collections. }
Consider a finite collection $\{C_a\}_{a\in\AA}$ of
submanifolds of a manifold $X$. Then two submanifolds $C_1$ and
$C_2$ intersect {\em transversely} if $T_pC_1+T_pC_2=T_pX$ for
all $p\in C_1\cap C_2$.
Assume now inductively that transversality has been
defined for any collection of $m-1$ submanifolds, for some $m\ge 3$.
Then we call a collection of $m$ submanifolds {\em transverse}
if any subcollection of $m-1$ submanifolds is
transverse and its intersection is transverse
to the remaining submanifold. We could extend this notion to manifolds
with corners, but we will not need it. In the following, by a {\em
  manifold with a transverse collection}  
$$
  (X,C) = (X, \{C_a\}_{a\in \AA})
$$
we will mean a manifold (without boundary) $X$ with a transverse
collection of submanifolds $C_a\subset X$ indexed by a finite set $\AA$.

\begin{remark}\label{rem:straighten}
For a transverse collection $(X,C)$, near every point $p$ of $X$ there
is a chart straightening all $C_a$'s passing through $p$.  
\end{remark}

\subsection{Blow-ups}\label{ss:blow-up}

In this subsection we recall some basic facts about oriented real
blow-ups. 
Let $(X, C=\{C_a\}_{a\in \AA})$ be manifold with a transverse
collection as above. We stratify the union $\bigcup_{a\in \AA}C_a$ as 
$$
  \bigcup_{a\in \AA}C_a=\coprod_{\varnothing\ne J\subset \AA}
  X_J,
$$
where for a nonempty subset $J\subset \AA$ we set
\begin{equation}\label{eq:basicstrat0}
  X_J:=\bigcap_{a\in J}C_a\setminus\bigcup
  _{a\in\AA\setminus J}C_a.
\end{equation}  
For $a\in \AA$ we denote by $N_{C_a}:=TX|_{C_a}/TC_a$ the normal bundle
to $C_a$ and introduce its oriented projectivization
$$
   P^+N_{C_a}:=(N_{C_a}\setminus C_a)/\sim,
$$
where $v_1\sim v_2$ if and only if $v_1=t v_2$ for some $t>0$.
This yields a sphere bundle
$$
  P^+N_{C_a}\longrightarrow C_a.
$$
We define the bundle 
$$
  \pi_J:P^+N_J\longrightarrow X_J
$$
(with fibre a product of spheres) as the pullback of the product
bundle 
$$
\prod_{a\in J}P^+N_{C_a}\longrightarrow 
\prod_{a\in J}C_a
$$
under the natural inclusion $X_J\into \prod_{a\in J}C_a$. 
We define (as a set)
$$
   \Bl(X,C) := (X\setminus \bigcup_{a\in \AA}C_a)\amalg
   \coprod_{\varnothing\neq J\subset \AA}P^+N_J.
$$
By a slight abuse of notation, we will often write
$$
  C = \bigcup_{a\in \AA}C_a.
$$
The natural projection 
$$
  \pi: \Bl(X,C)\longrightarrow X
$$
is given by the identity on $X\setminus C$ and by $\pi_J$ on
$P^+N_J$. When there is no risk of confusion we will identify
$X\setminus C$ with its preimage under $\pi$.

\begin{lemma}\label{lem:blow-up}
The set $\Bl(X,C)$ carries the natural structure of a manifold with
corners such that $P^+N_J$ becomes part of the codimension $|J|$ boundary.
\end{lemma}

\begin{proof}
Let us first describe the blow-up $\wt\R^d:=\Bl(\R^d,0)$ of $\R^d$ at the
origin. It is defined semi-algebraically using the incidence relation
$$
  \wt\R^d=\{(l,x)\in P^+\R^d\times \R^d\mid x\in l\}.
$$
Consider the homeomorphism
$$
   \Phi:[0,\infty)\times S^{d-1}\stackrel{\cong}\longrightarrow
     \wt\R^d,\qquad (r,v)\mapsto ([v],rv),
$$
where $S^{d-1}\subset\R^d$ is the unit sphere and $[v]\in P^+\R^d$ is
the ray defined by $v$. Note that its inverse if given by
$\Phi^{-1}(l,x)=(|x|,v)$ for the unique representative $v\in S^{d-1}$
of $l$. We make $\wt\R^d$ a manifold with boundary by declaring $\Phi$
to be a global chart. 

For a linear subspace $E\subset \R^d$, the blow-up 
$\Bl(\R^d,E)$ is
diffeomorphic to the product $\wt\R^{d-\dim E}\times E$ using the
orthogonal splitting $\R^d=E^\perp\oplus E$. For a transverse
collection $C$ of linear subspaces in $\R^d$, the blow-up is
therefore diffeomorphic to the product of several $\wt\R^{d_a}$ and a
linear space. In view of Remark~\ref{rem:straighten}, this gives us
local manifold-with-corner charts for $\Bl(X,C)$. 
The statement about $P^+N_J$ follows immediately from this
description. 
\end{proof}

We call the manifold with corners 
$\Bl(X,C)$ together with the map $\pi$ the 
{\em (oriented real) blow-up of $X$ along $C$}.
The map $\pi$ is called the {\em blow-down map}.


{\bf Proper transforms. }
Let $(X,C)$ be a manifold with a transverse collection and 
$$
  \pi:\Bl(X,C)\rightarrow X
$$ 
the corresponding blow-up. The {\em proper transform} of a subset
$Z\subset X$ is the closure of $Z\setminus C$ in $\Bl(X,C)$,
$$
  PT(Z) := Closure({\pi}^{-1}(Z\setminus C))\subset \Bl(X,C).
$$

\begin{remark}\label{rem:closedness}
In our applications the subset $Z$ will usually be closed in $X$. In
this case we can identify the part of $PT(Z)$ which lies in the
complement of $\pi^{-1}(C)$ with
$Z\setminus C$ by means of $\pi$. 
\end{remark} 

\subsection{A general setup for Stokes' theorem}\label{ss:Stokes-gen}

In this subsection we introduce a general setup for Stokes' theorem. 

{\bf Manifolds with quasi-regular boundary. }
We begin with some definitions from Pawlucki's
article~\cite{Pawlucki}.

\begin{definition}\label{def:gen-q-reg}
Let $L$ be a topological space and $\p^{\rm q-reg} L\subset L$ a closed subset. 
We say that $(L,\p^{\rm q-reg} L)$ (or simply $L$) is {\em a manifold with 
quasi-regular boundary} if the following holds: the difference 
$L\setminus \p^{\rm q-reg} L$ is an oriented manifold; for each point 
$p\in \p^{\rm q-reg} L$
there exists an open neighbourhood $U\subset L$ of $p$ such that 
$U\setminus \p^{\rm q-reg} L$ consists of $m\ge 1$ connected components
$\{U^j_0\}_{j=1}^m$ and each
$U^j:=U_0^j\amalg (\p^{\rm q-reg} L\cap U)$ is a $C^1$-manifold with
boundary, with interior $U_0^j$ and boundary $(\p^{\rm q-reg} L\cap U)$. 
The $U^j$ are called the {\em local regular components} at $p$. The
multiplicity $m$ can depend on $p$ but must be locally
constant. The open subset of $\p^{\rm q-reg} L$ defined by the equation 
$m=1$ is denoted by $\p^{reg} L$ and called the {\em regular boundary}.
If $m=1$ constantly, then we get the well-known notion of an oriented
manifold with boundary. 
\end{definition}


An {\em odd $k$-form} on a manifold
$L$ is a $k$-form $\alpha$ on its orientation double cover $\wt L$ with
$\tau^*\alpha=-\alpha$ for the canonical involution $\tau:\wt L\to\wt L$. 
If $k=\dim L$ and $\alpha$ has compact support, then it has a
well-defined integral $\int_L\alpha$.  
In this terminology, each local regular component $U^j$ at $p\in
\p^{\rm q-reg}L$ induces an odd $0$-form $\eps^j$ on $\p^{\rm q-reg}L$ near $p$
whose value is $1$ on the boundary orientation. The sums
$\eps^1+\dots+\eps^m$ at all $p$ fit together to a $\Z$-valued odd
$0$-form $\eps$ on $\p^{\rm q-reg}L$.

\begin{definition}\label{def:emb-q-reg}
Let $(L,\p^{\rm q-reg} L)$ be a manifold with quasi-regular boundary and $(N,\p N)$
be a manifold with corners. We say that a map 
$\iota:(L,\p^{\rm q-reg} L)\rightarrow (N,\p N)$ is an {\em embedding 
(of a manifold with quasi-regular boundary into a manifold with corners)}
if $\iota$ is injective, it restricts to an embedding between the interiors
$L\setminus \p^{\rm q-reg} L\rightarrow N\setminus\p N$, and it restricts to an embedding 
of a manifold with boundary into a manifold with corners on each local
regular component of $L$. 
\end{definition}


{\bf Pairs and Stokes' theorem. }
Now we introduce our general setup for Stokes' theorem. 

\begin{definition}\label{def:pair}
A {\em pair} $(\YY,\XX)$ consists of a (not necessarily compact)
manifold-with-corners $\YY$ and a closed subset $\XX\subset\YY$
with a decomposition
\begin{equation}\label{eq:basicstrat}
  \XX = \XX_0\amalg\p \XX, \quad\text{where}\quad
  \p\XX := \p\YY\cap \XX \quad\text{and}\quad \XX_0 := \YY_0\cap \XX,
\end{equation}
such that $\XX_0$ is an oriented $d$-dimensional submanifold
of $\YY_0$ whose closure equals $\XX$.
\end{definition}


\begin{definition}\label{def:bdry}
Let $(\YY,\XX)$ be a pair. 
Consider the collection of all open subsets $\XX_1$ of $\p\XX$
such that the natural inclusion $\XX_0\amalg\XX_1\into \YY$
is an embedding of a manifold with quasi-regular boundary.
{\em The quasi-regular boundary 
$\p^{\rm q-reg}\XX$} of $\XX$ is the subset of $\p\XX$ maximal with respect
to this property. The {\em regular boundary} $\p^{\rm reg}\XX$ of $\XX$ is defined by requiring that the multiplicity $m$ be equal to $1$. 
\end{definition}

In the setup of Definition~\ref{def:bdry}, note that
$
\XX_0\amalg \p^{\rm q-reg}\XX
$
is a quasi-regular submanifold of $\YY$, and as such 
its boundary $\p^{\rm q-reg}\XX$ carries an odd $0$-form $\eps$.

Recall the codimension $k$ boundary $\p_k\YY$ of $\YY$ and the
``codimension at least $k$'' part $\p_{\ge k}\YY$ of the boundary of
$\YY$.

\begin{definition}\label{def:hidden}
We define the {\em main} (or {\em primary}) and 
{\em hidden} parts of 
$\p^{\rm q-reg}\XX$ as
$$
\p^{\main}\XX:=\p^{\rm q-reg}\XX\cap\p_1\YY,
\qquad 
\p^{\hidden}\XX:=\p^{\rm q-reg}\XX\cap\p_{\ge 2}\YY.
$$
\end{definition}

\begin{definition}\label{def:Stokes}
We say that {\em Stokes' theorem holds} 
for a pair $(\YY,\XX)$ if for every $d$-form $\alpha\in \Om^d(\YY)$ such that
$supp\,\alpha\cap\XX$ is compact the integral $\int_{\XX_0} \alpha$
exists, and for every $(d-1)$-form $\beta\in \Om^{d-1}(\YY)$ 
such that $supp\,\beta\cap\XX$ is compact we have
\begin{equation*}
  \int_{\XX_0} d\beta=\int_{\p^{\rm q-reg}\XX}\eps\beta.
\end{equation*}
Here $\int_{\p^{\rm q-reg}\XX}\eps\beta$ is understood as the integral of an odd
$(d-1)$-form.
\end{definition}

\subsection{Stokes' theorem for $\CC_\Gamma$}\label{ss:CGamma}

In this subsection we derive from~\cite{Cieliebak-Volkov-stringtop}
that Stokes' theorem holds for the compactified configuration
space $\CC_\Gamma$ from~\S\ref{ss:config}. We begin by recalling its
construction (ignoring orientation issues). 
Throughout this subsection, $\Gamma$ denotes a trivalent ribbon graph of
signature $(k,\ell,g)$
with a chosen extended labelling in the sense of
Definition~\ref{def:labelling}. We will use the notation from \S\ref{sec:graphs}. 
In particular, $s_b$ is the number of leaves on the $b$-th boundary component, 
$s=s_1+\cdots+s_\ell$, $k$ is the number of vertices determined
by~\eqref{eq:k}, $e$ is the number of edges, and 
$$
\bar R_\Gamma:=O_v^{-1}\circ O_e:
\{1,\dots,|\Flag(\Gamma)|\}\longrightarrow \{1,\dots,|\Flag(\Gamma)|\}
$$
is the reordering permutation from~\eqref{eq:reod}. 
In addition, we fix an $n$-dimensional oriented closed manifold
$M$. Recall the contravariant functor $S\to M^S$ from the category of
finite sets to the category of manifolds.  
In particular, the reordering permutation $\bar R_\Gamma$ induces a
diffeomorphism 
$$
   R_\Gamma=M^{\bar R_\Gamma}:M^{3k}\stackrel{\cong}\longrightarrow M^{3k}.
$$ 
We denote the domain and target of the map $R_\Gamma$ by different
symbols $Y_\Gamma$ and $X_\Gamma$, so that we have
$$
   R_\Gamma:Y_\Gamma\longrightarrow X_\Gamma.
$$
Let $\Delta_3\subset M^3$ be the triple diagonal and define the (slim)
{\em vertex diagonal}
$$
   \Delta_3^k\subset Y_\Gamma
$$
with its natural orientation, as well as its image
\begin{equation*}
   \Delta_{3\Gamma}:=
     R_\Gamma(\Delta_3^k)\subset X_\Gamma.
\end{equation*}
Let now $l$ be an edge of $\Gamma$. Let  $\Delta_2\subset M^2$
be the diagonal with its canonical orientation and define the double
diagonal corresponding to the edge $l$,
$$
 \Delta_2^l:=(M^2\times\cdots\times M^2\times \Delta_2\times
 M^2\times\cdots\times M^2)\times M^s\subset X_\Gamma.
$$
Here $\Delta_2$ comes at the position corresponding to the edge $l$ in the
numbering (v) in Definition~\ref{def:labelling}.
The family 
$$
\Delta_{2\Gamma}:=\{\Delta_2^l\}_{l\in \Edge(\Gamma)}
$$
is a transverse collection in $X_\Gamma$. By a slight abuse of
language, we also denote by $\Delta_{2\Gamma}$ the union of all
members of this collection and call it the (fat) {\em edge diagonal}. 
Let
$$
   \widetilde X_\Gamma:=(\widetilde M^2)^e\times M^s
$$
denote the blow-up of $X_\Gamma$ along $\Delta_{2\Gamma}$. 

\begin{proposition}[{\cite[\S9]
{Cieliebak-Volkov-stringtop}}]\label{prop:Stokes}
Stokes' theorem holds for the pair
$$
  \bigl(\widetilde X_\Gamma,\,\CC_\Gamma:=PT(\Delta_{3\Gamma})\bigr).  
$$
\end{proposition}

{\bf Description of $\p^{\rm main}\CC_\Gamma$.}
We denote
$$
   \Delta_2^{\{l\}} := \Delta_2^l\setminus \bigcup_{k\in\Edge(\Gamma)\setminus \{l\}}\Delta_2^k
$$
and let
$$
   \widetilde X_{\Gamma,l}:=(M^2\times\cdots\times M^2\times \widetilde
   M^2\times M^2\times\cdots\times M^2)\times M^s
$$
be the (oriented real) blow-up of $X_\Gamma$ along 
$\Delta_2^l$. According to Definition~\ref{def:hidden} the 
main part of the codimension $1$ boundary of 
$\CC_\Gamma$ is exactly the intersection of 
the quasi-regular boundary of $\CC_\Gamma$ with 
the codimension $1$ stratum of the ambient manifold 
with corners $\wt X_\Gamma$.
Thus
\begin{equation}\label{eq:descr-prim}
  \p^{\rm main}\CC_\Gamma = \coprod_{l\in\Edge(\Gamma)}\p^l\CC_\Gamma, \qquad 
  \p^l\CC_\Gamma:=\p\widetilde X_{\Gamma,l}|_{\Delta_{3\Gamma}\cap\Delta_2^{\{l\}}},
\end{equation}
where $\p^l\CC_\Gamma$ is the restriction of the $S^{n-1}$-bundle $\widetilde
X_{\Gamma,l}\to \Delta_2^l$ to $\Delta_{3\Gamma}\cap\Delta_2^{\{l\}}$. 

\subsection{Vanishing of integrals over hidden faces}

In this subsection we derive the two vanishing results for hidden
faces used in the main body of this paper. We retain the notation from
the previous subsection. 

Consider two adjacent edges $(A,B)$ and $(B,C)$ of the graph $\Gamma$,
where the vertices $A$ and $C$ can be equal but must be different from
$B$, see Figure~\ref{fig:hidden}. Let 
\begin{equation}\label{eq:key-invol}
\tau:\wt X_\Gamma\longrightarrow \wt X_\Gamma\
\end{equation}
be the involution that swaps the $\wt M^2$ factors in $\wt X_\Gamma$
corresponding to the edges $(A,B)$ and $(B,C)$.

\begin{definition}\label{def:symm}
We call a form $\om\in \Om^*(\wt X_\Gamma)$ {\em invariant} if 
\begin{equation}\label{eq:key-symm}
  \tau^*\om=(-1)^{n-1}\om
\end{equation} 
for every pair of edges $(A,B)$ and $(B,C)$ as above.
\end{definition}

\begin{proposition}[{\cite[\S9]
{Cieliebak-Volkov-stringtop}}]\label{prop:cancel}
If $\om\in \Om^*(\widetilde X_\Gamma)$ is invariant, then 
\begin{equation*}
  \int_{\p^\hidden\CC_\Gamma}\om=0.
\end{equation*}
\end{proposition}

\begin{figure}
\begin{center}
\includegraphics[width=\textwidth]{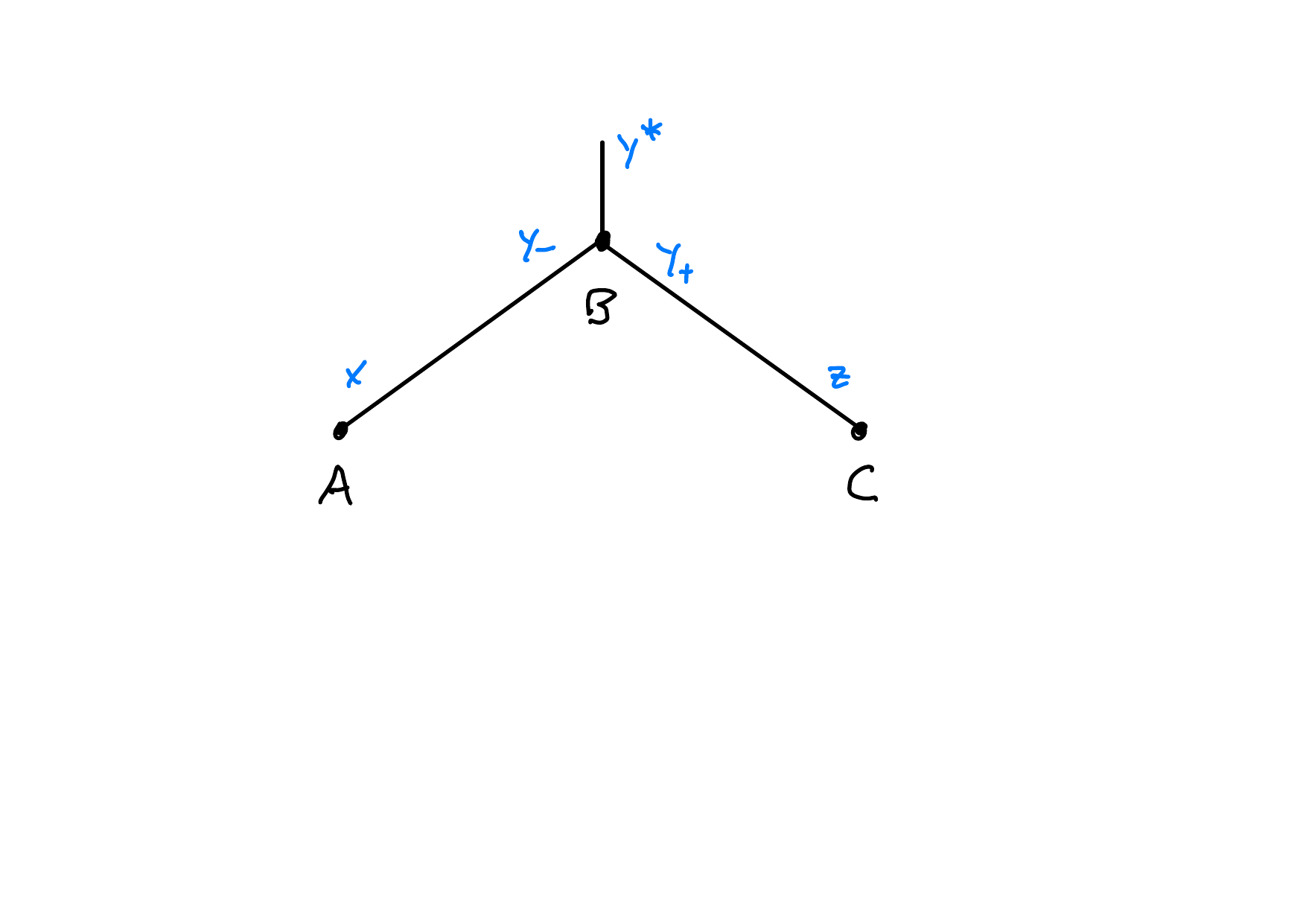}
\vspace{-4cm}
\caption{The involution on hidden faces}
\label{fig:hidden} 
\end{center}
\end{figure}

For the purposes of this paper, we will use two applications of this
result. For the first one, recall from Section~\S\ref{sec:opforms} 
the form $\wt G^e(\alpha)$ on $\wt X_\Gamma$
defined as the cross product of $e$ copies of $\wt G$
times the form $\alpha$ on $M^s$. 

\begin{corollary}\label{cor:Galpha-invar} 
The form $\wt G^e(\alpha)$ on $\wt X_\Gamma$ is invariant, hence
\begin{equation*}
  \int_{\p^\hidden\CC_\Gamma}\wt G^e(\alpha) = 0.
\end{equation*}
\end{corollary}

\begin{proof}
It suffices to consider the map $\tau$ on $\wt M^2\times \wt M^2$
swapping the two factors, ignoring the other factors in $\wt
X_\Gamma$. Let $p_1,p_2$ denote the projections onto the first
respectively second $\wt M^2$ factor. Then
$$
  \tau^*(p_1^*\wt G\wedge p_2^*\wt G) = p_2^*\wt G\wedge p_1^*\wt G =
  (-1)^{n-1}p_1^*\wt G\wedge p_2^*\wt G, 
$$ 
where the first equality follows from $p_1\circ\tau=p_2$ and vice
versa, and the second equality holds because $\wt G$ has degree $n-1$.
\end{proof}


For the second application, 
recall the setup of~\S\ref{ss:case2}: $\HH\subset \ker d$ is a
subspace complementing $\im d$ in $\ker d$ and $\wt
G_0\in\Om^{n-1}(\wt M^2)$ an
associated propagator; $\mu$ is an $(n-2)$-form on $\wt M^2$ and $\wt
G_t=\wt G_0+td\mu$; $\Gamma\in\RR_{\ell,g}$ is a graph with $s$ leaves
and $\alpha\in\HH^{\otimes s}$.  

\begin{corollary}\label{cor:gauge-vanish}
In the setting above,
\begin{equation}\label{eq:gauge-vanish}
  \sum_{l\in \Edge(\Gamma)}(-1)^{s(\Gamma,\alpha)+(n(l)-1)(n-1)}
  \int_{\p\CC_{\Gamma}^{\rm hidden}}\wt G_t\times\dots\times
  \mu\times\dots\times \wt G_t\times\alpha=0.
\end{equation}
Here $(-1)^{s(\Gamma,\alpha)}$ denotes a suitable sign exponent
depending on $\Gamma$ and $\alpha$, and $\mu$ is inserted at the
position $n(l)$ of $l$ in a chosen extension of the labelling of $\Gamma$. 
\end{corollary}

\begin{proof}
In view of Proposition~\ref{prop:cancel}, it suffices to show that the 
form 
\begin{equation*}
  \om:=\sum_{l\in \Edge(\Gamma)}(-1)^{s(\Gamma,\alpha)+(n(l)-1)(n-1)}\wt G_t\times\dots\times
  \mu\times\dots\times \wt G_t\times\alpha
\end{equation*}
is invariant in the sense of Definition~\ref{def:symm}.
Consider two adjacent edges $l_1=(A,B)$ and $l_2=(B,C)$. 
We write
$$
  \om=\om_1+\om_2
$$
with 
$$
  \om_1:=\sum_{l\in \Edge(\Gamma)\setminus\{l_1,l_2\}}(-1)^{s(\Gamma,\alpha)+(n(l)-1)(n-1)}
  \wt G_t\times\dots\times \mu\times\dots\times \wt G_t\times\alpha
$$
and 
$$
  \om_2:=\sum_{l\in\{l_1,l_2\}}(-1)^{s(\Gamma,\alpha)+(n(l)-1)(n-1)}
  \wt G_t\times\dots\times \mu\times\dots\times \wt G_t\times\alpha.
$$
In each summand of $\om_1$ the forms corresponding to $l_1$ and $l_2$
are $\wt G_t$, so the argument in the proof of 
Corollary~\ref{cor:Galpha-invar} goes through 
verbatim to conclude that all summands of $\om_1$
are invariant.
The two summands of $\om_2$ combine to
$$
  \om_2 = (-1)^{s(\Gamma,\alpha)}\nu\times \eta,\qquad
  \nu := \mu\times \wt G_t+(-1)^{n-1}\wt G_t\times \mu,
$$
where $\nu$ corresponds to the edges $l_1$
and $l_2$ (assuming without loss of generality that they have positions
$1$ and $2$ in the extended labelling) and $\eta$ combines the remaining forms.  
As the diffeomorphism $\tau$ preserves $\eta$, it remains to consider
its effect on $\nu$.
Denoting by $p_1,p_2$ the projections onto the two factors, we write
this form more explicitly as
$$
  \nu = p_1^*\mu\wedge p_2^*\wt G_t+(-1)^{n-1}p_1^*\wt G_t\times p_2^*\mu
$$
and compute
\begin{align*}
  \tau^*\nu
  = p_2^*\mu\wedge p_1^*\wt G_t+(-1)^{n-1}p_2^*\wt G_t\times p_1^*\mu 
  = (-1)^{n-1}\nu.
\end{align*} 
Here for the last equality we use the fact that the degrees of $\wt
G_t$ and $\mu$ differ by $1$, so we can swap their order in the wedge 
product without a sign. 
This shows that $\om_2$ is invariant. Combined with the invariance of
$\om_1$, this gives us the desired invariance of $\om$.
\end{proof}

\subsection{An example}\label{ss:ex}

In this subsection we discuss
a simple example of a hidden face that exhibits many of the features
occurring in the general case.  

Consider the graph given by a triangle with $3$ vertices, $3$ edges
and $3$ leaves shown in Figure~\ref{fig:triangle}.
We associate to the vertices the variables $v_1,v_2,v_3\in\R^n$, and
we wish to blow up in $(\R^n)^3$ the three diagonals 
$$
   \Delta_1=\{v_2=v_3\},\quad \Delta_2=\{v_3=v_1\},\quad
   \Delta_3=\{v_1=v_2\}.  
$$
Unfortunately, the collection 
$\{\Delta_j\}_{j=1,2,3}$ does not intersect transversely. To get around
this difficulty, we make the linear coordinate change
$$
   \R^n\times\R^n\times\R^n\stackrel{\cong}{\longrightarrow}
   \{(x_0,x_1,x_2,x_3)\in(\R^n)^4\mid x_1+x_2+x_3=0\}
$$
given by the equations
$$
   x_0=v_1,\quad x_1=v_2-v_3,\quad x_2=v_3-v_1,\quad x_3=v_1-v_2.  
$$
\begin{figure}
\begin{center}
\includegraphics[width=\textwidth]{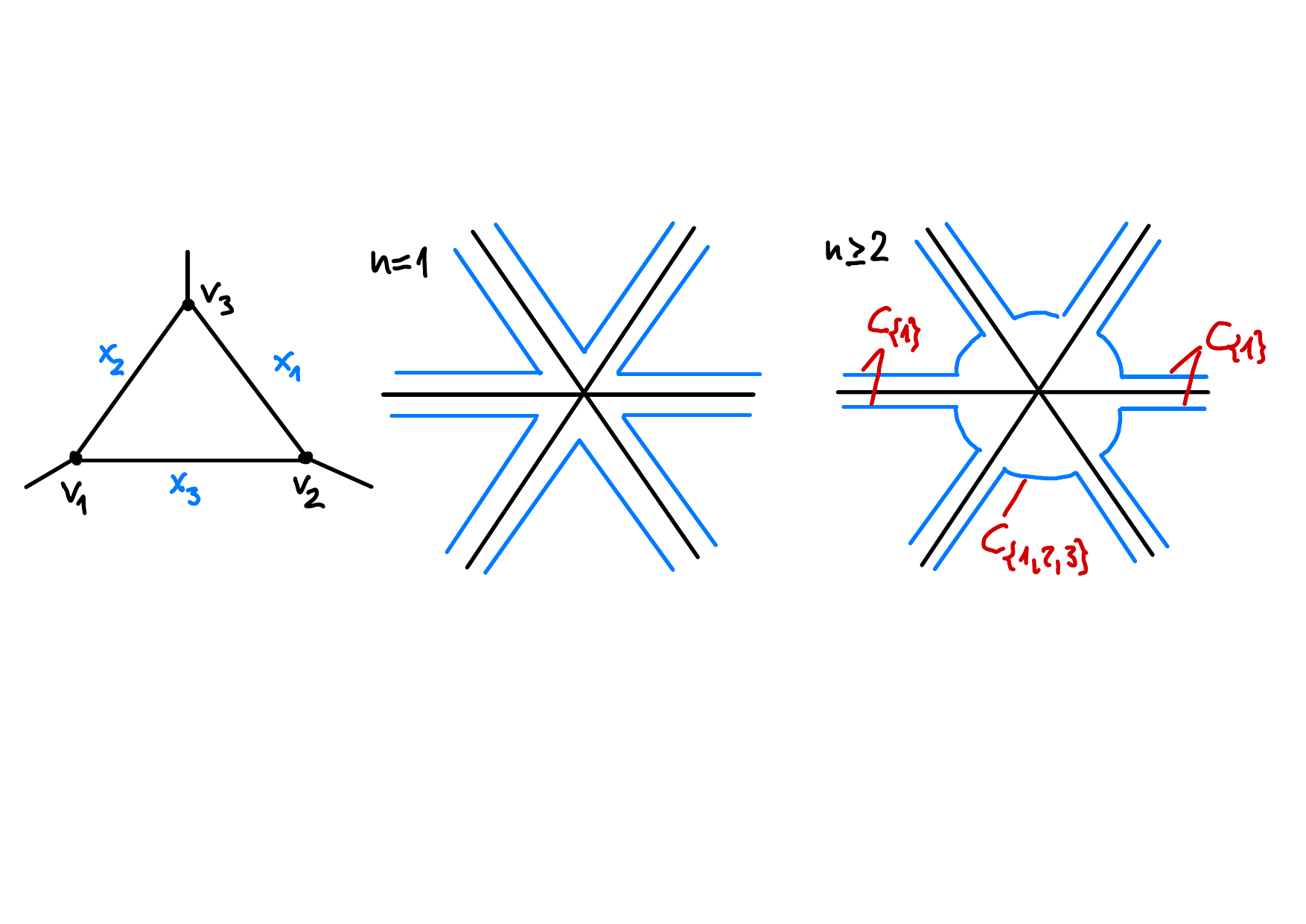}
\vspace{-3.5cm}
\caption{The triangle graph and its compactified configuration space}
\label{fig:triangle} 
\end{center}
\end{figure}
Ignoring the irrelevant variable $x_0$, we have transformed the
problem to blowing up the subspaces $\{x_i=0\}$ in the space
$\{(x_1,x_2,x_3)\in(\R^n)^3\mid x_1+x_2+x_3=0\}$. 
Let 
$$
   \wt X = \Bl(\R^n,0)\times \Bl(\R^n,0)\times
   \Bl(\R^n,0)\longrightarrow X=\R^n\times\R^n\times\R^n
$$
be the oriented real blow-up of $X$ at the three ``diagonals''
$$
   \Delta_i=\{x=(x_1,x_2,x_3)\in X\mid x_i=0\},\qquad i=1,2,3.
$$ 
We define the desired blow-up as the proper transform
$$
  C := PT(B)\subset \wt X,\qquad B:=\{x\in X\mid x_1+x_2+x_3=0\}.
$$
The projection $\wt X\to X$ restricts to a projection $\pi:C\to B$.  
For a (possibly empty) subset $I\subset\{1,2,3\}$ we set 
$$
   C_I := \pi^{-1}\Bigl(\{x\in B\mid x_i=0\Leftrightarrow i\in I\}\Bigr).
$$
These sets define a stratification $C=\amalg_IC_I$ which we will now
describe in detail. Figure~\ref{fig:triangle} depicts the true picture
for $n=1$ and a schematic picture for $n\geq 2$.
First note that $\pi$ restricts to a diffeomorphism on
the open dense stratum
$$
   C_\varnothing = \{x\in (\R^n\setminus 0)\times (\R^n\setminus
   0)\times (\R^n\setminus 0) \mid x_1+x_2+x_3=0\} 
   \cong (\R^n\setminus 0)\times (\R^n\setminus 0). 
$$
We write $S^{n-1}=\p\Bl(\R^n,0)$ for the space of rays at the origin,
and we write $x\in l$ if $x$ is a {\em nonzero} vector in the ray
$l\in S^{n-1}$. Then
$$
   C_{\{1\}} = \{(l_1,x_2,x_3)\in S^{n-1}\times (\R^n\setminus
   0)\times (\R^n\setminus 0) \mid x_2+x_3=0\} 
   \cong S^{n-1} \times (\R^n\setminus 0)
$$
is a codimension $1$ boundary stratum of $C$, and
similarly for $C_{\{2\}}$ and $C_{\{3\}}$. Finally, we have
$$
   C_{\{1,2\}} = C_{\{2,3\}} = C_{\{1,3\}} = \varnothing
$$
and
\begin{equation}\label{eq:123}
\begin{aligned}
  C_{\{1,2,3\}} = &\{(l_1,l_2,l_3)\in S^{n-1}\times S^{n-1} \times
  S^{n-1} \mid t_1x_1+t_2x_2+t_3x_3=0 \\
  &\ \ \text{ for some } x_i\in l_i,\,t_i\geq 0\text{ with }t_1+t_2+t_3>0\}. 
\end{aligned}
\end{equation}
Note that $C_{\{1,2,3\}}\subset C$ is closed. The closure of $C_{\{1\}}$ is the
union of $C_{\{1\}}$ with
$$
   \ol{C_{\{1\}}}\cap C_{\{1,2,3\}} = \{(l_1,l_2,l_3)\in S^{n-1}\times S^{n-1} \times
   S^{n-1} \mid l_2=-l_3\} \cong S^{n-1}\times S^{n-1}, 
$$
and similarly for $C_{\{2\}}$ and $C_{\{3\}}$.
Recall that the codimension $2$ part of $C$ is by definition the
saturation of the union of these sets with respect to the natural map 
sending rays to lines. Observe, however, that this union is already
saturated. To see this, consider a point $(l_1,l_2,-l_2)\in
\ol{C_{\{1\}}}\cap C_{\{1,2,3\}}$ and assume that it does not belong
to any of the other two sets, i.e.~$l_1\ne \pm l_2$. Let 
$(\bar l_1,\bar l_2,\bar l_2)$ denote the corresponding image in the  
unoriented blow up. Then the only points
we have to add to get the full preimage 
of $(\bar l_1,\bar l_2,\bar l_2)$ are the 
points $(-l_1,l_2,-l_2),(\pm l_1,-l_2,l_2)\in \ol{C_{\{1\}}}\cap C_{\{1,2,3\}}$.
It remains to consider a  ``completely degenerate'' point
$(l,l,-l)\in \ol{C_{\{1\}}}\cap \ol{C_{\{2\}}}\cap C_{\{1,2,3\}}$.
In this case, to get the full preimage 
we have to add $(l,-l,l), (-l,l,-l)\in \ol{C_{\{1\}}}\cap \ol{C_{\{3\}}}\cap C_{\{1,2,3\}}$
and $(-l,l,l), (l,-l,-l)\in \ol{C_{\{2\}}}\cap \ol{C_{\{3\}}}\cap C_{\{1,2,3\}}$
as well as $(-l,-l,l)\in \ol{C_{\{1\}}}\cap \ol{C_{\{2\}}}\cap C_{\{1,2,3\}}$.

The complement of these
sets in $C_{\{1,2,3\}}$ is empty for $n=1$, while for $n\geq 2$ it is
given by
\begin{equation}\label{eq:123reg}
\begin{aligned}
   C_{\{1,2,3\}}^\reg 
   &:= C_{\{1,2,3\}}\setminus(\ol{C_{\{1\}}}\cup \ol{C_{\{2\}}}\cup
   \ol{C_{\{3\}}}) \cr 
   &= \{(l_1,l_2,l_3)\in C_{\{1,2,3\}} \mid l_i\neq -l_j \text{ for
     all }i,j\} \cr
   &\cong\{(l_1,l_2)\in S^{n-1}\times S^{n-1}\mid l_1\neq\pm l_2\}\times(0,1). 
\end{aligned}
\end{equation}
Here for the last isomorphism note that $C_{\{1,2,3\}}^\reg$ is the
manifold of $2$-dimensional (nondegenerate) fans in $\R^n$, i.e., triples of rays
$(l_1,l_2,l_3)$ whose positive linear combinations span a plane. Given
$(l_1,l_2)$ with $l_1\neq \pm l_2$, the possible rays $l_3$ form an
open interval whose boundary points correspond to $l_3=-l_1$ and
$l_3=-l_2$. 

The manifold $C_{\{1,2,3\}}^\reg$ is part of the codimension $1$ boundary of
$C$. A tubular neighbourhood is given by
$$
   [0,\infty)\times C_{\{1,2,3\}}^\reg\to C,\qquad
  (r,l_1,l_2,l_3)\mapsto (l_1,rx_1,l_2,rx_2,l_3,rx_3),
$$
where $x_i\in l_i$ are the unique vectors with $x_1+x_2+x_3=0$ and
$|x_1|^2+|x_2|^2+|x_3|^2=1$, and we write points in $\Bl(\R^n,0)$ as
$(l,x)$ with $x\in l$. Altogether,
$$
   C_1 := C_{\{1\}}\cup C_{\{2\}}\cup C_{\{3\}}\cup C_{\{1,2,3\}}^\reg
$$
is the codimension $1$ boundary of $C_0\cup C_1$, where
$C_0:=C_\varnothing$, and the remaining set
$$
   C_2 := C\setminus(C_0\cup C_1) = \bigcup_{i=1}^3(\ol{C_{\{i\}}}\cap
   C_{\{1,2,3\}}) 
$$
has codimension at least $2$. The sets in the last union are not
disjoint, but intersect pairwise in 
$$
   \ol{C_{\{1\}}}\cap \ol{C_{\{2\}}} = \{(l_1,l_2,l_3)\in
   (S^{n-1})^3\mid l_1=l_2=-l_3\}\cong S^{n-1}\subset C_{\{1,2,3\}},
$$
and similarly for the other two pairs. The triple intersection
$\ol{C_{\{1\}}}\cap \ol{C_{\{2\}}}\cap \ol{C_{\{3\}}}$ is empty. The
set
$$
   (\ol{C_{\{1\}}}\cap C_{\{1,2,3\}})\setminus(\ol{C_{\{2\}}}\cup \ol{C_{\{3\}}})
$$
is actually a codimension $2$ corner at which the codimension $1$
boundary strata $C_{\{1\}}$ and $C_{\{1,2,3\}}^\reg$ meet, and
similarly for the other two combinations. However, points of
$\ol{C_{\{i\}}}\cap \ol{C_{\{j\}}}$ are not corners of the appropriate
codimension. Therefore, it is not immediate that Stokes' theorem holds
on spaces of this kind. Nonetheless, if we perform our constructions
with a closed manifold in place of $\R^n$ (to gain compactness), then
the space we obtain in place of $C$ is sufficiently nice that Stokes'
theorem does indeed hold, see Proposition~\ref{prop:Stokes}. The main point
is that $C_2$ has codimension at least $2$. 

Finally, let us have a look at the partial blow-ups
$$
   \wt X\stackrel{p_1}{\longrightarrow}\wt X_1=\Bl(\R^n,0) \times \R^n
   \times \R^n \stackrel{q_1}{\longrightarrow} X.  
$$
Here the map $p_1$ blows down all the 
diagonals except the first one, and $q_1$ blows down the remaining first 
diagonal. Denote by $p_1C$ the proper transform of $\{x_1+x_2+x_3=0\}$ 
in $\wt X_1$. This is a manifold with boundary given by
$$
   \p p_1C = \{(l_1,x_2,x_3)\in S^{n-1}\times\R^n\times\R^n\mid
   x_2+x_3=0\} \cong S^{n-1}\times\R^n. 
$$
The preimage of the boundary is
$$
   p_1^{-1}(\p p_1C) = C_{\{1\}}\cup C_{\{1,2,3\}},
$$
where 
\begin{equation}\label{eq:partbup}
\begin{aligned}
  p_1|_{C_{\{1\}}}: C_{\{1\}} \to  
   &\{(l_1,x_2,x_3)\in S^{n-1}\times(\R^n\setminus
   0)\times(\R^n\setminus 0)\mid x_2+x_3=0\} \cr
   &\cong S^{n-1}\times(\R^n\setminus 0)
\end{aligned}
\end{equation}
is the identity diffeomorphism onto a full measure subset of $\p p_1C$ and 
$$
   p_1|_{C_{\{1,2,3\}}}: C_{\{1,2,3\}}\to\{(l_1,0,0)\mid l_1\in
   S^{n-1}\} \cong S^{n-1}
$$
is surjective. Analogous properties hold for $p_2$ and $p_3$ defined
in the same way. We see that 
$$
   C_{\{1,2,3\}} = p_1^{-1}(\p p_1C)\cap p_2^{-1}(\p p_2C)\cap p_3^{-1}(\p p_3C)
$$
is the ``overlap'' of the preimages of the boundaries under the three
projections. These overlaps (called {\em hidden faces}) present a
major difficulty in our proof of the Maurer-Cartan
equation~\eqref{eq:MCIBL}. Indeed, integrals over primary parts like
$C_{\{1\}}$ can be rewritten as integrals over single blow-ups 
using~\eqref{eq:partbup} and then incorporated in our algebraic
formalism. Integrals over the hidden parts like $C_{\{1,2,3\}}^{\reg}$  
cannot be incorporated in our formalism in any 
sensible way, so the only way to prove 
equation~\eqref{eq:MCIBL} is to show that all such integrals
vanish. The main idea is to produce a certain  
self-diffeomorphism and use invariance of integration. In our 
example above the self-diffeomorphism 
of $\wt X$ swapping any two $\Bl(\R^n,0)$ factors of $\wt X$ 
preserves $C_{\{1,2,3\}}$ and 
$C_{\{1,2,3\}}^{\reg}$ since the defining equations~\eqref{eq:123} 
and~\eqref{eq:123reg} are invariant under such a swap. Moreover, one
can see explicitly that the restriction of this swap to
$C_{\{1,2,3\}}^{\reg}$ preserves orientation if $n$ is even, and reverses
orientation if $n$ is odd.


\bibliographystyle{abbrv}
\bibliography{./000_chen}

\end{document}